\else\usepackage[cp1252]{inputenc}\fi
\long\def\drop#1{}
\newtheorem{thm}{Theorem}[section]
\newtheorem{prop}[thm]{Proposition}
\newtheorem{lem}[thm]{Lemma}
\newtheorem{cor}[thm]{Corollary}
\newtheorem{defn}[thm]{Definition}
\newtheorem{mthm}{Theorem}   
\newtheorem{mcor}[mthm]{Corollary}   
\newtheorem{mlem}[mthm]{Lemma}   
\declaretheoremstyle[bodyfont=\normalfont,qed=$\square$]{remark}
\declaretheorem[style=remark,numberlike=thm,name=Remark]{rem}
\declaretheorem[style=remark,numberlike=thm,name=Example]{ex}
\def\XXint#1#2#3{{\setbox0=\hbox{$#1{#2#3}{\int}$ }
\vcenter{\hbox{$#2#3$ }}\kern-.6\wd0}}
\newcommand{\e}{\varepsilon}
\newcommand{\Lip}{\operatorname{Lip}}
\newcommand{\loc}{\operatorname{loc}}
\newcommand{\N}{\mathbb N}
\newcommand{\R}{\mathbb R}
\newcommand{\sign}{\operatorname{sign}}
\newcommand{\xto}[1]{\xrightarrow{#1}}
\newcommand{\Z}{\mathbb Z}
\newcommand{\bb}{\mathbf b}
\newcommand{\bM}{\mathbf M}
\newcommand{\bone}{\boldsymbol 1}
\newcommand{\bx}{\mathbf x}
\newcommand{\by}{\mathbf y}
\newcommand{\cF}{\mathcal F}
\newcommand{\cI}{\mathcal I}
\newcommand{\cM}{\mathcal M}
\newcommand{\cZ}{\mathcal Z}
\newcommand{\pv}{\mathrm{pv}}
\DeclareFontFamily{U}{mathx}{\hyphenchar\font45}
\DeclareFontShape{U}{mathx}{m}{n}{
      <5> <6> <7> <8> <9> <10>
      <10.95> <12> <14.4> <17.28> <20.74> <24.88>
      mathx10
      }{}
\DeclareSymbolFont{mathx}{U}{mathx}{m}{n}
\DeclareMathAccent{\widecheck}{0}{mathx}{"71}
\newlength{\leftstackrelawd}
\newlength{\leftstackrelbwd}
\def\leftstackrel#1#2{\settowidth{\leftstackrelawd}%
{${{}^{#1}}$}\settowidth{\leftstackrelbwd}{$#2$}%
\addtolength{\leftstackrelawd}{-\leftstackrelbwd}%
\leavevmode\ifthenelse{\lengthtest{\leftstackrelawd>0pt}}%
{\kern-.5\leftstackrelawd}{}\mathrel{\mathop{#2}\limits^{#1}}}
\renewcommand{\o}{\overline}
\title{Discrete-to-continuum convergence of charged particles in 1D with annihilation}
\author{Patrick van Meurs, Mark Peletier, Norbert Po\v{z}\'ar}
\begin{document}

\maketitle

\begin{abstract}
We consider a system of charged particles moving on the real line driven by electrostatic interactions. Since we consider charges of both signs, collisions might occur in finite time. Upon collision, some of the colliding particles are effectively removed from the system (annihilation). The two applications we have in mind are vortices and dislocations in metals. 

In this paper we reach two goals. First, we develop a rigorous solution concept for the interacting particle system with annihilation. The main innovation here is to provide a careful management of the annihilation of groups of more than two particles, and we show that the definition is consistent by proving existence, uniqueness, and continuous dependence on initial data. The proof relies on a detailed analysis of ODE trajectories close to collision, and a reparametrization of vectors in terms of the moments of their elements.  

Secondly, we pass to the many-particle limit (discrete-to-continuum), and recover the expected limiting equation for the particle density. Due to the singular interactions and the annihilation rule, standard proof techniques of discrete-to-continuum limits do not apply. In particular, the framework of measures seems unfit. Instead, we use the one-dimensional feature that both the particle system and the limiting PDE can be characterized in terms of Hamilton--Jacobi equations. While our proof follows a standard limit procedure for such equations, the novelty with respect to existing results lies in allowing for stronger singularities in the particle system by exploiting the freedom of choice in the definition of viscosity solutions. 
\end{abstract}

\tableofcontents

\section{Introduction}

Our starting point is the interacting particle system formally given by
   \begin{equation} \label{Pn} \tag{$P_n$}
   \left\{ \begin{aligned}
     &\frac{dx_i}{dt} = \frac{1}{n} \sum_{ \substack{ j = 1 \\ j \neq i} }^n \frac{b_i b_j}{x_i - x_j}
     && t \in (0,T), \ i = 1,\ldots, n \\
     &+ \text{annihilation upon collision,}
     &&
   \end{aligned} \right.
\end{equation}
where $n \geq 2$ is the number of particles, $\bx := (x_1, \ldots, x_n)$ are the particle positions in $\R$, and $\bb := (b_1, \ldots, b_n)$ are the charges of the particles, which are initially set as $+1$ or $-1$. One can think of this particle system as charged particles in a viscous fluid. Indeed, the right-hand side of the ODE shows that the interaction forces are nonlocal and singular. Moreover, equal-sign charges repel and opposite-sign charges attract each other. 

Due to these attractive forces, particles of opposite charge may collide in finite time. Since the right-hand side of the ODE does not vanish prior to the collision (in fact, it blows up),  the ODE is ill-defined at the collision time, and a modeling choice is to be made on how to continue it. Here we make the choice that opposite signs \emph{annihilate} each other: when two particles of opposite charge collide, they are removed from the system, and the remaining particles continue to evolve by \eqref{Pn}. Technically we encode this annihilation by setting the charges $b_i$ of the colliding particles to $0$. This is equivalent to removing them from the system because the ODE assigns zero velocity to particles with zero charge and vice versa, particles with zero charge do not exert a force on any other particle. In analogy with electric charges we call particles with zero charge \emph{neutral}, and particles with nonzero charge \emph{charged}.

A rigorous description of \eqref{Pn} is given in Definition \ref{defn:Pn}, and Figure \ref{fig:trajs} illustrates an example set of trajectories. To give some insight into the properties of \eqref{Pn}, we list here several observations: 
\begin{itemize}
  \item \textit{Gradient flow}. In between collisions, the ODE is the gradient flow of the energy
  \begin{equation} \label{E}
    E(\bx; \bb) = \frac1{2n^2} \sum_{i=1}^n \sum_{ \substack{ j = 1 \\ j \neq i} }^n b_i b_j V (x_i - x_j),
    \qquad V(x) := \log \frac1{|x_i - x_j|}, 
    \end{equation}  
  where $V$ is the particle interaction potential.
  \item \textit{Particle exchangeability}. \eqref{Pn} is invariant under relabeling the indices of particles with the same charge.
  
  \item \textit{Multiple-particle collisions}. The most likely collision is that between two charged particles of opposite charge. Yet, there are possible situations in which three or more charged particles collide; see Figure~\ref{fig:trajs}. 
  
  \item \textit{Translation invariance}. If $(\bx, \bb)$ is a solution to $(P_n)$, then $(\bx + a \bone, \bb)$ also is a solution, for any $a \in \R$ (here $\bone = (1,\ldots,1)$).
  
  \item \textit{Scale invariance}. If $t\mapsto (\bx(t), \bb(t))$ is a solution to $(P_n)$, then $t\mapsto (\alpha \bx(\alpha^{-2} t), \bb(\alpha^{-2} t))$ also is a solution for any $\alpha > 0$.
  
  \item \textit{Conserved quantities}. The first moment $M_1 (x_i) := \sum_{i=1}^n x_i$ and the net charge $\sum_{i=1}^n b_i$ are conserved under \eqref{Pn}.
\end{itemize}

\begin{figure}[ht]
\centering
\begin{tikzpicture}[scale=1.5, >= latex]
\def \sqtwo {1.414}
\def \rr {0.03}

\draw[->] (0,0) -- (0,3.5) node[above] {$t$};
\draw[->] (-2,0) -- (4.8,0) node[right] {$x$};

\draw[dotted] (0,.5) node[left]{$\tau_1$} -- (3.581,.5);
\draw[dotted] (0,1.5) node[right]{$\tau_2$} -- (-1,1.5);
\draw[dotted] (0,3) node[left]{$\tau_3$} -- (2,3);

\begin{scope}[shift={(2,3)},scale=1] 
    \draw[thick, red] (0,0) -- (0,.5);
    \draw[thick, blue] (0,-1.5) -- (0,0);
    \draw[domain=-1.732:1.581, smooth, thick, red] plot (\x,{-\x*\x});           
    \draw[domain=1.581:1.732, smooth, thick, blue] plot (\x,{-\x*\x});           
    \fill[black] (0,0) circle (\rr); 
\end{scope}

\begin{scope}[shift={(2,1.5)},scale=1, rotate = 270] 
    \draw[domain=0:1, smooth, thick, blue] plot (\x,{.08*\x*sqrt(\x)}); 
    \draw[domain=1:1.5, smooth, thick, blue] plot (\x,{.08*\x*sqrt(\x) - .1*(\x - 1)*sqrt(\x - 1)});         
\end{scope}

\begin{scope}[shift={(3.581,.5)},scale=1] 
    \draw[domain=-.707:.707, smooth, thick, red] plot ({2*exp(-\x/2)-2},{-\x*\x});
    \fill[black] (0,0) circle (\rr);  
\end{scope}

\begin{scope}[shift={(1.2,.5)},scale=1] 
    \draw[domain=0:.408, smooth, thick, red] plot (\x,{-3*\x*\x});
    \draw[domain=-.408:0, smooth, thick, blue] plot (\x,{-3*\x*\x});     
    \fill[black] (0,0) circle (\rr);     
\end{scope}

\begin{scope}[shift={(-1,1.5)},scale=1] 
    \draw[domain=-.707:0, smooth, thick, blue] plot (\x,{-3*\x*\x});  
    \draw[domain=0:.707, smooth, thick, red] plot (\x,{-3*\x*\x}); 
    \fill[black] (0,0) circle (\rr);         
\end{scope}
\end{tikzpicture} \\
\caption{A sketch of solution trajectories to \eqref{Pn}. Trajectories of particles with positive charge are coloured red; those with negative charge blue. The black dots indicate the annihilation points $(\tau, y)$.}
\label{fig:trajs}
\end{figure}
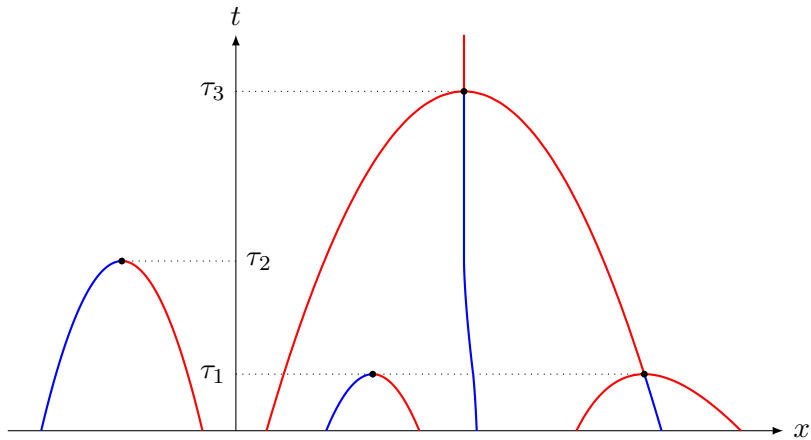

The main aim of this paper is to establish the `continuum limit' $n\to\infty$. This is a classical question in the study of interacting particle systems, with a long and distinguished history (see below). What makes the results of this paper special is the combination of opposite signs (and the ensuing annihilation) with singular interaction potentials. 

Along the way we also develop a theory for the annihilating finite-$n$ particle system \eqref{Pn}. For instance, it turns out that formulating the right solution concept for \eqref{Pn} is subtle; with the choices made in Definition~\ref{defn:Pn} we are able to prove   existence, uniqueness, stability, and various other properties  of its solutions.

\subsection{Motivation}
\label{s:intro:mot}

Our main motivation for studying the limit  $n \to \infty$  of \eqref{Pn} comes from a key problem in understanding plastic deformation of metals. Plastic deformation of metals is the macroscopic behavior of many dislocations interacting on microscopic time  and length scales. Dislocations are curve-like defects in the atomic lattice of the metal. We refer to \cite{HirthLothe82,HullBacon01} for textbook descriptions of dislocations and their relation to plasticity. Because of the immense complexity of the dynamics of groups of dislocations, it is a long-standing problem to derive plastic deformation as the micro-to-macro limit of interacting dislocations.

To address this problem, several simplifying assumptions are common in the literature:
\begin{enumerate}
  \item The assumption that the dislocation lines are straight and parallel, such that their position can be identified by points in a 2D cross section 
\cite{AlicandroDeLucaGarroniPonsiglione14,
Berdichevsky06,
CermelliLeoni06,
GarroniVanMeursPeletierScardia19,
GromaBalogh99,
GromaCsikorZaiser03,
GromaGyorgyiKocsis06,
HudsonOrtner14,
KooimanHuetterGeers15b,
MoraPeletierScardia17}.  
  \item The further assumption that these dislocation points move along the same line 
\cite{FocardiGarroni07,
GeersPeerlingsPeletierScardia13,
Hall11,
HeadLouat55,
VanMeursMuntean14}%
. This assumption is inspired by the fact that for many types of dislocations the movement is confined to a \emph{slip plane}.
  \item The assumption that the velocity of dislocations is proportional to the stress acting on them. Most of the literature on micro-to-macro limits operates under this assumption. A few exceptions are \cite{AlicandroDeLucaGarroniPonsiglione16,
HudsonVanMeursPeletier20ArXiv,
MoraPeletierScardia17}.
\end{enumerate}
Under these assumptions, the resulting dynamics of dislocations are given by \eqref{Pn}. The charges $b_i$ correspond to the orientation of the dislocations (given by their Burgers vector). Dislocations with opposite Burgers vector are such that when they collide, their lattice defects cancel out, and a locally perfect lattice emerges after a collision. This motivates the annihilation rule that we adopt here. 

Including annihilation in dynamic dislocation models in one form or another is not new. For instance, the discrete-to-continuum limits of Alicandro, De Luca, Garroni, and Ponsiglione~\cite{AlicandroDeLucaGarroniPonsiglione14,AlicandroDeLucaGarroniPonsiglione16}  allow opposite-sign defects to cancel each other, in a regime corresponding to $O(1)$ net dislocations. Forcadel, Imbert, and Monneau~\cite{ForcadelImbertMonneau09} and Van Meurs and Morandotti~\cite{VanMeursMorandotti19} both include annihilation for \emph{regularized} dislocation interactions. Another example is  \cite{MonneauPatrizi12a,
MonneauPatrizi12,
PatriziValdinoci15,
PatriziValdinoci16}, 
where the dislocation positions are described by a phase-field model (the Peierls--Nabarro model) which by construction includes annihilation, and can be considered a form of regularization. 

The work of this paper is different in a number of ways. First, we insist on taking the $n\to\infty$ limit for point particles with \emph{unregularized}, singular interactions. Secondly, the finite-$n$ system has an explicit annihilation rule, which is a consequence of the point nature of the particles and the singularity of the ODE, as we described above. 

\medskip

Our second motivation for studying annihilation is the dynamics of vortices. In \cite{Serfaty07II} and \cite{SmetsBethuelOrlandi07} it is shown that the two-dimensional dissipative Ginzburg-Landau equation converges, as the related phase-field parameter tends to zero, to an evolution of point vortices. Besides the complication of two spatial dimensions, vortices may have an integer-valued degree (charge), which further complicates the annihilation rule. In fact, a self-contained equation for the vortex dynamics including an annihilation rule has not yet been constructed. With the results of this paper we hope to help in clarifying vortex dynamics.

On the other end of the spectrum, a model for the many-vortex limit appears in superconductivity where it is called the Chapman--Rubinstein--Schatzman--E model. In \cite{AmbrosioMaininiSerfaty11} a gradient flow structure of this model is formulated, but the mathematical description is not completely satisfactory due to an implicit term regarding the vortex annihilation. 

As of yet, there appears to be no rigorous result on the many-vortex limit passage for multi-sign vortices. In this paper we aim to contribute  by considering a simpler particle system. 

\medskip

Our third motivation for studying the limit $n \to \infty$ of \eqref{Pn} is to contribute to the general understanding of many-particle limits for interacting particle systems with multiple species; see, e.g., 
\cite{DiFrancescoEspositoSchmidtchen20ArXiv,
DiFrancescoFagioli13,
DiFrancescoFagioli16}.
The particle systems considered in these papers are similar to \eqref{Pn}, but have bounded interaction forces depending on the particle types $b_i$ and $b_j$, for which no particular collision rule needs to be specified. Still, in \cite{DiFrancescoEspositoSchmidtchen20ArXiv}, the interaction force is discontinuous at zero, which results in nontrivial particle collisions that require special attention. A central difference with the current paper is that particles that collide are not removed; instead,  particles are conserved after collision. In the subsequent dynamics they remain completely `joined' to each other, and this joined couple effectively decouples the interaction between other particles to the left and to the right. Since in \cite{DiFrancescoEspositoSchmidtchen20ArXiv} the number of particles in conserved and the interaction forces remain bounded, different techniques such as Wasserstein gradient flows apply, which makes the analysis significantly different from that in this paper.


\subsection{Well-posedness of the ODE}

Our first main result is the well-posedness of \eqref{Pn}. While the classical ODE theory gives existence and uniqueness of solutions up to the first collision time $\tau$, it fails to say what happens at $\tau$. Since the interaction force is singular, it is not obvious why the limit of $x_i(t)$ as $t \nearrow \tau$ exists. And, if it exists, the possibility of multiple particle collisions makes it unclear whether the right-hand side of the ODE is defined after the annihilation rule has been applied. Besides the classical ODE theory, the gradient flow structure is of limited use too, because the energy diverges to $-\infty$ prior to collision.

Our strategy to overcome these two difficulties at a collision event is to consider the relevant quantities of the ODE. First, for existence of the limits $x_i(t)$ as $t \nearrow \tau$, we use the property that the moments 
\begin{equation} 
\label{def:Mk-intro}	
  M_k (\bx) := \frac1k \sum_{i=1}^n x_i^k \qquad
  \text{for } k = 1, \ldots, n
\end{equation}
do not blow up as $t \nearrow \tau$. Indeed, differentiating the moments along solutions yields that the singularity in the right-hand side of the ODE cancels out. This turns out to be sufficient to deduce that the limit of $x_i(t)$ as $t \nearrow \tau$ exists. 

Secondly, we show that at any collision event, the total net charge of the colliding particles has to be either $-1$, $0$, or $+1$.
Hence, after a collision, at most one charge remains active, and therefore the right-hand side of the ODE~\eqref{Pn} is again meaningful. Hence, the ODE can be restarted after collision.

These proof elements lead to our first main result.
\begin{mthm} \label{mthm:A}
(See Theorem \ref{t:Pn}).
The system of equations~\eqref{Pn} with rigorous meaning given by Definition~\ref{defn:Pn} is well posed, i.e.\ solutions exist, are unique, and depend continuously on the initial data.  In addition they are Lipschitz continuous in time with respect to a special metric~$d_\bM$ based on the moments $M_k$~\eqref{def:Mk-intro}.
\end{mthm}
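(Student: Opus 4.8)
The plan is to build the solution interval by interval, the intervals being separated by the (few) collision times, and to use the moments $M_k$ simultaneously as a no-blow-up mechanism and as the coordinates in which the Lipschitz-in-time and stability statements become transparent.

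\emph{Local theory and the moment bound.} Away from the collision set $\{\exists\,i\ne j:\ b_ib_j\neq 0,\ x_i=x_j\}$ the right-hand side of \eqref{Pn} is smooth, so Picard--Lindel\"of gives a unique $C^1$ solution on $[0,\tau)$, where (after relabelling, the charged particles start at distinct positions) $\tau\in(0,T]$ is the first time two charged particles meet. One shows this solution neither blows up nor oscillates wildly at $\tau$. Differentiating $M_k$ along the flow, the singular factor $1/(x_i-x_j)$ cancels against the numerator and leaves a \emph{polynomial} in $\bx$:
\[
  \frac{d}{dt}M_k(\bx)=\frac1n\sum_{1\le i<j\le n}b_ib_j\sum_{\ell=0}^{k-2}x_i^{\ell}x_j^{k-2-\ell}.
\]
For $k=2$ this equals the constant $\frac1n\sum_{i<j}b_ib_j$, so $\sum_i x_i(t)^2$ grows at most linearly and the positions stay in a fixed bounded set on $[0,T]$; hence every $\dot M_k$ is bounded on $[0,\tau)$ and $t\mapsto M_k(\bx(t))$ extends Lipschitz-continuously to $[0,\tau]$. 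Because the map sending a multiset of $n$ positions to $(M_1,\dots,M_n)$ is a homeomorphism onto its image --- by Newton's identities it is polynomially equivalent to the coefficient vector of $\prod_i(z-x_i)$, whose roots depend continuously on its coefficients --- the position multiset $\bx(t)$ itself converges as $t\nearrow\tau$. The same computation already gives the last assertion of the theorem: $t\mapsto M_k(\bx(t))$ is Lipschitz on each inter-collision interval and, since at a collision only the charges jump while the positions do not, it is Lipschitz on all of $[0,T]$; thus $t\mapsto\bx(t)$ is Lipschitz in $d_\bM$.

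\emph{The net-charge lemma, restart, and global existence.} At $t=\tau$, partition the charged particles by their limit points; a limit point carrying at least two of them is a collision site $(\tau,y)$ with cluster $G$. The structural heart of the existence proof --- and, I expect, the first serious obstacle --- is that $\sum_{i\in G}b_i\in\{-1,0,1\}$ for every cluster $G$. I would prove this by a blow-up analysis at $(\tau,y)$: rescaling positions by the cluster diameter $\rho(t)\to0$ and time by $\rho(t)^2$ (the scaling forced by the scale invariance of \eqref{Pn}), the renormalised cluster approximately solves the $|G|$-particle system with charges $(b_i)_{i\in G}$, the pull of the far-away remaining particles being a lower-order perturbation; one then argues, by induction on $|G|$ and the gradient-flow structure of that subsystem (a configuration of all-equal signs disperses rather than collapses, since the restricted energy would have to increase to $+\infty$ while the dissipation, up to the bounded external coupling, forces it to decrease), that a cluster with $|\sum_{i\in G}b_i|\ge2$ cannot contract to a point. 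Granting this, the rule of Definition~\ref{defn:Pn} replaces each cluster by one particle at $(\tau,y)$ carrying the unit-or-zero net charge and neutralises the rest; afterwards all charged particles sit at distinct positions, so \eqref{Pn} is classically well posed and can be restarted. Each collision neutralises at least one charged particle, so there are at most $n$ collision times; with the uniform position bound this extends the solution to all of $[0,T]$. Uniqueness is then immediate: on each open inter-collision interval it is the classical one, the first collision time and the limit configuration are determined by the trajectory up to that time, and Definition~\ref{defn:Pn} prescribes the restarted data deterministically from that configuration, so induction over the finitely many collisions yields uniqueness.

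\emph{Continuous dependence.} This is the step I expect to be the main difficulty, and it is where the moment reparametrisation is essential. Given $\by^0\to\bx^0$, the moment bound of the first step is \emph{uniform} in the initial datum, so the solutions $\by(\cdot)$ are equi-Lipschitz in $d_\bM$ and, by Arzel\`a--Ascoli on the (compact, by the position bound) space of $n$-point multisets with the $d_\bM$-topology, precompact in $C([0,T])$. The crux is to show that the solution concept is \emph{closed} under such convergence: every subsequential limit path is again a solution of \eqref{Pn} in the sense of Definition~\ref{defn:Pn} with datum $\bx^0$. Between collisions of the limit this is routine; at a collision one must exclude that nearby solutions rearrange the annihilation of a $\ge3$-particle cluster into a different sequence of two-particle collisions, or neutralise a different set of particles. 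The net-charge lemma above, together with the fact that through a collision the moments evolve by a Lipschitz, singularity-free vector field and do not jump, is precisely what pins down the limiting collision structure. Once closedness is established, the uniqueness just proved forces every subsequential limit of $\by(\cdot)$ to equal $\bx(\cdot)$; hence $\by(\cdot)\to\bx(\cdot)$ in $C([0,T];d_\bM)$, which is the asserted continuous dependence.
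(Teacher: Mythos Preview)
Your overall architecture matches the paper's: moment calculus for the no-blow-up and $d_\bM$-Lipschitz bound, restart after each collision, and Arzel\`a--Ascoli for stability. The first paragraph is essentially the paper's argument for Property~(i). Two places deserve comment.

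\textbf{The net-charge lemma.} Your blow-up\,/\,energy argument is a genuinely different route from the paper's. The paper never rescales; instead it proves directly (Property~(ii)) that the minimal distance $d^\pm$ between any two \emph{same-sign neighbours} satisfies $\tfrac{d}{dt}(d^\pm)^2\ge 8/(n^2-1)$, by an explicit 1D pairing estimate on the force. This immediately forces alternating signs in any cluster and hence $|\sum_{i\in G}b_i|\le1$. Your blow-up sketch is plausible but underdeveloped: the induction step is not spelled out, and the energy heuristic (``all-equal signs disperse'') is not sharp for mixed clusters such as $(+,+,+,-)$, where $\sum_{i\ne j}b_ib_j=0$ and the restricted energy need not blow up with a definite sign. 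The paper's 1D ordering argument buys you a clean one-page proof and, as a bonus, the quantitative lower bounds (iii)--(v) on inter-particle distances and collision rates that are used later.

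\textbf{Stability.} Here there is a real gap. Your key sentence --- that ``through a collision the moments evolve by a Lipschitz, singularity-free vector field'' and that this ``pins down the limiting collision structure'' --- does not hold as stated. The moment derivatives $\dot M_k=\tfrac1n\sum_{i<j}b_ib_j\,p_k(x_i,x_j)$ depend on the charges $b_i$, which jump at collisions; one checks that $\dot M_k$ genuinely jumps across a $|G|$-particle collision whenever $|G|\ge2$ (since $\sum_{i<j\in G}b_ib_j=\tfrac12(\sigma^2-|G|)\ne0$). So the moments do \emph{not} satisfy a closed autonomous ODE, and you cannot deduce closedness of the solution concept from ODE stability in the $M$-variables alone. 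What must be shown is that the approximating particles with indices in a given limit-cluster actually annihilate down to at most one survivor shortly after the limit's collision time --- otherwise a $(+,-,+)$ cluster in the limit might be approximated by a sequence whose surviving $+$ sits on the wrong side, and the moment convergence cannot distinguish this. The paper handles this by a direct estimate: for each limit-cluster it tracks the diameter $D_m(t)$ of the corresponding approximating particles and shows $\tfrac{d}{dt}D_m\le -1$ on $[\tau,\tau+\delta]$ (using the alternating-sign structure from the $d^\pm$ bound and control on the external force), forcing $D_m(\tau+\delta)=0$. This is the missing mechanism in your closedness argument; once you have it, your Arzel\`a--Ascoli\,/\,uniqueness conclusion goes through.
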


\subsection{The many-particle limit $n \to \infty$}
\label{sec:intro-many-particle-limit}

Our second main result, Theorem \ref{t}, is the passage of \eqref{Pn} to the limit $n\to\infty$, and the characterization of the limit equation. The usual approach in the particle systems literature (see, e.g.,  
\cite{Duerinckx16,
GarroniVanMeursPeletierScardia19,
VanMeursMuntean14,
Schochet96}) 
is to describe the particle positions by an empirical measure. For the two-species particle system \eqref{Pn} a natural choice is
\begin{equation}
\label{def:kappa_n}	
\kappa_n := \frac1n \sum_{i=1}^n b_i \delta_{x_i}.
\end{equation}
Then, one derives from the particle system that $\kappa_n$ satisfies a weak form of a PDE, which for \eqref{Pn} turns out to be
\begin{equation} \label{kappan:PDE}
  \partial_t \kappa_n = \partial_x \big( |\kappa_n| \, (V' * \kappa_n ) \big),
\end{equation}
where $V'(x) = - \frac1x$ as in \eqref{E} and $*$ denotes the convolution over space. Finally, the task is to pass to $n \to \infty$ in this weak form in the framework of measures. The expected limiting equation is
\begin{equation} \label{kappa:PDE}
  \partial_t \kappa = \partial_x \big( |\kappa| \, (V' * \kappa ) \big),
\end{equation}
where positive values of the particle density $\kappa$ correspond to positively charged particles, and negative values represent the density of the negatively charged ones. 
In our setting, it is difficult to make this limit passage rigorous due to the combination of signed charges, singular interactions, and the annihilation rule. 
 
In this paper we follow a different approach. To date, the only rigorous interpretation of \eqref{kappa:PDE} is developed in \cite{ImbertMonneauRouy08} and \cite{BilerKarchMonneau10}. It is given in terms of a Hamilton--Jacobi equation. Before laying out the details, let us formally derive this Hamilton--Jacobi equation. The idea is to {integrate} \eqref{kappa:PDE} in space. Assuming that $\kappa$ is regular enough, we define the cumulative distribution function 
\begin{equation} \label{u:from:kappa}
  u(x) := \int_{-\infty}^x d\kappa.
\end{equation}
Since $\partial_x u = \kappa$, the derivative of $u$ describes the density of particles; in particular, the sign of $\partial_x u(x)$  determines the charge of the particles around $x$. With this definition of $u$, integrating~\eqref{kappa:PDE} in space yields
\begin{equation} 
	\label{HJ:informal:Vp}
  \partial_t u = \partial_x (V' * u ) \, |\partial_x u|.
\end{equation}
The interaction term $\partial_x (V' * u)$ has several alternative representations given by
\begin{multline} \label{Iu}
  \partial_x (V' * u )
      = \cI [u] (x)
      = -(-\Delta)^{\frac12} u (x)
      = -\cF^{-1} \left( |\xi| \widehat u (\xi) \right) \\
      = -\mathcal H [\partial_x u] (x)
      = \text{pv}\int_\R \big( u(x + z) - u(x) \big) \, \frac{dz}{z^2},
\end{multline}
where $-\cI$ is a L\'evy operator of order $1$, $(-\Delta)^{1/2}$ is the half-Laplacian, $\cF$ is the Fourier transform, $\mathcal H$ is the Hilbert transform, and $\text{pv}\int$ is the principal-value integral. 

Using the short notation $\cI[u]$ above we write the Hamilton--Jacobi equation~\eqref{HJ:informal:Vp} as
\begin{equation} \label{HJ:formal}
  \partial_t u = \cI[u] \, |\partial_x u|.
\end{equation}
This is a Hamilton--Jacobi equation with a Hamiltonian $\cI[u]$ which is non-standard for two reasons. First, it is a nonlocal function of $u$; similar nonlocal Hamilton--Jacobi equations are regularly used to describe curvature-driven flows such as mean-curvature flow (see e.g.~\cite{ChambolleMoriniPonsiglione15}). Secondly, the singular kernel in $\cI$ does not permit evaluation of $\cI$ at every bounded, uniformly continuous function, and thus a proper extension has to be constructed. We postpone the related rigorous definition of viscosity solutions to Section \ref{s:HJ}.
 
Coming back to the question of passing to the limit $n \to \infty$ in \eqref{Pn}, it is then natural to seek a Hamilton--Jacobi formulation for it such that the limit passage can be carried out in a Hamilton--Jacobi framework. In~\cite{ForcadelImbertMonneau09} the authors develop such a formulation for a version of \eqref{Pn} with \emph{regularized} interactions. They successfully pass to the limit $n \to \infty$, but the link between the Hamilton--Jacobi equation and \eqref{Pn} (with singular interactions) is only established in the simple case where all particles have the same charge. The difficulty in the signed-charge case stems from the singularity of the interactions and the previously missing definition and well-posedness of \eqref{Pn}. With Theorem \ref{mthm:A} in hand, we overcome this difficulty, and treat the case of singular interactions and annihilation. 

To construct a Hamilton--Jacobi formulation for \eqref{Pn}, we build on the ideas in \cite{ForcadelImbertMonneau09}. Analogously to \eqref{u:from:kappa}, we set
\begin{equation} \label{un}
  u_n(x) 
  = \int\limits_{(-\infty,x]} d\kappa_n 
  = \frac1n \sum_{i=1}^n b_i H(x - x_i)
  \qquad \text{for all } x \in \R
\end{equation}
as the level set function, where $H$ is the Heaviside function with $H(0) = 1$. We illustrate $u_n$ in Figure \ref{fig:level:sets}. The function $u_n$ is piecewise constant, with jumps at the particle positions $x_i$, where the direction of the jump (upward or downward) determines the charge~$b_i$. The precise value of $u_n$ at the jump points turns out to be important; we will often consider the upper and lower semi-continuous envelope of $u_n$.

\def\lwid{4pt}
\begin{figure}[ht]
\centering
\begin{tikzpicture}[scale=1.2, >= latex]
\tikzset{
    grhorline/.style={dotted, line width=\lwid, , green!90!black}
}
\def \rr {0.06} 

\draw[->] (1.5, -2.2) --++ (0,2.7) node[left] {$y$};
\draw[->] (-1, -1) node[left]{$0$} --++ (7.2,0) node[right] {$x$};
\draw (6.2,.5) node[right] {$v(\tau_1 - \delta, x)$};

\draw[thin] (-1, 0) node[left]{$\frac1n$} --++ (7.2,0);
\draw[thin] (-1, -2) node[left]{$-\frac1n$} --++ (7.2,0);

\draw[grhorline] (0,-2) -- (1,-2);

\draw[grhorline] (-1,-1) -- (0,-1);
\draw[grhorline] (1,-1) -- (2,-1);
\draw[grhorline] (2.9,-1) -- (3.1,-1);
\draw[grhorline] (4,-1) -- (4.8,-1);
\draw[grhorline] (5,-1) -- (5.2,-1);

\draw[grhorline] (2,0) -- (2.9,0);
\draw[grhorline] (3.1,0) -- (4,0);
\draw[grhorline] (4.8,0) -- (5,0);
\draw[grhorline] (5.2,0) -- (6.2,0);
\draw[green!70!black] (6.2,0) node[right] {$u_n(\tau_1 - \delta, x)$};

\foreach \x in {0, 1}
{ \draw[dotted, thick, green!70!black] (\x,-1) --++ (0,-1); }

\foreach \x in {2, 3.1, 4.8, 5.2, 2.9, 4, 5}
{ \draw[dotted, thick, green!70!black] (\x,0) --++ (0,-1); }

\foreach \x in {1, 2, 3.1, 4.8, 5.2}
{ \fill[red] (\x,-1) circle (\rr); }

\foreach \x in {0, 2.9, 4, 5}
{ \fill[blue] (\x,-1) circle (\rr); }

\draw [thick] plot [smooth] coordinates {(-1,-.5) (-.5,-.6) (0,-1) (.5,-1.2) (1,-1) (2,0) (2.45, .2) (2.9,0) (3, -.05) (3.1,0) (3.55, .2) (4,0) (4.4, -.2) (4.8,0) (4.9, .05) (5,0) (5.1, -.05) (5.2,0) (5.7, .4) (6.2, .5)};

\begin{scope}[shift={(0,-3.5)}] 
\draw[->] (1.5, -2.2) --++ (0,2.7) node[left] {$y$};
\draw[->] (-1, -1) node[left]{$0$} --++ (7.2,0) node[right] {$x$};
\draw (6.2,.5) node[right] {$v(\tau_1, x)$};

\draw[thin] (-1, 0) node[left]{$\frac1n$} --++ (7.2,0);
\draw[thin] (-1, -2) node[left]{$-\frac1n$} --++ (7.2,0);

\draw[grhorline] (0,-2) -- (1,-2);

\draw[grhorline] (-1,-1) -- (0,-1);
\draw[grhorline] (1,-1) -- (2,-1);
\draw[grhorline] (4,-1) -- (5,-1);

\draw[grhorline] (2,0) -- (4,0);
\draw[grhorline] (5,0) -- (6.2,0);
\draw[green!70!black] (6.2,0) node[right] {$u_n(\tau_1, x)$};

\foreach \x in {0, 1}
{ \draw[dotted, thick, green!70!black] (\x,-1) --++ (0,-1); }

\foreach \x in {2, 4, 5}
{ \draw[dotted, thick, green!70!black] (\x,0) --++ (0,-1); }

\foreach \x in {3}
{ \draw[dotted] (\x,0) --++ (0,-1); }

\foreach \x in {1, 2, 5}
{ \fill[red] (\x,-1) circle (\rr); }

\foreach \x in {3}
{ \draw[fill=white] (\x,-1) circle (\rr); }

\foreach \x in {0, 4}
{ \fill[blue] (\x,-1) circle (\rr); }

\draw [thick] plot [smooth] coordinates {(-1,-.5) (-.5,-.6) (0,-1) (.5,-1.2) (1,-1) (2,0) (2.5, .2) (3, 0) (3.5, .2) (4,0) (4.4, -.2) (4.8,-.025) (5,0) (5.2,.045) (5.7, .4) (6.2, .5)};         
\end{scope}

\begin{scope}[shift={(0,-7)}] 
\draw[->] (1.5, -2.2) --++ (0,2.7) node[left] {$y$};
\draw[->] (-1, -1) node[left]{$0$} --++ (7.2,0) node[right] {$x$};
\draw (6.2,.5) node[right] {$v(\tau_1 + \delta, x)$};

\draw[thin] (-1, 0) node[left]{$\frac1n$} --++ (7.2,0);
\draw[thin] (-1, -2) node[left]{$-\frac1n$} --++ (7.2,0);

\draw[grhorline] (0,-2) -- (1,-2);

\draw[grhorline] (-1,-1) -- (0,-1);
\draw[grhorline] (1,-1) -- (2,-1);
\draw[grhorline] (4,-1) -- (5,-1);

\draw[grhorline] (2,0) -- (4,0);
\draw[grhorline] (5,0) -- (6.2,0);
\draw[green!70!black] (6.2,0) node[right] {$u_n(\tau_1 - \delta, x)$};

\foreach \x in {0, 1}
{ \draw[dotted, thick, green!70!black] (\x,-1) --++ (0,-1); }

\foreach \x in {2, 4, 5}
{ \draw[dotted, thick, green!70!black] (\x,0) --++ (0,-1); }

\foreach \x in {2, 4, 5}
{ \draw[dotted] (\x,0) --++ (0,-1); }

\foreach \x in {1, 2, 5}
{ \fill[red] (\x,-1) circle (\rr); }

\foreach \x in {0, 4}
{ \fill[blue] (\x,-1) circle (\rr); }

\draw [thick] plot [smooth] coordinates {(-1,-.5) (-.5,-.6) (0,-1) (.5,-1.2) (1,-1) (2,0) (2.5, .2) (3, .05) (3.5, .2) (4,0) (4.4, -.2) (4.8,-.05) (5,0) (5.2,.08) (5.7, .4) (6.2, .5)};         
\end{scope}
\end{tikzpicture} \\
\caption{Possible graphs of the level set functions $u_n$ and $v$ for the particle trajectories in Figure \ref{fig:trajs} sliced at three time points; slightly before $\tau_1$, at $\tau_1$, and slightly after $\tau_1$. The change of sign of $v$ at the intersections with the level sets at $\Z/n$ determines the charge of the particles. If there is no change in sign, no particle is associated to it (illustrated by the white circle in the second graph). }
\label{fig:level:sets}
\end{figure}
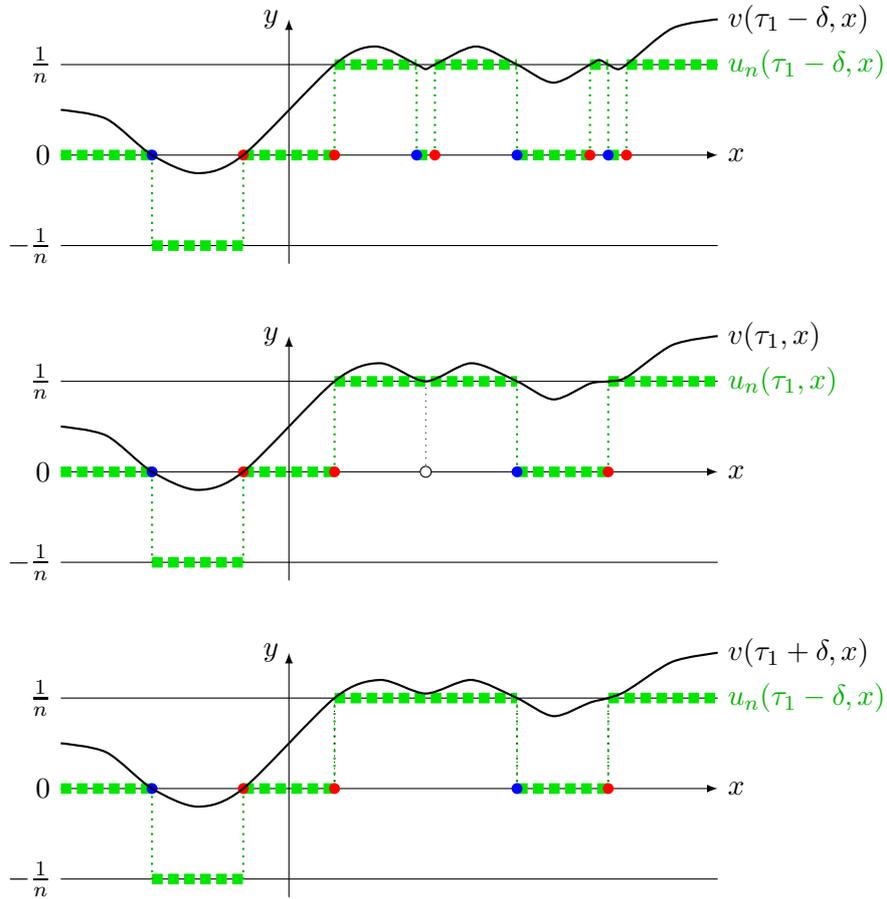

However, to derive formally the Hamilton--Jacobi equation, it is more instructive to replace~$u_n$ by a regular enough level set function $v$. Figure \ref{fig:level:sets} illustrates a possible choice of~$v$. Given $v$, the particle positions $x_i$ can be recovered from the level sets of~$v$ at~$\frac1n \Z$, i.e.\ by solving for~$y$ in $v(y) \in \frac1n \Z$. The charge $b_i$ is then given by 
\begin{equation} \label{bi:ito:dxv}
  b_i = \sign( \partial_x v(x_i) ),
\end{equation}
which is the analogue of the continuum Hamilton--Jacobi equation in which $\partial_x u$ determines the charge of the particle density. Given $u_n$, a sufficient condition for a related continuous function $v$ is
\begin{equation} \label{v:from:un}
 0 < v(x) - u_n(x) < \frac1n \qquad \text{for all } x \in \R \setminus \{x_i : i = 1,\ldots,n\}. 
\end{equation}
Because of the jumps in $u_n$ and the continuity of $v$, this implies that 
\begin{equation} \label{vi:ito:uni}
  v(x_i) = u_n^*(x_i), 
\end{equation}
where $u_n^*$ is the upper semi-continuous envelope of $u_n$. By \eqref{v:from:un} we can recover $u_n$ from $v$ by
\begin{equation} \label{un:ito:v}
  u_n^*(x) = \frac1n \lfloor n v(x) \rfloor,
\end{equation}
where $\lfloor\cdot\rfloor$ denotes the floor function, that is, the largest integer less than or equal to the argument.

To derive formally the Hamilton--Jacobi equation from the solution $(\bx, \bb)$ to \eqref{Pn}, suppose that there exists a regular level set function $v : [0,\infty) \times \R \to \R$ such that $v(t, \cdot)$ satisfies \eqref{v:from:un} at each $t \geq 0$. Then, by \eqref{vi:ito:uni}, $v(t, x_i(t))$ is constant in $t$ for each $i$. Hence, $0 = \frac d{dt} v(t, x_i(t))$, which we rewrite as
\begin{equation} \label{HJn:formal:1}
  \partial_t v (t, x_i(t)) 
  = - \frac{d x_i}{dt}(t) \, \partial_x v (t, x_i(t))
  = - \frac{1}{n} \sum_{ j \neq i }^n \frac{b_i b_j}{x_i - x_j} \, \partial_x v (t, x_i(t)).
\end{equation}
To rewrite the force in terms of $v$, we use for $x_i > x_j$ that
\begin{equation} \label{force:ito:uz}
  \frac{1}{x_i - x_j} 
  = \int_{x_i - x_j}^\infty \frac{dz}{z^2} 
  = \int_{\R} H(z - x_i + x_j) \, \frac{dz}{z^2}
  \qquad \text{and} \qquad
  \text{pv}\int_{\R} \Big( H(z) - \frac12 \Big) \, \frac{dz}{z^2} = 0
\end{equation}
to obtain (see \eqref{pf:Me:RHS} for details)
\begin{align*}
  - \frac{1}{n} \sum_{ j \neq i }^n \frac{b_i b_j}{x_i - x_j}
  &= b_i \, \text{pv}\int_\R \Big( u_n^*(x_i + z) - u_n^*(x_i) + \frac1{2n} \Big) \, \frac{dz}{z^2} \\
  &= b_i \, \text{pv}\int_\R E_{1/n} \big( v(x_i + z) - v(x_i) \big) \, \frac{dz}{z^2},
\end{align*}
where in the second equality we have used \eqref{un:ito:v} with
\begin{equation} \label{E1n}
    E(\alpha) := \lfloor \alpha \rfloor + \frac12,
    \qquad E_{1/n}(\alpha) = \frac1n E ( n \alpha).
\end{equation} 
Inserting this in \eqref{HJn:formal:1} and recalling \eqref{bi:ito:dxv}, we obtain
\begin{equation} \label{HJn:formaller}
  \partial_t v = \cI_n [v] \, |\partial_x v|, \qquad
  \cI_n [v] (x) := \text{pv}\int_\R E_{1/n} \big( v(x + z) - v(x) \big) \, \frac{dz}{z^2}.
\end{equation}
This is the formal shape for the Hamilton--Jacobi equation which describes solutions to \eqref{Pn}.  

\medskip

We take a moment to discuss several features of \eqref{HJn:formaller}. First, the expression for $\cI_n$ resembles the last expression of $\cI$ in \eqref{Iu}. The only difference is the appearance of $E_{1/n}$, which is a staircase approximation of the identity (see Figure \ref{fig:En}). The offset $\frac1{2n}$ is carefully chosen to cancel out the singularity of the kernel $1/z^2$ at $0$; it originates in the subtraction of $\frac12$ in the integrand in \eqref{force:ito:uz}. The role of $E_{1/n}$ is to project the information of $v$ down to the behavior of the level sets of $v$ corresponding to the discrete levels $v(x) =  k/n$ for different values of~$k\in\Z$; this corresponds to the discrete nature of \eqref{Pn}. Because of this nonlocal feature of $\cI_n$ which even includes interactions between different level sets, \eqref{HJn:formaller} is a non-standard Hamilton--Jacobi equation. To further illustrate its behavior, we compute in Example \ref{ex:HJe} an explicit, continuous solution $v$ to \eqref{HJn:formaller}, and show how each choice of $a \in [0,1)$ for the level sets at $\frac1n (\Z + a)$ corresponds to a different solution to \eqref{Pn}. 

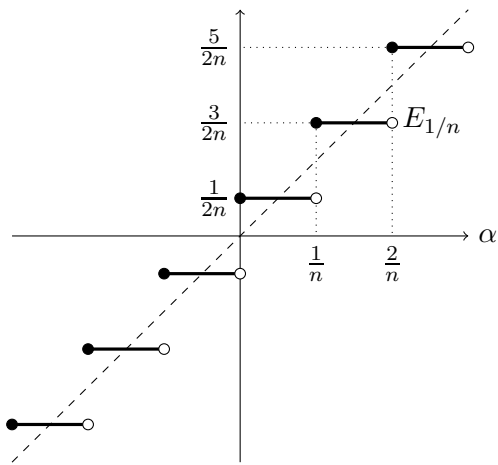
\begin{figure}[ht]
\centering
\begin{tikzpicture}[scale=1]
    \draw[->] (-3,0) -- (3,0) node[right]{$\alpha$};
    \draw[->] (0, -3) -- (0,3);
    
    \draw[dotted] (1,1.5) --++ (0,-1.5) node[below]{$\frac1n$}; 
    \draw[dotted] (2,2.5) --++ (0,-2.5) node[below]{$\frac2n$};
    \draw[dotted] (1,1.5) --++ (-1,0) node[left]{$\frac3{2n}$}; 
    \draw[dotted] (2,2.5) --++ (-2,0) node[left]{$\frac5{2n}$};
    \draw (0,.5) node[left]{$\frac1{2n}$};  
    
    \draw[dashed] (-3, -3) --++ (6,6);
    \foreach \k in {-3,-2,-1,0,1,2}{    
      \draw[very thick] (\k, \k +0.5) --++ (1,0);
      \draw[fill=black] (\k, \k + 0.5) circle[radius=2pt];
      \draw[fill=white] (\k + 1, \k + 0.5) circle[radius=2pt];
    }
    \draw (2,1.5) node[right]{$E_{1/n}$};
\end{tikzpicture} \\
\caption{Plot of the staircase approximation $E_{1/n}$ of the identity.}
\label{fig:En}
\end{figure}

Secondly, the formulation of \eqref{Pn} in terms of the Hamilton--Jacobi equation \eqref{HJn:formaller} has several mathematical advantages:
\begin{enumerate}
   \item No annihilation rule needs to be specified. For example, a two-particle annihilation happens when a local maximum of $v$ crosses a level set downwards, or when a local minimum crosses it upwards (see Figure \ref{fig:level:sets}).
   \item The function $v$ need not develop singularities around annihilation points (see again Figure \ref{fig:level:sets}).
 \end{enumerate} 
 Note that the Hamilton--Jacobi formulation exposes a monotonicity property of the trajectories of~\eqref{Pn}: for $a_1 \neq a_2$, the level sets of $v$ at $\frac1n (\Z + a_1)$ do not cross with any of the level sets of $v$ at $\frac1n (\Z + a_2)$. This property follows after establishing a comparison principle. This monotonicity of trajectories is difficult to obtain from either \eqref{Pn} or its measure formulation \eqref{kappan:PDE}, and is not used in the part of the literature on many-particle limits which relies on a measure-theoretic framework. 

Thirdly, even for smooth functions $\phi$ the operator $\cI_n$ is not defined at local maxima or local minima of $\phi$. However, at these points, $\partial_x \phi$ vanishes, and thus it may be possible to extend the right-hand side in \eqref{HJn:formaller} at these points. We show that such an extension is possible when developing a notion of viscosity solutions to \eqref{HJn:formaller}. In this definition, we build on ideas from \cite{ImbertMonneauRouy08} for the limiting Hamilton--Jacobi equation \eqref{HJ:formal}. It is here that we depart from the framework in \cite{ForcadelImbertMonneau09}; in~\cite{ForcadelImbertMonneau09} this issue was solved by regularizing the singular kernel $1/z^2$ around $0$, which implies modifying the equation~\eqref{Pn}, while we insist on studying the original equation.

Fourthly, our Definition~\ref{def:HJe:VS} of viscosity solutions of \eqref{HJn:formaller} does not only allow for solutions that are continuous, but also for \emph{discontinuous} solutions;  we show in Lemma \ref{l:Pn:to:HJe} and Proposition \ref{p:Pn:to:HJe:un} that the step function $u_n(t,x)$ as constructed from the solution to \eqref{Pn} indeed is the unique viscosity solution to \eqref{HJn:formaller} with initial datum $u_n(0,x)$. Because of the discontinuity, usual proofs of uniqueness do not apply (see e.g.~\cite[Th.~2.4]{BilerKarchMonneau10}); Proposition~\ref{p:Pn:to:HJe:un} uses the special structure of~\eqref{Pn} to deal with this discontinuity. This shows that each solution of~\eqref{Pn} generates a corresponding \emph{unique} solution of the Hamilton--Jacobi equation \eqref{HJn:formaller}. This is interesting, because \eqref{Pn} has a clear physical interpretation with possible extension to higher dimensions, whereas \eqref{HJn:formaller} has an advantageous mathematical structure.

\medskip

We now return to the aim of this paper to pass to the limit $n \to \infty$. For this limit passage we follow the usual approach in viscosity theory; this is our second main result.  

\begin{mthm}
\label{mthm:B}
	(See Theorem~\ref{t}.) Assume that the initial data satisfy $u_n^\circ\to u^\circ$ uniformly, and that $u^\circ$ is bounded and uniformly continuous. 
Then $u_n \to u$ locally uniformly in time-space as $n \to \infty$, the function $u$ is continuous, and $u$ satisfies the limiting Hamilton--Jacobi equation given formally by \eqref{HJ:formal}.
\end{mthm}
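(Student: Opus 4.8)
The plan is to prove Theorem~\ref{mthm:B} by the standard half-relaxed-limits (Barles--Perthame) method from the theory of viscosity solutions, adapted to the nonlocal operators $\cI_n$ and $\cI$. First I would establish the uniform bounds needed to make the method work: since each $u_n$ is the unique viscosity solution of \eqref{HJn:formaller} with datum $u_n^\circ$ (Lemma~\ref{l:Pn:to:HJe}, Proposition~\ref{p:Pn:to:HJe:un}), and the $u_n^\circ$ converge uniformly, they are uniformly bounded; a comparison-principle argument for \eqref{HJn:formaller} against spatially constant sub/supersolutions (using translation invariance and that constants are stationary) gives $\|u_n(t,\cdot)\|_\infty \le \|u_n^\circ\|_\infty$, hence a uniform-in-$n$ $L^\infty$ bound on $u_n$ on $[0,\infty)\times\R$. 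I would then define the half-relaxed limits
\begin{equation*}
  \overline u(t,x) := \limsup_{\substack{n\to\infty \\ (s,y)\to(t,x)}} u_n(s,y), \qquad
  \underline u(t,x) := \liminf_{\substack{n\to\infty \\ (s,y)\to(t,x)}} u_n(s,y),
\end{equation*}
which are finite, with $\overline u$ upper semicontinuous and $\underline u$ lower semicontinuous, and $\underline u \le \overline u$ by construction.

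The core of the argument is the \emph{stability} step: I would show that $\overline u$ is a viscosity subsolution and $\underline u$ a viscosity supersolution of the limiting equation \eqref{HJ:formal}. Take a smooth test function $\phi$ touching $\overline u$ from above at $(t_0,x_0)$ with $t_0>0$; by a standard argument there exist $n_k\to\infty$ and points $(t_k,x_k)\to(t_0,x_0)$ at which $u_{n_k}-\phi$ (suitably perturbed so the maximum is strict and global enough) attains a local maximum, with $u_{n_k}(t_k,x_k)\to\overline u(t_0,x_0)$. Using the definition of viscosity subsolution for \eqref{HJn:formaller} at $(t_k,x_k)$ one gets $\partial_t\phi(t_k,x_k) \le \cI_n[\phi(t_k,\cdot)](x_k)\,|\partial_x\phi(t_k,x_k)|$ (replacing $\cI_n[u_{n_k}]$ by $\cI_n[\phi]$ on the region where $\phi$ lies above $u_{n_k}$, and controlling the far field via the uniform bound and the integrable tail $dz/z^2$ away from the origin, exactly as in the split-integral definition of the nonlocal operator). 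The passage $k\to\infty$ then requires the convergence $\cI_n[\phi]\to\cI[\phi]$; this is where the staircase $E_{1/n}$ must be handled: since $E_{1/n}(\alpha)\to\alpha$ locally uniformly with $|E_{1/n}(\alpha)-\alpha|\le 1/(2n)$, and the offset $\tfrac1{2n}$ is exactly the one that cancels the $1/z^2$ singularity, one splits $\pv\int = \int_{|z|<r} + \int_{|z|\ge r}$: on $|z|\ge r$ the integrand converges uniformly and the kernel is integrable; on $|z|<r$ one uses $|E_{1/n}(v(x+z)-v(x)) - (v(x+z)-v(x))| \le 1/(2n)$ together with the cancellation (the $\tfrac1{2n}$ offset makes the principal value of $1/z^2$ against a constant vanish) and the $C^2$ bound $|v(x+z)-v(x)-z\,\partial_x v(x)| \le C z^2$ to bound the small-$z$ contribution uniformly in $n$, so it is $O(r) + O(1/n)$. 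Letting $k\to\infty$ then $r\to 0$ yields the subsolution inequality for $\overline u$ at $(t_0,x_0)$; the supersolution inequality for $\underline u$ is symmetric. The initial condition $\overline u(0,\cdot)\le u^\circ \le \underline u(0,\cdot)$ follows from uniform convergence of the data plus a barrier argument controlling the short-time modulus (using spatial moduli of continuity propagated by the equation, or explicit Lipschitz-in-space barriers built from $u^\circ$).

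Finally I would invoke the comparison principle for the limiting Hamilton--Jacobi equation \eqref{HJ:formal} — available from \cite{ImbertMonneauRouy08} (or its adaptation to this Hamiltonian as set up in Section~\ref{s:HJ}) — for bounded uniformly continuous solutions. Comparison gives $\overline u \le \underline u$ on $[0,\infty)\times\R$; combined with $\underline u \le \overline u$ this forces $u := \overline u = \underline u$, so the limit exists, is continuous, and is the unique viscosity solution of \eqref{HJ:formal} with datum $u^\circ$. The equality $\overline u = \underline u$ of the half-relaxed limits, together with local boundedness, is exactly the statement that $u_n \to u$ locally uniformly in $(t,x)$, which is the conclusion.

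I expect the main obstacle to be the stability step for the nonlocal operator, and specifically the interplay between three things at once near $z=0$: the singular kernel $1/z^2$, the discontinuity/roughness allowed in the $u_n$ (they are only step functions), and the staircase $E_{1/n}$. The delicate point is that one cannot naively pass $\cI_n[u_n]\to\cI[u]$; one must work with the test function $\phi$ in the small-$z$ region and only use $u_n$ (and its uniform bound) in the far field, which is why the specific structure of the split-integral definition of viscosity solutions in Definition~\ref{def:HJe:VS} — the freedom to choose the radius $r$ separating ``test-function part'' from ``solution part'' — is essential; this is precisely the ``freedom of choice in the definition of viscosity solutions'' advertised in the abstract. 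A secondary but real difficulty is making the perturbed-test-function maximum-point extraction rigorous when the $u_n$ are discontinuous and only the half-relaxed limits are semicontinuous, which requires the usual doubling-of-variables bookkeeping but with care that the nonlocal term does not spoil the localization.
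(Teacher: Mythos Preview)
Your overall strategy --- half-relaxed limits, stability of sub/supersolutions under the limit, then comparison --- is exactly the paper's route (Theorem~\ref{t:conv} feeding into Theorem~\ref{t:CP:HJ}), and the barrier argument for the initial trace is the content of Lemma~\ref{l:barrier}. Two points in your stability sketch, however, are either wrong as written or missing.

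First, your near-field argument for $\cI_n[\phi]\to\cI[\phi]$ when $\phi_x\neq 0$ does not work as stated. The pointwise bound $|E_{1/n}(\alpha)-\alpha|\le 1/(2n)$ is useless against $\int_{|z|<r}dz/z^2=+\infty$, and ``the principal value of $1/z^2$ against a constant vanishes'' is simply false --- that integral diverges. What actually saves you (Lemma~\ref{l:Hpe}\eqref{l:Hpe:Bp}) is that on a tiny interval $|z|<\rho_0$ with $\rho_0\sim \e/|\phi_x|$, the quantity $E_\e[\phi(x+z)-\phi(x)]$ equals $\tfrac\e2\,\mathrm{sgn}(z\,\phi_x)$, an \emph{odd} step, whose principal value against $1/z^2$ vanishes. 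Beyond this interval one cannot simply use smallness of $E_\e-\mathrm{id}$; the paper (Lemma~\ref{l:M-I}) changes variables to $u=\phi(x+z)-\phi(x)$ and uses that $F_\e(u):=E_\e(u)-u$ is odd and $\e$-periodic to extract an error $C\e|\log\e|$, with a further splitting into four subregions. Your ``$O(r)+O(1/n)$'' is not what comes out.

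Second, and this is the genuine gap, you do not treat the case $\phi_x(\o t,\o x)=0$ at all. This case is unavoidable (it occurs at annihilation points), and there $\cI_n[\phi]$ is not defined, so your displayed subsolution inequality at $(t_k,x_k)$ is not available. The paper's device is precisely the restricted test class in Definitions~\ref{def:HJe:VS} and~\ref{def:HJ:VS}: when $\phi_x(\o t,\o x)=0$ one only tests against $\phi(t,x)=c(x-\o x)^4+g(t)$ and must show $\phi_t(\o t,\o x)\le 0$. In the stability proof (end of the proof of Theorem~\ref{t:conv}) the approximating contact points $(t_\e,x_\e)$ may have $x_\e\neq\o x$, hence $\phi_x(t_\e,x_\e)\neq 0$; one then bounds $\o H_{\rho,\e}[\phi,u_\e](t_\e,x_\e)$ via the parabola estimate Lemma~\ref{l:parabola}, which yields an upper bound that vanishes as $\phi_x(t_\e,x_\e)\to0$ and $\e\to0$. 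This is the step your sketch is missing, and it --- not the split at radius $\rho$, which is standard after~\cite{Sayah91,ImbertMonneauRouy08} --- is the ``freedom of choice in the definition of viscosity solutions'' the abstract refers to.
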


\medskip

 Turning to  the limiting Hamilton--Jacobi equation \eqref{HJ:formal}, we construct a different---but equivalent---notion of viscosity solutions than that in \cite{ImbertMonneauRouy08,BilerKarchMonneau10}. The reason for this is that Theorem \ref{mthm:B} is easier to prove if the notions of viscosity solutions of \eqref{HJ:formal} and \eqref{HJn:formaller} are similar.  More precisely, in contrast to the viscosity-solution approach in \cite{ImbertMonneauRouy08}, we restrict the class of test functions at $\partial_x u = 0$ to functions of the specific form $cx^4 + g(t)$. This idea of reducing the class of test functions is inspired by the approach of Ishii and Souganidis \cite{IshiiSouganidis95} and \cite{OhnumaSato97}; see also \cite{ChambolleMoriniPonsiglione15} and the discussion of $\mathcal F$-solutions in \cite{Giga06}. This choice interacts well with the observation that  annihilating particles  meet with quadratic rate (see Theorem~\ref{t:Pn}(\ref{t:Pn:C12}--\ref{t:Pn:LB:col}) and Figure~\ref{fig:trajs}). Even with this restricted class of test functions, the standard comparison principle holds (Theorem~\ref{t:CPe} and Theorem~\ref{t:CP:HJ}) and yields uniqueness even for discontinuous initial data  (Proposition \ref{p:Pn:to:HJe:un}). 
 
\subsection{Measure-theoretic version of Theorem~\ref{mthm:B}}

As we mentioned above, a common approach in many-particle limits is to consider the evolution equation~\eqref{kappan:PDE} for the empirical measure $\kappa_n$, and pass to the limit in a weak formulation of that equation. The typical type of convergence that one obtains in this way is narrow convergence at each time $t$, i.e.
\[
\int_\R \varphi(x) \kappa_n(t,dx) \longrightarrow\int_\R \varphi(x)\kappa(t,dx) 
\qquad\text{for each }\varphi\in C_b(\R).
\]
For comparison with this body of literature we now restate Theorem~\ref{mthm:B} in terms of $\kappa_n$ and $\kappa$,  using measure terminology.

Theorem~\ref{mthm:B} requires the initial datum $u_n^\circ$ to converge uniformly to a continuous limit $u^\circ$. 
If $\kappa_n^\circ$ were non-negative, then the pair $(\kappa_n^\circ,u_n^\circ)$ could be interpreted as a probability distribution and its cumulative distribution function; in this case $\kappa_n^\circ$ converges narrowly if and only if $u_n^\circ$ converges at every~$x$ at which the limit $u^\circ$ is continuous. When the limit $u^\circ$ is continuous, as is the case here, the convergence strengthens to uniform convergence.

However, the measures $\kappa_n$ have both signs, and then (locally) uniform convergence to a continuous limit is significantly stronger than narrow convergence alone (a counterexample is $\kappa_n = \delta_{1/n} - \delta_0$; see Section~8.1 in~\cite{Bogachev07.II} for a discussion). This explains the appearance of the third condition on $\kappa_n$ in the following Lemma.
\begin{mlem}
\label{ml:conv-circ}
(See Lemma~\ref{l:conv-equiv-R}.)
Let $u_n^\circ(x)=\kappa_n^\circ((-\infty,x])$ as in~\eqref{un}, and assume that the sequence $\kappa_n^\circ$ is bounded in total variation and tight. Then the following are equivalent:
\begin{enumerate} 
\item $u_n^	\circ$ converges uniformly to $u^\circ$, and $u^\circ$ is continuous;
\item $u_n^	\circ$ converges locally uniformly to $u^\circ$, and $u^\circ$ is continuous;
\item \begin{enumerate}
	\item $\kappa_n^\circ $ converges narrowly to $\kappa^\circ$,
	\item \label{mlem:i:no-atoms} $\kappa^\circ$ has no atoms, and
	\item there exist a sequence $s_n\xrightarrow{n\to\infty}0$ and a modulus of continuity $\omega$ such that 
	\begin{equation}
	\label{cond:AEC-kappa-intro}
		\text{for all }-\infty< x\leq y< \infty, \qquad 
		  |\kappa^\circ_n((x,y])|\leq s_n + \omega(|x-y|).
	\end{equation}
	\end{enumerate}
\end{enumerate}
The limits $u^\circ$ and $\kappa^\circ$ are connected by $u^\circ(x) = \kappa^\circ((-\infty,x])$.
\end{mlem}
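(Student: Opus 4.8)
The plan is to prove the cycle of implications $(1)\Rightarrow(3)\Rightarrow(2)\Rightarrow(1)$, with the relation $u^\circ(x)=\kappa^\circ((-\infty,x])$ built into the construction of $\kappa^\circ$. The structural point is that, for \emph{signed} measures, narrow convergence of $\kappa_n^\circ$ gives no control on $|\kappa_n^\circ|$ over small intervals — large positive and negative masses may nearly cancel, as in $\kappa_n^\circ=\delta_{1/n}-\delta_0$ — so the classical dictionary ``narrow convergence of probability measures $\Leftrightarrow$ pointwise convergence of distribution functions'' does not transfer unaided. Condition~(c) is precisely the quantitative ``almost equicontinuity'' of the family $\{u_n^\circ\}_n$ that repairs this, and the heart of the proof is to see that it is simultaneously necessary (to get from~(1)) and sufficient (to recover~(2)).

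For $(1)\Rightarrow(3)$ I would first note that, by $\sup_n\|\kappa_n^\circ\|_{TV}<\infty$ and uniform convergence, $u^\circ$ has finite total variation, tends to $0$ at $-\infty$ and to a finite limit at $+\infty$; a continuous function with finite limits at $\pm\infty$ is uniformly continuous, say with modulus~$\omega$. Let $\kappa^\circ$ be the Lebesgue--Stieltjes signed measure of $u^\circ$, so that $u^\circ(x)=\kappa^\circ((-\infty,x])$, and note that continuity of $u^\circ$ forces $\kappa^\circ$ to have no atoms, which is~(b). For narrow convergence~(a), test against $\varphi\in C_c^\infty(\R)$ and integrate by parts: $\int\varphi\,d\kappa_n^\circ=-\int\varphi'\,u_n^\circ\,dx\to-\int\varphi'\,u^\circ\,dx=\int\varphi\,d\kappa^\circ$, and upgrade from $C_c$ to $C_b$ using tightness (a standard cutoff argument). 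For~(c), $|\kappa_n^\circ((x,y])|=|u_n^\circ(y)-u_n^\circ(x)|\leq 2\|u_n^\circ-u^\circ\|_\infty+\omega(|x-y|)$, so~(c) holds with $s_n:=2\|u_n^\circ-u^\circ\|_\infty\to0$.

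For $(3)\Rightarrow(2)$, set $u^\circ:=\kappa^\circ((-\infty,\cdot])$, continuous by~(b). The key step is pointwise convergence $u_n^\circ(x)\to u^\circ(x)$: fixing $x$ and $\e>0$ and a continuous $\varphi_\e$ equal to $\One_{(-\infty,x]}$ off $(x,x+\e)$ with $0\leq\varphi_\e\leq1$, one has $|u_n^\circ(x)-\int\varphi_\e\,d\kappa_n^\circ|\leq|\kappa_n^\circ|((x,x+\e))\leq s_n+\omega(\e)$ by~(c), while $\int\varphi_\e\,d\kappa_n^\circ\to\int\varphi_\e\,d\kappa^\circ$ by~(a) and $\int\varphi_\e\,d\kappa^\circ\to u^\circ(x)$ as $\e\downarrow0$ by dominated convergence against the finite measure $|\kappa^\circ|$; taking $\limsup_n$ and then $\e\downarrow0$ gives $\limsup_n|u_n^\circ(x)-u^\circ(x)|\leq\omega(0^+)=0$. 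Since~(c) also says $|u_n^\circ(y)-u_n^\circ(x)|\leq s_n+\omega(|x-y|)$, the $u_n^\circ$ are almost equicontinuous, and on each $[-R,R]$ an Arzel\`a--Ascoli-type argument (finite $\omega$-net, $s_n\to0$) upgrades pointwise to uniform convergence. Finally, $(2)\Rightarrow(1)$ uses tightness once more: pick $R$ with $|\kappa_n^\circ|(\R\setminus[-R,R])<\e$ for all $n$; then $|u_n^\circ|<\e$ on $(-\infty,-R)$ and $u_n^\circ$ varies by $<\e$ on $(R,\infty)$, likewise for $u^\circ$, so local uniform convergence on $[-R,R]$ forces $\|u_n^\circ-u^\circ\|_\infty\to0$.

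The step I expect to be the main obstacle is the pointwise-convergence part of $(3)\Rightarrow(2)$: for signed $\kappa_n^\circ$ one cannot pass the indicator $\One_{(-\infty,x]}$ through the narrow limit directly, because the sandwiching error is controlled only by $|\kappa_n^\circ|$ on a shrinking interval, not by $\kappa_n^\circ$ itself. Recognizing that condition~(c) is exactly the uniform bound needed to kill that error — and that the same identity $\kappa_n^\circ((x,y])=u_n^\circ(y)-u_n^\circ(x)$ makes~(c) automatic from uniform convergence in~(1) — is the real content of the lemma; everything else is routine Arzel\`a--Ascoli bookkeeping and tightness-driven control of tails.
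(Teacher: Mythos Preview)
Your proof follows essentially the same route as the paper's: a cycle of implications, with the passage from condition~(c) to local uniform convergence via an Arzel\`a--Ascoli argument, and the passage from uniform convergence to~(c) via the identity $\kappa_n^\circ((x,y])=u_n^\circ(y)-u_n^\circ(x)$. The paper's version (Lemma~\ref{l:conv-equiv-R}) adds a fourth equivalent condition (continuous convergence) and, for the direction $(3)\Rightarrow(2)$, cites a generalized Arzel\`a--Ascoli result from \cite{AmbrosioGigliSavare08} to extract a subsequential uniform limit and then identifies it via narrow convergence, rather than proving pointwise convergence by hand as you do. These are stylistic differences.

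There is, however, one genuine slip in your $(3)\Rightarrow(2)$, and it is precisely the signed-measure trap you flagged in your own commentary. You write
\[
\Bigl|u_n^\circ(x)-\int\varphi_\e\,d\kappa_n^\circ\Bigr|\leq|\kappa_n^\circ|\bigl((x,x+\e)\bigr)\leq s_n+\omega(\e)\quad\text{``by (c)''},
\]
but condition~(c) bounds $|\kappa_n^\circ((x,y])|$, not the total-variation measure $|\kappa_n^\circ|((x,y])$. These can differ badly: with $n$ alternating $\pm\tfrac1n$ atoms packed into an interval of length $1/n$, one has $|\kappa_n^\circ|((0,1/n])=1$ while $\sup_{x\leq y}|\kappa_n^\circ((x,y])|\leq 1/n$, so~(c) holds with $s_n=1/n$, $\omega\equiv0$, yet your second inequality fails. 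The fix is easy once you choose $\varphi_\e$ \emph{monotone}: writing $\varphi_\e(y)=\int_y^{x+\e}(-\varphi_\e'(s))\,ds$ for $y\in(x,x+\e)$ and applying Fubini gives
\[
\int\varphi_\e\,d\kappa_n^\circ-u_n^\circ(x)=\int_x^{x+\e}(-\varphi_\e'(s))\,\kappa_n^\circ\bigl((x,s]\bigr)\,ds,
\]
and since $-\varphi_\e'\geq0$ with unit integral, this is bounded in absolute value by $\sup_{s\in(x,x+\e]}|\kappa_n^\circ((x,s])|\leq s_n+\omega(\e)$, which is exactly what~(c) provides. The paper sidesteps this computation altogether by letting compactness produce the limit and narrow convergence identify it.
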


\begin{mcor}
\label{mcor}
(See Corollaries~\ref{c:conv-kappa} and~\ref{c:kappa-limit-ct}.)
Assume that $\kappa_n^\circ$ converges to  $\kappa^\circ$ in the sense of Lemma~\ref{ml:conv-circ}.
Then  for any sequence $t_n\to t$ in $[0,T]$, $\kappa_n(t_n)$ converges to $\kappa(t)$ in the sense of Lemma~\ref{ml:conv-circ}. The sequence $(s_n)_n$ and the modulus of continuity $\omega$ can be chosen to be independent of the sequence $t_n\to t$.

In addition the map $t\mapsto \kappa(t)$ is narrowly continuous. 
\end{mcor}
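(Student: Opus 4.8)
The plan is to transport the convergence $u_n\to u$ supplied by Theorem~\ref{t} back down to the measures $\kappa_n$, where $\kappa_n(t)=\partial_x u_n(t,\cdot)$, and to read off from it the three clauses of part~(3) of Lemma~\ref{l:conv-equiv-R} for the sequence $\kappa_n(t_n)$. Two a priori facts come for free. First, since the charges start at $\pm1$, $\|\kappa_n(t)\|_{TV}=\tfrac1n\#\{i:b_i(t)\neq0\}\le1$, and this number only decreases in time because annihilation destroys charges; so $\kappa_n(t)$ is bounded in total variation uniformly in $n$ and $t\in[0,T]$. Secondly, $u(t,\cdot)$ is the locally uniform limit of the $u_n(t,\cdot)$, which have total variation $\le1$; lower semicontinuity of the total variation gives $u(t,\cdot)\in BV(\R)$, so $\kappa(t):=\partial_x u(t,\cdot)$ is a finite signed measure with $\|\kappa(t)\|_{TV}\le1$, and its continuity yields $u(t,x)=\kappa(t)((-\infty,x])$ and the absence of atoms of $\kappa(t)$.

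The real work is a tightness estimate for the family $\{\kappa_n(t):n\ge1,\ t\in[0,T]\}$ that is uniform in both $n$ and $t$. Fix a nonnegative $\varphi\in C^2(\R)$ with $\varphi\equiv0$ on $(-\infty,R]$ and $\varphi\equiv1$ on $[R+\ell,\infty)$, so $\|\varphi''\|_\infty\le C\ell^{-2}$. Differentiating $\int_\R\varphi\,d|\kappa_n(t)|=\tfrac1n\sum_{b_i\neq0}\varphi(x_i)$ along solutions of \eqref{Pn} and symmetrizing the resulting double sum turns the singular factor $1/(x_i-x_j)$ into a difference quotient of $\varphi'$:
\[
  \frac{d}{dt}\int_\R\varphi\,d|\kappa_n(t)|=\frac1{2n^2}\sum_{i\neq j}b_ib_j\,\frac{\varphi'(x_i)-\varphi'(x_j)}{x_i-x_j},
  \qquad\text{hence}\qquad
  \Big|\frac{d}{dt}\int_\R\varphi\,d|\kappa_n(t)|\Big|\le\tfrac12\|\varphi''\|_\infty
\]
between collisions. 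At a collision all participating particles sit at one point $y$, at least one charged member disappears, and $\varphi\ge0$, so $\int\varphi\,d|\kappa_n|$ can only jump downward; since there are finitely many collisions this gives $\int\varphi\,d|\kappa_n(t)|\le\int\varphi\,d|\kappa_n^\circ|+\tfrac T2\|\varphi''\|_\infty$ on $[0,T]$. Taking $\ell$ large and then $R$ large---here using the hypothesis that $\kappa_n^\circ$ is tight---makes the right side arbitrarily small uniformly in $n$; a mirror cut-off handles the left tail. This yields the uniform tightness.

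From here everything is bookkeeping. Uniform tightness passes to $\{\kappa(t):t\in[0,T]\}$, so $u(t,x)\to u(t,\pm\infty)$ uniformly in $t\in[0,T]$; since $u_n(t,-\infty)=0$ and $u_n(t,+\infty)=u_n^\circ(+\infty)\to u^\circ(+\infty)$ by conservation of the net charge, the local uniform convergence of Theorem~\ref{t} improves to $s_n:=2\sup_{[0,T]\times\R}|u_n-u|\to0$, and $u$ extends continuously to the compact set $[0,T]\times[-\infty,+\infty]$, hence admits a modulus of continuity $\omega$ in $x$ uniform over $t\in[0,T]$. Now fix $t_n\to t$ in $[0,T]$: joint local uniform convergence plus uniform continuity of $u$ on compacts give $u_n(t_n,\cdot)\to u(t,\cdot)$ locally uniformly with continuous limit, so Lemma~\ref{l:conv-equiv-R}, applied to the sequence $\kappa_n(t_n)$ (bounded in total variation and tight by the above), gives $\kappa_n(t_n)\to\kappa(t)$ narrowly with $\kappa(t)$ atomless. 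For \eqref{cond:AEC-kappa-intro} I would exhibit the controlling data explicitly rather than quote their abstract existence: for $-\infty<x\le y<\infty$,
\[
  |\kappa_n(t_n)((x,y])|=|u_n(t_n,y)-u_n(t_n,x)|\le 2\sup_{[0,T]\times\R}|u_n-u|+\omega(|x-y|)=s_n+\omega(|x-y|),
\]
and $s_n,\omega$ plainly do not depend on the chosen sequence. Lastly, narrow continuity of $t\mapsto\kappa(t)$: for $t_k\to t$ in $[0,T]$ one has $\kappa(t_k)((x,y])=u(t_k,y)-u(t_k,x)\to u(t,y)-u(t,x)=\kappa(t)((x,y])$ for every $x\le y$ by uniform continuity of $u$, which together with the uniform tightness and total-variation bound of $\{\kappa(s):s\in[0,T]\}$ upgrades to narrow convergence, again via the equivalence $(2)\Leftrightarrow(3)$ in Lemma~\ref{l:conv-equiv-R}.

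I expect the uniform-in-$(n,t)$ tightness to be the only genuine obstacle: tightness of $\kappa_n^\circ$ does not bound $\int x^2\,d|\kappa_n^\circ|$ uniformly, so a crude moment estimate is unavailable, and one must exploit the structure of \eqref{Pn}---symmetrization kills the singular kernel in $\tfrac d{dt}\int\varphi\,d|\kappa_n|$, and annihilation pushes this quantity in the favorable direction. The remaining steps only recombine Theorem~\ref{t} and Lemma~\ref{l:conv-equiv-R}.
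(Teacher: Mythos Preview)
Your argument is correct and follows essentially the same route as the paper: the key step in both is the uniform-in-$(n,t)$ tightness obtained by differentiating $\int\varphi\,d|\kappa_n(t)|$ along \eqref{Pn} and symmetrizing to turn $1/(x_i-x_j)$ into a difference quotient bounded by $\|\varphi''\|_\infty$ (or $\Lip(\varphi')$), after which everything reduces to Theorem~\ref{t} and the equivalences of Lemma~\ref{l:conv-equiv-R}. The only cosmetic difference is that the paper chooses a single coercive $\varphi$ with compact sublevel sets (via the de la Vall\'ee-Poussin characterization of tightness, regularized by Moreau--Yosida), whereas you use a family of smooth cutoffs; your choice avoids the regularization step and is arguably more elementary, while the paper's choice yields tightness in one estimate rather than separate left- and right-tail bounds.
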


\noindent
The connection between Theorem~\ref{mthm:B} and Corollary~\ref{mcor} is discussed in Section~\ref{s:kappa}.
\subsection{Discussion}
\label{ss:Discussion}

To summarize the above, our main two results are the combination of Definition~\ref{defn:Pn} and  Theorem \ref{t:Pn} on well-posedness of the particle systems described by \eqref{Pn}, and Theorem \ref{t} on the convergence of \eqref{Pn} as $n \to \infty$ with \eqref{kappa:PDE} as the PDE for the signed limiting particle distribution. We conclude by discussing several features of these results.

\paragraph{Properties of the limiting equation} In \cite{BilerKarchMonneau10} regularity of viscosity solutions to \eqref{HJ:formal} is proven. In particular, if $u^\circ \in \Lip (\R)$, then $\| u(t, \cdot) \|_\infty$ and $\| \partial_x u(t, \cdot) \|_\infty$ are non-increasing in time.
Hence, if the signed particle distribution $\kappa^\circ$ is an absolutely continuous measure with bounded density, then $\kappa(t, \cdot)$ is also absolutely continuous with density in $L^\infty(\R)$ for each $t \geq 0$.

\paragraph{H\"older-continuous trajectories} The properties of the solution $(\bx, \bb)$ to \eqref{Pn} listed in Theorem \ref{t:Pn} suggest that $t \mapsto \bx (t)$ is in $C^{1/2}([0,T])$. Since our proof methods do not rule out oscillatory behavior of the trajectories prior to annihilation, we were only able to prove that $t \mapsto \bx (t)$ is continuous, and $C^\infty$ away from collision times.

\paragraph{Other interaction potentials} In \eqref{Pn} we chose to take as the particle interaction potential the function $V(x) = -\log |x|$. This choice is most relevant to the applications mentioned in Section \ref{s:intro:mot}. In the literature on particle systems other potentials such as $V(x) = |x|^{-s}$ with $0 < s < 1$ or smooth perturbations thereof also appear 
\cite{Duerinckx16,
GeersPeerlingsPeletierScardia13,
Hauray09,
PetracheSerfaty14}. 
Due to lack of clear applications for annihilation and to avoid clutter, we have not investigated whether our method works for such potentials. 

\paragraph{Two dimensions} A future goal is to define and prove well-posedness of \eqref{Pn} in two dimensions, and then also to pass to the limit $n \to \infty$. Our current proof methods strongly rely on the ordering of the particles. Yet, already in 1D our proof method for the well-posedness of \eqref{Pn} contains unconventional ideas, which may inspire a new approach to treat the higher dimensional case.

\paragraph{Organization of the paper} In Section \ref{s:Pn} we state and prove our first main result on the well-posedness of \eqref{Pn}. In Section \ref{s:HJ} we give a precise meaning to the Hamilton--Jacobi equations \eqref{HJn:formaller} and \eqref{HJ:formal}, prove that they satisfy a comparison principle, and establish the convergence of \eqref{HJn:formaller} to \eqref{HJ:formal}. In Section \ref{s:Pn:to:HJ} we apply this convergence to pass to $n \to \infty$ in \eqref{Pn}. In Section \ref{s:kappa} we reformulate the convergence result in a measure theoretic framework.

\section{Well-posedness and properties of $(P_n)$}
\label{s:Pn}

To give a rigorous meaning to the particle system in \eqref{Pn} with $n \geq 2$, we start with defining a state space $\cZ_n$ for the pair $(\bx, \bb)$. It will be convenient to have a unique description of a set of particles by numbering them from left to right; in the case of charged particles the dynamics preserves such numbering, since same-sign neighbors repel each other and opposite-sign neighbors are removed upon collision. Neutral particles, however, have no reason to preserve the left-to-right numbering of the charged particles, since neutral and charged particles do not interact. 

These observations lead to a definition of the state space $\cZ_n$ that imposes ordering for charged particles only:
\begin{equation*}
  \cZ_n := \{ (\bx, \bb) \in \R^n \times \{-1,0,+1\}^n : \text{if } i>j \text{ and } b_i b_j \neq 0, \text{ then } x_i > x_j \},
\end{equation*}
Since we are interested in the charged particles, we call two particles $j < i$ \emph{neighbors}  if they are charged and any particle in between them is neutral, i.e.\ 
\[ b_i(t) b_j(t) \neq 0
\quad \text{and} \quad 
\{k: b_k(t) \neq 0,\ j < k < i\} = \emptyset. \]

(As an alternative to $\cZ_n$, one could remove the neutral particles from the state, and consider the union of sets of varying dimension
\[
\bigcup_{k=1}^n \big\{ (\bx, \bb) \in \R^k \times \{-1,+1\}^k : x_{i+1}>x_i \big\}
\]
as the state space. This state space can naturally be embedded into $\cZ_n$ by relabeling the particle indices. We prefer to keep the number of particles the same, and `remove' particles by setting their charge to zero.)

The solution concept to \eqref{Pn} is as follows:

\begin{defn}[Solution to $(P_n)$] \label{defn:Pn}
Let $n \geq 2$, $\bb^\circ \in \{-1,0,1\}^n$ and $(\bx^\circ, \bb^\circ) \in \cZ_n$. A map $(\bx, \bb) : [0,T] \to \cZ_n$ is a solution of $(P_n)$ if there exists a finite subset $S \subset (0,T]$ such that
\begin{enumerate}[(i)]
  \item (Regularity) For each $i\in\{1,n\}$, $x_i \in C([0,T]) \cap C^1([0,T] \setminus S)$, and $b_1, \ldots, b_n : [0,T] \to \{-1, 0, 1\}$ are right-continuous; 
  \item (Initial condition) $(\bx(0), \bb(0)) = (\bx^\circ, \bb^\circ)$;
  \item (Annihilation rule) \label{defn:Pn:ann:rule} Each $b_i$ jumps at most once. If $b_i$ jumps at $t \in [0,T]$, then $t \in S$, $|b_i(t-)| = 1$ and $b_i(t) = 0$. Moreover, for all $(\tau, y) \in S \times \R$,
  \begin{equation} \label{annih:rule}
    \sum_{i: x_i(\tau) = y} \llbracket b_i \rrbracket (\tau) = 0,
  \end{equation}
  where the bracket $\llbracket f \rrbracket(t)$ is the difference between the right and left limits of $f$ at $t$;
  \item (ODE of $\bx$) On $(0,T) \setminus S$, $\bx$ satisfies the ODE in \eqref{Pn}.
\end{enumerate}
\end{defn}

Definition \ref{defn:Pn} calls for some terminology. For a function $f$ of one variable, we set $f(t-) := \lim_{s \nearrow t} f(s)$ as the left limit.
We call a point $(\tau, y) \in S \times \R$ an  \emph{annihilation point} if the sum in \eqref{annih:rule} contains at least one non-zero summand. We call the time $\tau$ of an annihilation point a \emph{collision time}. The set of all collision times $\{ \tau_1, \ldots, \tau_K \}$ is finite, where $K \leq n^+ \wedge n^-$ and $n^\pm$ are the numbers of positively/negatively charged particles at time $0$. From Theorem \ref{t:Pn} it turns out that the minimal choice for $S$ is $\{ \tau_1, \ldots, \tau_K \}$.

In Definition \ref{defn:Pn}, annihilation is encoded by the combination of the annihilation rule in \eqref{defn:Pn:ann:rule} and the requirement that $(\bx(t), \bb(t)) \in \cZ_n$. Indeed, Definition \ref{defn:Pn}\eqref{defn:Pn:ann:rule} limits the choice of jump points for $b_i$, while the separation of particles implied by $(\bx(t), \bb(t)) \in \cZ_n$ requires particles to annihilate upon collision. 

\begin{rem}
\label{r:unique-modulo-renumbering}
As we shall see in Theorem~\ref{t:Pn} below, solutions according to Definition~\ref{defn:Pn} are unique, but only  up to relabeling. One can recognize the possibility of relabeling as follows: if three particles (say numbered $i=1,2,3$, with charges $+,-,+$) collide at some point $(\tau,y)$, then according to~\eqref{annih:rule} one positive particle should continue, while the two other particles should become neutral. Therefore either $i=1$ could remain positive, with $i=2,3$ becoming neutral, or $i=3$ could remain positive with $i=1,2$ becoming neutral. Both lead to the same evolution of \emph{points} and their charges, and therefore the same physical interpretation, but the numbers attached to the points are different. 
\end{rem}

\medskip
  
To state the main result of this section, Theorem \ref{t:Pn}, on the well-posedness of \eqref{Pn} and properties of the solutions, we introduce several objects. First, given $(\bx, \bb) \in \cZ_n$, we set $d^+$ as the smallest distance between any two neighboring particles with positive charge. Analogously, we define $d^-$ for the negatively charged particles. More precisely, we set $m := \sum_{i=1}^n |b_i|$ as the number of charged particles, and take a permutation $\sigma \in S_n$ such that $(x_{\sigma(1)}, \dots, x_{\sigma(m)})$ is the ordered list of all charged particles. Then, 
\begin{equation} \label{dpm}
  d^\pm := \inf \big\{ x_{\sigma(i+1)} - x_{\sigma(i)} : i \text{ is such that } b_{\sigma(i+1)} = b_{\sigma(i)} = \pm 1 \big\} \in (0,\infty].
\end{equation} 

Secondly, we recall from the introduction the (scaled and signed) moments of $\bx$ given by
\begin{equation*} 
  M_k (\bx) := \frac1k \sum_{i=1}^n x_i^k \qquad
  \text{for } k = 1, \ldots, n.
\end{equation*}
Then, using the map
\begin{equation*}
  \bM : \R^n / S_n \to \R^n,
  \qquad \bM \big( \bx) := (M_1(\bx), \ldots, M_n(\bx) \big)^T,
\end{equation*}
we define the moment-distance
\begin{equation*}
  d_\bM (\bx, \by) := \| \bM(\bx) - \bM(\by) \|_2.
\end{equation*}

\begin{lem}
\label{le:dM-metric}
The distance $d_\bM$ is a metric on $\R^n / S_n$ and $d_\bM$-bounded sets are relatively compact. Moreover, if $\bx_m \to \bx$ in $(\R^n/S_n, d_\bM)$ then $\bx_m \to \bx$ on $\R^n /S_n$ with the Euclidean norm.
\end{lem}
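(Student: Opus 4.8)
\emph{Proof strategy.}
The plan is to view $d_\bM$ as the pull-back of the Euclidean metric on $\R^n$ under the moment map $\bM$, and to show that $\bM$ is a continuous, injective and proper map from $\R^n/S_n$ (equipped with the Euclidean quotient metric) onto its image. All three assertions of the lemma are then immediate consequences: injectivity gives the metric axioms, properness gives relative compactness of bounded sets, and the three properties together force $\bM$ to be a homeomorphism onto its image, which is exactly the statement that $d_\bM$-convergence implies Euclidean convergence. The only genuinely substantive point is the injectivity of $\bM$, i.e.\ the fact that the first $n$ power sums separate multisets of size $n$ in $\R$; the remainder is routine metric-space topology built on the elementary identity relating $M_2$ to the Euclidean norm.

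First I would prove that $\bM$ is injective on $\R^n/S_n$. Since $M_k = \frac1k p_k$ with $p_k(\bx) = \sum_{i=1}^n x_i^k$, knowing $\bM(\bx)$ is equivalent to knowing $(p_1,\dots,p_n)$. Newton's identities express the elementary symmetric polynomials $e_1,\dots,e_n$ recursively in terms of $p_1,\dots,p_n$, so $\bM(\bx)$ determines $(e_1(\bx),\dots,e_n(\bx))$, hence the monic polynomial $t^n - e_1 t^{n-1} + \cdots + (-1)^n e_n = \prod_{i=1}^n (t - x_i)$, whose roots counted with multiplicity are precisely the unordered tuple $\bx \in \R^n/S_n$. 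Thus $\bM$ is injective. Consequently $d_\bM(\bx,\by)=0$ if and only if $\bx=\by$, while symmetry and the triangle inequality for $d_\bM$ are inherited directly from $\|\cdot\|_2$; hence $d_\bM$ is a metric.

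Next, relative compactness of $d_\bM$-bounded sets. Fix $\by_0$ and suppose $d_\bM(\bx_m,\by_0)\le C$ for all $m$. Then $\|\bM(\bx_m)\|_2$ is bounded, so in particular the second coordinate $M_2(\bx_m)=\frac12\sum_i (x_m)_i^2 = \frac12\|\bx_m\|_2^2$ is bounded; hence $\{\bx_m\}$ lies in a fixed Euclidean ball of $\R^n$. By Bolzano--Weierstrass some subsequence converges in $(\R^n/S_n,\|\cdot\|_2)$ to a limit $\bx^\ast$, and since $\bM$ is a polynomial map it is continuous, so $\bM(\bx_{m_j})\to\bM(\bx^\ast)$, that is $d_\bM(\bx_{m_j},\bx^\ast)\to 0$. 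Thus every $d_\bM$-bounded sequence admits a $d_\bM$-convergent subsequence, which is the claimed relative compactness.

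Finally, suppose $\bx_m\to\bx$ in $(\R^n/S_n,d_\bM)$. The sequence $\{\bx_m\}$ is $d_\bM$-bounded, so by the previous paragraph it is bounded in the Euclidean metric, and any Euclidean-convergent subsequence $\bx_{m_j}\to\bx^\ast$ satisfies $\bM(\bx^\ast)=\lim_j \bM(\bx_{m_j}) = \bM(\bx)$ by continuity of $\bM$; injectivity of $\bM$ then forces $\bx^\ast=\bx$. A standard subsequence argument (every subsequence has a further subsequence converging to $\bx$) yields $\bx_m\to\bx$ in the Euclidean quotient metric, completing the proof. I expect the extraction/topology steps to be entirely routine; the part that carries the content, and the one I would write out carefully, is the injectivity claim via Newton's identities and Vieta's formulas.
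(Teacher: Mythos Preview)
Your proposal is correct and follows essentially the same approach as the paper: injectivity of $\bM$ via Newton's identities to get the metric axioms, and the identity $M_2(\bx)=\tfrac12\|\bx\|_2^2$ to pass from $d_\bM$-boundedness to Euclidean boundedness. The only minor difference is in the convergence step: the paper argues directly that convergence of the moments implies convergence of the elementary symmetric polynomials and hence of the polynomial $\prod_i(z-x_i)$, invoking continuous dependence of roots on coefficients, whereas you replace this by a compactness-plus-injectivity subsequence argument; your route is arguably more self-contained since it avoids citing the root-continuity fact.
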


\begin{proof}
Positivity and the triangle inequality are immediate. The fact that $d_\bM (\bx, \by) = 0$ implies $ \bx = \by$ follows from Newton's identities. To see this, we write 
$$\prod_{i=1}^n (z - x_i) = \sum_{k=0}^n (-1)^k e_k z^{n - k}$$ 
using the symmetric polynomials 
\begin{align*}
  e_0 &= 1 \\
  e_1 &= x_1 + \cdots + x_n \\
  e_2 &= x_1 x_2 + \cdots + x_{n-1} x_n \\
  &\vdots \\
  e_n &= x_1 x_2 \cdots x_n.
\end{align*}
By Newton's identities, the symmetric polynomials satisfy
\[
me_m  = \sum_{k=1}^m (-1)^{k-1} e_{m-k} k M_k \quad \text{for } m = 1, \ldots, n.
\]
In particular, $d_\bM(\bx, \by) = 0$ implies $\bM(\bx) = \bM(\by)$ and hence $\prod_{i=1}^n (z - x_i) = \prod_{i=1}^n (z - y_i)$ for all $z\in \R$. Therefore  $\bx = \by$ up to a reordering of the indices, i.e.\ $\bx=\by$ as elements of $\R^n/S_n$. It follows that $d_\bM$ is a metric on $\R^n / S_n$.

If $d_\bM(\bx_m, \bx) \to 0$ then by Newton's identities above we deduce that $e_k(\bx_m) \to e_k(\bx)$ for all $1 \leq k \leq n$ and hence the polynomials $\prod_{i=1}^n(z - \bx_{m,i})$ converge locally uniformly to $\prod_{i=1}^n(z - \bx_i)$. We deduce that $\bx_m \to \bx$ in the Euclidean norm as elements of $\R^n/S_n$.

Finally, take a sequence $\bx_m$ bounded in $d_\bM$, that is, for some $R >0$ we have $d_\bM(\bx_m, 0) < R$ for all $m \geq 1$. In particular, $\| \bx_m \|_2^2 = 2 M_2(\bx_m) < 2R$, and therefore $\bx_m$ converges along a subsequence in $\R^n$ to some $\bx$ in the Euclidean norm. Then, along the same subsequence, $M_k(\bx_m) \to M_k(\bx)$ for all $k \geq 1$, and thus $d_\bM(\bx_m, \bx) \to 0$.
\end{proof}

\begin{thm}[Properties of $(P_n)$] \label{t:Pn}
Let $n \geq 2$, $T > 0$ and $(\bx^\circ, \bb^\circ) \in \cZ_n$. 
Then, $(P_n)$ has a solution $(\bx, \bb)$ with initial datum $(\bx^\circ, \bb^\circ)$, according to Definition \ref{defn:Pn}, that is unique modulo relabeling (see Remark~\ref{r:unique-modulo-renumbering}). 
Moreover, setting $S$ as the minimal set from Definition~\ref{defn:Pn} for the solution $(\bx, \bb)$, the following properties hold: 
\begin{enumerate}[(i)]
  \item \emph{($d_\bM$-Lipschitz regularity)}. \label{t:Pn:Lip}
   There exists a constant $C_n > 0$ depending only on $n$ and $\bM(\bx^\circ)$ such that
\begin{equation*}
  d_\bM (\bx(t), \bx(s)) \leq C_n|t-s|
  \qquad \text{for all } 0 \leq s < t \leq T;
\end{equation*}
  \item \emph{(Lower bound on minimal distance between neighbors of equal sign)}. \label{t:Pn:LB:d}
   \[ 
     d^\pm(t) \geq \sqrt{ \tfrac8{n^2 - 1} t +  d^\pm(0)^2} \qquad  \text{for all } t \in [0,T];
     \]
   \item \emph{(Lower bound on distance between any two neighbors)}.  \label{t:Pn:LB:pm}
   Let $i < j$ be neighboring particles at time $t_0 \geq 0$. Then
\begin{align}
\label{opposite-sign-lower-bound} 
x_j(t) - x_i(t) \geq \sqrt{c_0^2 - 8 \tfrac{\log n + 1}n (t - t_0)},
\qquad c_0 := \min(d^+, d^-, x_j - x_i) (t_0)
\end{align}
for all $t \geq t_0$ for which the square root exists;
   \item \emph{(Upper bound at collision)}. \label{t:Pn:C12}
   For any $\tau \in S$ and any $i$, there exists a $C \geq 0$ such that 
   $$ |x_i(t) - x_i(\tau)| \leq C \sqrt{\tau - t} 
      \quad \text{for all } t \in [0, \tau]; $$
   \item \emph{(Lower bound at collision)}. \label{t:Pn:LB:col}
   For each annihilation point $(\tau, y)$, there exists a $c > 0$ and indices $i,j$ such that $x_i(\tau) = x_j(\tau) = y$, $b_i(\tau-) \neq 0$, $b_j(\tau-) \neq 0$ and 
   \begin{align*}
     x_i(t) - y &> c \sqrt{\tau - t} \\
     x_j(t) - y &< -c \sqrt{\tau - t}
   \end{align*}
   for all $t < \tau$ large enough;
  \item \emph{(Stability with respect to $\bx^\circ$)}. \label{t:Pn:stab}
   Let $(\bx_m^\circ, \bb^\circ) \in \cZ_n$ be such that $\bx_m^\circ \to \bx^\circ$ as $m \to \infty$. Let $(\bx_m, \bb_m)$ be the solution of $(P_n)$ with initial data $(\bx_m^\circ, \bb^\circ)$. Then, $\bx_m \to \bx$ in $C([0,T]; (\R^n / S_n, d_\bM))$ and $\bb_m \to \bb$ locally uniformly on $[0, T] \setminus S$ as $m \to \infty$.
\end{enumerate}
\end{thm}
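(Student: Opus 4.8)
The plan is to prove all the assertions together by induction over the (finitely many) collision times, the only delicate issue being what happens \emph{at} a collision. On any time interval on which the charge vector $\bb$ is constant and the charged particles are pairwise distinct, the right-hand side of~\eqref{Pn} is smooth and locally Lipschitz, so Picard--Lindel\"of gives a unique $C^\infty$ local solution; the induction step therefore reduces to two claims: (a) the solution extends \emph{continuously} (in $\R^n/S_n$) up to the first time $\tau$ at which two charged particles meet, and (b) at time $\tau$ the annihilation rule~\eqref{annih:rule} can be applied so that the restarted right-hand side is again finite. Claim (a) will come from the moment reparametrization, and claim (b) from the separation estimate~\ref{t:Pn:LB:d}.

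\textbf{Moments and continuous extension; property~\ref{t:Pn:Lip}.} Differentiating $M_k$ along a classical solution and symmetrizing in the summation indices, I would obtain, on any collision-free interval,
\[
  \frac{d}{dt}M_k(\bx)\;=\;\frac1{2n}\sum_{i\ne j}b_ib_j\sum_{\ell=0}^{k-2}x_i^{\ell}x_j^{k-2-\ell},
\]
so that the singular factor $1/(x_i-x_j)$ cancels and the right-hand side is a \emph{polynomial} in $\bx$ of degree $k-2$. For $k=2$ it is the constant $\frac1{2n}\big((\sum_ib_i)^2-\sum_i|b_i|\big)$, hence $\|\bx(t)\|_2^2=2M_2(\bx(t))$ grows at most linearly and stays bounded on $[0,T]$; consequently each $\frac{d}{dt}M_k$ is bounded on $[0,T]$ by a constant depending only on $n$, $T$ and $\bM(\bx^\circ)$, so $t\mapsto\bM(\bx(t))$ is Lipschitz up to $\tau$ and extends continuously there. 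By Lemma~\ref{le:dM-metric} (Newton's identities) the ordered configuration $\bx(t)$ then converges in $(\R^n/S_n,d_\bM)$, hence in the Euclidean norm modulo $S_n$, as $t\nearrow\tau$; this is claim~(a), and the same uniform bound, valid across collisions, is exactly property~\ref{t:Pn:Lip}.

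\textbf{Collision structure, restarting, and the gap estimates \ref{t:Pn:LB:d}--\ref{t:Pn:LB:col}.} At time $\tau$ the charged particles coinciding at a point $y$ form an index-contiguous block, since particles cannot cross (membership in $\cZ_n$). No two \emph{adjacent} charged members of the block can share a sign: such a pair would be a same-sign neighbouring pair, and a differential inequality for the square of a neighbouring gap — isolating the repulsive leading term $\tfrac4n$ and controlling the remaining particles by a telescoping, harmonic-type sum — gives the linear lower bound $d^\pm(t)^2\ge d^\pm(0)^2+c_n t$ of~\ref{t:Pn:LB:d} (one also checks that $d^\pm$ never decreases across a collision, since neutralizing a charged particle only deletes or enlarges same-sign gaps), so same-sign charged particles never meet. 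Hence the signs along the block strictly alternate, its net charge lies in $\{-1,0,+1\}$, and \eqref{annih:rule} together with $(\bx(\tau),\bb(\tau))\in\cZ_n$ forces all but at most one block particle to become neutral (the survivor, if any, carrying the net charge); the choice of which particle survives is the ``modulo relabeling'' ambiguity of Remark~\ref{r:unique-modulo-renumbering}. Afterwards the charged particles are again distinct, the right-hand side of~\eqref{Pn} is finite, and the ODE restarts; since each collision annihilates at least one $+$ and one $-$ particle, there are at most $n^+\wedge n^-$ collisions, so $S$ is finite. Estimate~\ref{t:Pn:LB:pm} is the same gap computation for an arbitrary neighbouring pair (now allowing attraction), the correction terms producing the $\tfrac{\log n+1}{n}$ coefficient through a bound on $\sum_{k=1}^n\tfrac1k$. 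For the collision rates, I would track the cluster variance $W(t)=\sum_{i\in\mathrm{cl}}(x_i-\bar x_{\mathrm{cl}})^2$ and barycenter $\bar x_{\mathrm{cl}}$: the within-cluster interaction terms cancel in $\dot W$ and $\dot{\bar x}_{\mathrm{cl}}$ by antisymmetry, while the interactions with outside particles are bounded (those gaps stay bounded below near $\tau$ by~\ref{t:Pn:LB:pm}), so $\dot W$ and $\dot{\bar x}_{\mathrm{cl}}$ are bounded, giving $W(t)\le C(\tau-t)$ and hence $|x_i(t)-y|\le|x_i(t)-\bar x_{\mathrm{cl}}(t)|+|\bar x_{\mathrm{cl}}(t)-y|\le C\sqrt{\tau-t}$, which is~\ref{t:Pn:C12}; for~\ref{t:Pn:LB:col} one observes conversely that the within-cluster terms contribute $\tfrac{(\mathrm{net})^2-m_{\mathrm{cl}}}{n}\le-\tfrac1n$ to $\dot W$ while the outside contribution is $O(\sqrt{\tau-t})\to0$, so $\dot W\le-\tfrac1{2n}$ near $\tau$, whence $W(t)\ge\tfrac1{2n}(\tau-t)$, the cluster has diameter $\gtrsim\sqrt{\tau-t}$, and combining this with the sign-alternation structure and the bounded drift of $\bar x_{\mathrm{cl}}$ yields charged indices reaching $y$ from the right and from the left at rate $\gtrsim\sqrt{\tau-t}$.

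\textbf{Uniqueness, stability~\ref{t:Pn:stab}, and the main difficulty.} Uniqueness modulo relabeling follows from the same induction: before the first collision the classical solution is unique, so the first collision time and point are determined, and the continuation there is forced up to relabeling of the neutralized particles; iterating gives uniqueness on $[0,T]$. For stability, on each inter-collision interval $\bx_m\to\bx$ by standard ODE continuous dependence, and to pass through the collision one uses that $t\mapsto\bM(\bx_m(t))$ solves a polynomial ODE whose data converge, so $\bM(\bx_m(\cdot))\to\bM(\bx(\cdot))$ uniformly up to $\tau$ and thus $\bx_m\to\bx$ in $C([0,T];(\R^n/S_n,d_\bM))$ by Lemma~\ref{le:dM-metric}; the perturbed collision times converge to $\tau$, the same block annihilates for $m$ large (up to a consistent relabeling), and one gets $\bb_m\to\bb$ locally uniformly off $S$. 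The main obstacle is precisely the interlocking of these ingredients at a collision: one must know simultaneously that the limit configuration exists (the moment argument), that no same-sign charged particles are among the colliding ones (estimate~\ref{t:Pn:LB:d}, whose proof presupposes the solution is defined up to that time), and that the relabeling can be chosen consistently — so the entire argument has to be organized as one induction in which existence, the quantitative bounds, uniqueness and stability are established together, with the metric $d_\bM$ serving as the device that tames the singularity of~\eqref{Pn} at the collision.
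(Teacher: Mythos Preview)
Your outline is essentially the paper's proof: the moment identities for~\ref{t:Pn:Lip}, the differential inequality on same-sign gaps (with the check that $d^\pm$ does not drop at a collision) for~\ref{t:Pn:LB:d}, the harmonic-sum bound for~\ref{t:Pn:LB:pm}, and the cluster second moment together with the $C^1$ barycenter for~\ref{t:Pn:C12}--\ref{t:Pn:LB:col} are all what the paper does, in the same induction over collision times. (Minor slip: in your treatment of~\ref{t:Pn:C12} the within-cluster contribution to $\dot W$ does not ``cancel'' but equals the nonzero constant you correctly compute in~\ref{t:Pn:LB:col}; this does not affect the conclusion since you only need $\dot W$ bounded.)

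The one genuine gap is the stability argument~\ref{t:Pn:stab}. First, $\bM(\bx_m(\cdot))$ does not solve a closed ODE in the moments: the right-hand side of $\tfrac{d}{dt}M_k$ depends on the charges $b_{m,i}$, which may jump at collision times of the \emph{perturbed} system (possibly occurring before $\tau$). The paper therefore does not invoke an ODE for $\bM$; it uses the uniform Lipschitz bound~\ref{t:Pn:Lip} together with Ascoli--Arzel\`a and Lemma~\ref{le:dM-metric} to extract a subsequential limit, and then identifies it with $(\bx,\bb)$ by uniqueness. Second, and more substantively, your assertion that ``the perturbed collision times converge to $\tau$, the same block annihilates for $m$ large'' hides the real work. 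A $k$-particle collision of the limit will generically split, for the perturbed system, into several smaller collisions at nearby but distinct times, and you must show that \emph{all} charged particles of the block have annihilated by time $\tau+\delta$, not merely that the first perturbed collision is close to $\tau$. The paper does this by localizing around the annihilation point, tracking the diameter $D_m(t)$ of the surviving charged particles in the block, and proving (using the alternating-sign structure from Corollary~\ref{c:Pn} and the uniform separation from outside particles) a differential inequality of the form $\tfrac{d}{dt}D_m\le -1$ on $(\tau,\tau+\delta)$; this forces $D_m(\tau+\delta)=0$ and hence $\bb_m(\tau+\delta)=\bb(\tau+\delta)$ modulo relabeling. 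Without this step the induction cannot be restarted after $\tau$.
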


Remark that in Property \eqref{t:Pn:LB:d}, equality is reached when $n$ is odd, $b_i^\circ = 1$ and $x_i^\circ = i$ for all~$i$. Moreover, as a direct consequence of Theorem \ref{t:Pn}\eqref{t:Pn:LB:d} we have the following result.
\begin{cor}[Multiple-particle collisions]\label{c:Pn}
Let $(\tau, y) \in S \times \R$ be an annihilation point, and let $I$  be the corresponding indices:
  \begin{equation} \label{I:coll:parts}
    I := \{ i : x_i(\tau) = y, \, b_i(\tau-) \neq 0 \}.
  \end{equation}
Then prior to annihilation, the particles with index in $I$ have charges of alternating sign. In particular, 
\begin{equation*}
\Big| \sum_{i \in I} b_i(\tau-) \Big| \leq 1.
\end{equation*}
\end{cor}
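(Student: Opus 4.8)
The plan is to reduce the statement entirely to the equal-sign separation estimate of Theorem~\ref{t:Pn}\eqref{t:Pn:LB:d} via a short contradiction. Fix an annihilation point $(\tau,y)$ and let $I$ be the index set in~\eqref{I:coll:parts}; it is nonempty because, by definition of an annihilation point, \eqref{annih:rule} has a nonzero summand, and by the annihilation rule in Definition~\ref{defn:Pn} the corresponding $b_i$ has $b_i(\tau-)=\pm1$, so $i\in I$. Since each $b_i$ changes value at most once, and only from $\pm1$ to $0$, every $i\in I$ satisfies $b_i(t)=b_i(\tau-)\in\{-1,+1\}$ for all $t\in[0,\tau)$. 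In particular the particles indexed by $I$ are charged throughout $[0,\tau)$ and keep a fixed left-to-right order there; write $I=\{i_1,\dots,i_p\}$ with $x_{i_1}(t)<\cdots<x_{i_p}(t)$ for $t<\tau$.

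The one step requiring care is to verify that $i_\ell$ and $i_{\ell+1}$ are consecutive not merely inside $I$ but among \emph{all} charged particles on a left-neighborhood of $\tau$ — this is what lets us feed the pair into the definition~\eqref{dpm} of $d^\pm$. Let $\tau'<\tau$ be the previous collision time, or let $[0,\tau)$ play the role of $(\tau',\tau)$ if there is none; no charge jumps on $(\tau',\tau)$, so the order of the charged particles is frozen there. If some particle $k$ were charged and strictly between $i_\ell$ and $i_{\ell+1}$ at one time $t\in(\tau',\tau)$, then $b_k$ would be nonzero on all of $(\tau',\tau)$ (it cannot jump there) and $k$ would stay between $i_\ell$ and $i_{\ell+1}$ on all of $(\tau',\tau)$; since $x_{i_\ell}(s),x_{i_{\ell+1}}(s)\to y$ as $s\uparrow\tau$, continuity of the trajectories would force $x_k(\tau)=y$, and $b_k(\tau-)\neq0$, hence $k\in I$ — contradicting that $k$ lies strictly between consecutive elements of $I$.

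With this contiguity in hand, suppose toward a contradiction that $b_{i_\ell}(\tau-)=b_{i_{\ell+1}}(\tau-)$, say both $+1$ (the $-1$ case is symmetric). Both particles carry charge $+1$ already at $t=0$, so $d^+(0)$ is an infimum over a nonempty finite set of positive numbers (distinct charged particles in $\cZ_n$ have distinct positions), i.e.\ $d^+(0)\in(0,\infty)$. By the previous paragraph, for $t\in(\tau',\tau)$ the indices $i_\ell,i_{\ell+1}$ are neighbouring positively charged particles, so $x_{i_{\ell+1}}(t)-x_{i_\ell}(t)\ge d^+(t)\ge\sqrt{\tfrac8{n^2-1}t+d^+(0)^2}\ge d^+(0)>0$ by Theorem~\ref{t:Pn}\eqref{t:Pn:LB:d}. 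Letting $t\uparrow\tau$ and using continuity of $x_{i_\ell}$ and $x_{i_{\ell+1}}$ gives $0=x_{i_{\ell+1}}(\tau)-x_{i_\ell}(\tau)\ge d^+(0)>0$, a contradiction. Hence consecutive members of $I$ carry opposite charges, so $b_{i_1}(\tau-),\dots,b_{i_p}(\tau-)$ alternate in sign; summing, $\sum_{i\in I}b_i(\tau-)$ equals $0$ if $p$ is even and $\pm1$ if $p$ is odd, so $\big|\sum_{i\in I}b_i(\tau-)\big|\le1$ in all cases.

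I do not expect a genuine obstacle here: the argument is a two-line contradiction built on Theorem~\ref{t:Pn}\eqref{t:Pn:LB:d}, and the only mildly delicate point is the contiguity observation of the second paragraph — that the colliding charged particles form an unbroken block in the spatial order near $\tau$ — which is exactly where continuity of the trajectories and the finiteness of the collision set enter.
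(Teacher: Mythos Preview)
Your approach matches the paper's, which simply states that the corollary is ``a direct consequence of Theorem~\ref{t:Pn}\eqref{t:Pn:LB:d}'' without spelling out the contradiction; your contiguity paragraph is exactly the right supplement.

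There is, however, one incorrect step. You assert that $d^+(0)\in(0,\infty)$ because ``both particles carry charge $+1$ already at $t=0$, so $d^+(0)$ is an infimum over a nonempty finite set''. That inference is wrong: the set in~\eqref{dpm} consists of distances between \emph{neighbouring} same-sign particles, and the mere existence of two positive particles at $t=0$ does not produce a neighbouring positive pair (e.g.\ initial charges $+,-,+$ give $d^+(0)=\infty$). Your own contiguity argument only kicks in on $(\tau',\tau)$, not at $t=0$.

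The fix is immediate and does not change the strategy. Pick any $t_0\in(\tau',\tau)$; by your contiguity paragraph $i_\ell,i_{\ell+1}$ are neighbouring positive particles at $t_0$, so $d^+(t_0)\le x_{i_{\ell+1}}(t_0)-x_{i_\ell}(t_0)<\infty$ and $d^+(t_0)>0$. Now either apply Theorem~\ref{t:Pn}\eqref{t:Pn:LB:d} with the autonomous system restarted at $t_0$ to get $d^+(t)\ge d^+(t_0)>0$ for $t\in[t_0,\tau)$, or observe that \eqref{t:Pn:LB:d} makes $d^+$ nondecreasing on $[0,T]$ (with the convention that the bound reads $d^+(t)=\infty$ when $d^+(0)=\infty$), whence $d^+(0)\le d^+(t_0)<\infty$ and your original chain of inequalities goes through. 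Either way the contradiction $0=\lim_{t\uparrow\tau}\bigl(x_{i_{\ell+1}}(t)-x_{i_\ell}(t)\bigr)\ge d^+(t_0)>0$ follows.
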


\begin{proof}[Proof of Theorem \ref{t:Pn}]
\emph{Uniqueness}. Let $(\bx, \bb)$ and $(\hat \bx, \hat \bb)$ be two solutions with minimal sets of annihilation times $S = \{\tau_1, \ldots, \tau_K\}$ and $\hat S = \{\hat \tau_1, \ldots, \hat \tau_{\hat K} \}$ respectively. By standard ODE theory and the minimality of $S$ and $\hat S$, we obtain $\hat \tau_1 = \tau_1$ and $\hat \bx |_{[0, \tau_1)} = \bx |_{[0, \tau_1)}$. By continuity, $\hat \bx(\tau_1) = \bx(\tau_1)$. Hence, any annihilation point $(\tau_1, y)$ of $\bx$ is also an annihilation point of $\hat \bx$. Let $(\tau_1, y)$ be such an annihilation point, and let $I$ be the related index set of the colliding particles (see \eqref{I:coll:parts}). Definition \ref{defn:Pn}(iii) implies that 
\begin{equation*}
  \sum_{i \in I} \hat b_i(\tau)
  = \sum_{i \in I} \hat b_i(\tau-)
  = \sum_{i \in I} b_i(\tau-)
  = \sum_{i \in I} b_i(\tau).
\end{equation*}
In addition, since $(\bx, \bb), \, (\hat \bx, \hat \bb) \in \cZ_n$,
\begin{equation*}
  \sum_{i \in I} | b_i(\tau) | \leq 1
  \quad \text{and} \quad
  \sum_{i \in I} | \hat b_i(\tau) | \leq 1.
\end{equation*}
Hence, $\{ \hat b_i(\tau) \}_{i \in I}$ equals $\{ b_i(\tau) \}_{i \in I}$ up to a possible relabeling. Thus, the ODEs for $\bx$ and $\hat \bx$ are restarted with the same right-hand side. Iterating the argument above over all annihilation times in $S$, we obtain $\hat S = S$ and $(\hat \bx, \hat \bb) = (\bx, \bb)$, modulo relabeling of the particles.
\medskip

\emph{Existence and Properties \eqref{t:Pn:Lip},\eqref{t:Pn:LB:d}}. Standard ODE theory provides the existence of $\bx$ up to the first time $\tau_1$ at which either $\bx(\tau_1-)$ does not exist, or $(\bx(\tau_1-), \bb^\circ) \notin \cZ_n$. It is sufficient to show that $\bx(\tau_1-)$ exists, and that under Definition \ref{defn:Pn}\eqref{defn:Pn:ann:rule} $\bb(\tau_1)$ can be chosen such that $(\bx, \bb)(\tau_1) \in \cZ_n$. Indeed, if these two conditions are met, then $(\bx, \bb)(\tau_1)$ is an admissible initial condition for $(P_n)$, and further annihilation times $\tau_k$ are found and treated by induction. 

To prove these two conditions, we set $\tau := \tau_1$, and note that it is sufficient to prove Properties \eqref{t:Pn:Lip} and \eqref{t:Pn:LB:d} both with $T$ replaced by $\tau$. Indeed, Property \eqref{t:Pn:Lip} implies that $\bx(\tau_1-) \in \R^n$ exists. Then, Property \eqref{t:Pn:LB:d} implies Corollary \ref{c:Pn}, which gives enough information to construct $\bb(\tau)$ such that $(\bx, \bb)(\tau_1) \in \cZ_n$.
\smallskip

\emph{Property \eqref{t:Pn:Lip} with $T$ replaced by $\tau$}. For any integer $k \geq 0$ we compute
\begin{equation} \label{pf:ddt:Mk1}
  \frac d{dt} M_{k+1} (\bx) 
  = \sum_{i=1}^n x_i^k \frac{d x_i}{dt}
  = \sum_{i=1}^n \frac{x_i^k}n \sum_{j \neq i}^n \frac{b_i b_j}{x_i - x_j}
  = \frac1n \sum_{i=1}^n \sum_{j = 1}^{i-1} b_i b_j \frac{x_i^k - x_j^k}{x_i - x_j}
\end{equation}
on $(0, \tau)$. For $k = 0$, the right-hand side vanishes, and thus $M_1(\bx(t)) = M_1(\bx^\circ)$ is constant in $t$. For $k = 1$, we observe that
\begin{equation} \label{pf:ddt:Mk2}
  \frac d{dt} M_2 (\bx) 
  = \frac1n\sum_{i=1}^n \sum_{j = 1}^{i-1} b_i b_j
  = \frac1{2n} \Big( \sum_{i=1}^n \sum_{j = 1}^n b_i b_j - \sum_{i=1}^n b_i^2 \Big),
\end{equation} 
which is constant and bounded from above by $(n-1)/2$. Hence, 
\begin{equation}
\label{ineq:M2-bound}	
M_2(\bx(t)) \leq M_2(\bx^\circ) + \frac{n-1}2 t.
\end{equation}
For $k \geq 2$,
\begin{equation*}
  \bigg| \frac d{dt} M_{k+1} (\bx) \bigg|
  = \frac1n \bigg| \sum_{i=1}^n \sum_{j = 1}^{i-1} b_i b_j \sum_{\ell = 0}^{k-1} x_i^\ell x_j^{k - 1 - \ell} \bigg|
  \leq \frac1n \sum_{\ell = 0}^{k-1} \Big( \sum_{i=1}^n |x_i|^\ell \Big) \Big( \sum_{j=1}^n |x_j|^{k - 1 - \ell} \Big).
\end{equation*}
To bound the right-hand side, note that for even $\ell$,
\begin{equation*}
  \sum_{i=1}^n |x_i|^\ell = \ell M_\ell(\bx),
\end{equation*}
and for odd $\ell$
\begin{equation*}
  \Big( \sum_{i=1}^n |x_i|^{\tfrac{\ell+1}2} |x_i|^{\tfrac{\ell-1}2} \Big)^2
  \leq \Big( \sum_{i=1}^n |x_i|^{\ell + 1 } \Big) \Big( \sum_{i=1}^n |x_i|^{\ell - 1 } \Big)
  = \left\{ \begin{aligned}
    &{2n M_2(\bx)}
    &&\text{if } \ell = 1  \\
    &{ (\ell^2-1) M_{\ell+1}(\bx) M_{\ell-1}(\bx) }
    &&\text{otherwise. }
  \end{aligned} \right.
\end{equation*}
Hence, $| \frac d{dt} M_{k+1} (\bx) |$ is bounded on $(0,\tau)$ in terms of $f_k(M_1 (\bx), \ldots, M_k (\bx) )$ for some $f_k \in C(\R^k)$.
The result follows from induction over $k$ by integrating $\frac d{dt} M_{k+1} (\bx)$ from $0$ to $\tau$. 
\smallskip

\emph{Property \eqref{t:Pn:LB:d} with $T$ replaced by $\tau:= \tau_1$}. We prove Property \eqref{t:Pn:LB:d} for $d^+$; the proof for $d^-$ is analogous. For convenience, we assume that there are no neutral particles. Since $\bx$ is a solution to $(P_n)$, we obtain from the definition of $d^+$ in \eqref{dpm} that $d^+$ as a function on $[0, \tau)$ is positive, locally Lipschitz continuous and hence differentiable almost everywhere. 

Let $t \in (0, \tau)$ be a point of differentiability of $d^+$, and let $x_i(t)$ and $x_{i+1}(t)$ be particles for which the minimum in \eqref{dpm} is attained. Then, $(x_{i+1} - x_i)(t) = d^+(t)$, and at time $t$,
\begin{align} \notag
  \frac d{dt} d^+
  &= \frac{ dx_{i+1} }{dt} - \frac{ dx_i}{dt} \\\notag
  &= \frac2n \frac1{x_{i+1} - x_i} + \frac{1}{n} \sum_{ j \notin \{i, i + 1 \} } b_j \Big( \frac1{x_{i+1} - x_j} - \frac1{x_i - x_j} \Big) \\\label{pf:ddt:dp}
  &= \frac2{n d^+} + \frac{1}{n} \sum_{ j \notin \{i, i + 1 \} } (-b_j) \underbrace{ \frac{d^+}{(x_{i+1} - x_j)(x_i - x_j)} }_{=: \gamma_j}.
\end{align}

Next we bound the sum in \eqref{pf:ddt:dp} from below. For convenience, we focus on the part corresponding to $j > i+1$. The idea is to remove certain positive terms from the summation such that the remaining indices in the sum correspond to negative contributions of positively charged particles which are all separated by a distance no smaller than $d^+$.  

We remove indices (particles) in two consecutive steps. In the first step, we apply the following rule for all $j = i+2, \ldots, n-1$. If $b_j = -1$ and $b_{j+1} = 1$, then we remove both $j$ and $j+1$ from the summation. Note that the joint contribution of $j$ and $j+1$ to the sum is $-b_j \gamma_j + b_{j+1} \gamma_{j+1} = \gamma_j - \gamma_{j+1}$, which is positive since $\gamma_k$ is decreasing in $k$. The second step is simply to remove all remaining negatively changed particles $x_j$. Since $\gamma_k > 0$, each such particle provides a positive contribution $\gamma_j > 0$ to the summation.

After applying this rule for removing indices from the summation (but keeping the original labeling of the particles), all remaining indices $j$ satisfy $b_j = b_{j-1} = 1$, and thus all the corresponding particles are separated by a distance no smaller than $d^+$. This yields the following lower bound (assuming $n$ is even for convenience):
\begin{multline*}
  \sum_{ j \notin \{i, i + 1 \} } (-b_j) \frac{d^+}{(x_{i+1} - x_j)(x_i - x_j)}
  \geq - \sum_{ j \notin \{i, i + 1 \} } \frac{d^+}{(i+1 - j)d^+ \, (i - j)d^+} \\
  \geq - \frac2{d^+} \sum_{ k=1 }^{n/2-1} \frac{1}{k(k+1)}
  = - \frac2{d^+} \sum_{ k=1 }^{n/2-1} \Big( \frac1k - \frac1{k+1} \Big)
  = - 2\Big(1 - \frac2n\Big) \frac1{d^+}.
\end{multline*}
Inserting this lower bound in \eqref{pf:ddt:dp}, we obtain
\begin{equation} \label{pf:ddt:dp:2}
  \frac d{dt} d^+
  \geq \frac2{n d^+} - \frac2n \Big(1 - \frac2n\Big) \frac1{d^+}
  = \frac{4}{n^2} \frac1{d^+}.
\end{equation}
When $n$ is odd, a similar computation yields
\begin{equation*}
  \frac d{dt} d^+
  \geq \frac{4}{n^2 - 1} \frac1{d^+}
\end{equation*}
and thus
\begin{equation} \label{pf:ddt:dp:3}
  \frac d{dt} (d^+)^2
  \geq \frac{8}{n^2 - 1}.
\end{equation}

Since \eqref{pf:ddt:dp:3} holds for any $n \geq 2$ and at any point of differentiability $t$, and since $d^+$ is locally Lipschitz, we obtain by integrating from $0$ to any $t \in (0,\tau)$ that
\begin{equation*}
  d^+(t)^2 
  \geq d^+(0)^2 + \frac{8}{n^2 - 1} t.
\end{equation*}
This proves Property \eqref{t:Pn:LB:d} for $t < \tau = \tau_1$. 
\smallskip

A similar argument establishes Property~\eqref{t:Pn:LB:d} for $t\in [\tau_1,\tau_2)$, provided we show that $d^+(\tau_1) \geq d^+(\tau_1-)$, which we do now. Let the indices $i < k$ be such that $x_{k}(\tau_1) - x_i(\tau_1)$ is a minimizer for the set in \eqref{dpm} at $\tau_1$, i.e.\ 
\begin{itemize}
    \item $d^+(\tau_1) = x_k(\tau_1)-x_i(\tau_1)$;
    \item $b_k(\tau_1) = b_i(\tau_1) = 1$;
	\item $x_i(\tau_1)$ and $x_k(\tau_1)$ are neighbors.
\end{itemize}
Since $x_i(\tau_1)$ and $x_k(\tau_1)$ are neighbors, $\sum_{j=i+1}^{k-1} b_j(\tau_1) = 0$, and \eqref{annih:rule} implies that $\sum_{j=i+1}^{k-1} b_j(\tau_1-) = 0$. Therefore $\sum_{j=i}^{k} b_j(\tau_1-) = 2$, and since all $b_j(\tau_1-)$ in this sum are either $+1$ or $-1$, there exists $j \in \{i, \ldots, k-1\}$ such that $b_j(\tau_1-) = b_{j+1}(\tau_1-) = 1$. Hence,
\begin{equation*}
  d^+(\tau_1-) 
  \leq x_{j+1}(\tau_1-) - x_j(\tau_1-)
  = x_{j+1}(\tau_1) - x_j(\tau_1)
  \leq x_k(\tau_1)-x_i(\tau_1)
  = d^+(\tau_1).
\end{equation*} 

This completes both the proof for the existence of the solution $(\bx, \bb)$ to $(P_n)$ up to time~$T$, and the proof of Properties \eqref{t:Pn:Lip},\eqref{t:Pn:LB:d} up to time $T$.
\medskip

\emph{Property \eqref{t:Pn:LB:pm}}. 
For convenience, we assume that at $t_0$ all particles are charged (this implies $j = i+1$) and that $b_i(t_0) = 1$. Setting $d := x_{i+1} - x_i$, we write
\begin{align*}
\frac{d x_i}{dt} = -\frac{b_{i+1}}{nd} + \frac1n \sum_{j=1}^{i-1} \frac{b_j}{x_i - x_j} - \frac1n \sum_{j=i+2}^{n} \frac{b_j}{x_j - x_i}.
\end{align*}
To estimate the right-hand side from above, we use the technique in the proof of  Property \eqref{t:Pn:LB:d} to remove from the first sum a certain number of particles such that all remaining particles have positive charge and are separated by a distance $d^+$. Using the same technique also for the second sum, we obtain
\begin{align*}
\frac{d x_i}{dt} \leq \frac 1{nd} + \frac1n \sum_{j=1}^{i-1} \frac{1}{(i-j) d^+} + \frac1n \sum_{j=i+2}^{n} \frac{1}{(j - i) d^-}
\leq \frac{\log n + 1}n \Big( \frac1d + \frac1{d^=} \Big),
\end{align*}
where $d^= := \min(d^+, d^-)$. Since this upper bound is positive, it includes the scenario in which $x_i$ annihilates with $x_{i-1}$. A symmetric argument yields
\begin{align*}
\frac{d x_{i+1}}{dt} \geq -\frac{\log n + 1}n \Big( \frac1d + \frac1{d^=} \Big). 
\end{align*}
Therefore
\begin{align*}
\frac{d}{dt} d \geq - 2\frac{\log n + 1}n \Big( \frac1d + \frac1{d^=} \Big),
\end{align*}
As $d^=$ is nondecreasing due to Property \eqref{t:Pn:LB:d}, by comparison with $\frac{d e}{dt} = -4/e$ with initial datum $e(t_0) = \min(d^=(t_0), d(t_0))$, we deduce \eqref{opposite-sign-lower-bound}. 
\medskip

\emph{Property \eqref{t:Pn:C12}}. Note that it is sufficient to prove Property \eqref{t:Pn:C12} only for all $t$ from the last annihilation time $\tau_0$ prior to~$\tau$ (we set $\tau_0 = 0$ if $\tau$ is the first annihilation time) up to~$\tau$. Indeed, we can otherwise iterate backwards in time over the finitely many annihilation times, and use the continuity of $x_i$ and the bound in Property \eqref{t:Pn:C12} at each annihilation time to capture the resulting curve in a new parabola.

Next, we prove Property \eqref{t:Pn:C12} for all $t \in (\tau_0, \tau)$ and any $i$. We note that on this interval, $\bb$ is constant, and $\bx$ satisfies the ODE in \eqref{Pn}. If $b_i = 0$, then Property \eqref{t:Pn:C12} is satisfied with $C = 0$. If $b_i \neq 0$ and $x_i$ does not collide at $t = \tau$, then the right-hand side in \eqref{Pn} is bounded at $\tau$. By the continuity of $\bx$, it is also bounded in a neighborhood around $\tau$. Hence, $x_i$ is Lipschitz continuous in this neighborhood, which is sufficient to construct a $C$ for which Property \eqref{t:Pn:C12} is satisfied.

The delicate case is when $x_i$ collides with other particles at $\tau$. To avoid relabeling, we assume that there are no neutral particles up to time $\tau$. Let $I$ as in \eqref{I:coll:parts} be the index set of all particles that collide with $x_i$ at $\tau$, including $i$ itself. We use the translation invariance to assume that $x_i(\tau) = 0$. 
We can split the right-hand side of the ODE in \eqref{Pn} as
\begin{equation*}
 \frac{dx_i}{dt} 
  = \frac1n \sum_{ j \in I \setminus \{i\} } \frac{b_i b_j}{x_i - x_j} + F_i,
\end{equation*}
where
\begin{equation} \label{pf:Fi}
  F_i := \frac1n \sum_{ j \in I^c } \frac{b_i b_j}{x_i - x_j}.
\end{equation}
By the definition of $I$, the continuity of $\bx$ and $(\bx, \bb) \in \cZ_n$, we have
\begin{equation*}
  c := \inf_{\tau_0 < t < \tau} \min_{j \in I} \min_{k \in I^c} |x_j - x_k|(t) > 0.
\end{equation*}
Using $c$, we obtain the bound
\begin{equation*}
  \max_{j \in I} |F_j(t)| \leq 1/c,
\end{equation*}
which is independent of $t$.

Next we inspect the second moment of the colliding particles, which we define by
\begin{equation*}
  M (t) := \frac12 \sum_{ j \in I } x_j(t)^2.
\end{equation*}
By definition $M(\tau) = 0$.
A computation similar to \eqref{pf:ddt:Mk1} and \eqref{pf:ddt:Mk2} yields
\begin{equation*}
  \frac{dM}{dt}
  = \underbrace{ \frac1{2n} \sum_{ j \in I } \sum_{ \substack{ k \in I \\ k \neq j } } b_j b_k }_{=: -B} + \underbrace{ \sum_{ j \in I } x_j F_j }_{=: R},
\end{equation*}
where $B \in \R$ is a constant and $R = R(t)$ is a remainder term. A similar computation as in \eqref{pf:ddt:Mk2} shows that 
\[ B 
   = - \frac1{2n} \Big( \Big( \sum_{j=1}^n b_j \Big)^2 - \sum_{j=1}^n b_j^2 \Big)
   \geq \frac{|I| - 1}{2n} 
   > 0. \] 
To bound $R(t)$, we use that $M(t) \to 0$ as $t \nearrow \tau$ to get
\begin{equation*}
  |R(t)|^2
  \leq \Big(\sum_{ j \in I } x_j(t)^2 \Big) \Big(\sum_{ j \in I } F_j(t)^2 \Big)
  \leq 2 M(t) \frac{|I| }{c^2}  
  \to 0\qquad \text{as } t \nearrow \tau.
\end{equation*} 
Therefore for all $t < \tau$ sufficiently large we have
\begin{align}
\label{dMdt-bound}
-\frac{3B}2 \leq \frac{dM}{dt}(t) \leq -\frac B2.
\end{align}
From this and $M(\tau) = 0$ we deduce that
\begin{equation*}
  x_i(t)^2 \leq 2 M(t) \leq 3 B (\tau - t),
\end{equation*}
which completes the proof of Property \eqref{t:Pn:C12}.
\medskip

\emph{Property \eqref{t:Pn:LB:col}}. We translate coordinates such that $y=0$, and consider the computation and notation in the proof of Property \eqref{t:Pn:C12} for the colliding particles. In addition, we may assume that $\tau$ is the first collision time. Set $k := \min I$ and $\ell := \max I$; we will construct $c' > 0$ such that for all $t < \tau$ large enough
\begin{equation*}
  c' \sqrt{\tau - t} 
  < \min \big\{ x_\ell(t), - x_k(t) \big\}.
\end{equation*}

From \eqref{dMdt-bound} we have for $t < \tau$ large enough
\[
\sum_{ i \in I } x_i(t)^2 = 2M(t) \geq B(\tau - t).
\]
We conclude that there exists $c > 0$ such that for all $t < \tau$ large enough
\begin{equation} \label{pf:xkxl}
  x_k(t) \leq - c \sqrt{ \tau-t }
  \quad \text{or} \quad 
  x_\ell(t) \geq c \sqrt{ \tau-t }.
\end{equation}

To show that both cases have to hold, we inspect the first moment $m(t) := \sum_{ i \in I } x_i(t)$. Similar to \eqref{pf:ddt:Mk1} we compute
\begin{equation*}
  \frac{dm}{dt} 
  = \sum_{ i \in I } \frac1n \sum_{ j \in I^c } \frac{b_i b_j}{x_i - x_j}
  = \sum_{ i \in I } F_i,
\end{equation*}
where $F_i \in C([0, \tau])$. Hence, $m \in C^1([0,\tau])$. Since $m(\tau) = 0$, there exists $C > 0$ such that 
$$ |m(t)| \leq C (\tau - t). $$ 

Next we show that each of the two inequalities in \eqref{pf:xkxl} implies the other. Suppose that $x_\ell(t) \geq c \sqrt{ \tau-t }$ holds for some $t < \tau$ large enough (to be specified later). Then,
\begin{equation*}
  C (\tau - t) 
  \geq m(t)
  \geq x_\ell(t) + (|I|-1) x_k(t)
  \geq c \sqrt{ \tau-t } + (|I|-1) x_k(t).
\end{equation*}
Rearranging terms and changing constants, 
\begin{equation*}
  x_k(t) \leq c \sqrt{ \tau-t } \big( C \sqrt{ \tau-t } - 1 \big).
\end{equation*} 
Hence, for $t$ large enough, the upper bound on $x_k$ in \eqref{pf:xkxl} holds. Similarly, it follows that the upper bound on $x_k$ implies the lower bound on $x_\ell$. 
\medskip

\emph{Property \eqref{t:Pn:stab}}. Using Property \eqref{t:Pn:Lip} and Lemma~\ref{le:dM-metric}, Ascoli--Arzel\`a gives a subsequence $m$ (not relabeled) and an $\tilde \bx : [0,T] \to \R^n$ for which $\bx_{m} \to \tilde \bx$ in $C([0,T]; (\R^n / S_n, d_\bM))$ as $m \to \infty$, and in particular $\bx_m(t) \to \tilde\bx (t)$ pointwise in the Euclidean norm as elements of $\R^n/S_n$. By uniqueness of solutions to \eqref{Pn}, it is then sufficient to show that $(\tilde \bx, \bb)$ is a solution to \eqref{Pn}.

We start by proving that $(\tilde \bx, \bb)$ is a solution to \eqref{Pn} up to the first collision time $\tau$ of the limit $\tilde \bx$. Let $\delta \in (0, \tau)$. Passing to the limit $m \to \infty$ in the weak version of the ODE (testing against $\varphi \in C^1([0,\tau - \delta])$), we obtain that $\tilde \bx$ satisfies the weak version of the ODE on $[0,\tau - \delta]$. Since $\delta$ is arbitrary, $(\tilde \bx, \bb)$ satisfies the ODE on $(0, \tau)$. Moreover, by the continuity of $\tilde \bx$,
\begin{equation} \label{pf:bbe:upto:tau}
  \tilde \bx |_{[0,\tau]} = \bx |_{[0,\tau]}.
\end{equation}

Next we claim that for all $\delta$ small enough there exists $m_0 > 0$ such that for all $m \geq m_0$
\begin{equation} \label{pf:bbe:claim}
  \bb_m (\tau + \delta) = \bb (\tau + \delta) \quad \text{(modulo relabeling)}.
\end{equation}
From this claim, the argument above applies again to pass to the limit $m \to \infty$ in the weak form of the ODE on any compact subinterval of $(\tau, \tau_2)$, where $\tau_2$ is the second collision time of $\bx$. This yields that $(\tilde \bx, \bb)$ satisfies the ODE on $(\tau, \tau_2)$, and by the continuity of $\tilde \bx$ we get
$$\tilde \bx |_{[\tau, \tau_2]} = \bx |_{[\tau, \tau_2]}.$$
Property \eqref{t:Pn:stab} follows by iterating over the annihilation times of $\bx$.

It is left to prove the claim \eqref{pf:bbe:claim}. The idea of the argument is to localize around any annihilation point at $\tau$. With this aim, we fix any $i \in \{1,\ldots,n\}$, and take $I$ as the index set of particles $\tilde x_j$ which collide with $\tilde x_i$ at $\tau$, including $i$ itself. We allow for $I = \{i\}$, in which case $\tilde x_i$ does not collide with any other particle at $\tau$. From \eqref{pf:bbe:upto:tau} we infer that $(\tilde \bx(\tau), \bb(\tau)) \in \cZ_n$, and thus any two particles $\tilde x_j$ and $\tilde x_k$ with $j \in I$ and $k \notin I$ at time $\tau$ are separated by a distance of at least
\[ \rho 
:= \min_{\substack{ j \in I \\ k \notin I }} \big| x_j(\tau) - x_k(\tau) \big|
  > 0. \]
Then, since $\tilde \bx$ is continuous, a similar separation distance remains in effect over the time interval $[\tau - \delta, \tau + \delta]$ for all $\delta$ small enough, i.e.,
\begin{equation} \label{pf:tbx:through:cyl}
  \min_{ t \in [\tau - \delta, \tau + \delta] } 
  \min_{\substack{ j \in I \\ k \notin I }} 
  \big| \tilde x_j(t) - \tilde x_k(t) \big| > \frac23 \rho.
\end{equation}
We illustrate the geometric interpretation of $\rho$ and $\delta$ in Figure \ref{fig:pf:vi}. For later use, we will take $\delta$ small enough so that
\begin{equation} \label{pf:bbe:apriori:2}
  0 < \tau-\delta  < \tau+\delta < \tau_2   
  \quad \text{and} \quad  
  \frac6\rho - \frac1{n \delta} \leq -1.
\end{equation}

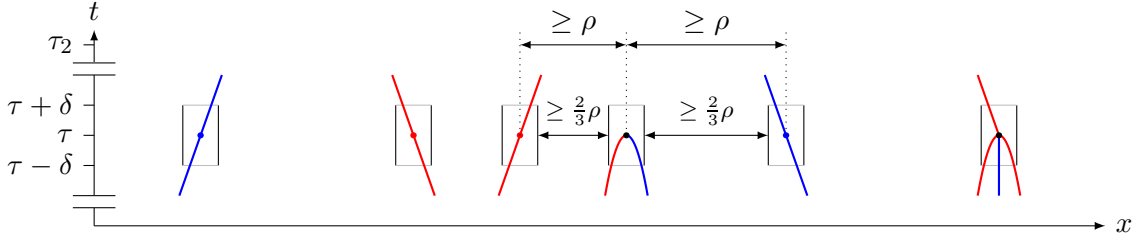
\begin{figure}[ht]
\centering
\begin{tikzpicture}[xscale=1.4, yscale=.4, >= latex]
\def \sqtwo {1.414}
\def \rr {0.03}

\draw[->] (0,0) -- (0,6.5) node[above] {$t$};
\draw[->] (0,0) -- (9.5,0) node[right] {$x$};

\foreach \point in {(0,.8),(0,5.2)}{
  \begin{scope}[shift=\point] 
    \fill[white] (-.2,-.2) rectangle (.2,.2);
    \draw (-.2,-.2) --++ (.4,0);
    \draw (-.2,.2) --++ (.4,0);
  \end{scope}
}

\draw (0,2) --++ (-.1,0) node[left]{$\tau - \delta$};
\draw (0,3) --++ (-.1,0) node[left]{$\tau$};
\draw (0,4) --++ (-.1,0) node[left]{$\tau + \delta$};
\draw (0,6) --++ (-.1,0) node[left]{$\tau_2$}; 

\foreach \point in {(1,3),(3,3),(4,3),(5,3),(6.5,3),(8.5,3)}{
  \begin{scope}[shift=\point] 
    \draw (-.167,-1) --++ (0,2);
    \draw (.167,-1) --++ (0,2);
    \draw[lightgray] (-.167,-1) -- (.167,-1);
    \draw[lightgray] (-.167,1) -- (.167,1);
  \end{scope}
}

\begin{scope}[shift={(1,3)},scale=1] 
    \draw[thick, blue] (-.2,-2) -- (.2,2);          
    \fill[blue] (0,0) ellipse (0.03 and 0.1);
\end{scope}

\begin{scope}[shift={(3,3)},scale=1] 
    \draw[thick, red] (.2,-2) -- (-.2,2);          
    \fill[red] (0,0) ellipse (0.03 and 0.1);
\end{scope}

\begin{scope}[shift={(4,3)},scale=1] 
    \draw[thick, red] (-.2,-2) -- (.2,2);          
    \fill[red] (0,0) ellipse (0.03 and 0.1);
\end{scope}

\begin{scope}[shift={(6.5,3)},scale=1] 
    \draw[thick, blue] (.2,-2) -- (-.2,2);          
    \fill[blue] (0,0) ellipse (0.03 and 0.1);
\end{scope}

\begin{scope}[shift={(5,3)},scale=1] 
    \draw[domain=-.2:0, smooth, thick, red] plot (\x,{-2*\x*\x*25});           
    \draw[domain=0:.2, smooth, thick, blue] plot (\x,{-2*\x*\x*25});           
    \fill[black] (0,0) ellipse (0.03 and 0.1);
\end{scope}

\begin{scope}[shift={(8.5,3)},scale=1] 
    \draw[thick, red] (0,0) -- (-.2,2);
    \draw[thick, blue] (0,0) --++ (0,-2);
    \draw[domain=-.2:.2, smooth, thick, red] plot (\x,{-2*\x*\x*25});                     
    \fill[black] (0,0) ellipse (0.03 and 0.1);
\end{scope}

\foreach \point in {(4,3),(5,3),(6.5,3)}{
  \begin{scope}[shift=\point] 
    \draw[dotted] (0,0) --++ (0,3.5);
  \end{scope}
}

\draw[<->] (4.167,3) -- (4.833,3) node[midway, above]{\footnotesize $\geq \frac23 \rho$};
\draw[<->] (5.167,3) -- (6.333,3) node[midway, above]{\footnotesize $\geq \frac23 \rho$};
\draw[<->] (4,6) --++ (1,0) node[midway, above]{$\geq \rho$};
\draw[<->] (5,6) --++ (1.5,0) node[midway, above]{$\geq \rho$};
\end{tikzpicture} \\
\caption{Sketch of the geometric interpretation of $\rho$ and $\delta$ for the localization of the trajectories of $x_i$ in Figure \ref{fig:trajs} zoomed in around $\tau$.}
\label{fig:pf:vi}
\end{figure}

Next we construct $m_0$. First, a separation condition similar to  \eqref{pf:tbx:through:cyl} remains in effect for the particles $\bx_m$ when $m$ is large enough. Indeed, by the pointwise convergence of $\bx_m$ to $\tilde \bx$ in $\R^n / S_n$ as $m \to \infty$, it follows from \eqref{pf:tbx:through:cyl} by the triangle inequality that for all $t \in [\tau - \delta, \tau + \delta]$ there exists $m_0 > 0$ such that for all $m \geq m_0$
\begin{equation*}
  \min_{\substack{ j \in I \\ k \notin I }} 
  \big| x_{m,j}(t) - x_{m,k}(t) \big| > \frac\rho2.
\end{equation*}
Then, by Property \eqref{t:Pn:LB:pm}, this separation condition is uniform in time, i.e., there exists $m_0 > 0$ such that for all $m \geq m_0$
\begin{equation} \label{pf:tbx:through:cyl:e}
  \min_{ t \in [\tau - \delta, \tau + \delta] } \min_{\substack{ j \in I \\ k \notin I }} 
  \big| x_{m,j}(t) - x_{m,k}(t) \big| > \frac\rho3.
\end{equation}
Second, by \eqref{pf:bbe:upto:tau} and the fact that the trajectories of $\bx$ over $[0, \tau - \delta]$ do not intersect, it follows again from the pointwise convergence of $\bx_m$ to $\tilde \bx$ and Property \eqref{t:Pn:LB:pm} that for $m$ large enough no particles $x_{m,j}$ with $j \in I$ collide before time $\tau - \delta$, i.e., by taking $m_0$ larger if necessary,
\begin{equation} \label{pf:tbx:b:equal}
  b_{m,j}(\tau - \delta) = b_j (\tau - \delta)
\end{equation}
for all $j \in I$ and all $m \geq m_0$. Third, for later use, we take $m_0$ larger if necessary to ensure that for all $m \geq m_0$
\begin{equation} \label{pf:tbx:xe:close:at:tau}
  \max_{j,k \in I} \big| x_{m,j}(\tau) - x_{m,k}(\tau) \big| < \delta.
\end{equation}

Take $m \geq m_0$ arbitrary. To prove \eqref{pf:bbe:claim} it is enough to show that at most one particle $x_{m,j}$ with $j \in I$ is charged at time $\tau + \delta$, i.e.,
\begin{equation} \label{pf:bbe:claim:reduced}
  \sum_{j \in I} \big| b_{m,j}(\tau + \delta) \big| \leq 1.
\end{equation}
Indeed, from Corollary \ref{c:Pn} it follows that also at most one particle $x_j$ with $j \in I$ is charged at time $\tau + \delta$. Then, by the conservation of charge at collisions (see Definition \ref{defn:Pn}\eqref{defn:Pn:ann:rule}) and \eqref{pf:tbx:b:equal} we obtain
\[
  \sum_{j \in I} b_{m,j}(\tau + \delta)
  = \sum_{j \in I} b_{m,j}(\tau - \delta)
  = \sum_{j \in I} b_j(\tau - \delta)
  = \sum_{j \in I} b_j(\tau + \delta),
\]
and the claim in \eqref{pf:bbe:claim} follows.

To prove \eqref{pf:bbe:claim:reduced}, we set
\[
  I_m (t) := \{ j \in I : b_{m,j}(t) \neq 0 \}
\]
as the index set of charged particles $x_{m,j}$ at time $t$ with $j \in I$, we define 
\[ 
  D_m(t) := \left\{ \begin{aligned}
    &\max_{j,k \in I_m(t)} x_{m,j}(t) - x_{m,k}(t)
    &&\text{if } |I_m(t)| \geq 2  \\
    &0
    &&\text{otherwise}
  \end{aligned} \right.
\]
as the maximal distance between any two particles with indices in $I_m(t)$,
and we prove that 
\begin{equation} \label{pf:bbe:Ie}
  D_m(t) \leq (\tau + \delta) - t
  \qquad \text{for all } t \in [\tau, \tau + \delta],
\end{equation} 
so that in particular $D_m(\tau + \delta) = 0$, which implies \eqref{pf:bbe:claim:reduced}.

By \eqref{pf:tbx:xe:close:at:tau}, the claim \eqref{pf:bbe:Ie} holds at $t = \tau$. To prove \eqref{pf:bbe:Ie} beyond $\tau$, we may assume that $|I_m (\tau)| \geq 2$. For convenience, we relabel the particles so that $I_m (\tau) = \{k, k+1, \ldots, \ell\}$. We treat the case where $b_{m,k}(\tau) = 1$; the other case can be treated analogously. Let $S_m$ be the set of annihilation times of $\{ x_{m,j} : j \in I_m (\tau) \}$, and set $\tau_m = \min S_m$. It is sufficient to show that
\begin{subequations} \label{pf:bbe:De}
\begin{align} \label{pf:bbe:De:jump}
  \llbracket D_m (t) \rrbracket &\leq 0
  &&\text{for all } t \in S_m, \\\label{pf:bbe:De:der}
  \frac{d D_m}{dt} (t) &\leq -1
  &&\text{for all } t \in (\tau, \tau + \delta) \setminus S_m \text{ with } D_m(t) > 0.
\end{align}
\end{subequations}

Since \eqref{pf:bbe:De:jump} is obvious, we focus on proving \eqref{pf:bbe:De:der}. We first consider the case where $|I|$ is even. In this case, $|I_m (\tau)|$ is also even, and we find on $(\tau, \tau_m)$ that
\begin{equation} \label{dxe1dt}
  \frac{dx_{m,k}}{dt} 
  = \frac1n \sum_{ j = k+1 }^\ell \frac{b_{m,k} b_{m,j}}{x_{m,k} - x_{m,j}} + F_{m,k},
\end{equation} 
where $F_{m,k}$ is as in \eqref{pf:Fi}. We bound $|F_{m,k}|$  as in the proof of Property~\eqref{t:Pn:C12}. Since any particle $x_{m,j}(t)$ with index $j < k$ or $j > \ell$ satisfies either $b_{m,j}(t) = 0$ or $j \notin I$, 
we obtain from \eqref{pf:tbx:through:cyl:e} that
\begin{equation*}
  |F_{m,k}|
  = \frac1n \bigg| \sum_{j=1}^{k-1} \frac{b_k b_j}{x_{m,k} - x_{m,j}} + \sum_{j=\ell+1}^{n} \frac{b_k b_j}{x_{m,k} - x_{m,j}} \bigg|
  \leq \frac1n \Big( (k-1) \frac3\rho + (n - \ell - 1) \frac3\rho \Big)
  \leq \frac3\rho.
\end{equation*}
For the first term on the right-hand side of \eqref{dxe1dt}, we infer from Corollary~\ref{c:Pn} that $b_{m,j}(\tau) = (-1)^{j-k}$ for all $j = k, \ldots, \ell$. Hence, on $(\tau, \tau_m)$,
\begin{multline} \label{pf:xe1:ita:force}
  \frac1n \sum_{ j = k+1 }^\ell \frac{b_{m,k} b_{m,j}}{x_{m,k} - x_{m,j}} \\
  = \frac1n \sum_{ j = 1 }^{(\ell - k - 1)/2} \Big( \frac{1}{x_{m,k + 2j - 1} - x_{m,k}} - \frac{1}{x_{m,k + 2j} - x_{m,k}} \Big) + \frac1n \frac{1}{x_{m,\ell} - x_{m,k}}
  \geq \frac1n \frac{1}{D_m},
\end{multline}
where in the last inequality we have used the ordering of the particles $\{ x_{m,j} : j \in I_m (\tau) \}$. Collecting these findings in \eqref{dxe1dt}, we obtain
\begin{equation*} 
  \frac{dx_{m,k}}{dt} 
  \geq \frac1n \frac{1}{D_m} - \frac3\rho.
\end{equation*}
Similarly, one can derive that $dx_{m,\ell} / dt \leq 3/\rho - 1/(n D_m)$ on $(\tau, \tau_m)$. Hence, $d D_m /dt \leq 6/\rho - 2/(n D_m)$, which by \eqref{pf:bbe:apriori:2} and $D_m(\tau) < \delta$ implies \eqref{pf:bbe:De:der} on $(\tau, \tau_m)$.  In fact, the estimates above show that outside of $S_m$, $d D_m /dt \leq -1$ as long as $|I_m (t)| \geq 2$, i.e., $D_m(t) > 0$. This completes the proof of \eqref{pf:bbe:De} for when $|I|$ is even.

If $|I|$ is odd, a similar argument applies. The only difference is that in \eqref{pf:xe1:ita:force} we need to be more precise in the estimates:
\begin{align*}
  \frac{dD_m}{dt} 
  &= \frac1n \sum_{ j = 1 }^{(\ell - k)/2} \Big( \frac{1}{x_{m,\ell} - x_{m,k + 2j - 2}} - \frac{1}{x_{m,\ell} - x_{m,k + 2j - 1}} \Big) \\
  &\qquad
       - \frac1n \sum_{ j = 1 }^{(\ell - k)/2} \Big( \frac{1}{x_{m,k + 2j - 1} - x_{m,k}} - \frac{1}{x_{m,k + 2j} - x_{m,k}} \Big)
       + F_{m,\ell} - F_{m,k} \\
  &= -\frac1n \sum_{ j = 1 }^{(\ell - k)/2} \bigg( \frac{ x_{m,k + 2j - 1} - x_{m,k + 2j - 2} }{(x_{m,\ell} - x_{m,k + 2j - 2} )( x_{m,\ell} - x_{m,k + 2j - 1} )}  \\
  & \qquad \qquad \qquad \qquad + \frac{ x_{m,k + 2j} - x_{m,k + 2j - 1} }{ ( x_{m,k + 2j - 1} - x_{m,k} )( x_{m,k + 2j} - x_{m,k} ) } \bigg)
       + F_{m,\ell} - F_{m,k} \\
  &\leq -\frac1{n D_m^2} \sum_{ j = 1 }^{(\ell - k)/2} ( x_{m,k+2j} -  x_{m,k+2j-2}) + \frac6\rho
  = -\frac1{n D_m} + \frac6\rho.
\end{align*} 
This concludes the proof of \eqref{pf:bbe:Ie}. Finally, the proof of \eqref{pf:bbe:claim} follows by repeating the construction of $\delta$ small enough and $m_0$ for each $i \in \{1,\ldots,n\}$, and then taking the minimal and maximum value respectively over~$i$.
\end{proof}

\section{The Hamilton--Jacobi equations}
\label{s:HJ}

In this section we introduce the notion of viscosity solutions for the Hamilton--Jacobi equation~\eqref{HJn:formaller} describing the particle system, as well as the limit equation~\eqref{HJ:formal}. 

Let us expand on the brief introduction of viscosity solutions at the end of Section~\ref{sec:intro-many-particle-limit}. Viscosity solutions are the natural generalized notion of solutions for this type of nonlocal Hamilton--Jacobi equations due to their comparison principle structure. The classical theory for local Hamilton--Jacobi equations goes back to the work of Crandall and Lions~\cite{CrandallLions83}; see \cite{CrandallIshiiLions92,Giga06} for the standard treatment of the theory and references. The general idea of viscosity solutions is to use the comparison principle as the defining property. We first identify a sufficiently large class of functions, called test functions, for which the property of locally being (strict) subsolutions or supersolutions has a classical meaning. We then require that a candidate viscosity solution satisfies a comparison principle with all such classical strict subsolutions and supersolutions. For the standard first-order and second-order partial differential equations one can choose smooth functions or even second order polynomials as the class of test functions; there is no unique choice. For more singular equations, it might be necessary to restrict the class of the test functions to be able to give the operator a classical meaning, but the choice of test functions may be subtle; choosing too few could make the comparison principle fail, whereas choosing too many could make the proof of existence or stability of solutions more challenging. This idea of restricting the class of test functions depending on the operator first appeared in \cite{IshiiSouganidis95} in the context of level set equations and in \cite{OhnumaSato97} for more general singular equations.

\subsection{Notation}

Throughout this section, we set $Q := (0,\infty)\times \R$ and $Q_T := (0,T) \times \R$. For any $\e > 0$, the staircase function $E_\e$ was already defined in \eqref{E1n}. For a function $f : \R^d \to \R$, we set $f_*, f^*$ to be its lower/upper semi-continuous envelope over all its variables. For example,
  \begin{equation*}
    E_\e^* = E_\e, \qquad
    E_{\e,*} (\alpha) = \e \Big\lceil \frac \alpha \e \Big\rceil - \frac\e2.
  \end{equation*}
Finally, $BUC$ is the space of bounded uniformly continuous functions, $C_b$ is the space of bounded continuous functions and $C_b^2$ is the space of bounded twice continuously differentiable functions: $C_b = C \cap L^\infty$ and $C_b^2 = C^2 \cap L^\infty$. 

\subsection{Hamilton--Jacobi equation at $\e>0$}

With $\e := \frac1n$ we rewrite the Hamilton--Jacobi equation in \eqref{HJn:formaller} as
\begin{align}
\label{HJeps-p}
\tag{HJ$_\e$}
u_t &= \mathcal M_\e[u] |u_x|, & \text{on } \R \times (0, \infty),
\intertext{with initial condition}
u(0, \cdot) &= u^\circ,\nonumber
\end{align}
where the nonlocal operator $\mathcal M_\e$ is formally defined as
\begin{align*}
\mathcal M_\e[w](x) = \int_\R E_\e[w(x+z) - w(x)] \frac{dz}{z^2}.
\end{align*}
In this section we switch the parameters $n$ and $\e$. Given any $\e > 0$, the level sets of $u^\circ$ at $\e \Z$ determine $(\bx^\circ, \bb^\circ)$, which in particular prescribes the initial number $n_\e$ of charged particles. Then, by the same formal arguments as in the introduction, it readily follows that the related ODE is
\[
  \frac{dx_i}{dt} = \e \sum_{ j \neq i }^{n_\e} \frac{b_i b_j}{x_i - x_j}
     \qquad t \in (0,T), \ i = 1,\ldots, n_\e.
\] 

Before constructing a rigorous definition of \eqref{HJeps-p}, we provide an explicit example of the expected solution to \eqref{HJeps-p} for simple choices of the initial condition $u^\circ$. 

\begin{ex} \label{ex:HJe}
For any $\e > 0$ and smooth even initial data $u^\circ$ which is strictly decreasing in $[0, \infty)$ and satisfies $\sup u^\circ - \inf u^\circ \leq \e$, one can check that the function $u(t,x) = u^\circ(\sqrt{x^2 + \e t})$ satisfies \eqref{HJeps-p} at all points $(t,x)$ where $\partial_x u(t,x) \neq 0$, i.e.\ at points $x \neq 0$. As a concrete example, consider $u^\circ(x) := \e/(x^2 + 1)$ that yields the solution $u(t,x) = \e/(x^2 + \e t + 1)$. After giving a rigorous definition to \eqref{HJeps-p}, it turns out to be  the unique viscosity solution with the initial data $u^\circ$. While it is not difficult to verify this, we do not provide the details, and refer instead to the proof of Lemma \ref{l:Pn:to:HJe} for a possible procedure. 

Instead, we focus on how the formula for $u(t,x)$ can be deduced from the particle system \eqref{Pn} with time-rescaling factor $\e n$ by using the method of characteristics. The initial data $u^\circ$ describes a continuum of two-particle systems parametrized by their initial position $0 < a < \infty$: a particle of charge $+1$ located at $x_1^{a\circ} = -a$ and a particle of charge $-1$ located at $x_2^{a\circ} = a$. The unique solution of the rescaled \eqref{Pn} up to the annihilation time $\tau_1 = a^2/\e$ is $x_1^a(t) = -\sqrt{a^2 - \e t}$, with $x_2^a(t) = -x_1^a(t)$. Note that that the parameter $a$ generates a foliation by trajectories of the half-plane $\{(t, x) : t > 0\}$; see Figure \ref{fig:ex:HJe}.  As $u$ is constant along the trajectories of the particles, $u(t, x_i^a(t)) = u^\circ(x_i^{a\circ})$, where $a = a(t,x) = \sqrt{x^2 + \e t}$ is the unique parameter so that the point $(t,x)$ lies on either of the trajectories $x_1^a$ or $x_2^a$. 
\end{ex}

\begin{figure}[h]
\centering
\begin{tikzpicture}[scale=0.5, >= latex]
\draw[->] (0,0) -- (0,5) node[above] {$t$};
\draw[->] (-5,0) -- (5,0) node[right] {$x$};
\foreach \a in {0.5, 1, 1.5, 2} {
    \draw[domain=-\a:0, smooth, thick, red] plot (\x,{\a*\a-\x*\x});  
    \draw[domain=0:\a, smooth, thick, blue] plot (\x,{\a*\a-\x*\x}); 
    }
\foreach \a in {2.5, 3, 3.5, 4} {
    \draw[domain=-\a:-sqrt(\a*\a - 4.5), smooth, thick, red] plot (\x,{\a*\a-\x*\x});  
    \draw[domain=sqrt(\a*\a - 4.5):\a, smooth, thick, blue] plot (\x,{\a*\a-\x*\x}); 
    }
\end{tikzpicture}
\caption{The foliation of the half-plane $t > 0$ by the trajectories of pairs in Example~\ref{ex:HJe}. Trajectories of particles with positive charge are coloured red; those with negative charge blue. These trajectories are also the level sets of the solution $u$ of \eqref{HJeps-p} constructed in Example~\ref{ex:HJe}.}
\label{fig:ex:HJe}
\end{figure}
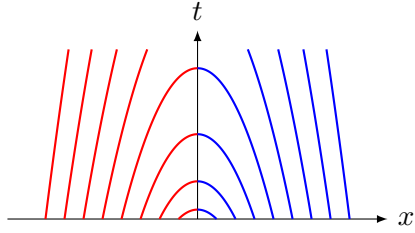

We continue with constructing a rigorous definition for \eqref{HJeps-p}. The difference with the setting in \cite{ForcadelImbertMonneau09} is that we consider the kernel $z^{-2}$ in the definition of $\mathcal M_\e$, whereas \cite{ForcadelImbertMonneau09} considers an integrable one. As a result, the integral in the definition of $\cM_\e [w] (x)$ does not converge as a Lebesgue integral if $\partial_x w (x) = 0$ due to the singularity of  $z^{-2}$ at $0$.

We construct a proper replacement for $\mathcal M_\e[u] |u_x|$ in two steps. In the first step, we follow the idea in the works of Sayah~\cite{Sayah91}, Imbert, Monneau and Rouy~\cite{ImbertMonneauRouy08} and Jakobsen and Karlsen~\cite{JakobsenKarlsen05} to replace $u$ in the integral of $\mathcal M_\e[u]$ by a test function $\phi$ on the range $-\rho < z < \rho$, where $\rho > 0$ is a (small) parameter. The test function $\phi$ will later be taken as the same test functions used in the definition of viscosity solutions. It will also turn out that the notion of viscosity solutions which follows is independent of the choice of $\rho$.

\begin{defn}[Hamiltonians at $\e>0$]
\label{d:eps-ham}
Fix $\rho>0$, $u \in L^\infty(\R \times [0,\infty))$, $t > 0$, $x\in \R$ and $\phi\in C^2_b(B_\delta(t) \times B_\rho(x))$ for some $\delta > 0$. If $\phi_x(t,x) \neq 0$, we 
define 
\begin{align*}
\o H_{\rho,\e} [\phi,u](t, x) 
&:= \o M_{\rho,\e}[\phi(t,\cdot),u(t, \cdot)](x) \,|\phi_x(t, x)| 
 \\
\underline H_{\rho,\e} [\phi,u](t,x) 
&:= \underline M_{\rho,\e}[\phi(t,\cdot),u(t, \cdot)](x) \,|\phi_x(t, x)|,
\end{align*}
where
\begin{align} \label{oMrhoe}
\o M_{\rho,\e}[\phi,u](x) &:= \pv \int_{B_\rho} E_\e^*\big[ \phi(x+z)-\phi(x)\big] \frac{dz}{z^2}
+ \int_{B_\rho^c} E_\e^*\big[ u(x+z)-u(x)\big] \frac{dz}{z^2}, \\
\underline M_{\rho,\e}[\phi,u](x) &:= \pv \int_{B_\rho} E_{\e,*}\big[ \phi(x+z)-\phi(x)\big] \frac{dz}{z^2}
+ \int_{B_\rho^c} E_{\e,*}\big[ u(x+z)-u(x)\big] \frac{dz}{z^2}.
\end{align}
\end{defn}

The Hamiltonians $\o H_{\rho,\e}$ and $\underline H_{\rho,\e}$ will replace the right-hand side of \eqref{HJeps-p} in the definition of the viscosity subsolution and supersolution. Since the Hamiltonians are discontinuous, we need to choose the one with the correct semi-continuity to be able to pass to various limits.

The next lemma shows that the expressions $\o H_{\rho,\e}$ and $\underline H_{\rho,\e}$ above are well-defined.

\begin{lem} \label{l:Hpe}
Let $\e$, $\rho$, $u$, $t$, $x$ and $\phi$ be as in Definition~\ref{d:eps-ham}. Then, dropping the dependence on $t$,
\begin{enumerate}[(i)]
  \item \label{l:Hpe:Bp} $\exists \, \rho_0 > 0 \: \forall \, \tilde \rho \in (0, \rho_0] :  \displaystyle \pv \int_{B_{\tilde \rho}} E_\e^*\big[ \phi(x+z)-\phi(x)\big] \frac{dz}{z^2} = 0$;
  \item \label{l:Hpe:Bpc} $\displaystyle \bigg| \int_{B_\rho^c} E_\e^*\big[ u(x+z)-u(x)\big] \frac{dz}{z^2} \bigg| \leq \frac{ 4 \|u\|_\infty + \e}\rho$.
\end{enumerate}
The above two properties also hold when $E_\e^*$ is replaced with $E_{\e,*}$. 
\end{lem}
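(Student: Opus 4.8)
The plan is to handle the two items separately: (i) is a cancellation statement coming from oddness near $z=0$, and (ii) is a crude tail estimate. In both cases I expect the argument written for $E_\e^*$ to carry over verbatim to $E_{\e,*}$, because the two staircases differ only on the grid $\e\Z$ and both satisfy $E_\e^*(\alpha)=E_{\e,*}(\alpha)=\tfrac\e2\sign(\alpha)$ whenever $0<|\alpha|<\e$; on the range of arguments that occurs in the proof the grid will be avoided, so the distinction between $E_\e^*$ and $E_{\e,*}$ never enters.

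For (i), I would fix $t$ (suppressing it from the notation) and exploit $\phi\in C^2_b$ together with $\phi_x(t,x)\neq 0$. A first-order Taylor expansion of $z\mapsto\phi(x+z)-\phi(x)$, with quadratic remainder bounded by $\|\phi_{xx}\|_{L^\infty(B_\delta(t)\times B_\rho(x))}$, lets me choose $\rho_0\in(0,\rho]$ so small that for all $z$ with $0<|z|\le\rho_0$ one has simultaneously $\sign\big(\phi(x+z)-\phi(x)\big)=\sign\big(\phi_x(x)\,z\big)$ and $0<|\phi(x+z)-\phi(x)|<\e$. For such $z$ the explicit staircase formula gives $E_\e^*\big[\phi(x+z)-\phi(x)\big]=\tfrac\e2\sign(\phi_x(x))\,\sign(z)$, and the same with $E_{\e,*}$. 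Hence for any $\tilde\rho\in(0,\rho_0]$ and any truncation parameter $\eta\in(0,\tilde\rho)$ the truncated integral $\int_{\eta<|z|<\tilde\rho}E_\e^*\big[\phi(x+z)-\phi(x)\big]\,\tfrac{dz}{z^2}$ equals $\tfrac\e2\sign(\phi_x(x))\int_{\eta<|z|<\tilde\rho}\tfrac{\sign z}{z^2}\,dz=0$, because $z\mapsto\sign(z)/z^2$ is odd and the domain is symmetric. Letting $\eta\downarrow0$ shows the principal value vanishes, and the identical computation applies with $E_{\e,*}$.

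For (ii), I would use the elementary bound $|E_\e^*(\alpha)-\alpha|\le\tfrac\e2$ (from $\alpha/\e-1<\lfloor\alpha/\e\rfloor\le\alpha/\e$), hence $|E_\e^*(\alpha)|\le|\alpha|+\tfrac\e2$, and the analogous bound for $E_{\e,*}(\alpha)=\e\lceil\alpha/\e\rceil-\tfrac\e2$ from $\alpha/\e\le\lceil\alpha/\e\rceil<\alpha/\e+1$. Taking $\alpha=u(x+z)-u(x)$ with $|\alpha|\le 2\|u\|_\infty$ gives $\big|E_\e^*[u(x+z)-u(x)]\big|\le 2\|u\|_\infty+\tfrac\e2$, uniformly in $z$. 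Since $z^{-2}$ is integrable on $B_\rho^c$ with $\int_{B_\rho^c}z^{-2}\,dz=2/\rho$, the integral in (ii) converges absolutely and is bounded in modulus by $\big(2\|u\|_\infty+\tfrac\e2\big)\cdot\tfrac2\rho=(4\|u\|_\infty+\e)/\rho$, and the same holds with $E_{\e,*}$.

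I do not expect a real obstacle here. The only step requiring genuine care is the choice of $\rho_0$ in (i): converting $\phi_x(t,x)\neq0$ and the $C^2$-bound on $\phi$ into the joint sign/smallness control that keeps $\phi(x+z)-\phi(x)$ inside $(-\e,0)\cup(0,\e)$, where both staircases collapse to the odd function $\tfrac\e2\sign(\cdot)$. (In the degenerate case $\phi_{xx}\equiv0$ on the box one has $\phi(x+z)-\phi(x)=\phi_x(x)z$ exactly and the choice of $\rho_0$ is immediate.)
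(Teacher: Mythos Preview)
Your proof is correct and follows essentially the same approach as the paper: for (i) you choose $\rho_0$ so that $\phi(x+z)-\phi(x)$ stays in $(-\e,0)\cup(0,\e)$ with the correct sign, where both staircases reduce to $\tfrac\e2\sign(\cdot)$ and the principal value vanishes by oddness; for (ii) you use the same bound $|E_\e^*(\alpha)|\le|\alpha|+\tfrac\e2$ together with $\int_{B_\rho^c}z^{-2}\,dz=2/\rho$. The only cosmetic difference is that the paper invokes $C^1$-continuity of $\phi$ directly rather than a Taylor expansion with quadratic remainder to obtain $\rho_0$.
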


\begin{proof}
Let $\e$, $\rho$, $u$, $t$, $x$, $\phi$ be given. Property \eqref{l:Hpe:Bpc} follows by simply using $| E_\e^*[\alpha] | \leq |\alpha| + \e/2$. For Property \eqref{l:Hpe:Bp}, we assume for convenience that $\phi'(x) > 0$. Since $\phi \in C^1(\R)$, there exists $\rho_0 > 0$ such that 
\begin{equation*} 
  \phi(x+z)-\phi(x)
  \in \left\{ \begin{array}{ll}
    (-\e, 0)
    &\text{if } z \in (-\rho_0, 0)  \\
    (0, \e)
    &\text{if } z \in (0, \rho_0).
  \end{array} \right.
\end{equation*}
Hence,
\begin{equation*}
  2 E_\e^* \big[ \phi(x+z)-\phi(x)\big]
  = \left\{ \begin{array}{ll}
    -\e
    &\text{if } z \in (-\rho_0, 0)  \\
    \e
    &\text{if } z \in (0, \rho_0);
  \end{array} \right.
\end{equation*}
Property \eqref{l:Hpe:Bp} follows.

The proof for $E_{\e,*}$ is analogous.
\end{proof} 

Even after replacing $\mathcal M_\e[u] |u_x|$ by either $\o H_{\rho,\e} [\phi,u]$ or $\underline H_{\rho,\e} [\phi,u]$ for a regular test function $\phi$, we still require $\phi_x(t,x) \neq 0$ in Definition \ref{d:eps-ham} for the Hamiltonians. This is the key difference with \cite{ImbertMonneauRouy08}, where the corresponding integral is defined for any smooth $\phi$. Here, the requirement $\phi_x(t,x) \neq 0$ is more than a technical issue; at annihilation points of the particle system we necessarily have $\partial_x v = 0$, and thus we require at least some test functions with $\phi_x(t,x) = 0$. This issue is avoided in Slep\v{c}ev~\cite{Slepcev03} and Forcadel, Imbert and Monneau~\cite{ForcadelImbertMonneau09} by replacing the singular kernel $1/z^2$ in the operator $\cM_\e$ in \eqref{HJn:formaller} by a smooth one. In that case, the parameter $\rho$ need not be introduced. However, in our case, regularizing the kernel breaks the connection with the particle system \eqref{Pn}; we therefore take a different approach.

This brings us to step 2 of the construction of a rigorous definition to \eqref{HJeps-p}. In this step we reduce the class of all regular enough test functions. This idea is briefly addressed in the introduction and at the start of Section \ref{s:HJ}. We recall that we may remove as many test functions as necessary as long as we can still prove a comparison principle. While Definition \ref{d:eps-ham} suggests to remove those with $\phi_x(\o t, \o x) = 0$ at the test point $(\o t, \o x)$, the discussion in the previous paragraph demonstrates that this would remove too many test functions. Hence, we need to appropriately extend the Hamiltonian $\o H_{\rho,\e}$ (and $\underline H_{\rho,\e}$) at least for some functions $\phi$ with $\partial_x \phi(\o t, \o x) = 0$. Since $|\partial_x \phi(\o t, \o x)| = 0$, a natural extension is to define $\o H_{\rho,\e} [\phi, u] (\o t, \o x) = 0$. Since pairs of annihilating particles move along parabolas (see Theorem~\ref{t:Pn}(\ref{t:Pn:C12}),~(\ref{t:Pn:LB:col})), there are smooth functions $\phi$ for which $\partial_t \phi(\o t, \o x)$ can have either sign (see Example~\ref{ex:HJe}), which does not fit \eqref{HJeps-p} with $\o H_{\rho,\e} [\phi, u] (\o t, \o x) = 0$. To remove such functions from the class of test functions, we require $\partial_{xx} \phi(\o t, \o x) = 0$. Our proof of the comparison principle allows us to go even further; we restrict the subclass of regular test functions with $\phi_x(\o t, \o x) = 0$ to functions of fourth-order growth with the specific form $c (x - \o x)^4 + g(t)$. This restriction is helpful later on (see Lemma \ref{l:Pn:to:HJe}) in the proof of Theorem \ref{mthm:B} where we show that the solution of the system of ODEs given by \eqref{Pn} translates to a solution of \eqref{HJeps-p}.

\begin{defn}[$\rho$-sub- and $\rho$-supersolutions for $\e>0$] \label{def:HJe:VS}
Let $\rho, \e > 0$.
\begin{itemize}
\item 
Let $u:Q \to \R$ be upper semi-continuous and bounded. The function $u$ is a $\rho$-subsolution of \eqref{HJeps-p} in $Q$ if the following holds: whenever $\phi\in C^2(Q)$ is such that $u-\phi$ has a global maximum at $(\o t, \o x)$, we have
\[
\phi_t(\o t, \o x) \leq 
\begin{cases}
\o H_{\rho,\e}[\phi,u](\o t, \o x)& \text{if $\phi_x(\o t, \o x) \neq 0$,}\\
0 & \text{if $\phi( t,  x)$ is of the form $c |x - \o x|^4 + g(t)$,}\\
+\infty & \text{otherwise}.
\end{cases}
\]
\item Let $v:Q \to \R$ be lower semi-continuous and bounded. The function $v$ is a $\rho$-supersolution of \eqref{HJeps-p} in $Q$ if the following holds: whenever $\psi \in C^2(Q)$ is such that $v-\psi$ has a global minimum at $(\o t, \o x)$, we have
\[
\psi_t(\o t, \o x) \geq 
\begin{cases}
\underline H_{\rho, \e}[\psi,v](\o t, \o x)& \text{if $\phi_x(\o t, \o x) \neq 0$,}\\
0 & \text{if $\phi( t,  x)$ is of the form $c |x - \o x|^4 + g(t)$,}\\
-\infty & \text{otherwise}.
\end{cases}
\]
\end{itemize}
A function $u:Q \to \R$ is a $\rho$-solution of \eqref{HJeps-p} in $Q$ if $u^*$ is a $\rho$-subsolution and $u_*$ is a $\rho$-supersolution.
\end{defn}

We remark that, as usual, we extend subsolutions $u$ and a supersolutions $v$ to $t=0$ by
\[
  u(0,x) := u^*(0,x)
  \quad \text{and} \quad
  v(0,x) := v_*(0,x)
  \quad \text{for all } x \in \R. 
\]
In addition, without loss of generality, we may assume in Definition \ref{def:HJe:VS} that the maximum of $u-\phi$ (and the minimum of $v-\psi$) is strict, that $(u - \phi)(\o t, \o x) = 0$ and that $\lim_{|t| + |x| \to \infty} (u - \phi)(t,x) = -\infty$. Indeed, if the maximum at $(\o t, \o x)$ is not strict, then we can approximate $\phi$ by 
\[
  \phi_\delta(t,x) := \phi(t,x) +  \delta |x - \o x|^4 + \delta |t - \o t|^2
\]
as $\delta \searrow 0$. Indeed, the maximum of $u-\phi_\delta$ is strict, and by Lemma \ref{l:Hp}\eqref{l:Hp:Bp} the right-hand side in Definition \ref{def:HJ:VS} converges as $\delta \searrow 0$ to that of $\phi$.

In the following lemma we show that this Definition \ref{def:HJe:VS} does not depend on $\rho$. Therefore we can simply talk about subsolutions, supersolutions and viscosity solutions of \eqref{HJeps-p}.

\begin{lem}[Independence of $\rho$] \label{l:HJe:rho}
If $u$ is a $\rho$-sub- or $\rho$-supersolution of \eqref{HJeps-p} for some $\rho > 0$, then it is a $\tilde\rho$-sub- or $\tilde\rho$-supersolution of \eqref{HJeps-p}, respectively, for any $\tilde \rho > 0$.
\end{lem}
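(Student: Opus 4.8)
The plan is to compare the Hamiltonians $\o H_{\rho,\e}$ and $\o H_{\tilde\rho,\e}$ (resp.\ $\underline H_{\rho,\e}$, $\underline H_{\tilde\rho,\e}$) at a common test point and to use the freedom in choosing test functions in Definition~\ref{def:HJe:VS}. It suffices to treat $\rho$-subsolutions; the supersolution case is entirely symmetric, replacing $E_\e^*$, $\o M$, ``$u-\phi$'', ``global maximum'' and $\lfloor\cdot\rfloor$ by $E_{\e,*}$, $\underline M$, ``$v-\psi$'', ``global minimum'' and $\lceil\cdot\rceil$. First note that $\rho$ enters Definition~\ref{def:HJe:VS} only in the branch $\phi_x(\o t,\o x)\neq 0$; in the other two branches the required inequality (right-hand side $0$, resp.\ $+\infty$) is $\rho$-independent, so there is nothing to prove. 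By the reduction recorded just after Definition~\ref{def:HJe:VS} we may also assume $(u-\phi)(\o t,\o x)=0$, hence $u\le\phi$ on $Q$. Finally, writing $\tilde\rho$ for the target radius, the case $\tilde\rho=\rho$ is trivial, and it remains to handle $\tilde\rho>\rho$ and $\tilde\rho<\rho$.

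The case $\tilde\rho>\rho$ is the easy one and follows from monotonicity of $\o M$ in the splitting radius. For $\tilde\rho>\rho$, splitting the two integrals defining $\o M_{\tilde\rho,\e}$ and $\o M_{\rho,\e}$ (no principal value is needed over the annulus $B_{\tilde\rho}\setminus B_\rho$, which avoids $z=0$) gives
\[
\o M_{\tilde\rho,\e}[\phi,u](\o x)-\o M_{\rho,\e}[\phi,u](\o x)
=\int_{B_{\tilde\rho}\setminus B_\rho}\Big(E_\e^*\big[\phi(\o x+z)-\phi(\o x)\big]-E_\e^*\big[u(\o x+z)-u(\o x)\big]\Big)\frac{dz}{z^2}\ \ge\ 0,
\]
since $\phi(\o x+z)-\phi(\o x)\ge u(\o x+z)-u(\o x)$ and $E_\e^*$ is non-decreasing. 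Multiplying by $|\phi_x(\o t,\o x)|\ge0$ shows that enlarging the radius only weakens the subsolution inequality $\phi_t\le\o M_{\cdot,\e}[\phi,u]\,|\phi_x|$, so a $\rho$-subsolution is a $\tilde\rho$-subsolution for every $\tilde\rho>\rho$; the analogous computation with $E_{\e,*}$ and global minima handles supersolutions.

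The substantial case is $\rho$-subsolution $\Rightarrow$ $\tilde\rho$-subsolution for $\tilde\rho<\rho$, where the monotonicity above runs the wrong way, so a fixed test function does not suffice and I modify it. Given $\phi\in C^2(Q)$ with $u-\phi$ attaining a global maximum at $(\o t,\o x)$ and $\phi_x(\o t,\o x)\neq0$, choose bounded smooth $v_k\searrow u$ pointwise on $Q$ with $v_k\ge u$ (possible because $u$ is bounded and upper semi-continuous), and cutoffs $\chi_k\in C^\infty$ with $\chi_k\equiv1$ on $B_{\tilde\rho-1/k}(\o x)$ and $\chi_k\equiv0$ off $B_{\tilde\rho}(\o x)$; set $\tilde\phi_k:=\chi_k\phi+(1-\chi_k)v_k\in C^2(Q)$. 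Then $\tilde\phi_k\ge u$ (a convex combination of functions $\ge u$), so $u-\tilde\phi_k$ still attains a global maximum at $(\o t,\o x)$, while $\tilde\phi_k\equiv\phi$ near $(\o t,\o x)$ gives $(\tilde\phi_k)_t(\o t,\o x)=\phi_t(\o t,\o x)$ and $(\tilde\phi_k)_x(\o t,\o x)=\phi_x(\o t,\o x)\neq0$. Applying the $\rho$-subsolution property to $\tilde\phi_k$ yields $\phi_t(\o t,\o x)\le\o M_{\rho,\e}[\tilde\phi_k(\o t,\cdot),u(\o t,\cdot)](\o x)\,|\phi_x(\o t,\o x)|$. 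Now let $k\to\infty$: on $B_{\tilde\rho-1/k}(\o x)$ we have $\tilde\phi_k(\o t,\cdot)=\phi(\o t,\cdot)$; on $B_\rho(\o x)\setminus B_{\tilde\rho}(\o x)$ we have $\tilde\phi_k(\o t,\cdot)=v_k(\o t,\cdot)$, and there $E_\e^*[\tilde\phi_k(\o x+z)-\tilde\phi_k(\o x)]\to E_\e^*[u(\o x+z)-u(\o x)]$ pointwise by right-continuity of $\lfloor\cdot\rfloor$; and the transition annulus $B_{\tilde\rho}(\o x)\setminus B_{\tilde\rho-1/k}(\o x)$ has measure $O(1/k)$ with $\tilde\phi_k$ bounded uniformly and the kernel $1/z^2$ bounded on it. Splitting $\o M_{\rho,\e}[\tilde\phi_k,u](\o x)$ at a fixed small radius $\rho_0$ as in Lemma~\ref{l:Hpe}\eqref{l:Hpe:Bp} and invoking dominated convergence gives $\o M_{\rho,\e}[\tilde\phi_k,u](\o x)\to\o M_{\tilde\rho,\e}[\phi,u](\o x)$, hence $\phi_t(\o t,\o x)\le\o H_{\tilde\rho,\e}[\phi,u](\o t,\o x)$, which is the $\tilde\rho$-subsolution inequality. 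The supersolution case repeats this with $v_k\nearrow v$ from below, $E_{\e,*}$, and left-continuity of $\lceil\cdot\rceil$.

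The main obstacle is precisely the construction of $\tilde\phi_k$: it must simultaneously stay above $u$ (to remain an admissible test function), coincide with $\phi$ near $(\o t,\o x)$ (to preserve the nonvanishing gradient and the time derivative at the test point), and yet track $u$ on the \emph{entire} annulus $B_\rho\setminus B_{\tilde\rho}$ in the $E_\e^*$-sense, so that the limit of $\o M_{\rho,\e}[\tilde\phi_k,u]$ is exactly $\o M_{\tilde\rho,\e}[\phi,u]$ and not something strictly larger. Reconciling these requirements is what forces both the squeezing of the transition region into a shrinking annulus and the use of the one-sided continuity of the staircase $E_\e$; some care is also needed to keep every principal-value manipulation on annuli that avoid $z=0$.
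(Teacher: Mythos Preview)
Your proof is correct and follows essentially the same approach as the paper's: the monotonicity argument for $\tilde\rho>\rho$ is identical, and for $\tilde\rho<\rho$ both you and the paper replace $\phi$ outside $B_{\tilde\rho}$ by a sequence of $C^2$ functions decreasing to $u$ and pass to the limit in $\o M_{\rho,\e}$. The only cosmetic differences are that you make the gluing explicit via the cutoff $\chi_k$ and a shrinking transition annulus (the paper simply posits $\phi^k$ with the required properties), and you invoke dominated convergence together with right-continuity of $\lfloor\cdot\rfloor$ to get equality in the limit, whereas the paper uses Fatou's lemma with the upper semi-continuity of $E_\e^*$ to get the (sufficient) inequality $\limsup_k \o M_{\rho,\e}[\phi^k,u]\le \o M_{\tilde\rho,\e}[\tilde\phi,u]$.
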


\begin{proof}
This is a modification of the proof of \cite[Prop.~II.1]{Sayah91}.
We prove the lemma for subsolutions; the proof for supersolutions is analogous. To prove that $u$ is a $\tilde \rho$-subsolution of \eqref{HJeps-p}, let $\tilde \phi$ be any corresponding test function such that $u - \tilde \phi$ has a global maximum at $( t,  x)$. We assume for convenience that $u( t,  x) = \tilde \phi ( t,  x)$. If $\tilde \phi_x(t,x) = 0$, then $u$ satisfies the condition for being a $\tilde \rho$-subsolution of \eqref{HJeps-p} for any $\tilde \rho > 0$. Hence, in the remainder we may assume that $\tilde \phi_x(t,x) \neq 0$.

We start with the case $\tilde \rho > \rho$. Since $u$ is a $\rho$-subsolution, we have
\[
\tilde \phi_t( t,  x) \leq \o H_{\rho,\e}[\tilde\phi, u]( t,  x).
\]
Since $\tilde \phi(t, x+z) - \tilde \phi(t, x) \geq  u(t, x+z)-u(t, x)$ for all $z \in \R$ and since $E_\e^*$ is non-decreasing, we obtain from the definition of $\o H_{\rho,\e}$ that $\o H_{\rho,\e}[\tilde\phi, u]( t,  x) \leq \o H_{\tilde \rho,\e}[\tilde\phi, u]( t,  x)$. This shows that $u$ is a $\tilde \rho$-subsolution.

It is left to treat the case $\tilde \rho < \rho$. Let $(\phi^k) \subset C^2(Q)$ be a sequence of test functions satisfying
\begin{equation*}
  \phi^k \left\{ \begin{array}{ll}
    = \tilde \phi
    &\text{on } B_{\tilde \rho} (t,x) \\
    \geq u
    &\text{on } B_{\tilde \rho} (t,x)^c
  \end{array} \right.
\end{equation*}
such that $\phi^k \searrow u$ pointwise on the interior of $B_{\tilde \rho} (t,x)^c$ as $k \to \infty$. Since $u$ is a $\rho$-subsolution and $u - \phi^k$ has a global maximum at $(t,x)$, we have
\[
\tilde \phi_t( t,  x) = \phi_t^k( t,  x) \leq \o H_{\rho,\e}[\phi^k, u]( t,  x).
\]
Next we prepare for passing to the limit $k \to \infty$ in the right-hand side. To avoid clutter, we remove the time variable. By construction of $\phi^k$,
\[
  \limsup_{k \to \infty} \phi^k(x+z) - \phi^k(x)
  \leq \left\{ \begin{array}{ll}
    \tilde \phi(x+z) - \tilde \phi(x)
    &\text{if } |z| < \tilde \rho  \\
    u(x+z) - u(x)
    &\text{if } |z| > \tilde \rho 
  \end{array} \right.
\]
for a.e.\ $z \in \R$. Then, since $E_\e^*$ is non-decreasing and upper semi-continuous, 
\[
  \limsup_{k \to \infty} E_\e^* \big[ \phi^k(x+z) - \phi^k(x) \big]
  \leq \left\{ \begin{aligned}
    &E_\e^* \big[ \tilde \phi(x+z) - \tilde \phi(x) \big]
    &&\text{if } |z| < \tilde \rho  \\
    &E_\e^* \big[ u(x+z) - u(x) \big]
    &&\text{if } |z| > \tilde \rho 
  \end{aligned} \right.
\]
for a.e.\ $z \in \R$. Hence, from the definition of $\o H_{\rho,\e}$ and Fatou's lemma, we obtain that
\[
  \limsup_{k \to \infty} \o H_{\rho,\e}[\phi^k, u]( t,  x)
  \leq \o H_{\tilde \rho,\e}[\tilde \phi, u]( t,  x).
\]
This shows that $u$ is a $\tilde \rho$-subsolution.
\end{proof}

\begin{thm}[Comparison principle for \eqref{HJeps-p}]
\label{t:CPe}
Let $u$ be a subsolution and $v$ be a supersolution of \eqref{HJeps-p}. Assume that for each $T>0$, 
\begin{equation} \label{CPe:far-field}
  \lim_{R\to\infty} \sup \Big\{ \;u(t,x)-v(t,x): \ |x|\geq R, \ t\in (0,T]\Big\} \leq 0.  
\end{equation}
Then $u(0,\cdot) \leq v(0,\cdot)$ on $\R$ implies $u\leq v$ on $Q$.
\end{thm}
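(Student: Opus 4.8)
The plan is to run the usual doubling-of-variables argument for viscosity solutions, with the two modifications forced by \eqref{HJeps-p}: a \emph{quartic} penalization in space (matched to the restricted class of test functions admitted at $\phi_x=0$ in Definition~\ref{def:HJe:VS}), and a careful treatment of the non-integrable kernel $z^{-2}$ in $\mathcal M_\e$ exploiting the freedom in $\rho$ granted by Lemma~\ref{l:HJe:rho}.

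\emph{Setup.} Fix $T>0$; it suffices to prove $\sup_{Q_T}(u-v)\le 0$. For each $\sigma>0$ we instead prove $u(t,x)-\sigma t\le v(t,x)$ and let $\sigma\to0$ at the end. Suppose not: $M_\sigma:=\sup_{Q_T}(u-\sigma t-v)>0$. Since $u$ is usc, $v$ is lsc and $u(0,\cdot)\le v(0,\cdot)$, there is $\delta_0>0$ (independent of the parameters below) with $u-\sigma t-v<M_\sigma/2$ for $t<\delta_0$. For small $\alpha,\beta,\eta>0$ set
\[
  \Psi(t,x,s,y):=u(t,x)-\sigma t-v(s,y)-\frac{|x-y|^4}{4\beta}-\frac{(t-s)^2}{2\alpha}-\frac{\eta}{t},
\]
possibly with an additional standard perturbation that makes the maximum below strict and localizes it. By boundedness of $u,v$ and \eqref{CPe:far-field}, $\Psi$ attains its maximum at some $(\hat t,\hat x,\hat s,\hat y)$; the usual penalization estimates give $|\hat x-\hat y|\to0$, $|\hat t-\hat s|\to0$ as $\beta,\alpha\to0$, $\hat t\ge\delta_0$ uniformly in $\eta$, and $\Psi(\hat t,\hat x,\hat s,\hat y)\ge M_\sigma-o(1)$.

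\emph{Nondegenerate contact.} If $\hat x\neq\hat y$, freezing $(s,y)=(\hat s,\hat y)$ produces a test function $\phi$ for $u$ (incorporating the $\sigma t$ term, which does not change $\mathcal M_\e$ since it sees only spatial increments) with $\phi_x(\hat t,\hat x)=|\hat x-\hat y|^2(\hat x-\hat y)/\beta=:p\neq0$, and freezing $(t,x)=(\hat t,\hat x)$ produces a test function $\psi$ for $v$ with $\psi_y(\hat s,\hat y)=p$; the first branch of Definition~\ref{def:HJe:VS} applies to each. Using Lemma~\ref{l:HJe:rho} we take $\rho$ below the threshold of Lemma~\ref{l:Hpe}\,(i), so that the principal-value parts of $\o M_{\rho,\e}$ and $\underline M_{\rho,\e}$ vanish, leaving $\o M_{\rho,\e}[\phi,u](\hat x)=\int_{B_\rho^c}E_\e^*[u(\hat x+z)-u(\hat x)]\,dz/z^2$ and $\underline M_{\rho,\e}[\psi,v](\hat y)=\int_{B_\rho^c}E_{\e,*}[v(\hat y+z)-v(\hat y)]\,dz/z^2$. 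Maximality of $\Psi$ under the diagonal shift $(x,y)\mapsto(x+z,y+z)$ gives $u(\hat x+z)-u(\hat x)\le v(\hat y+z)-v(\hat y)$ for all $z$, and, once the maximum is strict, strictly for $z\neq0$; the elementary staircase inequality $a<b\Rightarrow\lfloor a/\e\rfloor<\lceil b/\e\rceil\Rightarrow E_\e^*[a]\le E_{\e,*}[b]$ then yields $\o M_{\rho,\e}[\phi,u](\hat x)\le\underline M_{\rho,\e}[\psi,v](\hat y)$. Subtracting the sub- and supersolution inequalities (whose $|p|$ factors coincide) leaves $\sigma-\eta/\hat t^2\le\big(\o M_{\rho,\e}[\phi,u](\hat x)-\underline M_{\rho,\e}[\psi,v](\hat y)\big)|p|\le0$, i.e.\ $\sigma\le\eta/\hat t^2\le\eta/\delta_0^2$, which is false for $\eta$ small.

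\emph{Degenerate contact and conclusion.} If $\hat x=\hat y$, the frozen test functions are exactly of the admissible form $c|x-\o x|^4+g(t)$ with $\o x=\hat x=\hat y$, so the fourth-order branch applies: the subsolution condition reads $\phi_t(\hat t,\hat x)\le0$, i.e.\ $\sigma+(\hat t-\hat s)/\alpha-\eta/\hat t^2\le0$, and the supersolution condition reads $\psi_s(\hat s,\hat y)\ge0$, i.e.\ $(\hat t-\hat s)/\alpha\ge0$; together $\sigma\le\eta/\hat t^2-(\hat t-\hat s)/\alpha\le\eta/\delta_0^2$, again false for $\eta$ small. In either case we conclude $u-\sigma t\le v$ on $Q_T$, and letting $\sigma\to0$ gives $u\le v$; since $T$ was arbitrary, $u\le v$ on $Q$.

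\emph{Main obstacle.} The heart of the matter is the nondegenerate estimate. Because $z^{-2}$ is not integrable at $0$, the difference of the two nonlocal operators admits no uniform bound in general; it must be controlled by simultaneously (a) deleting the singular near-diagonal part through the free parameter $\rho$ and Lemma~\ref{l:Hpe}, and (b) exploiting that the contact inequality points in the correct \emph{strict} direction, so that the monotonicity of the staircases $E_\e^*,E_{\e,*}$ collapses the remaining far part to a favorable sign (reconciling $E_\e^*$ with $E_{\e,*}$ at the jump points is exactly what forces strictness to be arranged). This is also precisely where the restricted test-function class earns its keep: with a quadratic-in-space penalization the degenerate contact $\hat x=\hat y$ would fall into the uninformative ``$+\infty$''/``$-\infty$'' branch, whereas the quartic penalization places it in the fourth-order branch, which still yields usable information. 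The remaining care goes into arranging the strict maximum by an \emph{admissible} perturbation, fixing the order in which $\beta,\alpha,\eta$ (and $\rho$ and the auxiliary perturbation) tend to their limits, and handling the behaviour near $t=0$ with merely semicontinuous data.
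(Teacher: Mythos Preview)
Your proof follows the same overall strategy as the paper's: doubling of variables with a \emph{quartic} spatial penalty (so that at the degenerate contact $\hat x=\hat y$ the frozen test functions land in the admissible $c|x-\o x|^4+g(t)$ class), and in the nondegenerate case a comparison of the far-field pieces $\int_{B_\rho^c}$ after killing the principal-value parts via Lemma~\ref{l:Hpe}\eqref{l:Hpe:Bp}. The one technical variant is how you bridge $E_\e^*$ and $E_{\e,*}$: you arrange the four-variable maximum to be strict (by an admissible $|x-\hat x|^4$-type perturbation) so that the diagonal shift gives a \emph{strict} contact inequality, whence the staircase identity $a<b\Rightarrow E_\e^*[a]\le E_{\e,*}[b]$ applies. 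The paper instead avoids any strictness perturbation by shifting \emph{off} the diagonal, comparing $(\hat x+z,\hat y+z+\delta|z|)$ with $(\hat x,\hat y)$; the quartic penalty then contributes the strictly negative term $\tfrac1{4\gamma}\bigl(|\hat x-\hat y-\delta|z||^4-|\hat x-\hat y|^4\bigr)$, which already forces the staircase inequality, and one sends $\delta\to0$ at the end. Both routes work.

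There is, however, a gap in your handling of the initial time. The assertion ``since $u$ is usc, $v$ is lsc and $u(0,\cdot)\le v(0,\cdot)$, there is $\delta_0>0$ with $u-\sigma t-v<M_\sigma/2$ for $t<\delta_0$'' does not follow: semi-continuity gives this only pointwise in $x$, not uniformly. Without it, your term $\eta/t$ forces $\hat t>0$ but with no lower bound uniform in $\eta$, and nothing forces $\hat s>0$ at all; the final inequality $\sigma\le\eta/\hat t^2$ is then vacuous as $\eta\to0$. The paper penalizes symmetrically by $\eta/(T-\o t)+\eta/(T-\o s)$ (forcing $\o t,\o s<T$) and then rules out $\o t=0$ or $\o s=0$ by sending the doubling parameter $\gamma\to0$: along any sequence with $\o t_n=0$ one has $\o s_n\to0$, $\o x_n-\o y_n\to0$, the far-field condition~\eqref{CPe:far-field} gives a convergent subsequence $\o x_n,\o y_n\to\o x$, and then semi-continuity yields the contradiction $0<\theta/2\le u(0,\o x)-v(0,\o x)\le0$. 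Your argument is easily repaired along these lines, but as written the $\delta_0$ step is unjustified.
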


\begin{proof}
Suppose that the inequality $u\leq v$ does not hold on $Q$; then there exists $T>0$ such that $\theta := \sup_{Q_T} (u-v)  >0$.
For $\eta>0$ and $\gamma>0$ define the function
\begin{align} 
\label{comp-Phi}
\Phi(t,x,s,y) := u(t,x) - v(s,y) - \frac{(t-s)^2}{2\gamma} - \frac{(x-y)^4}{4\gamma} - \frac\eta{T-t} - \frac\eta{T-s}.
\end{align}
For sufficiently small $\eta$ it follows that $\sup_{Q_T\times Q_T} \Phi \geq \theta/2$, independently of $\gamma$.
By \eqref{CPe:far-field} and the semi-continuity of $u$ and $v$, the supremum is achieved at some point $(\o t, \o x, \o s, \o y)\in \overline {Q_T}\times \overline{Q_T}$.
By the divergence of $\Phi$ as $s\nearrow T$ or $t\nearrow T$  we have $\o t, \o s< T$. 
 
We now show by the usual arguments that for sufficiently small $\gamma>0$ we have  $\o t, \o s>0$. Assume, to force a contradiction, that there exists a sequence $\gamma_n\searrow 0$ such that for all $n$ the corresponding maxima $(\o t_n, \o x_n, \o s_n, \o y_n)$ satisfy $\o t_n=0$. As $u - v$ is bounded, by the structure of $\Phi$ we  have $\o s_n\to 0$ and $\o x_n-\o y_n\to0$, and by \eqref{CPe:far-field} we can assume by taking a subsequence if necessary that there exists $\o x\in \R$ such that $\o x_n,\o y_n\to \o x$ as $n\to\infty$. We then estimate, using the semi-continuity of $u$ and $v$,
\[
0< \frac \theta 2 \leq \limsup_{n\to\infty} \Phi_{\gamma_n}(0, \o x_n, \o s_n, \o y_n) 
 \leq \limsup_{n\to\infty} u(0,\o x_n) - v(\o s_n, \o y_n) 
 \leq u(0,\o x) - v(0,\o x) \leq 0,
\]
which is a contradiction. Therefore we can fix $\gamma$ and assume that $\o t, \o s>0$.

We therefore have $(\o t, \o x, \o s, \o y)\in Q_T\times Q_T$. Next we will obtain the contradiction to $\theta>0$ by constructing test functions $\phi$ and $\psi$ for the subsolution $u$ and the supersolution $v$, respectively, for which either $u$ or $v$ does not satisfy Definition \ref{def:HJe:VS}. With this aim, we set
\begin{align*}
\phi(t,x) &:=  \frac{(t-\o s)^2}{2\gamma } + \frac{(x-\o y)^4}{4\gamma} + \frac\eta{T-t}\\
\psi(s,y) &:= - \frac{(\o t-s)^2}{2\gamma } - \frac{(\o x-y)^4}{4\gamma } - \frac\eta{T-s}.
\end{align*}
Since $u-\phi = \Phi(\cdot, \cdot, \o s, \o y) + C$ has a global maximum at $(\o t,\o x)$ and $v-\psi = -\Phi(\o t, \o x, \cdot, \cdot) + C$ a global minimum at $(\o s,\o y)$, $\phi$ and $\psi$ are admissible test functions\footnote{As usual, we replace $\eta/(T - t)$ in $\phi(t,x)$ far enough away from $\o t$ by a regular extension beyond $t = T$, without changing notation. We do the same for $\psi$.}. We compute
\begin{align} \label{pf:phit-psit}
\phi_t(\o t,\o x) - \psi_s(\o s,\o y)
= \frac\eta{(T-\o t)^2} + \frac\eta{(T-\o s)^2} > 0.
\end{align}

Next we separate two cases. If $\o x = \o y$, then $\phi(t,x) = \frac{|x - \o x|^4}{4\gamma} + g(t)$ and $ \psi(s, y) = -\frac{|y - \o y|^4}{4\gamma} + h(s)$, and Definition \ref{def:HJe:VS} yields
\[
\phi_t(\o t,\o x) \leq 0
\quad \text{and} \quad 
\psi_s(\o s,\o y) \geq 0.
\]
This contradicts with \eqref{pf:phit-psit}.

In the second case, $\o x \not= \o y$, we note that $\phi_x(\o t, \o x) = \psi_y(\o s, \o y) = \frac1\gamma (\o x - \o y)^3 \neq 0$. We claim that
\begin{equation}
\label{ineq:CPe:2}
\int_{B_\rho^c} \biggl\{ E_\e^*\big[u(\o t,\o x+z) - u(\o t,\o x)\big] - 
  E_{\e,*} \big[v(\o s,\o y+z) - v(\o s, \o y)\big]\biggr\} \frac{dz}{z^2}
  \leq 0
  \quad \text{for any } \rho>0.
\end{equation}
Then, as $\phi_x(\o t, \o x) = \psi_y(\o s, \o y)$, 
\begin{multline*}
|\phi_x(\o t, \o x)|^{-1}(\o H_{\rho,\e}[\phi,u](\o t,\o x) - \underline H_{\rho,\e}[\psi,v](\o s,\o y))\\
\leq \pv \int_{B_\rho} E_\e^*\big[\phi(\o t, \o x+ z) - \phi(\o t, \o x)\big] \frac{dz}{z^2}
 - \pv \int_{B_\rho} E_{\e,*}\big[\psi(\o s, \o y+ z) - \psi(\o s, \o y)\big] \frac{dz}{z^2},
\end{multline*}
and this expression equals zero for $\rho$ small enough by Lemma \ref{l:Hpe}\eqref{l:Hpe:Bp}. This together with \eqref{pf:phit-psit} contradicts with Definition \ref{def:HJe:VS}.

It is left to prove the claim \eqref{ineq:CPe:2}.
Assume for convenience that $\o x -\o y >0$. 
For any $\delta>0$ and $z\in \R$, we have
\[
\Phi\big( \o t, \o x+ z, \o s, \o y+ z +\delta |z|\big)
\leq \Phi(\o t, \o x, \o s, \o y).
\]
We rewrite this inequality to find
\[
u(\o t, \o x+ z) - u(\o t, \o x) \leq v(\o s, \o y + z + \delta |z|) - v(\o s, \o y)
+ \frac1{4\gamma} \Big( \big |\o x - \o y - \delta |z|\,\big|^4 - |\o x - \o y |^4\Big).
\]
Since the final term is strictly negative if $|z|< (\o x - \o y)/\delta$, we have 
\begin{equation}
\label{ineq:l:CPe}
E^*_\e\big[ u(\o t, \o x+ z) - u(\o t, \o x)\big]
\leq E_{\e,*} \big[v(\o s, \o y + z + \delta |z|) - v(\o s, \o y)\big]
\qquad \text{for }|z|< (\o x - \o y)/\delta.
\end{equation}

We now split the left-hand side of~\eqref{ineq:CPe:2} into two parts:
\begin{align*}
\eqref{ineq:CPe:2} 
&= \int_{B_\rho^c} \biggl\{ E_\e^*\big[u(\o t,\o x+z) - u(\o t,\o x)\big] - 
  E_{\e,*} \big[v(\o s,\o y+z + \delta |z|) - v(\o s, \o y)\big]\biggr\} \frac{dz}{z^2}\\
  &\qquad + \int_{B_\rho^c} 
    \biggl\{ E_{\e,*} \big[v(\o s,\o y+z+ \delta |z|) - v(\o s, \o y)\big] - 
  E_{\e,*} \big[v(\o s,\o y+z) - v(\o s, \o y)\big]\biggr\} \frac{dz}{z^2} =: T_1 + T_2.
\end{align*}
Assuming that $\delta$ is small enough such that $\rho < (\o x - \o y)/\delta$, we further split the first integral $T_1$ into near-field and far-field parts:
\begin{align*}
T_1 &:= T_{1,\mathrm{near}} + T_{1,\mathrm {far}} \\
&:= \int_{B_{(\o x-\o y)/\delta}\setminus B_\rho} 
  \biggl\{ E_\e^*\big[u(\o t,\o x+z) - u(\o t,\o x)\big] - 
  E_{\e,*} \big[v(\o s,\o y+z + \delta |z|) - v(\o s, \o y)\big]\biggr\} \frac{dz}{z^2}\\
  &\qquad + \int_{B_{(\o x-\o y)/\delta}^c} 
  \biggl\{ E_\e^*\big[u(\o t,\o x+z) - u(\o t,\o x)\big] - 
  E_{\e,*} \big[v(\o s,\o y+z + \delta |z|) - v(\o s, \o y)\big]\biggr\} \frac{dz}{z^2}
\end{align*}
The inequality~\eqref{ineq:l:CPe} implies that $T_{1,\mathrm{near}}\leq 0$. For the integral $T_{1,\mathrm{far}}$ we use Lemma \ref{l:Hpe}\eqref{l:Hpe:Bpc} to estimate
\[
T_{1,\mathrm {far}}\leq
\frac{ 4 \|u\|_\infty + 4 \|v\|_\infty + 2\e}{|\o x-\o y|} \delta
\]
which converges to zero as $\delta\to0$. 

Finally, to estimate the term $T_2$ we set 
\[
J(z) := \begin{cases}
\displaystyle \frac 1{z^2} & \text{if }|z|> \rho\\
0 & \text{otherwise}
\end{cases}
\qquad \text{and}\qquad
J^\delta(z) := \begin{cases}
\displaystyle \frac{1-\delta}{z^2} & \text{if } z < -(1-\delta)\rho\\
0 &\text{if }-(1-\delta)\rho \leq  z\leq (1+\delta)\rho\\
\displaystyle \frac{1+\delta}{z^2} & \text{if } z>(1+\delta)\rho.
\end{cases}
\]
Note that $J^\delta$ is the push-forward of $J$ under the map $z\mapsto z + \delta |z|$, and that $J^\delta$ converges to $J$ in $L^1(\R)$ as $\delta \to0$. Since $z\mapsto E_{\e,*} \big[v(\o s,\o y+z) - v(\o s, \o y)\big]$ is an element of $L^\infty(\R)$, it follows that 
\[
T_2= \int_\R   E_{\e,*} \big[v(\o s,\o y+z) - v(\o s, \o y)\big]\Big\{ J^\delta(z) - J(z)\Big\}
\, dz \xrightarrow {\delta\to0}  0.
\]
This concludes the proof of the theorem. 
\end{proof}

\subsection{Limiting Hamilton--Jacobi equation}

The limiting Hamilton--Jacobi equation is studied in 
\cite{ImbertMonneauRouy08,BilerKarchMonneau10}. It reads
\begin{align}
\label{HJ-p}
\tag{HJ}
u_t &= \mathcal I[u] |u_x|, & \text{on } \R \times (0, \infty),
\intertext{with initial condition}
u(0, \cdot) &= u^\circ,\nonumber
\end{align}
where the nonlocal operator $\mathcal I$ is defined on $C_b^2(\R)$ as
\begin{align*}
\mathcal I[w](x) := \int_\R (w(x+z) - w(x) - z w'(x)) \frac{dz}{z^2}.
\end{align*}
It is easy to see from $\text{pv}\int \frac{dz}z = 0$ that this definition is equivalent to that in \eqref{Iu}. Here, however, we do not need a principle-value integral. Indeed, by using a Taylor expansion of $w$ around $x$, it follows that the integrand is bounded around $z = 0$.

Next, we briefly recall several results from \cite{Sayah91,ImbertMonneauRouy08}. First, to define viscosity solutions for \eqref{HJ-p}, we need to introduce a $\rho$-Hamiltonian for any $\rho > 0$ similar to Definition \ref{d:eps-ham}. Due to the absence of the staircase approximation $E_\e$, there is no need to develop different Hamiltonians for sub- and supersolutions.

\begin{defn}[Limiting Hamiltonian] 
\label{d:lim-h}
For $\rho>0$ fixed, $u\in L^\infty(\R \times [0, \infty))$, $t > 0$, $x \in \R$ and $\phi\in C^2_b(B_\delta(t) \times B_\rho(x))$ for some $\delta > 0$, we define
\[
H_\rho[\phi,u](t,x) := \mathcal I_\rho[\phi(t,\cdot),u(t,\cdot)](x) |\phi_x(t,x)|
\]
where
\begin{equation} \label{Irho}
\mathcal I_\rho[\phi,u](x) 
:= \int_{B_\rho} \big( \phi(x+z)-\phi(x) - \phi'(x)z\big) \frac{dz}{z^2}
   + \int_{B_\rho^c} \big( u(x+z)-u(x)\big) \frac{dz}{z^2}.
\end{equation}
\end{defn}

\begin{lem} \label{l:Hp} 
Let $\rho$, $u$, $t$, $x$ and $\phi$ be as in Definition~\ref{d:lim-h}. Then, dropping the dependence on $t$, $\mathcal I_\rho[\phi,u](x)$ is finite and
\begin{enumerate}[(i)]
  \item \label{l:Hp:Bp} if $\phi(x) = (x - y)^4$, then $\displaystyle \int_{B_\rho} \big( \phi(x+z)-\phi(x) - \phi'(x)z\big) \frac{dz}{z^2} = 12 (x-y)^2 \rho + \frac23 \rho^3$;
  \item \label{l:Hp:Bpc} $\displaystyle \bigg| \int_{B_\rho^c} \big( u(x+z)-u(x)\big) \frac{dz}{z^2} \bigg| \leq \frac{ 4 \|u\|_\infty }\rho$.
\end{enumerate} 
\end{lem}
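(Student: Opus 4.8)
The plan is to treat the three assertions separately, each by elementary direct estimation, splitting the defining integral of $\mathcal I_\rho[\phi,u](x)$ into the near-field piece over $B_\rho$ and the far-field piece over $B_\rho^c$.

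For finiteness: on the near-field, since $\phi(t,\cdot)\in C^2_b(B_\rho(x))$, a second-order Taylor expansion with Lagrange remainder gives $\phi(x+z)-\phi(x)-\phi'(x)z = \tfrac12\phi''(\xi_z)z^2$ for some $\xi_z$ between $x$ and $x+z$; hence the integrand $(\phi(x+z)-\phi(x)-\phi'(x)z)/z^2$ is bounded in absolute value by $\tfrac12\sup_{B_\rho(x)}|\phi''|<\infty$, so this integral converges absolutely over the bounded interval $B_\rho$. On the far-field, I would use $|u(x+z)-u(x)|\le 2\|u\|_\infty$ together with $\int_{B_\rho^c}z^{-2}\,dz = 2\int_\rho^\infty z^{-2}\,dz = 2/\rho$; this shows convergence and at the same time yields the bound $4\|u\|_\infty/\rho$, which is exactly assertion~\eqref{l:Hp:Bpc}.

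For assertion~\eqref{l:Hp:Bp}, substitute $\phi(w)=(w-y)^4$ and write $a:=x-y$, so that $\phi'(x)=4a^3$ and, expanding, $(a+z)^4-a^4-4a^3z = 6a^2z^2 + 4az^3 + z^4$. Dividing by $z^2$ and integrating over the symmetric interval $B_\rho=(-\rho,\rho)$, the odd term $4az$ integrates to zero, while $\int_{-\rho}^{\rho}6a^2\,dz = 12a^2\rho$ and $\int_{-\rho}^{\rho}z^2\,dz = \tfrac23\rho^3$, giving the claimed value $12(x-y)^2\rho + \tfrac23\rho^3$.

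There is no substantial obstacle here; the only point requiring a little care is that $\phi$ is assumed $C^2_b$ only on the neighbourhood $B_\delta(t)\times B_\rho(x)$, so the Taylor remainder estimate in the near-field must be taken with the supremum of $|\phi''|$ over precisely $B_\rho(x)$ — which is legitimate since that integral involves $\phi$ only at points $x+z$ with $|z|<\rho$.
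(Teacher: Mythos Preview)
Your proposal is correct and follows essentially the same route as the paper: a Taylor-remainder bound for the near-field integral to get finiteness, the trivial $2\|u\|_\infty$ bound combined with $\int_{B_\rho^c}z^{-2}\,dz=2/\rho$ for part~\eqref{l:Hp:Bpc}, and the explicit binomial expansion of $(a+z)^4$ with symmetry killing the odd term for part~\eqref{l:Hp:Bp}. The paper's proof is just a terser version of yours (and in fact carries a harmless sign typo in the $z^3$ coefficient that you have right).
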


\begin{proof}
Property \eqref{l:Hp:Bpc} is immediate. As already argued above, the first integral in \eqref{Irho} is finite; hence $\mathcal I_\rho[\phi,u](x)$ is well defined. In the special case $\phi(x) = (x - y)^4$, we compute
$$
  \phi(x +z) - \phi(x) - \phi'(x) z = 6 (x-y)^2 z^2 - 4 (x-y) z^3 + z^4;
$$
Property \eqref{l:Hp:Bp} follows.
\end{proof}

We define viscosity solutions with the same restricted class of test functions for the $\e$-problem \eqref{HJeps-p}, because this choice simplifies the proof of the convergence of solutions as $n\to \infty$. We then prove a comparison principle (Theorem~\ref{t:CP:HJ}), which shows the uniqueness of viscosity solutions. In Corollary~\ref{co:solutions-match-imr} we demonstrate that our notion of viscosity solutions coincides with that of \cite{ImbertMonneauRouy08}, in which the complete class of smooth test functions is considered.

\begin{defn}[$\rho$-sub- and $\rho$-supersolutions for the limiting problem] \label{def:HJ:VS}
Let $\rho > 0$.
\begin{itemize}
\item 
Let $u:Q \to \R$ be upper semi-continuous and bounded. The function $u$ is a $\rho$-subsolution of \eqref{HJ-p} in $Q$ if the following holds: whenever $\phi\in C^2(Q)$ is such that $u-\phi$ has a global maximum at $(\o t, \o x)$, we have
\begin{align}
\label{visc-subsol-cond}
\phi_t(\o t, \o x) \leq 
\begin{cases}
H_\rho[\phi,u](\o t, \o x)& \text{if $\phi_x(\o t, \o x) \neq 0$,}\\
0 & \text{if $\phi( t,  x)$ is of the form $c |x - \o x|^4 + g(t)$,}\\
+\infty & \text{otherwise}.
\end{cases}
\end{align}
\item Let $v:Q \to \R$ be lower semi-continuous and bounded. The function $v$ is a $\rho$-supersolution of \eqref{HJ-p} in $Q$ if the following holds: whenever $\psi\in C^2(Q)$ is such that $u-\psi$ has a global minimum at $(\o t, \o x)$, we have
\[
\psi_t(\o t, \o x) \geq 
\begin{cases}
H_\rho[\psi,v](\o t, \o x)& \text{if $\phi_x(\o t, \o x) \neq 0$,}\\
0 & \text{if $\phi( t,  x)$ is of the form $c |x - \o x|^4 + g(t)$,}\\
-\infty & \text{otherwise}.
\end{cases}
\]
\end{itemize}
A function $u:Q \to \R$ is a $\rho$-solution of \eqref{HJ-p} in $Q$ if $u^*$ is a $\rho$-subsolution and $u_*$ is a $\rho$-supersolution.
\end{defn}

As for Definition \ref{def:HJe:VS}, we may assume also in Definition \ref{def:HJ:VS} that the maximum of $u-\phi$ is strict, that $(u - \phi)(\o t, \o x) = 0$ and that $\lim_{|t| + |x| \to \infty} (u - \phi)(t,x) = -\infty$. In addition, Definition \ref{def:HJ:VS} does not depend on $\rho$ in the sense of the following lemma, and therefore we will not emphasize the dependence on $\rho$ in what follows.

\begin{lem}[Independence of $\rho$; {\cite[Prop.~II.1]{Sayah91}}] \label{l:HJ:rho}
If $u$ is a $\rho$-sub- or $\rho$-supersolution of \eqref{HJ-p} for some $\rho > 0$, then it is a $\tilde\rho$-sub- or $\tilde\rho$-supersolution of \eqref{HJ-p} for any $\tilde \rho > 0$, respectively.
\end{lem}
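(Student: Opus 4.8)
The plan is to mirror the proof of Lemma~\ref{l:HJe:rho} (itself an adaptation of \cite[Prop.~II.1]{Sayah91}); the argument here is in fact slightly simpler, because the staircase functions $E_\e^*,E_{\e,*}$ are replaced by the identity. The one new ingredient is the corrector $\phi'(x)z$ in $\mathcal I_\rho$, but it is harmless: since $\int_A z\,\frac{dz}{z^2}=0$ for every $A\subset\R$ symmetric about the origin, it contributes nothing over symmetric annuli $B_\rho\setminus B_{\tilde\rho}$. As usual it suffices to treat subsolutions, the supersolution case being symmetric. I would first reduce to the nontrivial situation: given a test function $\tilde\phi\in C^2(Q)$ with $u-\tilde\phi$ attaining a global maximum at $(\o t,\o x)$, normalised so that $u(\o t,\o x)=\tilde\phi(\o t,\o x)$, if $\tilde\phi_x(\o t,\o x)=0$ then the $\tilde\rho$- and $\rho$-subsolution inequalities at $(\o t,\o x)$ coincide (the branches ``$0$'' and ``$+\infty$'' of Definition~\ref{def:HJ:VS} do not see $\rho$), so nothing is needed; hence assume $\tilde\phi_x(\o t,\o x)\neq0$.

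For $\tilde\rho>\rho$ I would argue directly. From the $\rho$-subsolution inequality $\tilde\phi_t(\o t,\o x)\le H_\rho[\tilde\phi,u](\o t,\o x)$, and from $\tilde\phi(\o t,\o x+z)-\tilde\phi(\o t,\o x)\ge u(\o t,\o x+z)-u(\o t,\o x)$ for all $z$ (a consequence of the global maximum), one gets, after discarding the corrector term via the symmetry observation,
\[
\mathcal I_{\tilde\rho}[\tilde\phi,u](\o x)-\mathcal I_\rho[\tilde\phi,u](\o x)
=\int_{B_{\tilde\rho}\setminus B_\rho}\Big[\big(\tilde\phi(\o x+z)-\tilde\phi(\o x)\big)-\big(u(\o x+z)-u(\o x)\big)\Big]\frac{dz}{z^2}\ \ge\ 0 ,
\]
whence $H_\rho[\tilde\phi,u]\le H_{\tilde\rho}[\tilde\phi,u]$ at $(\o t,\o x)$ and $u$ is a $\tilde\rho$-subsolution.

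For $\tilde\rho<\rho$ I would use Sayah's approximation device: choose $(\phi^k)\subset C^2(Q)$ with $\phi^k=\tilde\phi$ on $B_{\tilde\rho}(\o t,\o x)$, $\phi^k\ge u$ on $B_{\tilde\rho}(\o t,\o x)^c$, and $\phi^k\searrow u$ pointwise on the interior of $B_{\tilde\rho}(\o t,\o x)^c$ (possible since $u$ is bounded and upper semi-continuous). Each $u-\phi^k$ has a global maximum at $(\o t,\o x)$ with $\phi^k_x(\o t,\o x)=\tilde\phi_x(\o t,\o x)\neq0$, so the $\rho$-subsolution property gives $\tilde\phi_t(\o t,\o x)\le\mathcal I_\rho[\phi^k,u](\o x)\,|\tilde\phi_x(\o t,\o x)|$; then let $k\to\infty$. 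Splitting $B_\rho=B_{\tilde\rho}\cup(B_\rho\setminus B_{\tilde\rho})$: the integrand over $B_{\tilde\rho}$ is independent of $k$; on the annulus one has $\phi^k(\o x)=\tilde\phi(\o x)=u(\o x)$, $(\phi^k)'(\o x)=\tilde\phi'(\o x)$, the corrector integrates to zero, and $\limsup_k\phi^k(\o x+z)=u(\o x+z)$; a (reverse) Fatou argument, legitimate because the annular integrand is bounded above uniformly in $k$ (the $\phi^k$ are decreasing and $|z|\ge\tilde\rho$ there), yields $\limsup_k\mathcal I_\rho[\phi^k,u](\o x)\le\mathcal I_{\tilde\rho}[\tilde\phi,u](\o x)$, which gives the $\tilde\rho$-subsolution inequality.

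The main obstacle, such as it is, is purely technical and lives entirely in this last limit passage: constructing $(\phi^k)$ so that it is $C^2$ and \emph{exactly} equal to $\tilde\phi$ in a full neighbourhood of $(\o t,\o x)$ while decreasing to the merely upper semi-continuous limit $u$ outside, and checking the reverse-Fatou step on $B_\rho\setminus B_{\tilde\rho}$. Since all of this is a verbatim transcription of the corresponding steps in the proof of Lemma~\ref{l:HJe:rho}, no essentially new idea is required, and one could alternatively simply invoke \cite[Prop.~II.1]{Sayah91} directly.
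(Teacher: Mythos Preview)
Your proposal is correct and follows exactly the approach the paper would take: the paper does not give a proof of Lemma~\ref{l:HJ:rho} at all, deferring instead to \cite[Prop.~II.1]{Sayah91}, and your argument is precisely the natural transcription of the proof of Lemma~\ref{l:HJe:rho} to the limiting setting (with the staircase functions replaced by the identity and the corrector $\phi'(x)z$ disposed of via symmetry of the annuli). Nothing further is needed.
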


\begin{thm}[Comparison principle {\cite[Thm.~5]{ImbertMonneauRouy08} and \cite[Thm.~4.3]{BilerKarchMonneau10} }] \label{t:CP:HJ}
Let $u^\circ \in BUC(\R)$, and let $u$ be a subsolution and $v$ be a supersolution of \eqref{HJ-p}. If $u(0,\cdot) \leq u^\circ \leq v(0,\cdot)$ on $\R$, then  $u\leq v$ on $Q$.
\end{thm}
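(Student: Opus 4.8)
I would prove Theorem~\ref{t:CP:HJ} by the doubling-of-variables scheme already used for Theorem~\ref{t:CPe}, of which the present statement is a simpler (but in one respect subtler) variant; the cited \cite[Thm.~5]{ImbertMonneauRouy08} and \cite[Thm.~4.3]{BilerKarchMonneau10} furnish the precedent, and one could alternatively invoke them directly once one checks, as in Corollary~\ref{co:solutions-match-imr}, that Definition~\ref{def:HJ:VS} singles out the same solutions as the notion used there. Two features simplify the argument relative to Theorem~\ref{t:CPe}: there is a single Hamiltonian $H_\rho[\phi,u]=\mathcal I_\rho[\phi(t,\cdot),u(t,\cdot)](x)\,|\phi_x(t,x)|$ with no staircase $E_\e$ and hence no sub-/supersolution asymmetry, and in the far-field step no envelope mismatch occurs, so the auxiliary perturbation $z\mapsto z+\delta|z|$ needed in Theorem~\ref{t:CPe} can be dropped; the near-field integrals of the quartic test functions, given exactly by Lemma~\ref{l:Hp}\eqref{l:Hp:Bp}, are only $O(\rho)$ and are killed by letting $\rho\searrow0$, which is legitimate by Lemma~\ref{l:HJ:rho}. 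One feature complicates it: the far-field hypothesis~\eqref{CPe:far-field} is replaced by $u^\circ\in BUC(\R)$, so a coercive penalization at spatial infinity must be inserted, and — because of the kernel $z^{-2}$ — it must grow \emph{sub-linearly}, e.g. $\chi(x):=\log(2+x^2)$ or $\chi(x):=(1+x^2)^{1/4}$, so that $\int_{B_\rho^c}|\chi(x+z)-\chi(x)|\,z^{-2}\,dz<\infty$ while still $\chi(x)\to\infty$ as $|x|\to\infty$.

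Arguing by contradiction, suppose $\theta:=\sup_{Q_T}(u-v)>0$ for some $T>0$ and, for $\gamma,\eta,\beta>0$, consider
\begin{align*}
\Phi(t,x,s,y):=u(t,x)-v(s,y)-\frac{(t-s)^2}{2\gamma}-\frac{(x-y)^4}{4\gamma}-\frac{\eta}{T-t}-\frac{\eta}{T-s}-\beta\chi(x)-\beta\chi(y).
\end{align*}
For $\eta,\beta$ small enough one has $\sup\Phi\ge\theta/3$ uniformly in $\gamma$; the penalization together with the semicontinuity of $u$ and $v$ now guarantees that the supremum is attained at some $(\o t,\o x,\o s,\o y)\in\overline{Q_T}\times\overline{Q_T}$, and the divergence of the $\eta$-terms gives $\o t,\o s<T$. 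The standard $\gamma\searrow0$ argument forces $\o t,\o s>0$: along a hypothetical sequence with $\o t=0$ one gets $\o s\to0$ and $\o x-\o y\to0$, while the coercivity of $\beta\chi$ keeps $\o x,\o y$ in a fixed compact set, so one may pass to a limit $\o x$ and combine $u(0,\cdot)\le u^\circ$, the \emph{uniform} continuity of $u^\circ$, and the lower semicontinuity of $v$ to obtain $0<\theta/3\le u^\circ(\o x)-v(0,\o x)\le0$, a contradiction; this is the step that genuinely uses $u^\circ\in BUC$ rather than mere continuity, the spatial domain being unbounded. Fixing such a $\gamma$, we have an interior maximum, and — as in Theorem~\ref{t:CPe} — freezing variables produces admissible test functions $\phi(t,x):=\frac{(t-\o s)^2}{2\gamma}+\frac{(x-\o y)^4}{4\gamma}+\frac{\eta}{T-t}+\beta\chi(x)$ for the subsolution $u$ at $(\o t,\o x)$ and $\psi(s,y):=-\frac{(\o t-s)^2}{2\gamma}-\frac{(\o x-y)^4}{4\gamma}-\frac{\eta}{T-s}-\beta\chi(y)$ for the supersolution $v$ at $(\o s,\o y)$, with $\phi_t(\o t,\o x)-\psi_s(\o s,\o y)=\eta(T-\o t)^{-2}+\eta(T-\o s)^{-2}>0$.

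The contradiction comes by estimating $H_\rho[\phi,u](\o t,\o x)-H_\rho[\psi,v](\o s,\o y)$ from above. In the generic case $\phi_x(\o t,\o x)\neq0$, subtracting the sub- and supersolution inequalities gives $\phi_t-\psi_s\le\mathcal I_\rho[\phi,u](\o x)\,|\phi_x|-\mathcal I_\rho[\psi,v](\o y)\,|\psi_y|$; one splits each $\mathcal I_\rho$ into its $B_\rho$-part, which is $O(\rho)$ for the quartic (Lemma~\ref{l:Hp}\eqref{l:Hp:Bp}) plus $O(\beta\rho)$ from $\beta\chi$, and its $B_\rho^c$-part, which involves only the bounded functions $u,v$; testing the maximality of $\Phi$ at $(\o t,\o x+z,\o s,\o y+z)$ the quartic terms cancel exactly and yields $(u(\o x+z)-u(\o x))-(v(\o y+z)-v(\o y))\le\beta[(\chi(\o x+z)-\chi(\o x))+(\chi(\o y+z)-\chi(\o y))]$, whose integral against $z^{-2}$ over $B_\rho^c$ is $O(\beta)$ precisely because $\chi$ grows sub-linearly, so the far-field part of the difference is $\le O(\beta)$; finally $|\phi_x|-|\psi_y|=O(\beta)$ contributes a further $O(\beta)$. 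Hence $\phi_t-\psi_s\le C(\rho+\beta)$ with $C$ fixed, and letting $\rho\searrow0$ and then $\beta\searrow0$ contradicts $\phi_t-\psi_s>0$. In the degenerate case $\phi_x(\o t,\o x)=0$ one uses the quartic clause of Definition~\ref{def:HJ:VS}: this case forces $|\o x-\o y|$ to be $O(\beta^{1/3})$ and $\phi$ to be of the form $c|x-\o x|^4+g(t)$ up to the $\beta\chi$-term, which one absorbs into an $O(\beta)$ error, so that Definition~\ref{def:HJ:VS} gives $\phi_t(\o t,\o x)\le O(\beta)$ and symmetrically $\psi_s(\o s,\o y)\ge -O(\beta)$, again contradicting $\phi_t-\psi_s>0$ after $\beta\searrow0$.

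I expect the principal obstacle to be the honest bookkeeping around the penalization at infinity: it must be coercive yet grow slowly enough to be compatible with the Lévy kernel $z^{-2}$ (linear growth already produces a divergent tail), and it perturbs both the exact quartic form of $\phi$ and the identity $\phi_x(\o t,\o x)=\psi_y(\o s,\o y)$, so the degenerate-gradient case has to be handled with care — tracking the $O(\beta)$ corrections, and excluding or treating separately the point where $\chi'$ vanishes. A cleaner alternative, worth mentioning, is to exploit the spatial translation invariance of~\eqref{HJ-p} to move a near-maximizer of $u-v$ into a fixed compact set and thereby dispense with the penalization, or simply to reduce to \cite[Thm.~5]{ImbertMonneauRouy08} and \cite[Thm.~4.3]{BilerKarchMonneau10} via Corollary~\ref{co:solutions-match-imr}. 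The $t=0$ closure is the other place where $u^\circ\in BUC$ is used essentially; the $\gamma\searrow0$, $\rho\searrow0$, $\beta\searrow0$ limits are otherwise routine.
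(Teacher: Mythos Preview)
Your doubling-of-variables scheme with a sublinear penalization $\beta\chi$ is natural and would work for the \cite{ImbertMonneauRouy08} notion of viscosity solution, but under Definition~\ref{def:HJ:VS} it has a genuine gap in the degenerate case $\phi_x(\o t,\o x)=0$. Your test function $\phi(t,x)=\frac{(x-\o y)^4}{4\gamma}+\beta\chi(x)+g(t)$ is \emph{not} of the form $c|x-\o x|^4+g(t)$, so the subsolution clause yields only the vacuous inequality $\phi_t\le+\infty$. The proposal to ``absorb the $\beta\chi$-term into an $O(\beta)$ error'' cannot be made rigorous here: one would need to touch $u$ from above at $(\o t,\o x)$ by a genuine quartic centered at $\o x$, but expanding $\phi$ at $\o x$ (using $(\o x-\o y)^3=-\gamma\beta\chi'(\o x)$) gives
\[
\phi(t,x)-\phi(t,\o x)=\Bigl[\tfrac{3(\o x-\o y)^2}{2\gamma}+\tfrac{\beta}{2}\chi''(\o x)\Bigr](x-\o x)^2+O\bigl((x-\o x)^3\bigr),
\]
whose quadratic coefficient is generically strictly positive (of order $\beta^{2/3}\gamma^{-1/3}$), so no quartic centered at $\o x$ can dominate $\phi$ near $\o x$. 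The alternative of invoking \cite{ImbertMonneauRouy08,BilerKarchMonneau10} via Corollary~\ref{co:solutions-match-imr} is circular, since that corollary \emph{uses} Theorem~\ref{t:CP:HJ}; and because Definition~\ref{def:HJ:VS} has fewer test functions than the \cite{ImbertMonneauRouy08} notion, a Definition~\ref{def:HJ:VS} subsolution need not a priori be an \cite{ImbertMonneauRouy08} subsolution, so their comparison principle does not apply directly.

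The paper avoids spatial penalization altogether. It first establishes a ``no initial layer'' estimate~\eqref{no-init-layer} by constructing barriers $c|x-y|^4+\sigma t+\alpha+u^\circ(y)$ (this is where $u^\circ\in BUC$ enters), and then works with a maximizing \emph{sequence} $(t_m,x_m,s_m,y_m)$ for $\Phi$, adding to the test functions the purely quartic localizers $(x-y_m-\o w)^4$ and $(x_m-y-\o w)^4$, where $\o w=\lim_m(x_m-y_m)$. The point is that in the degenerate case $\o w=0$ the two quartics in $\phi_m$ share the center $y_m$, so $\phi_m(t,x)=\bigl(\tfrac1{4\gamma}+1\bigr)|x-y_m|^4+g(t)$ is \emph{exactly} of the required form, and the quartic clause of Definition~\ref{def:HJ:VS} applies whenever the maximum of $u-\phi_m$ occurs at $\xi_m=y_m$; otherwise $(\phi_m)_x\neq0$ and the standard estimate goes through.
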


\begin{proof}
Since we made the viscosity solution test weaker when $\phi_x(\o t, \o x) = 0$, we need to check that the comparison principle proved in \cite{ImbertMonneauRouy08,BilerKarchMonneau10} still applies. We follow the proof of Theorem~\ref{t:CPe} with modifications similar to the proof of Theorem~3.1.4 in \cite{Giga06}.  

We first show that
\begin{align}
\label{no-init-layer}
\lim_{\delta \searrow 0} \sup \{u(t, x) - v(s, y) : |x - y| \leq \delta,\ 0 \leq t, s \leq \delta\} \leq 0.
\end{align}
Since $u^\circ$ is uniformly continuous, for every $\alpha > 0$ there exists $c > 0$ such that 
\begin{equation} \label{pf:u0:est}
  u^\circ(x) \leq c |x - y|^4 + \alpha + u^\circ(y) 
  \quad \text{for all } x, y.
\end{equation}
Then, we can fix $\sigma >0$ sufficiently large depending only on $c$ and $\|u\|_\infty$ such that 
\begin{equation} \label{pf:u:est}
  u(t,x) \leq c|x - y|^4 + \sigma t + \alpha + u^\circ(y)
  \quad \text{for any } t \geq 0, \: x, y \in \R.
\end{equation}
To see this, suppose that this order fails for some $t, x, y$ and denote the right-hand side for the fixed $y$ as $\phi(t, x)$. Then, since $u$ is bounded, $u - \phi$ has a positive global maximum at a point $(\o t, \o x)$. Since by \eqref{pf:u0:est} there holds $u(0, \cdot) < \phi(0, \cdot)$, we have $\o t > 0$. We cannot have $\o x = y$, because otherwise $\phi_x(\o t, \o x) = 4c(\o x - y)^3 = 0$, which contradicts with $\phi_t = \sigma > 0$ and $u$ being a subsolution. Thus, $\phi_x(\o t, \o x) \neq 0$. Since $u$ is a subsolution, we have
\begin{equation} \label{pf:u:1VS:est}
  \sigma
  = \phi_t(\o t, \o x)
  \leq H_1[\phi(\o t, \cdot), u(\o t, \cdot)](\o x)
  = \cI_1[\phi(\o t, \cdot), u(\o t, \cdot)](\o x) \ 4c | \o x - y |^3.
\end{equation}
Using Lemma \ref{l:Hp}, we get
\begin{align} \label{pf:I1:est}
\cI_1[\phi(\o t, \cdot), u(\o t, \cdot)](\o x) 
\leq c (12 |\o x - y|^2 + 2/3) + 4 \|u\|_\infty. 
\end{align}
Moreover, since $(u - \phi) (\o t, \o x) > 0$, we have that $|\o x - y| < (2 \|u\|_\infty/ c)^{1/4}$. Substituting these estimates in \eqref{pf:u:1VS:est} we obtain
\begin{equation*}
  \sigma \leq C(c, \|u\|_\infty)
\end{equation*}
for some $C(c, \|u\|_\infty) > 0$ which only depends on $c$ and $\|u\|_\infty$. Hence, if we choose $\sigma > C(c, \|u\|_\infty)$, we conclude that \eqref{pf:u:est} holds. In particular, choosing $y = x$,
\begin{equation} \label{pf:u:est:II}
  u(t,x) \leq u^\circ(x) + \sigma t + \alpha
  \quad \text{for any } t \geq 0, \: x \in \R.
\end{equation}
Analogously, we can find a similar bound as in \eqref{pf:u:est:II} for $v$ from below. This together with the uniform continuity of $u^\circ$ and the arbitrariness of $\alpha > 0$ allows us to deduce \eqref{no-init-layer}.
\smallskip

As in the proof of Theorem~\ref{t:CPe}, suppose that the comparison fails and therefore $\theta := \sup (u  - v) > 0$. Considering the function $\Phi$ in \eqref{comp-Phi} with related constants $\gamma, \eta, T > 0$, we can choose $\eta, T^{-1} > 0$ small enough so that $\Theta := \sup_{Q_T \times Q_T} \Phi \geq \theta /2$ uniformly in $\gamma$. By \eqref{no-init-layer} we can take $\gamma$ small enough so that the super-level set $\{ \Phi \geq \theta/4 \}$ is separated by a positive distance from $\{ (0,x,0,y) : x,y \in \R \}$. Due to the lack of compactness, a maximum of $\Phi$ is not necessarily achieved. We choose a maximizing sequence $(t_m, x_m, s_m, y_m)$. As $u - v$ is bounded, the sequences $(x_m - y_m)$, $(t_m)$ and $(s_m)$ are bounded, and thus by selecting a subsequence we can assume that $x_m - y_m \to \o w$, $t_m \to \o t$ and $s_m \to \o s$ as $m \to \infty$. Note that $\gamma$ is chosen so that $\o t, \o s > 0$. 

We consider the test functions
\begin{align*}
\phi_m(t,x) &:=  \frac{(t-s_m)^2}{2\gamma } + \frac{(x-y_m)^4}{4\gamma} + \frac\eta{T-t} + (x - y_m - \o w)^4 + (t - \o t)^2\\
\psi_m(s,y) &:= - \frac{(t_m-s)^2}{2\gamma } - \frac{(x_m-y)^4}{4\gamma } - \frac\eta{T-s} - (x_m - y - \o w)^4 - (s - \o s)^2.
\end{align*}

Let $(\tau_m, \xi_m)$ be a point of maximum of $u - \phi_m$ and $(\sigma_m, \eta_m)$ be a point of minimum of $v - \psi_m$. We have
\begin{align*}
u(t_m, x_m) - \phi_m(t_m, x_m) \leq u(\tau_m, \xi_m) - \phi_m(\tau_m, \xi_m),
\end{align*}
and subtracting $v(s_m, y_m) + \eta/(T-s_m)$ yields
\begin{multline}
\label{Phi-xi-tau}
\Phi(t_m, x_m, s_m, y_m) - (x_m - y_m- \o w)^4 - (t_m - \o t)^2 \\
\leq 
\Phi(\tau_m, \xi_m, s_m, y_m) - (\xi_m - y_m- \o w)^4 - (\tau_m - \o t)^2.
\end{multline}
Then, since $\Phi \leq \Theta$,
\begin{align*}
(\xi_m - y_m- \o w)^4 + (\tau_m - \o t)^2 \leq \Theta - \Phi(t_m, x_m, s_m, y_m) + (x_m - y_m- \o w)^4 + (t_m - \o t)^2.
\end{align*}
The right-hand side converges to 0 as $m\to\infty$ and therefore $\xi_m - y_m \to \o w$ and $\tau_m \to \o t$. Combining this with \eqref{Phi-xi-tau}, we deduce that $\Phi(\tau_m, \xi_m, s_m, y_m) \to \Theta$. A parallel argument for $v - \psi_m$ with $(\sigma_m, \eta_m)$ as a point of maximum yields $x_m - \eta_m \to \o w$ and $\sigma_m \to \o s$, and that $\Phi(t_m, x_m, \sigma_m, \eta_m) \to \Theta$.

We claim that also $\Phi(\tau_m, \xi_m, \sigma_m, \eta_m) \to \Theta$. Indeed, a bit of algebra shows that
\begin{align*}
\Phi(\tau_m, \xi_m, \sigma_m, \eta_m) &= \Phi(\tau_m, \xi_m, s_m, y_m) + \Phi(t_m, x_m, \sigma_m, \eta_m) - \Phi(t_m, x_m, s_m, y_m) \\
&\quad + \frac{(\xi_m - \eta_m)^4}{4\gamma} - \frac{(\xi_m - y_m)^4}{4\gamma} - \frac{(x_m - \eta_m)^4}{4\gamma} + \frac{(x_m - y_m)^4}{4\gamma} \\
&\quad + \frac{(\tau_m - \sigma_m)^2}{2\gamma} - \frac{(\tau_m - s_m)^2}{2\gamma} - \frac{(t_m - \sigma_m)^2}{2\gamma} + \frac{(t_m - s_m)^2}{2\gamma},
\end{align*}
where the right-hand side converges to $\Theta$ as $m \to \infty$.
Therefore, for any $\e > 0$ there exists an $m_0 > 0$ such that $\Phi(\tau_m, \xi_m, \sigma_m, \eta_m) > \Theta - \e$ for all $m \geq m_0$, and thus
\begin{align*}
\Phi(\tau_m, \xi_m + z, \sigma_m, \eta_m + z) \leq \Phi(\tau_m, \xi_m, \sigma_m, \eta_m) + \e \qquad \text{for all } m \geq m_0.
\end{align*}
This yields
\begin{align*}
u(\tau_m, \xi_m + z) - u(\tau_m, \xi_m) \leq v(\sigma_m, \eta_m + z) - v(\sigma_m, \eta_m) + \e,
\end{align*}
and thus, for any $\rho > 0$,
\begin{align}
\label{u-v-gap}
\int_{B_\rho^c} (u(\tau_m, \xi_m + z) - u(\tau_m, \xi_m))\frac{dz}{z^2} \leq \int_{B_\rho^c} (v(\sigma_m, \eta_m + z) - v(\sigma_m, \eta_m)) \frac{dz}{z^2} + \frac{2\e}{\rho}.
\end{align}

Regarding the first of the two integrals in \eqref{Irho}, we obtain from Lemma \ref{l:Hp}\eqref{l:Hp:Bp} that 
\begin{multline*} 
0 \leq \int_{B_\rho} \Big(\phi_m(\tau_m, \xi_m + z) - \phi_m(\tau_m, \xi_m) - (\phi_m)_x(\tau_m, \xi_m) z\Big) \frac{dz}{z^2} \\
= 12 \bigg[ \frac{ (\xi_m - y_m)^2 }{4 \gamma} + (\xi_m - y_m - \o w)^2 \bigg] \rho + \frac23 \bigg[ \frac{ 1 }{4 \gamma} + 1 \bigg] \rho^3.
\end{multline*}
Since $\xi_m - y_m \to \o w$, we obtain for $\rho$ and $m^{-1}$ small enough that 
\begin{equation}\label{brho-uniform} 
  \int_{B_\rho} \Big(\phi_m(\tau_m, \xi_m + z) - \phi_m(\tau_m, \xi_m) - (\phi_m)_x(\tau_m, \xi_m) z\Big) \frac{dz}{z^2} \leq \Big( 4 \frac{\o w^2}\gamma + o_m(1) + C_\gamma \rho^2 \Big)\rho,
\end{equation}
where the constant $C_\gamma > 0$ only depends on $\gamma$, and $o_m(1) \to 0$ as $m \to \infty$ uniformly in $\rho$ and $\e$.
The analogous estimate to \eqref{brho-uniform} for $\psi_m$ can be obtained in a similar fashion.
\smallskip

Let us first consider $\o w \neq 0$. Then, for $m$ large enough we have  $(\phi_m)_x(\tau_m, \xi_m) \neq 0 \neq(\psi_m)_x(\sigma_m, \eta_m)$, and
the definition of $\rho$-solutions yields
\begin{align} \label{pf:uvVS:diff}
(\phi_m)_t(\tau_m, \xi_m) - (\psi_m)_t (\sigma_m, \eta_m) 
\leq H_\rho[\phi_m, u](\tau_m, \xi_m) - H_\rho[\psi_m, v](\sigma_m, \eta_m).
\end{align} 
For the left-hand side, we compute
\begin{equation} \label{pf:uvVS:LHS:bd}
  (\phi_m)_t(\tau_m, \xi_m) - (\psi_m)_t (\sigma_m, \eta_m) 
  = \frac\eta{(T - \o t)^2} + \frac\eta{(T - \o s)^2} + o_m(1)
  \geq \frac{2\eta}{T^2} + o_m(1),
\end{equation}
where $o_m(1) \to 0$ as $m \to \infty$ uniformly in $\rho$ and $\e$. For the right-hand side in \eqref{pf:uvVS:diff}, we first compute $(\phi_m)_x(\tau_m, \xi_m) = \frac{\o w^3}\gamma + o_m(1)$.
Thus, for $m$ large enough, 
\begin{equation*}
  \big| (\phi_m)_x (\tau_m, \xi_m) \big| = \frac{|\o w|^3}\gamma + o_m(1).
\end{equation*}
Analogously, we find
\begin{equation*}
  \big| (\psi_m)_x (\sigma_m, \eta_m) \big| = \frac{|\o w|^3}\gamma + o_m(1).
\end{equation*}
Secondly, we focus on the two integrals in \eqref{Irho} for both $\cI_\rho[\phi_m, u]$ and $\cI_\rho[\psi_m, v]$. We estimate the integrals over $B_\rho$ simply by \eqref{brho-uniform}. The integrals over $B_\rho^c$ require more care; for the leading order terms in $| (\phi_m)_x (\tau_m, \xi_m) |$ and $| (\psi_m)_x (\sigma_m, \eta_m) |$ we use \eqref{u-v-gap}, and for the $o_m(1)$ parts we simply apply Lemma \ref{l:Hp}\eqref{l:Hp:Bpc}. Putting this all together, we estimate the right-hand side of \eqref{pf:uvVS:diff} as
\begin{align} \label{pf:uvVS:RHS:bd}
  H_\rho[\phi_m, u](\tau_m, \xi_m) - H_\rho[\psi_m, v](\sigma_m, \eta_m) 
  \leq 5 \frac{\o w^2}\gamma \rho \ 4 \frac{|\o w|^3}\gamma
        + \frac{2\e}{\rho} \ \frac{|\o w|^3}\gamma
        + \frac4\rho (\|u\|_\infty + \|v\|_\infty) o_m(1).
\end{align}

Next we reach a contradiction in \eqref{pf:uvVS:diff} by showing that $\rho$, $\e$ and $m$ can be chosen such that the right-hand side in \eqref{pf:uvVS:LHS:bd} is larger than the right-hand side in \eqref{pf:uvVS:RHS:bd}. First, we take $\rho > 0$ small enough so that the first term in \eqref{pf:uvVS:RHS:bd} is smaller than $\eta/(2 T^2)$. Secondly, we take $\e$ and $m^{-1}$ small enough such that also 
\begin{equation*}
  \frac{2\e}{\rho} \ \frac{|\o w|^3}\gamma
        + \frac4\rho (\|u\|_\infty + \|v\|_\infty) o_m(1)
  < \frac\eta{T^2}.
\end{equation*}
Third, we take $m$ even larger if necessary to have that \eqref{pf:uvVS:LHS:bd} is larger than $3\eta/(2 T^2)$. Then, the resulting estimates contradict with \eqref{pf:uvVS:diff}.
\smallskip

It is left to consider the case $\o w = 0$. If $\xi_m \neq y_m$ and $\eta_m \neq x_m$ for all $m$, then the estimates in the case $\o w \neq 0$ apply verbatim. In fact, the estimates can be simplified, because $\o w = 0$ implies that
\begin{equation*}
  \big| (\phi_m)_x (\tau_m, \xi_m) \big| + \big| (\psi_m)_x (\sigma_m, \eta_m) \big| = o_m(1),
\end{equation*}
and then it is enough to bound $H_\rho[\phi_m, u](\tau_m, \xi_m)$ and $H_\rho[\psi_m, v](\sigma_m, \eta_m)$ independently from each other (i.e.\ \eqref{u-v-gap} need not be used). Simplifying the estimates in this manner, we observe that if either $\xi_m = y_m$ or $\eta_m = x_m$ for some $m$, then $H_\rho[\phi_m, u](\tau_m, \xi_m) = 0$ or $H_\rho[\psi_m, v](\sigma_m, \eta_m) = 0$, which yields directly a sufficient bound on $H_\rho[\phi_m, u](\tau_m, \xi_m)$ or $H_\rho[\psi_m, v](\sigma_m, \eta_m)$. This yields again a contradiction. 
\end{proof}

Finally, we show in Corollary \ref{co:solutions-match-imr} that our notion of viscosity solution (Definition \ref{def:HJ:VS}) is equivalent to that given by \cite[Def.\ 1]{ImbertMonneauRouy08}. The notion in \cite[Def.\ 1]{ImbertMonneauRouy08} is obtained from Definition \ref{def:HJ:VS} by simply replacing \eqref{visc-subsol-cond} by $\phi_t(\o t, \o x) \leq H_\rho[\phi, u](\o t, \o x)$, and by performing a similar replacement for the supersolutions. Since \cite{ImbertMonneauRouy08,BilerKarchMonneau10} prove existence and regularity of \cite{ImbertMonneauRouy08}-solutions with $BUC$ initial data, Corollary \ref{co:solutions-match-imr} implies that these results then also apply to our notion of viscosity solution. 

\begin{cor}
\label{co:solutions-match-imr}
Let $u^\circ \in BUC(\R)$. Then $u$ is a viscosity solution with initial data $u^\circ$ in the sense of Definition~\ref{def:HJ:VS} if and only if it is a viscosity solution with initial data $u^\circ$ in the sense of \cite{ImbertMonneauRouy08}. In particular, \eqref{HJ-p} has a unique viscosity solution with initial data $u^\circ$.
\end{cor}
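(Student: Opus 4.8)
The plan is to avoid re-examining the degenerate test functions directly, and instead to combine a one-line inclusion of solution classes with our comparison principle and the existing existence theory. Concretely: (i) every \cite{ImbertMonneauRouy08}-solution is a viscosity solution in the sense of Definition~\ref{def:HJ:VS}, because our test-function requirement is \emph{weaker}; and (ii) Theorem~\ref{t:CP:HJ} forces Definition-\ref{def:HJ:VS} solutions to be unique, so once an \cite{ImbertMonneauRouy08}-solution is known to exist it must coincide with any Definition-\ref{def:HJ:VS} solution. Throughout I fix one $\rho>0$ and use Lemma~\ref{l:HJ:rho} to speak of "solutions" without reference to $\rho$.

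First I would verify the inclusion. Let $w$ be an \cite{ImbertMonneauRouy08}-subsolution and let $\phi\in C^2(Q)$ be such that $w-\phi$ has a global maximum at $(\o t,\o x)$. If $\phi_x(\o t,\o x)\neq 0$, the inequality $\phi_t(\o t,\o x)\le H_\rho[\phi,w](\o t,\o x)$ is exactly the condition in Definition~\ref{def:HJ:VS}. If $\phi_x(\o t,\o x)=0$, then $\mathcal I_\rho[\phi,w](\o x)$ is finite by Lemma~\ref{l:Hp}, hence $H_\rho[\phi,w](\o t,\o x)=\mathcal I_\rho[\phi,w](\o x)\,|\phi_x(\o t,\o x)|=0$, so the \cite{ImbertMonneauRouy08}-condition reads $\phi_t(\o t,\o x)\le 0$; this is precisely what Definition~\ref{def:HJ:VS} requires when $\phi$ has the form $c|x-\o x|^4+g(t)$, while Definition~\ref{def:HJ:VS} requires nothing in the remaining case. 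Thus $w$ is a $\rho$-subsolution in our sense, and symmetrically every \cite{ImbertMonneauRouy08}-supersolution is a $\rho$-supersolution; consequently every \cite{ImbertMonneauRouy08}-solution is a solution in the sense of Definition~\ref{def:HJ:VS}.

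Next I would invoke \cite{ImbertMonneauRouy08,BilerKarchMonneau10}: for $u^\circ\in BUC(\R)$ there exists a bounded, continuous \cite{ImbertMonneauRouy08}-solution $\o u$ with initial data $u^\circ$ (its $L^\infty$-bound is inherited from that of $u^\circ$). By the previous step $\o u$ is also a solution in the sense of Definition~\ref{def:HJ:VS}, with $\o u(0,\cdot)=u^\circ$. Now let $u$ be any solution in the sense of Definition~\ref{def:HJ:VS} with initial data $u^\circ$, so that $u^*(0,\cdot)=u_*(0,\cdot)=u^\circ$. Applying Theorem~\ref{t:CP:HJ} to the subsolution $u^*$ and the supersolution $\o u$ gives $u^*\le \o u$ on $Q$, and applying it to the subsolution $\o u$ and the supersolution $u_*$ gives $\o u\le u_*$ on $Q$. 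Since always $u_*\le u^*$, all these functions coincide, so $u=\o u$ is continuous and is an \cite{ImbertMonneauRouy08}-solution. Combined with the first step this yields the equivalence of the two notions of viscosity solution, and uniqueness is immediate because every Definition-\ref{def:HJ:VS} solution equals the single function $\o u$.

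I do not expect any real obstacle: all the analytic content is already contained in Theorem~\ref{t:CP:HJ} and in the cited existence/regularity theory. The only points demanding a little care are the bookkeeping of the semicontinuous envelopes $u^*,u_*$ and the interpretation of "initial data $u^\circ$" for a Definition-\ref{def:HJ:VS} solution (namely $u^*(0,\cdot)=u_*(0,\cdot)=u^\circ$), which is what makes Theorem~\ref{t:CP:HJ} applicable in both directions, and the observation that the \cite{ImbertMonneauRouy08}-solution is bounded so that Theorem~\ref{t:CP:HJ} (which assumes bounded sub/supersolutions) indeed applies.
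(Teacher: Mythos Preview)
Your proposal is correct and follows essentially the same approach as the paper: you show the easy inclusion (every \cite{ImbertMonneauRouy08}-solution is a Definition~\ref{def:HJ:VS} solution because the latter uses fewer test functions), then invoke the existence of an \cite{ImbertMonneauRouy08}-solution from \cite{ImbertMonneauRouy08,BilerKarchMonneau10} and use the comparison principle Theorem~\ref{t:CP:HJ} to force any Definition-\ref{def:HJ:VS} solution to coincide with it. The paper's proof is terser---it dispatches the inclusion in one sentence and states the comparison conclusion as ``$u\equiv v$'' without spelling out the two applications to $u^*$ and $u_*$---but the logic is identical.
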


\begin{proof}
Since Definition~\ref{def:HJ:VS} only restricts the class of test functions when compared to \cite{ImbertMonneauRouy08}-solutions, it is clear that any viscosity solution with initial data $u^\circ$ in the \cite{ImbertMonneauRouy08}-sense is a viscosity solution in the sense of Definition~\ref{def:HJ:VS}.

On the other hand, suppose that $u$ is a viscosity solution with initial data $u^\circ$ in the sense of Definition~\ref{def:HJ:VS}. By \cite[Th.~4.7]{BilerKarchMonneau10} there exists a unique \cite{ImbertMonneauRouy08}-solution $v$ with initial data $u^\circ$. Then, by the first part of this proof, $v$ is also a viscosity solution in the sense of Definition~\ref{def:HJ:VS}. By the comparison principle, Theorem~\ref{t:CP:HJ}, we conclude that $u \equiv v$.
\end{proof}

\subsection{Convergence}

As usual, we define for an $\e$-indexed sequence $u_\e : Q \to \R$ of upper semi-continuous functions
\begin{equation*}
  {\limsup}^* \, u_\e (t,x) := \limsup_{ \substack{ \e \to 0 \\ s \to t \\ y \to x }  } u_\e (s,y).
\end{equation*}
Similarly, we use $\liminf_*$ for lower semi-continuous functions.

\begin{thm}[Convergence as $\e\to0$] \label{t:conv}
Let $u_\e$ be a sequence of subsolutions of \eqref{HJeps-p}, and assume that $u_\e$ is uniformly bounded. Set $\o u := \limsup^* u_\e$. Then $\o u$ is a subsolution of \eqref{HJ-p}.

Similarly, if $v_\e$ is a sequence of supersolutions of \eqref{HJeps-p} bounded uniformly, then $\underline v := \liminf_* v_\e$ is a supersolution of \eqref{HJ-p}.
\end{thm}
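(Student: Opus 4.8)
The plan is to run the standard ``half-relaxed limits'' argument (Barles--Perthame), adapted to the restricted class of test functions of Definition~\ref{def:HJ:VS}. I would prove the subsolution statement; the supersolution statement follows by the mirror argument, working with $\underline v:={\liminf}_* v_\e$, with the lower envelope $E_{\e,*}$ and the lower Hamiltonian $\underline H_{\rho,\e}$, and reversing all inequalities. First note that $\o u:={\limsup}^* u_\e$ is bounded (by the uniform bound on $u_\e$) and upper semi-continuous by construction. Let $\phi\in C^2(Q)$ be such that $\o u-\phi$ has a global maximum at $(\o t,\o x)$; as in the remark following Definition~\ref{def:HJ:VS} we may assume without loss of generality that this maximum is strict and that $\o u-\phi\to-\infty$ as $|t|+|x|\to\infty$ (after perturbing $\phi$ by $\delta|x-\o x|^4+\delta|t-\o t|^2$ and letting $\delta\searrow0$ at the end). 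The usual perturbation and compactness argument then produces points $(t_\e,x_\e)\to(\o t,\o x)$ at which $u_\e-\phi$ attains a global maximum, with $u_\e(t_\e,x_\e)\to\o u(\o t,\o x)$. It remains to verify the subsolution inequality at $(\o t,\o x)$; the only non-trivial cases are \textbf{(A)} $\phi_x(\o t,\o x)\neq0$ and \textbf{(B)} $\phi(t,x)=c|x-\o x|^4+g(t)$, the remaining case ($\phi_x(\o t,\o x)=0$ with $\phi$ not of this form) imposing only $\phi_t(\o t,\o x)\leq+\infty$.

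In case \textbf{(A)}, $\phi_x(t_\e,x_\e)\neq0$ for $\e$ small, so by the subsolution property of $u_\e$ (Definition~\ref{def:HJe:VS}),
\[
\phi_t(t_\e,x_\e)\ \leq\ \o M_{\rho,\e}\big[\phi(t_\e,\cdot),u_\e(t_\e,\cdot)\big](x_\e)\,\big|\phi_x(t_\e,x_\e)\big|,
\]
and I would pass to the limit by treating the two parts of $\o M_{\rho,\e}$ (see \eqref{oMrhoe}) separately. For the far-field part over $B_\rho^c$, the bound $E_\e^*[\alpha]\leq\alpha+\e/2$, the maximum property $(u_\e-\phi)(t_\e,x_\e+z)\leq(u_\e-\phi)(t_\e,x_\e)$, the estimate $\limsup_\e u_\e(t_\e,x_\e+z)\leq{\limsup}^* u_\e(\o t,\o x+z)=\o u(\o t,\o x+z)$, and $u_\e(t_\e,x_\e)\to\o u(\o t,\o x)$ together yield (via reverse Fatou, the integrand being dominated by $C/z^2$ by Lemma~\ref{l:Hpe}\eqref{l:Hpe:Bpc}) that the $\limsup_{\e\to0}$ of this part is $\leq\int_{B_\rho^c}\big(\o u(\o t,\o x+z)-\o u(\o t,\o x)\big)z^{-2}\,dz$. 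For the near-field part over $B_\rho$ (which involves only $\phi$), the relevant facts are $E_\e^*\to\id$ with $|E_\e^*[\alpha]-\alpha|\leq\e/2$, and that the staircase error $E_\e^*[\alpha]-\alpha=\e\,\psi(\alpha/\e)$ with $\psi$ the odd, mean-zero sawtooth; combined with the vanishing of the innermost principal value (Lemma~\ref{l:Hpe}\eqref{l:Hpe:Bp}) and $\pv\int_{B_\rho}z^{-1}\,dz=0$, this gives
\[
\pv\int_{B_\rho}E_\e^*\big[\phi(t_\e,x_\e+z)-\phi(t_\e,x_\e)\big]\frac{dz}{z^2}\ \xrightarrow{\e\to0}\ \int_{B_\rho}\big(\phi(\o t,\o x+z)-\phi(\o t,\o x)-\phi_x(\o t,\o x)z\big)\frac{dz}{z^2},
\]
the near-field part of $\cI_\rho[\phi(\o t,\cdot),\o u(\o t,\cdot)](\o x)$. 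Since $\o M_{\rho,\e}$ stays bounded and $|\phi_x(t_\e,x_\e)|\to|\phi_x(\o t,\o x)|>0$, these two facts combine to $\phi_t(\o t,\o x)\leq|\phi_x(\o t,\o x)|\,\cI_\rho[\phi(\o t,\cdot),\o u(\o t,\cdot)](\o x)=H_\rho[\phi,\o u](\o t,\o x)$, which is the required inequality.

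In case \textbf{(B)} we must show $\phi_t(\o t,\o x)=g'(\o t)\leq0$. If $c=0$ this is immediate, since $\phi=0\cdot|x-x_\e|^4+g(t)$ is of the admissible form at the maximum point $(t_\e,x_\e)$ of $u_\e-\phi$, so $g'(t_\e)\leq0$ and we let $\e\to0$. If $c\neq0$, then either $x_\e=\o x$ along a subsequence — in which case $\phi=c|x-x_\e|^4+g(t)$ is again of the admissible form at $(t_\e,x_\e)$, whence $g'(t_\e)\leq0$ — or $x_\e\neq\o x$ for all small $\e$, so that $\phi_x(t_\e,x_\e)=4c(x_\e-\o x)^3\neq0$ and the subsolution property gives $g'(t_\e)\leq\o M_{\rho,\e}[\phi(t_\e,\cdot),u_\e(t_\e,\cdot)](x_\e)\,|\phi_x(t_\e,x_\e)|$. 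Here I would bound the far-field part of $\o M_{\rho,\e}$ by $C/\rho$ and the near-field part, using Lemma~\ref{l:Hpe}\eqref{l:Hpe:Bp} with a cutoff $\rho_0(\e)$ of order $\e/|\phi_x(t_\e,x_\e)|$ (when this is $\leq\rho$; otherwise the near-field part is simply $0$), by a quantity of order $1+\e/\rho_0(\e)$; since $|\phi_x(t_\e,x_\e)|=4|c|\,|x_\e-\o x|^3\to0$, the whole right-hand side vanishes in the limit and we conclude $g'(\o t)\leq0$.

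The main obstacle is the near-field limit of the nonlocal operator in case (A). Unlike the far-field term, the integrand $\big(E_\e^*[\phi(x+z)-\phi(x)]-(\phi(x+z)-\phi(x))\big)z^{-2}$ is \emph{not} absolutely integrable near $z=0$ — it is of size $\e/z^2$ there — so dominated convergence does not apply, and the convergence must be extracted from the principal-value cancellation together with the mean-zero (odd sawtooth) structure of $E_\e^*-\id$, uniformly as $(t_\e,x_\e)\to(\o t,\o x)$; after rescaling $z$ this reduces to the absolute convergence of $\int_1^\infty\psi(v)v^{-2}\,dv$ and the oddness of $\psi$, with the $O(z^2)$ corrections to $\phi(x+z)-\phi(x)$ contributing only lower-order terms. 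The same mechanism, made quantitative through the $\e$-dependent truncation $\rho_0(\e)$, is what forces $|\phi_x(t_\e,x_\e)|\,\o M_{\rho,\e}\to0$ in case (B); ensuring that the $\e/\rho_0(\e)$ contribution does not survive multiplication by $|\phi_x(t_\e,x_\e)|$ is the delicate bookkeeping of the proof.
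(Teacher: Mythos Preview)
Your overall framework---the Barles--Perthame half-relaxed limits argument with the restricted test-function class---matches the paper's approach exactly, including the case split into $\phi_x(\o t,\o x)\neq0$ and $\phi$ of the special quartic form. The far-field treatment in Case~(A) via $E_\e^*[\alpha]\leq\alpha+\e/2$ and reverse Fatou is also what the paper does.

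There are, however, genuine gaps in both of your near-field estimates.

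\emph{Case~(A), near field.} You correctly identify the difficulty: the sawtooth error $F_\e(u)=E_\e^*[u]-u$ has size $\e/2$, so $F_\e[\phi_\e(z)]/z^2$ is not absolutely integrable near $z=0$. Your rescaling argument shows that the \emph{linearized} contribution $\pv\int_{B_\rho} F_\e[\alpha z]\,z^{-2}\,dz$ vanishes by oddness of $\psi$, but the claim that ``the $O(z^2)$ corrections contribute only lower-order terms'' is not justified, because $F_\e$ is \emph{discontinuous}: a perturbation $r(z)=O(z^2)$ of the argument can shift which step of the staircase you land on, and the resulting error is not controlled by $|r(z)|$ pointwise. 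The paper resolves this with a dedicated lemma (Lemma~\ref{l:M-I}), which changes variables $u=\phi_\e(z)$ in the intermediate annulus so that the sawtooth cancellation becomes exact in the variable~$u$; the Jacobian correction is then shown to be $O(\e|\log\e|)$, yielding the one-sided bound
\[
\pv\int_{B_\rho}E_\e^*[\phi_\e(z)]\,\frac{dz}{z^2}\ \leq\ \int_{B_\rho}\big(\phi_\e(z)-\phi_\e'(0)z\big)\,\frac{dz}{z^2}+C\e|\log\e|.
\]

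\emph{Case~(B).} Your cutoff $\rho_0\sim\e/|\phi_x(t_\e,x_\e)|$ is not in general a valid radius for Lemma~\ref{l:Hpe}\eqref{l:Hpe:Bp}. That lemma needs $\phi(t_\e,x_\e+z)-\phi(t_\e,x_\e)$ to remain in $(-\e,0)$ for $z\in(-\rho_0,0)$ and in $(0,\e)$ for $z\in(0,\rho_0)$, but for the quartic $\phi=c|x-\o x|^4+g(t)$ the higher-order terms dominate when $|x_\e-\o x|^4\lesssim\e$, and then the admissible $\rho_0$ is only of order $|x_\e-\o x|$, not $\e/|\phi_x|$. Your fallback (``otherwise the near-field part is simply $0$'') fails for the same reason: when $\e/|\phi_x|>\rho$ it does \emph{not} follow that $|\phi_\e(z)|<\e$ on $B_\rho$, since $\phi_\e(z)\approx cz^4$ there. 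The paper avoids this regime analysis entirely: it bounds $\phi(t_\e,x_\e+z)-\phi(t_\e,x_\e)$ above by the parabola $\alpha z+\tfrac K2 z^2$ with $K=48c$ (Taylor, using $|x_\e-\o x|\leq1$) and invokes a separate estimate (Lemma~\ref{l:parabola}) giving
\[
|\alpha|\,\pv\int_{B_1}E_\e^*\big[\alpha z+\tfrac K2 z^2\big]\,\frac{dz}{z^2}\ \leq\ K|\alpha|+C(K\e+\alpha^2),
\]
which tends to $0$ since $\alpha=\phi_x(t_\e,x_\e)\to0$ and $\e\to0$.
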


\begin{proof}
We only prove the subsolution case; the proof for supersolutions is analogous. By Lemmas \ref{l:HJe:rho} and \ref{l:HJ:rho}, we may set $\rho = 1$.

Let $\phi$ be a test function for $\o u$ such that $\o u - \phi$ has a strict maximum at $(\o t,\o x)$. By the usual argument, along a subsequence of $\e$, $u_\e-\phi$ has a global maximum at $(t_\e,x_\e)$ and we have $(t_\e,x_\e)\to (\o t,\o x)$ as $\e\to0$ and $u_\e(t_\e,x_\e) \to \o u(\o t,\o x)$. 
In what follows, $\e$ is taken along such subsequence. We therefore also have for any $z\in \R$
\begin{equation}
\label{ineq:th:conv}
\limsup_{\e\to0} u_\e(t_\e,x_\e+z) - u_\e(t_\e,x_\e) \leq \o u(\o t,\o x+ z) - \o u(\o t,\o x).
\end{equation}

We separate two cases; $\phi_x(\o t, \o x) \not= 0$ and $\phi_x(\o t, \o x) = 0$. When $\phi_x(\o t, \o x) \not= 0$, we need to prove that $\phi_t(\o t, \o x)\leq H_\rho[\phi,\o u](\o t, \o x)$. By Lemma~\ref{l:M-I} below, we have that for all $\e > 0$ small enough
\begin{multline*}
\pv \int_{B_\rho} E_\e^* \big[ \phi(t_\e,x_\e+z) - \phi(t_\e,x_\e)\big] \frac{dz}{z^2} \\
\leq \int_{B_\rho} \Big( \phi(t_\e,x_\e+z) - \phi(t_\e,x_\e) - z\phi_x(t_\e,x_\e)\Big) \frac{dz}{z^2} + R_\e,
\end{multline*}
where $R_\e\to 0$ as $\e\to0$. Then 
\begin{multline*}
  \limsup_{\e\to0} \o M_{\rho,\e}[\phi,u_\e] (t_\e,x_\e)
 \leq  \limsup_{\e\to0} \int_{B_\rho} \Big( \phi(t_\e,x_\e+z) - \phi(t_\e,x_\e) - z\phi_x(t_\e,x_\e)\Big) \frac{dz}{z^2} \\
 + \limsup_{\e\to0}\int_{B_\rho^c} E_\e^*\big[u_\e (t_\e,x_\e+ z) - u_\e (t_\e,x_\e)\big] \frac {dz}{z^2}.
\end{multline*}
Relying further on Fatou's lemma and \eqref{ineq:th:conv}, we obtain 
\begin{align*}
\limsup_{\e\to0} {}&\o M_{\rho,\e}[\phi,u_\e] (t_\e,x_\e)
 \\
&\leq \int_{B_\rho} \Big( \phi(\o t,\o x+z) - \phi(\o t,\o x) - z\phi_x(\o t,\o x)\Big) \frac{dz}{z^2}\\
&\qquad + \limsup_{\e\to0}  \frac\e2 \int_{B_\rho^c} \frac{dz}{z^2} 
\;+ \;\limsup_{\e\to0} \int_{B_\rho^c} \Big(u(t_\e,x_\e+ z) - u(t_\e,x_\e)\Big)\frac{dz}{z^2}\\
&\leq \int_{B_\rho} \Big( \phi(\o t,\o x+z) - \phi(\o t,\o x) - z\phi_x(\o t,\o x)\Big) \frac{dz}{z^2} +\int_{B_\rho^c} \Big(\o u(\o t, \o x+ z) - \o u(\o t, \o x)\Big)\frac{dz}{z^2}\\
&= \cI_\rho[\phi,\o u](\o t, \o x).
\end{align*} 
Combining this with the convergence $|\phi_x(t_\e,x_\e)|\to |\phi_x(\o t, \o x)|$, we find
\begin{align*}
\phi_t(\o t,\o x) &= \limsup_{\e\to0} \phi_t(t_\e,x_\e)
\leq \limsup_{\e\to0} \o M_{\rho,\e}[\phi,u_\e] (t_\e,x_\e)\; |\phi_x(t_\e,x_\e)|\\
&\leq \cI_\rho[\phi,\o u](\o t, \o x)\; |\phi_x(\o t, \o x)| = H_\rho[\phi,\o u](\o t, \o x),
\end{align*}
which concludes the proof  that $\o u$ satisfies the subsolution condition for \eqref{HJ-p} in the case $\phi_x(\o t, \o x)\not=0$. 

\medskip

When $\phi_x(\o t, \o x)=0$, we only need to consider the case in which $\phi(t, x) = c |x - \o x|^4 + g(t)$, and prove that $\phi_t(\o t, \o x) \leq 0$. By the usual approximation argument, we may assume that $c > 0$ and that $g(t)$ tends to $+\infty$ as $t \to +\infty$. If $x_\e = \o x$ for some $\e$, then from $u_\e$ being a subsolution to \eqref{HJeps-p} we obtain immediately $\phi_t(t_\e, \o x) \leq 0$. If $x_\e \neq \o x$, we obtain that $\phi_t(t_\e, x_\e) \leq \o H_{\rho, \e}[\phi, u_\e](t_\e, x_\e)$. To estimate $\o H_{\rho, \e}[\phi, u_\e](t_\e, x_\e)$ from above, let $\e$ be sufficiently small so that $|x_\e - \o x| \leq 1$. Then, by Taylor's Theorem,
$$
 \phi(t_\e, x_\e +z) \leq \phi(t_\e, x_\e) + \phi_x(t_\e, x_\e) z + 24c z^2
 \quad \text{for } |z| \leq 1,
$$ 
from which we obtain by Lemma~\ref{l:parabola} below that
\[
\o H_{\rho, \e}[\phi, u_\e](t_\e, x_\e) 
\leq C \|u\|_\infty |\phi_x(t_\e, x_\e)| +  48c |\phi_x(t_\e, x_\e)| + C\Big(48c \e + |\phi_x(t_\e, x_\e)|^2\Big).
\]
Taking $\e \to 0$, we observe from $\phi_x(t_\e, x_\e) \to 0$ that $\phi_t(\o t, \o x) \leq 0$.
\end{proof}

\begin{lem}
\label{l:M-I}
Let $\rho > 0$ be given. Let $\phi\in C^2(Q)$ and $(t_\e,x_\e)\to (\o t, \o x)$ such that $\phi_x(\o t, \o x)\not=0$.
Then there exists a $C > 0$ such that for all $\e > 0$ small enough
\begin{multline*}
 \pv \int_{B_\rho} E_\e^*\big[ \phi(t_\e, x_\e+z) - \phi(t_\e,  x_\e)\big] \frac{dz}{z^2} \\
\leq \int_{B_\rho} \Big(\phi(t_\e, x_\e+z) - \phi(t_\e,  x_\e) -z \phi_x(t_\e,x_\e)\Big)\frac{dz}{z^2} + C\e |\log \e|. 
\end{multline*}
\end{lem}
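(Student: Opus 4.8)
The plan is as follows. Write $\Delta(z):=\phi(t_\e,x_\e+z)-\phi(t_\e,x_\e)$ and $p_\e:=\phi_x(t_\e,x_\e)$; since $\phi\in C^2$ and $\phi_x(\o t,\o x)\neq0$ we may assume $\phi_x(\o t,\o x)>0$, so that for $\e$ small there is a fixed $\rho_1\in(0,\rho]$ with $\Delta\in C^2(B_{\rho_1})$, $\Delta'\ge c_0>0$ on $B_{\rho_1}$, $|\Delta(z)-zp_\e|\le Mz^2$ and $|\Delta(z)+\Delta(-z)|\le Mz^2$ on $B_{\rho_1}$, where $c_0,M$ depend only on $\phi$ near $(\o t,\o x)$. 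Using $E_\e^*[\alpha]=\alpha+\e\big(\tfrac12-\{\alpha/\e\}\big)$ (with $\{\cdot\}$ the fractional part) I write $E_\e^*[\Delta(z)]=\Delta(z)+\epsilon(z)$, $\epsilon(z):=\e\big(\tfrac12-\{\Delta(z)/\e\}\big)\in(-\tfrac\e2,\tfrac\e2]$. Since $z\mapsto p_\e z/z^2$ is odd on $B_\rho$ and $z\mapsto(\Delta(z)-zp_\e)/z^2$ is bounded near $0$, one has $\pv\int_{B_\rho}\Delta(z)\,z^{-2}\,dz=\int_{B_\rho}(\Delta(z)-zp_\e)\,z^{-2}\,dz$, so (once $\pv\int_{B_\rho}\epsilon(z)\,z^{-2}\,dz$ is shown to exist, see below) the claimed inequality is equivalent to
\begin{equation}\label{plan:red}
\pv\int_{B_\rho}\epsilon(z)\,\frac{dz}{z^2}\le C\e|\log\e|.
\end{equation}
On $B_\rho\setminus B_{\rho_1}$ we have $|\epsilon|\le\e/2$, contributing at most $\e/\rho_1=O(\e)$, so only $\pv\int_{B_{\rho_1}}\epsilon(z)\,z^{-2}\,dz$ needs attention.

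By the symmetry $z\mapsto-z$, $\pv\int_{B_{\rho_1}}\epsilon(z)\,z^{-2}\,dz=\int_0^{\rho_1}F(z)\,z^{-2}\,dz$ with $F(z):=\epsilon(z)+\epsilon(-z)=\e\big(1-\{\Delta(z)/\e\}-\{\Delta(-z)/\e\}\big)$, so $|F|\le\e$; and for $|z|$ small enough that $\Delta(\pm z)\in(-\e,\e)$ one computes $F(z)=-\big(\Delta(z)+\Delta(-z)\big)$, whence $|F(z)|\le Mz^2$ there — this shows the principal value converges. Let $P(\alpha):=\int_0^\alpha\big(\tfrac12-\{\beta/\e\}\big)\,d\beta$, so $\epsilon(s)=\e\,P'(\Delta(s))$, and $P$ is $\e$-periodic (its integrand has mean zero over a period), \emph{even} (because $\tfrac12-\{-\beta/\e\}=-(\tfrac12-\{\beta/\e\})$ a.e.), with $\|P\|_\infty\le\e/8$ and $P$ $\tfrac12$-Lipschitz. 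Writing $\psi:=\Delta^{-1}\in C^2$ (with $\|\psi'\|_\infty\le1/c_0$, $\|\psi''\|_\infty\le M/c_0^3$), the substitutions $\alpha=\Delta(s)$ and $\alpha=\Delta(-s)$ followed by integration by parts give, for $\widetilde\Psi(z):=\int_0^z F$,
\[
\widetilde\Psi(z)=\e\Big(P(\Delta(z))\psi'(\Delta(z))-P(\Delta(-z))\psi'(\Delta(-z))-\int_{\Delta(-z)}^{\Delta(z)}P(\alpha)\psi''(\alpha)\,d\alpha\Big).
\]
Because $P$ is even and $\Delta(-z)=-\Delta(z)+\big(\Delta(z)+\Delta(-z)\big)$ with $|\Delta(z)+\Delta(-z)|\le Mz^2$, one gets $|P(\Delta(z))-P(\Delta(-z))|\le\tfrac12 Mz^2$; together with $|\Delta(z)-\Delta(-z)|\le C|z|$, $\|P\|_\infty\le\e/8$ and the bounds on $\psi',\psi''$ this yields
\begin{equation}\label{plan:Psi}
|\widetilde\Psi(z)|\le C\big(\e z^2+\e^2|z|\big)\ \text{ on }(0,\rho_1),\qquad |\widetilde\Psi(z)|\le Cz^3\ \text{ on }(0,z_1),
\end{equation}
where $z_1:=\Delta^{-1}(\e)\asymp\e$ (on $(0,z_1)$, $F$ equals $-(\Delta(\cdot)+\Delta(-\cdot))$ except on a subinterval of length $O(\e^2)$, which contributes only $O(\e^3)$ to $\widetilde\Psi$).

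Finally, one integration by parts more: using \eqref{plan:Psi} (which kills the boundary term at $0$),
\[
\int_0^{\rho_1}\frac{F(z)}{z^2}\,dz=\frac{\widetilde\Psi(\rho_1)}{\rho_1^2}+2\int_0^{\rho_1}\frac{\widetilde\Psi(z)}{z^3}\,dz
\le C\e+2\int_0^{z_1}\frac{Cz^3}{z^3}\,dz+2\int_{z_1}^{\rho_1}\frac{C(\e z^2+\e^2z)}{z^3}\,dz,
\]
and the right-hand side is $\le C\e+Cz_1+C\e\log(\rho_1/z_1)+C\e^2/z_1\le C\e|\log\e|$ since $z_1\asymp\e$. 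Combined with the $O(\e)$ contribution from $B_\rho\setminus B_{\rho_1}$ this gives \eqref{plan:red} and hence the lemma.

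The only step requiring genuine work is the bound $|\widetilde\Psi(z)|\le C\e z^2$ in \eqref{plan:Psi}: the naive estimate $|\widetilde\Psi(z)|\le\int_0^z|F|\le\e|z|$ (equivalently the crude per-period bound $|\int_{I_k}F|\le\e|I_k|$ on the $\asymp1/\e$ staircase periods $I_k$) only gives $|\widetilde\Psi|=O(\e^2)$ near $z\asymp\e$, and after the last integration by parts that produces $O(1)$, not $O(\e|\log\e|)$. The extra factor $z\asymp\e k$ is exactly the cancellation between $\int_0^z\epsilon(s)\,ds$ and $\int_0^z\epsilon(-s)\,ds$, made visible by the evenness of $P$ and by $\Delta(z)+\Delta(-z)=O(z^2)$; summing this gain over the periods is what turns $\int_{z_1}^{\rho_1}\frac{\e z^2}{z^3}\,dz=\e\log(\rho_1/z_1)$ into the harmonic-type factor $|\log\e|$.
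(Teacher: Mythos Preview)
Your proof is correct and reaches the same $C\e|\log\e|$ bound, but by a genuinely different route than the paper.

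\medskip
\textbf{Comparison.} Both proofs reduce to bounding $\pv\int_{B_\rho}F_\e[\Delta(z)]\,z^{-2}\,dz$ with $F_\e[\alpha]:=E_\e^*[\alpha]-\alpha$ the odd $\e$-periodic sawtooth. The paper proceeds by a \emph{change of variables to the range}: on the main region it substitutes $u=\Delta(z)$, so the integrand becomes $F_\e[u]/u^2$ times a Jacobian factor; since $F_\e[u]/u^2$ is odd, its integral over a symmetric set vanishes, and the $\e|\log\e|$ error arises from the deviation of the Jacobian $1/\big((1+O(u))^2(1+O(u))\big)$ from a constant. This requires a somewhat careful four-piece splitting $(-r,r)\cup(r,\tilde r)\cup\big((-s,\tilde s)\setminus(-r,\tilde r)\big)\cup\big(B_\rho\setminus(-s,\tilde s)\big)$ with $r,\tilde r\asymp\e$ and $s,\tilde s\asymp\delta$.

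You instead stay in the domain variable and exploit the oddness of $F_\e$ through its \emph{antiderivative} $P$, which is even, $\e$-periodic, and bounded by $\e/8$. After symmetrizing in $z$, two integrations by parts (one via $P$, one in $z$) convert the principal-value integral into $\int_0^{\rho_1}\widetilde\Psi(z)\,z^{-3}\,dz$ plus harmless boundary terms. The key estimate $|\widetilde\Psi(z)|\le C(\e z^2+\e^2 z)$ comes from the cancellation $|P(\Delta(z))-P(\Delta(-z))|\le\tfrac12|\Delta(z)+\Delta(-z)|\le\tfrac M2 z^2$, which is exactly the evenness of $P$ combined with the second-order symmetry $\Delta(-z)=-\Delta(z)+O(z^2)$. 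The splitting is simpler (just $(0,z_1)$ versus $(z_1,\rho_1)$ with $z_1\asymp\e$), and the cubic bound on $(0,z_1)$ handles the singularity at $0$ cleanly.

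Both arguments are correct; the paper's is slightly more hands-on, while yours packages the oscillation cancellation into the single object $P$ and replaces the explicit four-region bookkeeping by a systematic double integration by parts. A small point: in your last paragraph you refer to ``the bound $|\widetilde\Psi(z)|\le C\e z^2$'' when the actual estimate is $C(\e z^2+\e^2 z)$; this is harmless since on $[z_1,\rho_1]$ the $\e^2 z$ term contributes only $\int_{z_1}^{\rho_1}\e^2 z^{-2}\,dz\le\e^2/z_1=O(\e)$, but it is worth stating precisely.
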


\begin{proof}
For convenience, we assume $\phi_x(\o t, \o x) > 0$. We set $\phi_\e (z) := \phi(t_\e, x_\e+z) - \phi(t_\e,  x_\e)$, and start by listing several properties of $\phi_\e$. Since
\[ 
  \phi_\e(0) = 0, \quad
  \phi_\e \xto{\e \to 0} \phi(\o t, \o x + \, \cdot \, ) - \phi(\o t,  \o x) \text{ in } C_{\loc}^2(Q),
\]
we may assume that $\| \phi_\e \|_{C^2([\o t /2, 2 \o t] \times \o B_\rho)}$ is bounded uniformly in $\e$. Moreover,
\[
  \exists \, \delta \in (0, \rho] \ 
  \forall \, z \in B_\delta : 
  \phi_\e'(z) \in \Big( \frac12 \phi_x(\o t, \o x), 2 \phi_x(\o t, \o x) \Big)
\]
for all $\e$ small enough, where we will choose $\delta$ small enough with respect to $\phi$ later on in the proof. In particular, $\phi_\e$ is strictly increasing on $B_\delta$; hence, for any $y$ in its range,
\begin{align*}
  \phi_\e^{-1}(y) 
  = \phi_\e^{-1}(0) + y (\phi_\e^{-1})'(0) + R_1(y)
  = \frac y{\phi_\e'(0)} + R_1(y),
\end{align*}
where $|R_1(y)| \leq C y^2$ for some $C > 0$ independent of $\e$. We further set
\[ 
  r := -\phi_\e^{-1}(-\e) \quad \text{and} \quad
  \tilde r := \phi_\e^{-1}(\e),
\]
note that
\[
  r, \tilde r \in \Big( \frac\e {2 \phi_x(\o t, \o x)}, \frac{2 \e}{\phi_x(\o t, \o x)} \Big),
\]
and assume for convenience that $\tilde r \geq r$. Similarly, we set
\[ 
  s := -\phi_\e^{-1}(-\delta), \quad 
  \tilde s := \phi_\e^{-1}(\delta), \quad
  s, \tilde s \in \Big( \frac\delta {2 \phi_x(\o t, \o x)}, \frac{2 \delta}{\phi_x(\o t, \o x)} \Big).
\]
We assume $\e < \frac14 \delta$ such that
\[
  \max \{ r, \tilde r \} \leq \min \{ s, \tilde s \}.
\]
Finally, we will use the following Taylor expansion on $B_\delta$:
\begin{equation*}
  \phi_\e'(z) = \phi_\e'(0) + R_2(z), 
\end{equation*}
where $|R_2(z)| \leq C |z|$ for some $C > 0$ independent of $\e$.

Using $\phi_\e$, the assertion of Lemma \ref{l:M-I} reads
\[
\pv \int_{B_\rho} F_\e [ \phi_\e(z) ] \frac{dz}{z^2}
\leq C\e |\log \e|,
\]
where $F_\e(u) := E_\e^*[u] - u$ is an odd, $\e$-periodic function. We prove this estimate by splitting the domain of integration in the following four parts:
\begin{equation*}
  (-r,r), \quad 
  (r, \tilde r), \quad
  (-s, \tilde s) \setminus (-r, \tilde r), \quad 
  B_\rho \setminus (-s, \tilde s).
\end{equation*}
On $(-r,r)$, we note that $\phi_\e$ increases from $-\e$ to a value less than $\e$, and thus we find
\begin{align*}
  \pv \int_{-r}^r F_\e [ \phi_\e(z) ] \frac{dz}{z^2}
  = -\int_0^r \frac{\phi_\e(z) + \phi_\e(-z) }{z^2} dz
  \leq \int_0^r \frac{ z^2 \| \phi_\e'' \|_\infty }{z^2} dz
  \leq C r \leq C \e.
\end{align*}
On $(r, \tilde r)$, we simply estimate $|F_\e(u)| \leq \frac\e2$, and use the properties of $r, \tilde r$ to find
\begin{align*}
  \int_r^{\tilde r} F_\e [ \phi_\e(z) ] \frac{dz}{z^2}
  \leq \frac\e2 \frac{\tilde r - r}{r^2}
  = \frac\e2 \frac{\phi_\e^{-1}(\e) + \phi_\e^{-1}(-\e)}{r^2}
  \leq \frac\e2 (2 |R_1(\e)|) \Big( \frac{2 \phi_x(\o t, \o x)}{\e} \Big)^2
  \leq C \e.
\end{align*}
On $(-s, \tilde s) \setminus (-r, \tilde r) = \phi_\e^{-1} (B_\delta \setminus B_\e)$, we change variables, and compute
\begin{multline} \label{pf:u}
  \int_{(-s, \tilde s) \setminus (-r, \tilde r)} F_\e [ \phi_\e(z) ] \frac{dz}{z^2}
  = \int_{B_\delta \setminus B_\e} F_\e [ u ] \frac{du}{\phi_\e^{-1}(u)^2 \, \phi_\e'(\phi_\e^{-1}(u))}  \\
  = \phi_\e'(0) \int_{B_\delta \setminus B_\e} \frac{ F_\e [ u ] }{u^2} \frac{du}{ \big(1 + \phi_\e'(0) R_1(u)/u \big)^2 \, \big(1 + R_2(\phi_\e^{-1}(u))/\phi_\e'(0) \big)}.
\end{multline}
Since $F_\e [ u ]/u^2$ is odd in $u$ and the domain $B_\delta \setminus B_\e$ is symmetric, this terms cancels out with the constant contribution from the second fraction. Writing the denominator of this second fraction as $(1 + R_3(u))^2 (1 + R_4(u))$, we note that $|R_3(u)| + |R_4(u)| \leq C|u|$ for some $C > 0$ which only depends on $\phi$. Hence, choosing $\delta < 1/(2C)$, we obtain some $C' > 0$ such that
\begin{equation*}
  \int_{(-s, \tilde s) \setminus (-r, \tilde r)} F_\e [ \phi_\e(z) ] \frac{dz}{z^2}
  \leq C' \int_{B_\delta \setminus B_\e} \frac{ |F_\e [ u ]| }{u^2} |u| \, du
  \leq C' \frac\e2 \int_{B_\delta \setminus B_\e} \frac{ du }{|u|}
  \leq C'' \e |\log \e|.
\end{equation*}
Finally, on the fourth part given by $B_\rho \setminus (-s, \tilde s)$, we simply bound
\begin{equation*}
  \int_{B_\rho \setminus (-s, \tilde s)} F_\e [ \phi_\e(z) ] \frac{dz}{z^2}
  \leq \frac\e2 2 \int_{\tfrac\delta {2 \phi_x(\o t, \o x)}}^\infty \frac{dz}{z^2}
  \leq C \e.
\end{equation*}

\end{proof}

\begin{lem}
\label{l:parabola}
For $\alpha \neq 0$, $K > 0$, $\e > 0$, we have
\begin{align}
\label{parabola-estimate}
|\alpha|\left( \pv \int_{B_1} E_\e^*[\alpha z + \frac K2 z^2] \frac{dz}{z^2}\right) \leq K|\alpha| + C (K\e + \alpha^2),
\end{align}
where $C$ is independent of $\alpha$, $K$ and $\e$. Moreover, \eqref{parabola-estimate} still holds if we replace $E_\e^*$ by $E_{\e,*}$.
\end{lem}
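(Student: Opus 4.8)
The plan is to peel off the oscillatory part of $E_\e^*$ and control it on a small number of explicit scales built from $\alpha$, $K$ and $\e$. Two reductions come first. Since $E_\e^*$ and $E_{\e,*}$ differ only on $\e\Z$ and $\{z : \alpha z + \tfrac K2 z^2 \in \e\Z\}$ is countable, hence Lebesgue-null, the two principal-value integrals in \eqref{parabola-estimate} coincide, so it suffices to treat $E_\e^*$. The map $z\mapsto -z$ fixes both $B_1$ and $dz/z^2$ and sends $\alpha z + \tfrac K2 z^2$ to $-\alpha z + \tfrac K2 z^2$, so we may assume $\alpha>0$. Writing $g(z):=\alpha z+\tfrac K2 z^2$ and $E_\e^*[w]=w+F(w)$ with $F:=E_\e^*-\mathrm{id}$ --- which is $\e$-periodic, odd almost everywhere, bounded by $\e/2$, and affine of slope $-1$ on every interval $[k\e,(k+1)\e)$ --- and using that $\pv\int_{B_1} g(z)\,z^{-2}\,dz = K$ (the linear part is killed by the principal value, the quadratic part contributes $K$), the claim reduces to showing
\[
  \alpha\,\Big|\,\pv\!\int_{B_1} F\big(g(z)\big)\,\frac{dz}{z^2}\,\Big| \;\le\; C\,(K\e+\alpha^2).
\]

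The next step is to subtract the linear profile using oddness: since $z\mapsto F(\alpha z)\,z^{-2}$ is odd almost everywhere, $\pv\int_{B_1}F(\alpha z)\,z^{-2}\,dz=0$, so it suffices to bound $\int_{B_1}\big(F(g(z))-F(\alpha z)\big)z^{-2}\,dz$. Split $B_1$ into the ``bad set'' $B:=\{z\in B_1:\lfloor g(z)/\e\rfloor\neq\lfloor \alpha z/\e\rfloor\}$ and its complement. On $B_1\setminus B$, because $F$ has slope $-1$ on each period, $F(g(z))-F(\alpha z)=\alpha z-g(z)=-\tfrac K2 z^2$; on $B$ one has only $|F(g(z))-F(\alpha z)|\le\e$. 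Two elementary facts then do the work: (a) since $g(z)\ge\alpha z$, lying in one period forces $\tfrac K2 z^2<\e$, so $B_1\setminus B\subseteq\{|z|<r_K\}$ with $r_K:=\sqrt{2\e/K}$; and (b) for $|z|$ below $\delta_1:=\min\{\e/(2\alpha),\sqrt{\e/K},2\alpha/K\}$ the floors of $g(z)$ and $\alpha z$ agree (both equal $0$ for $z>0$, both equal $-1$ for $z<0$), so $B\cap(-\delta_1,\delta_1)=\emptyset$; the three numbers are the scales at which, respectively, $\alpha z$ leaves its initial period, $\tfrac K2 z^2$ reaches $\e$, and $g$ changes sign. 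In particular (b) makes the integrand $\big(F(g(z))-F(\alpha z)\big)z^{-2}$ equal $-K/2$ near the origin, so no principal value is actually needed for this difference.

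Combining (a) and (b),
\[
  \Big|\int_{B_1}\big(F(g(z))-F(\alpha z)\big)\frac{dz}{z^2}\Big|
  \;\le\; \frac K2\,|B_1\setminus B| + \e\int_{B}\frac{dz}{z^2}
  \;\le\; K\min(r_K,1) + \frac{2\e}{\delta_1}
  \;\le\; \sqrt{2K\e} + \frac{2\e}{\delta_1}.
\]
Multiplying by $\alpha$, the term $\alpha\sqrt{2K\e}$ is $\le\alpha^2+K\e$ by the elementary inequality $2ab\le a^2+b^2$, while $2\alpha\e/\delta_1$ equals $4\alpha^2$, or $2\alpha\sqrt{\e K}\le\alpha^2+\e K$, or $\e K$, depending on which of the three quantities realizes $\delta_1$; in every case the total is $\le C(K\e+\alpha^2)$, and adding back the main term $K|\alpha|$ gives \eqref{parabola-estimate} with an explicit absolute constant (one can take $C=5$). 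By the first reduction the same bound holds with $E_{\e,*}$ in place of $E_\e^*$.

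The only genuine difficulty is the bookkeeping in the second step: identifying the correct three scales that separate the origin from $B$, checking the floor agreement near $z=0$ carefully --- the cases $z>0$ and $z<0$ need separate treatment because on the negative side $g$ can dip below $0$ before $\alpha z$ has crossed a full period --- and then verifying that each of the three possible values of $\delta_1$ produces a term absorbed by $C(K\e+\alpha^2)$. The remaining ingredients (the principal-value evaluation of $\int_{B_1}g\,z^{-2}$, the oddness cancellation, and the crude bound $|F|\le\e/2$) are routine.
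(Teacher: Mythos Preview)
Your proof is correct but proceeds by a different route than the paper. The paper's argument is shorter: it identifies a single symmetric interval $(-\eta,\eta)$, with $\eta$ a constant times $\min\{\e/\alpha,\alpha/K\}$, on which $E_\e^*[\phi(z)]=\pm\e/2$ so that the principal-value integral over $(-\eta,\eta)$ vanishes; outside it simply uses $E_\e^*[w]\le w+\e/2$ and integrates, yielding $K+\e/\eta$ before multiplying by $|\alpha|$. Your approach instead writes $E_\e^*=\mathrm{id}+F$ with $F$ the $\e$-periodic sawtooth, peels off the identity (which gives the leading term $K|\alpha|$ exactly), cancels the linear profile $F(\alpha z)$ by oddness, and then controls $F(g(z))-F(\alpha z)$ via a same-floor/different-floor splitting requiring three scales $\e/(2\alpha)$, $\sqrt{\e/K}$, $2\alpha/K$. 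The paper's argument is more economical --- two scales rather than three, no case analysis over which term realizes a minimum --- while yours makes the structure of the error term (the sawtooth remainder) more transparent and handles both $E_\e^*$ and $E_{\e,*}$ in one stroke via the a.e.\ identification. Both give the bound with an absolute constant.
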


\begin{proof}
We write $\phi(z) = \alpha z + \frac K2 z^2$.
By changing variables from $z $ to $ -z$ if necessary we can assume that $\alpha > 0$.
Let $(a, b) \ni 0$ be the largest interval containing the origin such that 
\begin{align*}
-\e &<  \phi(z) < 0, & a &< z < 0,\\
0 &< \phi(z) < \e, & 0 &< z < b.\\
\end{align*}
Since $\alpha > 0$, such $a < 0 < b$ exist, $\phi(b) = \e$, and $\phi(a)$ is equal to either $-\e$ or $0$. If $\phi(a) = -\e$, then $a < -b$, and if $\phi(a) = 0$, then $a = -\frac{2\alpha}K$. Inverting $\phi(b) = \e$, we find
\begin{align*}
b = \frac \alpha K \left(\sqrt{1 + \frac{2\e K}{\alpha^2}} - 1\right).
\end{align*}
Employing the elementary inequality
\[
  \sqrt{1 + x} \geq \left\{ \begin{aligned}
    &1 + \big( \sqrt 2 - 1 \big)x
    && \text{ if } 0 < x < 1 \\
    &\sqrt 2
    && \text{ if } 1 < x,
  \end{aligned} \right.
\]
we find that
\begin{align*}
b 
\geq \big( \sqrt 2 - 1 \big) \left\{ \begin{aligned}
    &\frac{2 \e}\alpha
    && \text{ if } \frac{2\e K}{\alpha^2} < 1 \\
    &\frac\alpha{K}
    && \text{ if } 1 < \frac{2\e K}{\alpha^2}
  \end{aligned} \right\} 
\geq \big( \sqrt 2 - 1 \big) \min\left\{\frac{2 \e}\alpha, \frac\alpha{K}\right\}
=: \eta.
\end{align*}
Together with the estimates on $a$, we obtain $(a, b) \supset (-\eta, \eta)$. Finally, we compute
\begin{align*}
\pv\int_{B_1}E_\e^*[\phi(z)] \frac{dz}{z^2} &= 
\int_{B_1 \setminus B_\eta}E_\e^*[\phi(z)] \frac{dz}{z^2} \leq
\int_{B_1 \setminus B_\eta}\left(\alpha z + \frac K2z^2 + \frac\e 2\right) \frac{dz}{z^2} \\
&\leq 0 + K + \frac\e\eta  = K + C \max\left\{\frac {K\e}{\alpha}, \alpha\right\}.
\end{align*}
This concludes the proof.
\end{proof}

\section{Passing to the limit $n \to \infty$}
\label{s:Pn:to:HJ}

Given $(\bx, \bb) \in \cZ_n$, we recall from \eqref{un} the related piecewise constant function given by
\begin{equation} \label{un:repd} 
  u_n(x) = \frac1n \sum_{i=1}^n b_i H(x - x_i).
\end{equation}

\begin{thm}[Main theorem] \label{t}
For each $n \geq 2$, let $(\bx^\circ, \bb^\circ)_n \in \cZ_n$ be such that the related function $u_n^\circ$ defined by \eqref{un:repd} satisfies $u_n^\circ \to u^\circ$ uniformly as $n \to \infty$ for some $u^\circ \in BUC(\R)$. Then, the solution $(\bx, \bb)_n$ to $(P_n)$ (Definition \ref{defn:Pn}) with initial datum $(\bx^\circ, \bb^\circ)_n$, translated to $u_n$, converges locally uniformly as $n \to \infty$ to the viscosity solution $u$ (see Definition \ref{def:HJ:VS}) with initial datum $u^\circ$. Furthermore, $u \in BUC([0, \infty) \times \R)$ and $u(t, \cdot)$ has the same modulus of continuity as $u^\circ$ for every $t > 0$.
\end{thm}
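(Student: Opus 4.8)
\emph{Strategy.} Put $\e=1/n$. By Lemma~\ref{l:Pn:to:HJe} and Proposition~\ref{p:Pn:to:HJe:un}, the step function $u_n$ associated through \eqref{un:repd} with the solution of $(P_n)$ is the unique viscosity solution of \eqref{HJeps-p} with $\e=1/n$ and initial datum $u_n^\circ$; in particular $u_n^*$ is a subsolution and $u_{n,*}$ is a supersolution of \eqref{HJeps-p}. The plan is: (a) prove a uniform bound on $u_n$ and a uniform initial-layer estimate; (b) form the half-relaxed limits $\o u:={\limsup}^*\,u_n$ and $\underline u:=\liminf_*\,u_n$ and apply the convergence Theorem~\ref{t:conv} to conclude that $\o u$ is a subsolution and $\underline u$ a supersolution of \eqref{HJ-p}; (c) use the comparison principle Theorem~\ref{t:CP:HJ} to squeeze $\o u\le\underline u$, obtaining a continuous limit $u=\o u=\underline u$ which is \emph{the} viscosity solution of \eqref{HJ-p} (uniqueness via Theorem~\ref{t:CP:HJ}; cf.\ Corollary~\ref{co:solutions-match-imr}); (d) deduce locally uniform convergence and the stated regularity of $u$ by translation–comparison arguments.

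\emph{A priori estimates.} Since $u_n^\circ\to u^\circ$ uniformly and $u^\circ$ is bounded, $\|u_n^\circ\|_\infty\le C_0:=\|u^\circ\|_\infty+1$ for all large $n$. Because positively (resp.\ negatively) charged particles keep their left-to-right order and each annihilation removes adjacent particles of opposite charge (Corollary~\ref{c:Pn}), the prefix sums of the spatially ordered charge sequence can only shrink in amplitude, so $\|u_n(t,\cdot)\|_\infty\le\|u_n^\circ\|_\infty\le C_0$ for all $t\ge0$ (alternatively, compare $u_n$ with the constant super/subsolutions $\pm C_0$ via Theorem~\ref{t:CPe}, whose far-field hypothesis \eqref{CPe:far-field} holds since the particle positions stay bounded on $[0,T]$ by Theorem~\ref{t:Pn}\eqref{t:Pn:Lip}, so $u_n(t,\cdot)$ is eventually constant in $x$). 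For the initial layer: fix $\alpha>0$; by uniform continuity of $u^\circ$ there is $c=c(\alpha)$ with $u^\circ(x)\le u^\circ(y)+c|x-y|^4+\alpha$ for all $x,y$, hence $u_n^\circ(x)\le u_n^\circ(y)+c|x-y|^4+2\alpha$ for $n\ge N(\alpha)$. Fixing such $n$ and $y$, set $\phi(t,x):=u_n^\circ(y)+c|x-y|^4+\sigma t+2\alpha$. If $u_n\le\phi$ failed, then since $u_n$ is bounded and $\phi\to\infty$ as $|x|\to\infty$, $u_n-\phi$ would have a positive global maximum at some $(\o t,\o x)$ with $\o t>0$; by positivity $|\o x-y|\le(2C_0/c)^{1/4}$. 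If $\o x=y$, $\phi$ is of the form $c|x-\o x|^4+g(t)$ and Definition~\ref{def:HJe:VS} forces $\sigma\le0$. If $\o x\neq y$, then $\phi_x(\o t,\o x)\neq0$ and, taking $\rho=1$ (Lemma~\ref{l:HJe:rho}), using Lemma~\ref{l:Hpe}\eqref{l:Hpe:Bpc} on $B_1^c$, monotonicity of $E_\e^*$ and Lemma~\ref{l:parabola} on $B_1$ (with $K=\sup_{B_1(\o x)}|\phi_{xx}|$ controlled by the bound on $|\o x-y|$) and $\e\le1$, one gets $\o H_{1,\e}[\phi,u_n](\o t,\o x)\le C(c,C_0)$ independently of $n$; as $u_n$ is a subsolution, $\sigma=\phi_t(\o t,\o x)\le C(c,C_0)$. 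Hence for $\sigma=\sigma(\alpha)$ large (independent of $n$), $u_n(t,x)\le u_n^\circ(y)+c|x-y|^4+\sigma t+2\alpha$ for all $t,x,y$ and $n\ge N(\alpha)$, and symmetrically (supersolutions, $E_{\e,*}$) $u_n(t,x)\ge u_n^\circ(y)-c|x-y|^4-\sigma t-2\alpha$. Taking $y=x_0$, then $n\to\infty$, then $(s,x)\to(0,x_0)$, then $\alpha\to0$, yields $\o u(0,x_0)\le u^\circ(x_0)\le\underline u(0,x_0)$.

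\emph{Squeezing and convergence.} By the uniform bound, $\o u,\underline u$ are bounded, $\o u$ is u.s.c., $\underline u$ is l.s.c., and $\underline u\le\o u$ on $Q$. Theorem~\ref{t:conv} (applied to the subsolutions $u_n^*$ and supersolutions $u_{n,*}$, and using ${\limsup}^*\,u_n^*={\limsup}^*\,u_n$, $\liminf_*\,u_{n,*}=\liminf_*\,u_n$) gives that $\o u$ is a subsolution and $\underline u$ a supersolution of \eqref{HJ-p}. Since $\o u(0,\cdot)\le u^\circ\le\underline u(0,\cdot)$ and $u^\circ\in BUC(\R)$, Theorem~\ref{t:CP:HJ} yields $\o u\le\underline u$, hence $u:=\o u=\underline u$ on $Q$ and $u(0,\cdot)=u^\circ$. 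Being simultaneously u.s.c.\ and l.s.c., $u$ is continuous; as a sub- and supersolution of \eqref{HJ-p} with datum $u^\circ\in BUC$, it is \emph{the} viscosity solution. For local uniform convergence, suppose $\sup_K|u_n-u|\not\to0$ for some compact $K$; passing to a subsequence and points $(t_n,x_n)\in K$ converging to $(t_*,x_*)$ with $|u_n(t_n,x_n)-u(t_n,x_n)|\ge\delta$ and fixed sign, say $u_n(t_n,x_n)\ge u(t_n,x_n)+\delta$, gives ${\limsup}^*\,u_n(t_*,x_*)\ge u(t_*,x_*)+\delta$ by continuity of $u$, contradicting ${\limsup}^*\,u_n=u$; the other sign is excluded using $\liminf_*\,u_n=u$.

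\emph{Regularity, and the main obstacle.} Let $\omega$ be a modulus of continuity of $u^\circ$. For fixed $h$, both $(t,x)\mapsto u(t,x+h)$ and $(t,x)\mapsto u(t,x)+\omega(|h|)$ solve \eqref{HJ-p} (translation invariance in $x$; constants add to solutions) with initial data $u^\circ(\cdot+h)\le u^\circ+\omega(|h|)$, so Theorem~\ref{t:CP:HJ} gives $u(t,x+h)\le u(t,x)+\omega(|h|)$, and symmetrically $|u(t,x+h)-u(t,x)|\le\omega(|h|)$ uniformly in $t$; comparison with $\pm\|u^\circ\|_\infty$ gives $\|u(t,\cdot)\|_\infty\le\|u^\circ\|_\infty$. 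Finally, by uniqueness $u(t_0+\cdot,\cdot)$ solves \eqref{HJ-p} with datum $u(t_0,\cdot)$, which has modulus $\omega$; the barrier bounds proved inside Theorem~\ref{t:CP:HJ} (namely $u(t,x)\le u^\circ(x)+\sigma(\alpha)t+\alpha$ with $\sigma(\alpha)$ depending only on $\alpha$ and $\omega$, and the matching lower bound) give $|u(t_0+s,x)-u(t_0,x)|\le\inf_{\alpha>0}(\sigma(\alpha)s+\alpha)=:\tilde\omega(s)\to0$ as $s\to0$, uniformly in $t_0,x$; together with the spatial modulus $\omega$ this gives $u\in BUC([0,\infty)\times\R)$. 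The main obstacle is the uniform-in-$n$ initial-layer estimate: bounding the nonlocal Hamiltonian $\o H_{1,\e}$ along the quartic barrier uniformly in $\e=1/n$, which is precisely where the singular kernel $z^{-2}$ at $\phi_x=0$ must be controlled—the role of Lemma~\ref{l:parabola} and of the restricted test-function class—and where the identification of $u_n$ with a viscosity solution of \eqref{HJeps-p} is essential.
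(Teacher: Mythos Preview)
Your proof is correct and follows essentially the same route as the paper: identify $u_n$ as a viscosity solution of \eqref{HJeps-p} via Lemma~\ref{l:Pn:to:HJe}, pass to the half-relaxed limits via Theorem~\ref{t:conv}, pin down the initial condition, squeeze with Theorem~\ref{t:CP:HJ}, and deduce regularity by translation comparison.

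The one substantive variation is in the initial-layer step. The paper mollifies $u^\circ$ to obtain $C_b^2$ functions $u^\circ_\delta \ge u^\circ + \delta$, invokes Lemma~\ref{l:barrier} to get that $u^\circ_\delta(x)+\sigma_\delta t$ is a supersolution of \eqref{HJeps-p} uniformly in $\e$, and then applies the $\e$-level comparison principle Theorem~\ref{t:CPe} (whose far-field hypothesis~\eqref{CPe:far-field} must be checked). You instead work directly with the quartic barrier $\phi(t,x)=u_n^\circ(y)+c|x-y|^4+\sigma t+2\alpha$ and argue from the subsolution definition itself, using Lemma~\ref{l:parabola} and Lemma~\ref{l:Hpe}\eqref{l:Hpe:Bpc} to bound $\o H_{1,\e}[\phi,u_n]$ uniformly in $\e$. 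This is exactly the estimate that Lemma~\ref{l:barrier} encapsulates, specialized to the quartic; your version is slightly more self-contained (no mollification, no appeal to Theorem~\ref{t:CPe} and its far-field assumption), while the paper's version is more modular. For time regularity, the paper compares with mollified shifts $(u(s,\cdot)*\eta_r)(x)\pm(\delta+\sigma(t-s))$, whereas you recycle the barrier estimate~\eqref{pf:u:est:II} from the proof of Theorem~\ref{t:CP:HJ} applied at each $t_0$; both yield a uniform temporal modulus.
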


\begin{proof}
Lemma \ref{l:Pn:to:HJe} below shows that $u_n$ is a viscosity solution of \eqref{HJeps-p} with $\e = \frac1n$. Then, since $\| u_n \|_\infty \leq 1$, we obtain from Theorem \ref{t:conv} that 
\begin{equation} \label{pf:t}
  \underline u := {\liminf}_* u_n \leq {\limsup}^* u_n =: \o u
  \quad \text{on } Q,
\end{equation}
where $\underline u$, $\o u$ are respectively super- and subsolutions of \eqref{HJ-p}.

Since $u^\circ \in BUC(\R)$, there exists a sequence $(u^\circ_\delta)_\delta \subset C_b^2$ such that $u^\circ_\delta \geq u^\circ + \delta$ and $u^\circ_\delta \to u^\circ$ for all $\delta > 0$. An example of such a sequence is a mollification of $u^\circ + 2\delta$ with sufficiently small, $\delta$-dependent radius. By Lemma~\ref{l:barrier} below, there exists $\sigma_\delta > 0$ such that $u^\circ_\delta(x) + \sigma_\delta t$ is a super-solution for \eqref{HJeps-p}. Then, the comparison principle (Theorem \ref{t:CPe}) implies that $u_n^* \leq u^\circ_\delta(x) + \sigma_\delta t$ for all $n$ large enough, and therefore $\o u(0, \cdot) \leq u^\circ_\delta$. Using a similar argument for approximation from below, in the limit $\delta \to 0$ we recover
\begin{equation} \label{pf:t:2}
  u^\circ \leq \underline u (0, \cdot) \leq \o u (0, \cdot) \leq u^\circ.
\end{equation}
Then, the comparison principle (Theorem \ref{t:CP:HJ}) yields $\o u \leq \underline u$. Hence, the inequality in \eqref{pf:t} has to be an equality, and thus $u_n \to u$ locally uniformly.

It remains to show the boundedness and uniform continuity of $u$. As constants are solutions of \eqref{HJ-p}, we immediately have $\|u\|_\infty = \|u^\circ\|_\infty$. Let $\omega$ be a modulus of continuity of $u^\circ$. Since $(t,x) \mapsto u(t, x + y) \pm \omega(|y|)$ are viscosity solutions of \eqref{HJ-p} for any fixed $y \in \R$, it follows from the comparison principle that $u(t, \cdot)$ is uniformly continuous with the same modulus of continuity $\omega$. 

The uniform continuity in time can be established by the comparison with functions such as $(u(s, \cdot) * \eta_{\omega^{-1}(\delta)})(x) \pm (\delta + \sigma(t - s))$ for $s \geq 0$, $\delta > 0$, where $\eta_r$ is the standard mollifier with support radius $r$ and $\sigma = \sigma(\delta, \|u^\circ\|_\infty)$ is a sufficiently large constant. 
\end{proof}

\begin{lem}
\label{l:Pn:to:HJe}
Consider the setting of Theorem \ref{t}. Then for any $n \geq 2$, $u_n$ is a viscosity solution to \eqref{HJeps-p} with $\e = \frac1n$.
\end{lem}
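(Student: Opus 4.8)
The plan is to verify Definition~\ref{def:HJe:VS} directly, with $\e=\tfrac1n$: namely that $u_n^*$ is a $\rho$-subsolution and $u_{n,*}$ a $\rho$-supersolution of \eqref{HJeps-p} for one (hence, by Lemma~\ref{l:HJe:rho}, every) $\rho>0$. Note $\|u_n\|_\infty\le1$, so $u_n^*,u_{n,*}$ are bounded, and $u_n(t,\cdot)$ is piecewise constant with at most $n$ jumps, located at the charged particles $x_i(t)$, the direction of the $i$th jump being $b_i(t)$. The whole argument rests on the exact identity derived formally in the introduction (the computation preceding~\eqref{HJn:formaller}): for a charged particle $i$ at $x_i=x_i(t)$,
\begin{equation}\label{force-id-proof}
  -\frac1n\sum_{j\neq i}\frac{b_ib_j}{x_i-x_j}
  = b_i\,\pv\!\int_\R E_\e\big(u_n^*(t,x_i+z)-u_n^*(t,x_i)\big)\,\frac{dz}{z^2},
\end{equation}
the principal value being convergent because $u_n^*$ takes values in $\e\Z$ and the integrand near $z=0$ is $\pm\tfrac1{2\e}z^{-2}$. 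I spell out only the subsolution case; the supersolution case is parallel, replacing $E_\e^*,\o H,\o M$ by $E_{\e,*},\underline H,\underline M$ and reversing inequalities, the only new point being that the force identity must still be written with the \emph{upper} envelope $u_n^*$ (the principal value diverges if one plugs in $u_{n,*}$), which is reconciled by applying the nondecreasing $E_{\e,*}$ to the contact inequality $u_{n,*}\ge\psi$ and using $E_{\e,*}(k\e)=k\e-\tfrac\e2=E_\e(k\e)-\e$.

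\textbf{Non-degenerate case $\phi_x(\o t,\o x)\neq0$.} Let $\phi\in C^2(Q)$ with $u_n^*-\phi$ having a global maximum at $(\o t,\o x)$, normalised (as permitted after Definition~\ref{def:HJe:VS}) to be strict with $(u_n^*-\phi)(\o t,\o x)=0$. Since $u_n^*(\o t,\cdot)$ is piecewise constant, the maximum at $\o x$ forces a jump of $u_n^*(\o t,\cdot)$ there of sign $\sign\phi_x(\o t,\o x)$; hence $\o x=x_{i_0}(\o t)$ for a charged particle with $b_{i_0}(\o t)=\sign\phi_x(\o t,\o x)$. Next one checks that $\o t$ is \emph{not} a collision time at $\o x$: if it were, then by Theorem~\ref{t:Pn}(\ref{t:Pn:LB:col}) some colliding particle sits at distance $\gtrsim\sqrt{\o t-t}$ from $\o x$ on the side opposite to $\phi_x$ for $t<\o t$, at which point $u_n^*(t,\cdot)$ equals $u_n^*(\o t,\o x)$ (it is a jump point, so $u_n^*$ is the larger neighbouring plateau value), and testing $u_n^*\le\phi$ there — using Theorem~\ref{t:Pn}(\ref{t:Pn:C12}) to make the quadratic remainder of $\phi$ subdominant — yields a contradiction. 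Hence $x_{i_0}$ is $C^1$ near $\o t$ (collisions elsewhere leave the force on $i_0$ continuous, since the net charge of any colliding group is conserved). Using~\eqref{annih:rule} and the order preservation built into $\cZ_n$, $u_n^*(t,x_{i_0}(t))$ is constant in $t$ near $\o t$; the contact inequality then says $t\mapsto\phi(t,x_{i_0}(t))$ has a minimum at $\o t$, so $\phi_t(\o t,\o x)=-\dot x_{i_0}(\o t)\phi_x(\o t,\o x)$. Plugging in the ODE of~\eqref{Pn} and~\eqref{force-id-proof}, and using $|\phi_x(\o t,\o x)|=b_{i_0}\phi_x(\o t,\o x)$,
\[
  \phi_t(\o t,\o x)
  = |\phi_x(\o t,\o x)|\,\pv\!\int_\R E_\e\big(u_n^*(\o t,\o x+z)-u_n^*(\o t,\o x)\big)\,\frac{dz}{z^2}.
\]
It remains to bound the integral by $\o M_{\rho,\e}[\phi(\o t,\cdot),u_n(\o t,\cdot)](\o x)$. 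On $B_\rho$ the global maximum gives $u_n^*(\o t,\o x+z)-u_n^*(\o t,\o x)\le\phi(\o t,\o x+z)-\phi(\o t,\o x)$, and monotonicity of $E_\e^*=E_\e$ yields the pointwise — hence, in the principal-value limit, integral — inequality; on $B_\rho^c$, since $u_n^*(\o t,\cdot)$ and $u_n(\o t,\cdot)$ differ on a finite set and $u_n^*(\o t,\o x)-u_n(\o t,\o x)\in\{0,\tfrac1n\}$, a one-line computation with $E_\e(\alpha-\tfrac1n)=E_\e(\alpha)-\tfrac1n$ gives the $B_\rho^c$-inequality. Adding, $\phi_t(\o t,\o x)\le\o H_{\rho,\e}[\phi,u_n](\o t,\o x)$.

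\textbf{Degenerate cases $\phi_x(\o t,\o x)=0$.} If $\phi$ is not of the form $c|x-\o x|^4+g(t)$ there is nothing to prove. If it is, I must show $g'(\o t)\le0$. If $\o x=x_{i_0}(\o t)$ is a charged particle position, then by the same reasoning (using~\eqref{annih:rule}, order preservation, and the fact that a surviving particle sits at an ``odd position'' of the colliding cluster) $u_n^*(t,x_{i_0}(t))\equiv g(\o t)$ near $\o t$, so the contact inequality along $x_{i_0}$ yields $g(t)\ge g(\o t)-c|x_{i_0}(t)-\o x|^4$; since $x_{i_0}$ is $C^1$ at $\o t$ or, if $\o t$ is its collision time, satisfies $|x_{i_0}(t)-\o x|\le C\sqrt{|t-\o t|}$ by Theorem~\ref{t:Pn}(\ref{t:Pn:C12}), the right-hand side is $g(\o t)-o(|t-\o t|)$, forcing $g'(\o t)\le0$. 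If $\o x$ is not a charged particle position, then any collision at $\o x$ has net charge $0$ (a nonzero net charge would leave a charged particle there; cf.\ Corollary~\ref{c:Pn}), so all colliding particles there become neutral; consequently $t\mapsto u_n^*(t,\o x)$ is locally constant for $t\ge\o t$, while for $t<\o t$ near $\o t$ it can differ from $u_n^*(\o t,\o x)$ only inside a cluster of colliding particles shrinking to $\o x$, at whose (charged) positions $u_n^*(t,\cdot)\ge u_n^*(\o t,\o x)$. By Theorem~\ref{t:Pn}(\ref{t:Pn:C12})--(\ref{t:Pn:LB:col}) such a position $y_t$ lies within $C\sqrt{\o t-t}$ of $\o x$, and testing the contact inequality at $(t,y_t)$ gives $g(t)\ge g(\o t)-cC^4(\o t-t)^2$, again forcing $g'(\o t)\le0$.

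\textbf{Main obstacle.} The non-degenerate case is, modulo the bookkeeping above, a routine consequence of Theorem~\ref{t:Pn} and the introduction's computation; the one subtlety there is ruling out that the touching point is a collision point, which is exactly where Theorem~\ref{t:Pn}(\ref{t:Pn:LB:col}) is needed (at a collision $\dot x_{i_0}$ blows up, so $\phi_t=-\dot x_{i_0}\phi_x$ would be meaningless). The genuinely delicate step — and the reason for restricting to quartic test functions — is the last degenerate case: one must show that the annihilation of a cluster of particles at $\o x$ cannot be ``pushed from above'' by a quartic with positive time-slope, and this needs \emph{both} the quadratic upper bound $|x_i(t)-\o x|\le C\sqrt{|t-\o t|}$ (to beat the quartic spatial penalty of $\phi$) \emph{and} the quadratic lower bound (to locate, within $O(\sqrt{\o t-t})$ of $\o x$, a point where the staircase $u_n^*$ has climbed back up to $u_n^*(\o t,\o x)$), together with a careful description of the profile of $u_n^*$ around a multi-particle collision.
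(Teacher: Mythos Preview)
Your approach is essentially the same as the paper's: the force identity~\eqref{force-id-proof} (which the paper derives as~\eqref{pf:Me:RHS}), the use of Theorem~\ref{t:Pn}\eqref{t:Pn:LB:col} to force $\phi_x(\o t,\o x)=0$ at annihilation points, and the use of Theorem~\ref{t:Pn}\eqref{t:Pn:C12} to handle the quartic test functions are exactly the paper's ingredients, and your case organisation (split first by $\phi_x=0$ or not) is equivalent to the paper's (split first by whether $\o x$ is a particle position).

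Two minor points of cleanup. First, in the non-degenerate case your first two sentences are logically out of order: the spatial slice $u_n^*(\o t,\cdot)$ need \emph{not} be piecewise constant at an even annihilation point (it carries an isolated one-point spike coming from the time direction of the envelope), so you cannot read off ``$\o x$ is a charged particle position with $b_{i_0}(\o t)=\sign\phi_x$'' from the spatial maximum alone; you should first run the LB:col argument to exclude annihilation (as you do in the next sentence), and only then conclude that $\o x$ is a genuine jump of $u_n(\o t,\cdot)$. Second, your parenthetical claim that ``the principal value diverges if one plugs in $u_{n,*}$'' is incorrect: since both $u_n^*$ and $u_{n,*}$ take values in $\e\Z$, one has $E_\e^*[u_n^*(x_i+z)-u_n^*(x_i)]=E_{\e,*}[u_{n,*}(x_i+z)-u_{n,*}(x_i)]$ for a.e.\ $z$ (both equal $u_n(x_i+z)-u_n(x_i)\pm\tfrac\e2$ with the signs matching), so the force identity holds verbatim with $(E_{\e,*},u_{n,*})$ and the supersolution case is genuinely symmetric---no workaround is needed. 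Neither point affects the validity of your argument.
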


\begin{proof}
Let us fix $n \geq 2$ and write $u = u_n^*$ to simplify the notation. To prove that $u$ is a subsolution, we check that it satisfies Definition \ref{def:HJe:VS} for any $\rho > 0$. With this aim, let $\phi\in C^2(Q)$ be such that $u-\phi$ has a $0$ global maximum at $(\o t, \o x)$.  

If $\o x \notin \{x_i(\o t): 1 \leq i \leq n, \, |b_i(\o t-)| = 1 \}$, then $u$ is constant in some neighborhood of $(\o t, \o x)$, and thus 
\[ \phi_t(\o t, \o x) 
   = \phi_x(\o t, \o x) 
   = 0
   \quad \text{for any } \rho > 0, \]
which is consistent with Definition \ref{def:HJe:VS}. In the remainder we may thus assume that there exists an index $i$ such that $\o x = x_i(\o t)$ and $|b_i(\o t-)| = 1$. 

We separate three cases. The first case is $\o t \notin S$, where $S$ is the set of collision times from Definition \ref{defn:Pn}. Then, the curve $t \mapsto (t, x_i(t))$ is of class $C^1$ in a neighborhood around $(\o t, \o x)$, and $u$ is constant along this curve. Therefore, 
\begin{equation} \label{pfzz}
  0 = \frac d{dt} \Big|_{t = \o t} \phi (t, x_i(t))
  = \phi_t(\o t, \o x) + \phi_x(\o t, \o x) \dot x_i (\o t).
\end{equation}
If $\phi_x(\o t, \o x) = 0$, then it follows that $\phi_t(\o t, \o x) = 0$, which is consistent with Definition \ref{def:HJe:VS}. Therefore we may assume $\phi_x(\o t, \o x) \neq 0$, and it is left to check that
\begin{equation} \label{pf:HJe:cond}
  \phi_t(\o t, \o x) 
   \leq \o H_{\rho,\e}[\phi, u](\o t, \o x) \qquad \text{for all } \rho > 0.
\end{equation}
With this aim, we compute (omitting $\o t$, recalling \eqref{force:ito:uz} and using that  $u(x_i) = \sum_{j=1}^{i-1} b_j + \frac12(b_i+1)$) 
\begin{align} \notag  
  n \o M_{0,\e}[u](x_i) 
  &:= n \liminf_{\rho \to 0} \o M_{\rho,\e}[\phi,u](x_i) \\\notag
  &= n \, \pv \int_{\R} E_\e^*\big[ u(x_i + z)-u(x_i)\big] \frac{dz}{z^2} \\\notag
  &= \pv \int_{\R} \Big( \sum_{j=1}^n b_j H(x_i - x_j + z) -\sum_{j=1}^{i-1} b_j - \frac{b_i+1}2 + \frac12 \Big) \frac{dz}{z^2} \\\notag
  &= \sum_{j=1}^{i-1} b_j \int_{\R} \big(  H(x_i - x_j + z) - 1 \big) \frac{dz}{z^2}
     + b_i \, \pv \int_{\R} \Big(  H(z) - \frac12 \Big) \frac{dz}{z^2} \\\notag
  & \qquad + \sum_{j=i+1}^n b_j \int_{\R} H(x_i - x_j + z) \frac{dz}{z^2} \\\label{pf:Me:RHS}
  &= \sum_{j=1}^{i-1} \frac{b_j}{x_j - x_i}
     + 0
     + \sum_{j=i+1}^n \frac{b_j}{x_j - x_i}
  = -\sum_{j \neq i}^n \frac{b_j}{x_i - x_j},
\end{align} 
and observe from \eqref{Pn} that $\dot x_i = -b_i \o M_{0,\e}[u](x_i)$ at time $\o t$. Moreover, from the discontinuity of $x \mapsto u(\o t, x)$ at $x_i(\o t)$ we infer that $b_i = \sign \phi_x(\o t, \o x)$. Then, we obtain from \eqref{pfzz} that
\[ \phi_t(\o t, \o x) 
   = -  \phi_x(\o t, \o x) \dot x_i (\o t)
   = \o M_{0,\e}[u](\o t, \o x) \big| \phi_x(\o t, \o x) \big|
   \leq \o M_{\rho,\e}[\phi, u](\o t, \o x) \big| \phi_x(\o t, \o x) \big|
   \quad \text{for all } \rho > 0, \]
which proves \eqref{pf:HJe:cond}.

The second case is $\o t \in S$ and $(\o t, \o x)$ is not an annihilation point. Then, by the continuity of $\bx$ and $(\bx, \bb) \in \cZ_n$, we observe that the right-hand side of the ODE for $x_i(t)$ is continuous as a function of $t$ at $\o t$. Hence, $x_i$ is differentiable, and the argument above applies.

The final, third case is when $(\o t, \o x)$ is an annihilation point. 
Note from Theorem \ref{t:Pn}\eqref{t:Pn:LB:col} that $\phi_x(\o t, \o x) = 0$, because otherwise $\phi \geq u$ fails in any neighborhood of $(\o t, \o x)$. Then, by Definition \ref{def:HJe:VS} we may assume that $\phi(t, x) = c(x - \o x)^4 + g(t)$, and we are left to prove that $\phi_t(\o t, \o x) \leq 0$. By Corollary \ref{c:Pn}, $u$ attains only two distinct values in any small enough neighborhood of $(\o t, \o x)$. In particular, since $u$ is upper semicontinuous, this implies that $u(t, x_i(t)) = u(\o t, \o x)$ for all $t < \o t$ close enough to $\o t$. Hence, $\phi(\bar t, \bar x) \leq \phi(t, x_i(t))$. Using the special form of $\phi$ and Theorem 2.4(iv), this yields 
$$
  g(\o t) 
  = \phi(\bar t, \bar x) 
  \leq \phi(t, x_i(t)) 
  = c(x_i(t) - \bar x)^4 + g(t) 
  \leq C(t - \bar t)^2 + g(t) $$ 
for all $t \leq \bar t$ close enough to $\o t$. Hence, $\phi_t(\bar t, \bar x) = g'(\bar t) \leq 0$.

This concludes the proof that $u = u_n^*$ is a subsolution of \eqref{HJeps-p}. The proof that $(u_n)_*$ is a supersolution is analogous.
\end{proof}

The following lemma is a consequence of the fact that rising parabolas are supersolutions of \eqref{HJeps-p} with $\e$-independent speed.

\begin{lem}
\label{l:barrier}
Let $v_0 : \R \to \R$ be bounded, $L$-Lipschitz and $K$-semiconcave, that is, $x \mapsto v_0(x) - \frac K2 x^2$ is concave. Then there exists $\sigma = \sigma(L, K, \|v_0\|_\infty) > 0$ such that $v(t,x) = v_0(x) + \sigma t$ is a supersolution of \eqref{HJeps-p} for all $\e \in (0,1)$. An analogous result holds for a semiconvex $u_0$, in which case $u(t,x)= u_0(x) - \sigma t$ is a subsolution.
\end{lem}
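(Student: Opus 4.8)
The plan is to verify directly that $v(t,x):=v_0(x)+\sigma t$ satisfies the supersolution conditions of Definition~\ref{def:HJe:VS} for the single radius $\rho=1$, extract the constant $\sigma=\sigma(L,K,\|v_0\|_\infty)$ from the resulting bound, and then quote Lemma~\ref{l:HJe:rho} for independence of $\rho$. Lower semi-continuity of $v(t,\cdot)$ is automatic since $v_0$ is Lipschitz, and I may assume $K>0$ (replace $K$ by $\max\{K,1\}$ if needed). So I would fix $\psi\in C^2(Q)$ with $v-\psi$ attaining a global minimum at some $(\o t,\o x)\in Q$, hence $\o t>0$. Differentiating $t\mapsto(v-\psi)(t,\o x)$ at the interior minimizer $\o t$ gives $\psi_t(\o t,\o x)=\sigma$.

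Next I would read off the spatial structure: $\psi(\o t,\cdot)$ touches $v_0$ from below at $\o x$, and since $v_0$ is $K$-semiconcave this forces $v_0$ to be differentiable at $\o x$ with $\psi_x(\o t,\o x)=v_0'(\o x)=:\alpha$ and $|\alpha|\le L$. If $\alpha=0$, Definition~\ref{def:HJe:VS} only requires $\psi_t(\o t,\o x)\ge0$ (in the special test-function case $c|x-\o x|^4+g(t)$) or $\psi_t(\o t,\o x)\ge-\infty$ (otherwise), both immediate from $\psi_t(\o t,\o x)=\sigma>0$; so this case is free. When $\alpha\ne0$, I must bound $\underline H_{1,\e}[\psi,v](\o t,\o x)=\underline M_{1,\e}[\psi(\o t,\cdot),v_0](\o x)\,|\alpha|$ from above by a constant depending only on $L,K,\|v_0\|_\infty$ and the universal constant of Lemma~\ref{l:parabola}.

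For that I would split $\underline M_{1,\e}$ into its $B_1$ and $B_1^c$ parts. The far-field part is crudely controlled by $|E_{\e,*}[\cdot]|\le|\cdot|+\e/2$ and $|v_0(\o x+z)-v_0(\o x)|\le2\|v_0\|_\infty$, as in Lemma~\ref{l:Hpe}(ii), giving a bound $4\|v_0\|_\infty+\e$. For the near-field principal-value part, the global-minimum property yields $\psi(\o t,\o x+z)-\psi(\o t,\o x)\le v_0(\o x+z)-v_0(\o x)$, and $K$-semiconcavity at the point of differentiability $\o x$ yields $v_0(\o x+z)-v_0(\o x)\le\alpha z+\tfrac K2 z^2$. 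Using monotonicity of $E_{\e,*}$ and of the $\pv$-integral (both principal-value integrals converge — the test-function one by Lemma~\ref{l:Hpe}(i) since $\alpha\ne0$, the parabola one exactly as in the proof of Lemma~\ref{l:parabola}), I replace $\psi$ by the majorant parabola and then apply Lemma~\ref{l:parabola} (its $E_{\e,*}$ version) and $|\alpha|\le L$, $\e<1$, to get the near-field contribution bounded by $KL+C(K+L^2)$. Combining the two pieces, $\underline M_{1,\e}[\psi(\o t,\cdot),v_0](\o x)\,|\alpha|\le\sigma_0$ for a constant $\sigma_0=\sigma_0(L,K,\|v_0\|_\infty)$, and choosing $\sigma:=\sigma_0+1$ closes the supersolution verification. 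The subsolution claim for semiconvex $u_0$ is symmetric: apply the same argument to $-u_0$, or repeat it with $E_\e^*$ in place of $E_{\e,*}$.

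The one genuinely delicate step — and the reason the hypothesis is semiconcavity rather than mere Lipschitz continuity — is the uniform control of the near-field integral $\pv\int_{B_1}E_{\e,*}\big[\psi(\o t,\o x+z)-\psi(\o t,\o x)\big]\,\tfrac{dz}{z^2}$ simultaneously over all admissible test functions $\psi$ and uniformly in $\e\in(0,1)$. Semiconcavity provides a single parabola majorant for $v_0$, hence for every such $\psi$, and Lemma~\ref{l:parabola} then converts that majorant into the required $\e$-uniform bound; without a uniform second-order majorant the near-field term, and the $\e$-uniformity, would escape control. The remaining estimates are routine bookkeeping.
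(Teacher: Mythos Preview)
Your proposal is correct and follows essentially the same route as the paper: verify the supersolution condition at $\rho=1$, read off $\psi_t=\sigma$ and $\psi_x=\alpha$ at the contact point, dispose of the case $\alpha=0$ trivially, and for $\alpha\ne0$ bound the far-field part of $\underline M_{1,\e}$ via Lemma~\ref{l:Hpe}\eqref{l:Hpe:Bpc} and the near-field part by majorizing $\psi(\o t,\o x+\cdot)-\psi(\o t,\o x)$ with the parabola $\alpha z+\tfrac K2 z^2$ from semiconcavity and invoking Lemma~\ref{l:parabola}. Your treatment is slightly more explicit than the paper's (e.g.\ justifying differentiability of $v_0$ at the contact point and the monotonicity of the principal-value integral), but the argument is the same.
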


\begin{proof}  
For a certain $\sigma > 0$ which we specify later, we check that $v$ satisfies Definition \ref{def:HJe:VS} with $\rho = 1$. Suppose $v - \psi$ has a global minimum at $(\o t, \o x)$. Note that, by semiconcavity,
\begin{gather} \label{pf:sig:phit}
  0 
  < \sigma 
  = v_t (\o t, \o x)
  = \psi_t (\o t, \o x), \\\notag
  \alpha 
  := v_x (\o t, \o x)
  = \psi_x (\o t, \o x).
\end{gather}
In particular, if $\psi_x(\o t, \o x) = 0$, then \eqref{pf:sig:phit} is consistent with Definition \ref{def:HJe:VS}. Hence, we may assume that $\alpha \neq 0$. Note that $|\alpha| \leq L$, and set $\phi(z) := \psi(\o t, \o x + z) - \psi(\o t, \o x)$. By semiconcavity,
\begin{align*}  
\phi(z) 
\leq v(\o t, \o x + z) - v(\o t, \o x) 
= v_0(\o x + z) - v_0(\o x) 
\leq \alpha z + \frac K2 z^2,
\end{align*}
and thus
\begin{align*}
\pv \int_{B_\rho} E_{\e, *}[\phi(z)] \frac{dz}{z^2} 
\leq \pv \int_{B_\rho} E_{\e, *}[\alpha z + \frac K2z^2] \frac{dz}{z^2}.
\end{align*}
Then, using Lemma~\ref{l:parabola} and $\e \leq 1$, $|\alpha| \leq L$, we obtain
\begin{align*}
  \underline H_{\rho, \e}[\psi, v](\o t, \o x)
  = \underline H_{\rho, \e}[\psi, v_0](\o t, \o x)
  \leq K L + C(K + L^2) + 4 \| v_0 \|_\infty L + 1 
  =: \sigma
\end{align*}
for some $C$ independent of the parameters. This together with \eqref{pf:sig:phit} shows that $v$ is a supersolution.
\end{proof}

We finally show that the viscosity solution $u_n$ in Theorem \ref{t} constructed from \eqref{Pn} is the unique viscosity solution of \eqref{HJeps-p}. This will establish the correspondence between \eqref{Pn} and \eqref{HJeps-p}.  This is not obvious from the comparison principle (Theorem \ref{t:CPe}), because the initial condition is not continuous.

We start with a general theorem on uniqueness of viscosity solution with possibly discontinuous initial datum:

\begin{prop}
\label{p:visc-ode-uniq}
Consider \eqref{HJeps-p} with $\e > 0$. Let $u$ be a viscosity solution such that
\begin{align*}
(u^*)_* = u_*, \qquad (u_*)^* = u^* \qquad \text{on }Q.
\end{align*}
If there exists a sequence $(u^\eta)_{\eta \in (-1,1)}$ of viscosity solutions such that $(u^{-\eta})^*(0, \cdot) \leq u_*(0, \cdot)$ and $u^*(0, \cdot) \leq (u^{\eta})_*(0, \cdot)$ for all $\eta \in (0,1)$, and
\begin{align*}
(u^\eta)^* &\nearrow u_* \qquad \text{on }Q \qquad \text{as } \eta \nearrow 0\\
(u^\eta)_* &\searrow u^* \qquad \text{on }Q \qquad \text{as } \eta \searrow 0,
\end{align*}
then any viscosity solution $v$ with 
\begin{align*}
v^*(0, \cdot) = u^*(0, \cdot),\qquad
v_*(0, \cdot) = u_*(0, \cdot)
\end{align*}
satisfies $v^* = u^*$ and $v_* = u_*$ on $Q$.
\end{prop}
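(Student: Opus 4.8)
The goal is to show that the viscosity solution built from the particle system is the \emph{only} one with the given (possibly discontinuous) initial datum, by squeezing $v$ between the approximating family $(u^\eta)_\eta$ and then letting $\eta\to 0$.

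\paragraph{Strategy.} The plan is to use the comparison principle for \eqref{HJeps-p} (Theorem~\ref{t:CPe}) against each member of the approximating family. Fix $\eta\in(0,1)$. I would like to compare $v^*$ (a subsolution, since $v$ is a viscosity solution) with $(u^\eta)_*$ (a supersolution). The hypotheses give $v^*(0,\cdot)=u^*(0,\cdot)\leq (u^\eta)_*(0,\cdot)$, which is exactly the initial ordering needed. Provided the far-field condition \eqref{CPe:far-field} holds for the pair $(v^*,(u^\eta)_*)$, Theorem~\ref{t:CPe} yields $v^*\leq (u^\eta)_*$ on $Q$. Since $(u^\eta)_*\searrow u^*$ as $\eta\searrow 0$, letting $\eta\searrow 0$ gives $v^*\leq u^*$ on $Q$. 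Symmetrically, comparing the supersolution $v_*$ with the subsolution $(u^{-\eta})^*$: the initial ordering $(u^{-\eta})^*(0,\cdot)\leq u_*(0,\cdot)=v_*(0,\cdot)$ and Theorem~\ref{t:CPe} give $(u^{-\eta})^*\leq v_*$, and since $(u^{-\eta})^*=(u^\eta)^*\big|_{\eta\to -\eta}\nearrow u_*$ as $\eta\nearrow 0$, we obtain $u_*\leq v_*$ on $Q$.

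\paragraph{Closing the argument.} Combining the two inequalities with the trivial ones $u_*\leq u^*$ and $v_*\leq v^*$ and the hypothesis $(u^*)_*=u_*$, $(u_*)^*=u^*$, I would argue as follows. From $v^*\leq u^*$, taking lower semicontinuous envelopes gives $v_*=(v^*)_*\leq (u^*)_*=u_*$; combined with $u_*\leq v_*$ this yields $v_*=u_*$ on $Q$. Taking upper semicontinuous envelopes of $v_*=u_*$ and using $(u_*)^*=u^*$ together with $(v_*)^*=v^*$ (which holds since $v$ is a viscosity solution in the same regularity class — one should note this, or alternatively derive $v^*\leq u^*$ directly and combine with a symmetric lower bound $u^*\leq v^*$ obtained by swapping the roles of $v$ and the family, but the cleanest route is via the envelope identities). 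This gives $v^*=u^*$ on $Q$, completing the proof.

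\paragraph{Main obstacle.} The delicate point is verifying the far-field hypothesis \eqref{CPe:far-field} for the pairs $(v^*,(u^\eta)_*)$ and $((u^{-\eta})^*,v_*)$, since Theorem~\ref{t:CPe} is not stated unconditionally. All functions involved are uniformly bounded (the $u^\eta$ and $u$ and $v$ are, e.g., bounded by $1$ in the application coming from $u_n$), so $u(t,x)-v(t,x)$ stays bounded; one needs that the $\limsup$ over $|x|\geq R$ as $R\to\infty$ is $\leq 0$. In the intended application this follows because, away from the (finite, bounded) cloud of particles, $v^*$ and $(u^\eta)_*$ are locally constant and agree up to the $\eta$-shift, so the difference is $\leq 0$ outside a fixed compact set for $\eta>0$; I would either build this into the statement's standing assumptions or verify it from the explicit structure of $u_n$ when this proposition is applied in Proposition~\ref{p:Pn:to:HJe:un}. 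The only other subtlety is bookkeeping with semicontinuous envelopes and the monotone limits $\eta\to 0^\pm$ — these are routine but must be done in the correct order so that the monotone convergences $(u^\eta)_*\searrow u^*$ and $(u^\eta)^*\nearrow u_*$ can be invoked pointwise on $Q$.
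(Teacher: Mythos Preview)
Your approach is essentially the same as the paper's: apply the comparison principle (Theorem~\ref{t:CPe}) to the pairs $(v^*,(u^\eta)_*)$ and $((u^{-\eta})^*,v_*)$, pass to the limit $\eta\to 0$ to obtain $u_*\leq v_*\leq v^*\leq u^*$ on $Q$, and then close using the envelope hypotheses on $u$.

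Your closing argument, however, invokes the identities $(v^*)_*=v_*$ and $(v_*)^*=v^*$, which are \emph{not} assumed for $v$ and need not hold for an arbitrary viscosity solution. The paper avoids this by using only the envelope identities for $u$ and the trivial inequalities for $v$. Concretely: from $u_*\leq v_*\leq v^*$ and the upper semicontinuity of $v^*$, one gets $(u_*)^*\leq v^*$; the hypothesis $(u_*)^*=u^*$ then yields $u^*\leq v^*$, which combined with $v^*\leq u^*$ gives $v^*=u^*$. The identity $v_*=u_*$ follows symmetrically. Your argument can be repaired along the same lines (you only need $v_*\leq (v^*)_*$ and $(v_*)^*\leq v^*$, both always true), but as written the equalities you assert are unjustified.

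Your flag on the far-field condition~\eqref{CPe:far-field} is fair: the paper's proof also invokes the comparison principle without verifying it, so this is indeed something to be checked in the concrete application (Proposition~\ref{p:Pn:to:HJe:un}), where the step-function structure makes it immediate.
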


\begin{proof}
Since at $t = 0$ it is given that $(u^{-\eta})^* \leq u_* \leq u^* \leq (u^{\eta})_*$ on $\R$ for any $\eta \in (0,1)$, the comparison principle yields
\begin{align*}
(u^{-\eta})^* \leq v_* \leq v^* \leq (u^\eta)_* \qquad \text{on } Q.
\end{align*}
Sending $\eta \to 0$, we recover
\begin{align*}
u_* \leq v_* \leq v^* \leq u^* \qquad \text{on }Q.
\end{align*}
By definition of the upper semi-continuous envelope,
\begin{align*}
u^* = (u_*)^* \leq v^* \leq u^* \qquad \text{on } Q,
\end{align*}
which yields $v^* = u^*$ on $Q$.

We can similarly show $v_* = u_*$ using the lower semi-continuous envelope.
\end{proof}

\begin{prop}
\label{p:Pn:to:HJe:un}
Consider the setting of Theorem \ref{t}. Then the viscosity solution $v$ to \eqref{HJeps-p} with $\e = \frac1n$ which satisfies $v^*(0, \cdot) = (u_n^\circ)^*$ and $v_*(0, \cdot) = (u_n^\circ)_*$ is unique.
\end{prop}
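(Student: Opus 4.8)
The plan is to invoke Proposition~\ref{p:visc-ode-uniq} with $u:=u_n$, the viscosity solution of \eqref{HJeps-p} with $\e=\tfrac1n$ produced by Lemma~\ref{l:Pn:to:HJe}. Two ingredients must be supplied: (a) the envelope identities $(u_n^*)_*=(u_n)_*$ and $((u_n)_*)^*=u_n^*$ on $Q$; and (b) a family $(u^\eta)_{\eta\in(-1,1)}$ of viscosity solutions of \eqref{HJeps-p} with $(u^{-\eta})^*(0,\cdot)\le (u_n)_*(0,\cdot)$ and $u_n^*(0,\cdot)\le (u^\eta)_*(0,\cdot)$ for $\eta\in(0,1)$, with $(u^\eta)^*\nearrow (u_n)_*$ as $\eta\nearrow0$ and $(u^\eta)_*\searrow u_n^*$ as $\eta\searrow0$ on $Q$. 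Once these are in place, Proposition~\ref{p:visc-ode-uniq} (applied with $u_*=(u_n)_*$, $u^*=u_n^*$) gives $v^*=u_n^*$ and $v_*=(u_n)_*$ on $Q$, which is the assertion.

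For (a): by Definition~\ref{defn:Pn}(i) and Theorem~\ref{t:Pn}, the function $(t,x)\mapsto u_n(t,x)=\tfrac1n\sum_i b_i(t)H(x-x_i(t))$ is piecewise constant in $x$, and its discontinuity set is contained in the finite union $\Gamma$ of the graphs of the continuous curves $t\mapsto x_i(t)$ associated with charged indices. Off $\Gamma$ the function is locally constant, so $u_n=u_n^*=(u_n)_*$ there; near a point of $\Gamma$, continuity of the $x_i$, right-continuity of the $b_i$, and, at collision times, Corollary~\ref{c:Pn}, show that $u_n$ attains only finitely many values in each neighbourhood, every one of them on a set having the point in its closure. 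From this the two identities follow by direct inspection, and I would record only this short argument.

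For (b) I would perturb the initial particle configuration. Fix $\eta_0>0$ small enough that, for $|\eta|\le\eta_0$, the configuration $(\bx^{\circ,\eta},\bb^\circ)$ obtained from $(\bx^\circ,\bb^\circ)$ by displacing every positive particle by $-\eta$ and every negative particle by $+\eta$ (leaving neutrals and all charges unchanged) still lies in $\cZ_n$; extend the definition for $|\eta|>\eta_0$ by $\bx^{\circ,\eta}:=\bx^{\circ,\pm\eta_0}$, and let $u^\eta$ be the corresponding viscosity solution of \eqref{HJeps-p} given by Lemma~\ref{l:Pn:to:HJe}, so that $u^0=u_n$. Since shifting up-jumps to the left and down-jumps to the right only raises a right-continuous staircase, for $\eta_1\le\eta_2$ the initial data satisfy $u^{\eta_1}(0,\cdot)\le u^{\eta_2}(0,\cdot)$, and inspecting the (finitely many, pairwise distinct) jump points gives $(u^{\eta_1})^*(0,\cdot)\le (u^{\eta_2})_*(0,\cdot)$; the comparison principle, Theorem~\ref{t:CPe} (whose far-field hypothesis is trivial here, all functions being constant for $|x|$ large), then yields $u^{\eta_1}\le u^{\eta_2}$ on $Q$, so $\eta\mapsto(u^\eta)^*$ and $\eta\mapsto(u^\eta)_*$ are monotone. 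The same inspection at $t=0$ gives $(u^{-\eta})^*(0,\cdot)\le (u_n)_*(0,\cdot)$ and $u_n^*(0,\cdot)\le (u^\eta)_*(0,\cdot)$ for $\eta\in(0,\eta_0)$, and comparing the subsolution $u_n^*$ with the supersolution $(u^\eta)_*$ (resp.\ the subsolution $(u^{-\eta})^*$ with the supersolution $(u_n)_*$) propagates these bounds to all of $Q$.

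It remains to identify the limits $\eta\to0$ on $Q$. Since $\bx^{\circ,\eta}\to\bx^\circ$ and $\bb^\circ$ is fixed, Theorem~\ref{t:Pn}\eqref{t:Pn:stab}, together with the stability of the annihilation pattern established inside its proof (cf.~\eqref{pf:bbe:claim}), gives $\bx^\eta\to\bx$ in $C([0,T];(\R^n/S_n,d_\bM))$ and $\bb^\eta\to\bb$ off the collision times, hence $u^\eta(t,x)\to u_n(t,x)$ at every $(t,x)$ lying off $\Gamma$ with $t$ not a collision time. Combined with the monotonicity in $\eta$ and the propagated sandwiches $u_n^*\le (u^\eta)_*\le u^\eta$ and $(u^{-\eta})^*\le (u_n)_*$, this forces $(u^\eta)_*\searrow u_n^*$ and $(u^\eta)^*\nearrow (u_n)_*$ at such points; the remaining task is to check that the monotone one-sided convergence also pins the limit to the correct semicontinuous value along $\Gamma$. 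This last point is the main obstacle: one must control, uniformly for small $\eta>0$, the location of the jump of $u^\eta(t,\cdot)$ relative to the limiting curve $x_i(t)$ — so that $(u^\eta)_*$ keeps selecting the \emph{upper} value of $u_n$ on $\Gamma$ and $(u^{-\eta})^*$ the \emph{lower} value — which relies on the non-crossing of the perturbed and unperturbed charged trajectories and on the local-near-collision estimates of Theorem~\ref{t:Pn}\eqref{t:Pn:C12}--\eqref{t:Pn:LB:col}. With this in hand, every hypothesis of Proposition~\ref{p:visc-ode-uniq} is met and the uniqueness of $v$ follows.
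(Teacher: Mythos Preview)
Your approach coincides with the paper's: both invoke Proposition~\ref{p:visc-ode-uniq} with $u=u_n$, verify the envelope identities via Corollary~\ref{c:Pn}, and build the approximating family $u^\eta$ from the same perturbed initial data $x_i^{\eta\circ}=x_i^\circ-b_i^\circ\eta$, using Theorem~\ref{t:CPe} for the ordering and Theorem~\ref{t:Pn}\eqref{t:Pn:stab} for the limit.

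Where you differ is in candour about the final step. The paper asserts in one line that ``$u_n^\eta\to u_n$ pointwise in $Q$'' follows from stability; as you effectively notice, this is literally false on $\Gamma$ (at a positive particle $x_i(t)$ the one-sided limits in $\eta$ give the two different step values). What is actually needed, and does hold, is the one-sided convergence of the \emph{envelopes}: $(u^\eta)_*\searrow u_n^*$ and $(u^{-\eta})^*\nearrow (u_n)_*$. Your diagnosis that this reduces to the ordered location of the perturbed jumps relative to the unperturbed ones is correct, but you need not appeal to Theorem~\ref{t:Pn}\eqref{t:Pn:C12}--\eqref{t:Pn:LB:col}: the non-crossing already follows from the strict comparison $(u_n^{\eta})^*\le(u_n^{\zeta})_*$ for $\eta<\zeta$ that you established, since for step functions with values in $\tfrac1n\Z$ this forces the super-level sets (hence the jump positions) to be nested. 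With that observation the argument closes, and your proof is complete along the same lines as the paper's.
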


\begin{proof}
Lemma \ref{l:Pn:to:HJe} implies the existence of a viscosity solution $u_n$ constructed from the ODE solution $(\bx, \bb)$. To prove uniqueness, we show that $u_n$ satisfies the conditions of Proposition \ref{p:visc-ode-uniq}.

We start with showing that $(u_n^*)_* = u_{n,*}$ and $(u_{n,*})^* = u_n^*$ on $Q$. This is obvious when $(t,x) \in Q$ is not an annihilation point. If $(\tau, y) \in Q$ is an annihilation point, then Corollary \ref{c:Pn} implies that $u_n |_{ B_\delta (\tau) \times B_\delta (y) }$ takes at most two values in $\e \Z$ for any $\delta > 0$ small enough. If the number of colliding particles at $(\tau, y)$ is odd, then these two values are reached by 
\begin{equation*}
  \lim_{x \searrow y} u_n(\tau, x)
  \quad \text{and} \quad
  \lim_{x \nearrow y} u_n(\tau, x).
\end{equation*}
If instead the number of colliding particles is even, then these two values are reached by 
\begin{equation*}
  \lim_{t \searrow \tau} u_n(t,y)
  \quad \text{and} \quad
  \lim_{t \nearrow \tau} u_n \Big(t, \frac{x_i(t) + x_j(t)}2 \Big),
\end{equation*}
where $i, j$ are the first two elements of the ordered set $I$ of indices of all colliding particles at $(\tau, y)$. 

Next we construct $u_n^\eta$. Consider the initial data
\begin{align*}
x_i^{\eta\circ} = x_i^\circ - b_i^\circ \eta
\quad \text{and} \quad
b_i^{\eta\circ} = b_i^\circ.
\end{align*}
For 
$$
  |\eta| < \min \{ |x_i^\circ - x_j^\circ| : i,j \text{ such that } b_i^\circ b_j^\circ = -1 \} =: \delta
$$ 
we have $(\bx^{\eta\circ}, \bb^{\eta\circ}) \in \cZ_n$. Let $(\bx^{\eta}, \bb^{\eta})$ be the corresponding solution of \eqref{Pn}. By Lemma \ref{l:Pn:to:HJe}, the corresponding step function $u_n^\eta$ is a viscosity solution. Note that $u_n^0 = u_n$. Since $\bx^{\eta} \in C([0,T])$ we have by the choice of $\bx^{\eta\circ}$ that
\begin{align*}
(u_n^{\eta})^*(0, \cdot) 
= (u_n^{\eta\circ})^*
\leq (u_n^{\zeta\circ})_*
= (u_n^{\zeta})_*(0, \cdot) \qquad \text{for } \eta <\zeta.
\end{align*}
Then, by the comparison principle,
\begin{align*}
(u_n^{\eta})^*
\leq (u_n^{\zeta})_* \qquad \text{for } \eta <\zeta.
\end{align*}
With this ordering of the sequence $(u_n^{\eta})_\eta$, it is left to show that $u_n^{\eta} \to u_n$ pointwise in $Q$ as $\eta \to 0$. This convergence statement is a direct consequence of Theorem \ref{t:Pn}\eqref{t:Pn:stab}.
\end{proof}

\section{Interpretation in terms of measure language}
\label{s:kappa}

In this section we re-interpret the convergence result of Theorem~\ref{t} in terms of the empirical measures~$\kappa_n$ given by~\eqref{def:kappa_n}.
This connection builds upon a more general equivalence between different types of convergence. For the case of functions and measures on $\R$ (i.e.\ without time) the following lemma describes these.

First we recall some definitions and relations between them. A function $u : \R \to \R$ has \emph{finite variation} on $\R$ if its \emph{total variation}
\[
  TV(u) := \sup_{N \geq 1} \sup_{y_0 < y_1 < \ldots < y_N} \sum_{i=1}^N \big| u(y_i) - u(y_{i-1}) \big|
\]
is finite. A right-continuous function $u$ with finite variation on $\R$ is \emph{c\`adl\`ag}, i.e.\ right-continuous with left limits, and  differentiable in the distributional sense. The distributional derivative of~$u$ can be characterized as a signed measure $\kappa$ on $\R$ whose total-variation norm $|\kappa|(\R)$ equals $TV(u)$. Thus the concepts of total variation for functions and for measures are coupled, in  that the total variation of a function is the same as the (measure-)total variation of its distributional derivative. 
Finally, a sequence of measures $\{\kappa_n\}_n$ on $\R$ is \emph{tight} if for any $\e > 0$ there exists $R > 0$ such that $|\kappa_n|(\R \setminus [-R, R]) < \e$ for all $n$.

\begin{lem}
\label{l:conv-equiv-R}
	Let $u_n$ and $u$ be c\`adl\`ag functions of finite variation on $\R$, and let the signed measures $\kappa_n$ and $\kappa$ be the corresponding distributional derivatives.  
Assume that the sequence $\{\kappa_n\}_n$ is bounded in total variation and tight.	

	Then the following are equivalent:
\begin{enumerate}
\item \label{l:conv-equiv-R:ct}
$u_n\to u$ continuously, i.e.
\begin{equation}
\label{eqdef:ct-conv-R}
\text{for all $x_n\to x$,} \qquad u_n(x_n)\to u(x);
\end{equation}
\item \label{l:conv-equiv-R:locunif}
$u_n\to u $ locally uniformly on $\R$, 
and the limit $u$ is  continuous on $\R$;
\item \label{l:conv-equiv-R:unif}
$u_n\to u $  uniformly on $\R$, 
and the limit $u$ is  continuous on $\R$;
\item \label{l:conv-equiv-R:kappa}
\begin{enumerate}
	\item \label{lem:i:narrow-conv}$\kappa_n $ converges narrowly to $\kappa$,
	\item \label{lem:i:no-atoms} $\kappa$ has no atoms, and
	\item \label{lem:i:AEC-kappa} there exist a sequence $s_n\xrightarrow{n\to\infty}0$ and a modulus of continuity $\omega$ such that 
	\begin{equation}
	\label{cond:AEC-kappa}
		\text{for all } -\infty< x\leq y<\infty, \qquad 
		  |\kappa_n((x,y])|\leq s_n + \omega(|x-y|).
	\end{equation}
	\end{enumerate}
\end{enumerate}
In all these cases the limit $u$ is uniformly continuous on $\R$.
\end{lem}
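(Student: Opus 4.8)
The plan is to prove the chain of equivalences by establishing $(1)\Rightarrow(2)\Rightarrow(3)\Rightarrow(1)$ among the three statements about the functions $u_n$, and then closing the loop with $(3)\Rightarrow(4)$ and $(4)\Rightarrow(2)$. Throughout I would use the normalisation $u_n(x)=\kappa_n((-\infty,x])$ and $u(x)=\kappa((-\infty,x])$ as in~\eqref{un}, so that $\|u_n\|_\infty\le|\kappa_n|(\R)$ is bounded uniformly in $n$ and $u_n(b)-u_n(a)=\kappa_n((a,b])$. From tightness of $\{\kappa_n\}$ one gets, for each $\e>0$, an $R$ with $|\kappa_n|(\R\setminus[-R,R])<\e$ for all $n$ (and the same for $|\kappa|$); hence $|u_n(x)|<\e$ for $x\le-R$ and $|u_n(x)-u_n(R)|<\e$ for $x\ge R$, and likewise for $u$. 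These tail bounds also immediately give the final assertion of the lemma: a continuous $u$ with $u=\kappa((-\infty,\cdot])$ has finite limits $0$ and $\kappa(\R)$ at $\mp\infty$, hence extends continuously to the two-point compactification of $\R$ and is therefore uniformly continuous.

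For the soft equivalences, $(3)\Rightarrow(1)$ is immediate (uniform convergence plus continuity of $u$ gives $u_n(x_n)\to u(x)$ whenever $x_n\to x$), and $(2)\Rightarrow(3)$ is the usual upgrade of locally uniform to uniform convergence, handling $|x|>R$ by the tail bounds above together with $|u_n(R)-u(R)|\to0$. For $(1)\Rightarrow(2)$ I would first deduce that $u$ is continuous by a diagonal argument: given $x_k\to\bar x$, pick $n_k\nearrow\infty$ with $|u_{n_k}(x_k)-u(x_k)|<1/k$ and apply~\eqref{eqdef:ct-conv-R} to the sequence that equals $x_k$ at index $n_k$ and $\bar x$ elsewhere, obtaining $u_{n_k}(x_k)\to u(\bar x)$ and hence $u(x_k)\to u(\bar x)$; local uniform convergence then follows by the standard contradiction argument, using continuity of $u$ at the limit point of a bad sequence in a compact set.

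For $(3)\Rightarrow(4)$: continuity of $u$ gives $\kappa(\{x\})=u(x)-u(x^-)=0$, i.e.\ no atoms; taking $\omega$ a modulus of continuity of the (uniformly continuous) limit $u$, the bound $|\kappa_n((x,y])|=|u_n(y)-u_n(x)|\le 2\|u_n-u\|_\infty+\omega(|x-y|)$ gives~\eqref{cond:AEC-kappa} with $s_n:=2\|u_n-u\|_\infty\to0$; and narrow convergence follows by testing first against $\varphi\in C_c^1(\R)$, where $\int\varphi\,d\kappa_n=-\int\varphi'u_n\,dx\to-\int\varphi'u\,dx=\int\varphi\,d\kappa$, and then extending to $\varphi\in C_b(\R)$ by a three-$\e$ argument that uses the uniform total-variation bound and uniform tightness of $\{\kappa_n\}$.

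The crux is $(4)\Rightarrow(2)$. The naive attempt --- proving $u_n(x)=\kappa_n((-\infty,x])\to\kappa((-\infty,x])$ by squeezing $\One_{(-\infty,x]}$ between continuous functions --- fails, because for signed $\kappa_n$ the estimate~\eqref{cond:AEC-kappa} controls only the \emph{net} mass $|\kappa_n((x,y])|$ of a small interval, not its total variation; this is precisely why condition~\eqref{cond:AEC-kappa} is needed in the statement. Instead I would read~\eqref{cond:AEC-kappa} as asymptotic equicontinuity of the primitives, $|u_n(x)-u_n(y)|\le s_n+\omega(|x-y|)$ with $s_n\to0$, combined with the uniform bound $\|u_n\|_\infty\le|\kappa_n|(\R)$, to conclude via Arzel\`a--Ascoli and a diagonal argument that every subsequence of $(u_n)$ has a further subsequence converging locally uniformly to some $v\in C(\R)$ with $|v(x)-v(y)|\le\omega(|x-y|)$ and $v(x)\to0$ as $x\to-\infty$. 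To identify $v=u$: narrow convergence gives $\langle\kappa_{n_k},\varphi\rangle=-\int u_{n_k}\varphi'\to-\int v\varphi'$ for all $\varphi\in C_c^\infty(\R)$, so $v'=\kappa$ in the sense of distributions; since $\kappa$ has no atoms $u$ is continuous, so $v-u$ is a genuine constant, and it is zero because both $v(x)$ and $u(x)=\kappa((-\infty,x])$ vanish as $x\to-\infty$. As all subsequential limits coincide with $u$, the whole sequence $u_n\to u$ locally uniformly and $u$ is continuous, which is statement~$(2)$. I expect this last implication --- and specifically the realisation that~\eqref{cond:AEC-kappa} must be exploited at the level of the primitives $u_n$ rather than of the measures $\kappa_n$ --- to be the main obstacle, since it is exactly where the signed case diverges from the familiar cumulative-distribution-function picture.
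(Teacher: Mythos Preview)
Your proposal is correct and follows essentially the same approach as the paper: the same cycle of implications (with a cosmetic reordering), the same use of tightness to upgrade local to global uniform convergence, the same choice $s_n=2\|u_n-u\|_\infty$ for~\eqref{cond:AEC-kappa}, and the same Arzel\`a--Ascoli-type compactness argument (the paper cites \cite[Prop.~3.3.1]{AmbrosioGigliSavare08}) for the key step $(4)\Rightarrow(2)$. Your identification of the subsequential limit via $v'=\kappa$ and matching at $-\infty$ is spelled out more explicitly than in the paper, which simply asserts that narrow convergence ``uniquely characterizes this limit as $u$''; both are fine.
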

\noindent
Note that by choosing $u_n$ and $u$ to be right-continuous, we have the characterization
\begin{equation}
	\label{eq:u-kappa-rel}	
	u_n(x) = \kappa_n\bigl((-\infty,x]\bigr) \qquad \text{and}\qquad
	u(x) = \kappa\bigl((-\infty,x]\bigr) \qquad \text{for all }x\in\R.
	\end{equation}

\begin{rem}
\label{rem:kappa-conv-is-topological}
Since  uniform convergence on the space of bounded functions is generated by a norm topology, conditions~\ref{lem:i:narrow-conv} and~\ref{lem:i:AEC-kappa} together characterize the corresponding topology on any bounded and tight set of measures. 
\end{rem}

Before proving Lemma~\ref{l:conv-equiv-R} we first consider the question of continuity of the limit. First note  that the continuity of the limit $u$ and the absence of atoms in~$\kappa$ are equivalent properties. These are required in the second, third, and fourth equivalent characterizations of Lemma~\ref{l:conv-equiv-R}, but not in the first. This is because  continuous convergence implies continuity of the limit, \emph{even if the sequence itself consists of discontinuous functions}:
\begin{lem}
\label{l:ct-conv-ct}
Let $X$ be a metric space and let $v_n,v:X\to\R$. If $v_n\to v$ continuously, then~$v$ is continuous. 
\end{lem}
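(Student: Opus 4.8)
The plan is a standard interleaving (diagonal) argument. First I would record the elementary consequence that continuous convergence implies pointwise convergence: applying~\eqref{eqdef:ct-conv-R} (with the roles of $v_n$, $v$ here) to the constant sequence $x_n\equiv y$ gives $v_n(y)\to v(y)$ for every $y\in X$. Fix $x\in X$ and an arbitrary sequence $x_k\to x$; it suffices to show $v(x_k)\to v(x)$.

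By pointwise convergence, for each $k$ we may pick an index $n_k$, and by choosing them successively we may take $n_1<n_2<\cdots$, such that $|v_{n_k}(x_k)-v(x_k)|<1/k$. Now define an auxiliary sequence $(z_n)_{n\ge 1}$ in $X$ by $z_n:=x$ for $n<n_1$ and $z_n:=x_k$ whenever $n_k\le n<n_{k+1}$. Since $x_k\to x$, one checks directly that $z_n\to x$ as $n\to\infty$: given $\e>0$, pick $K$ with $d(x_k,x)<\e$ for $k\ge K$; then for $n\ge n_K$ one has $z_n=x_k$ for some $k\ge K$, hence $d(z_n,x)<\e$.

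Continuous convergence applied to $z_n\to x$ yields $v_n(z_n)\to v(x)$, and in particular along the subsequence $n=n_k$ we get $v_{n_k}(x_k)=v_{n_k}(z_{n_k})\to v(x)$. Combining this with the estimate $|v_{n_k}(x_k)-v(x_k)|<1/k$ and the triangle inequality,
\[
  |v(x_k)-v(x)|\le |v(x_k)-v_{n_k}(x_k)| + |v_{n_k}(x_k)-v(x)| < \tfrac1k + |v_{n_k}(x_k)-v(x)| \xrightarrow{k\to\infty} 0.
\]
Hence $v(x_k)\to v(x)$. Since $(x_k)$ was an arbitrary sequence converging to $x$ and $x\in X$ was arbitrary, $v$ is continuous on $X$.

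This argument is essentially routine; there is no real obstacle. The only point requiring a little care is the construction of the interleaved sequence $(z_n)$ and the verification that $z_n\to x$, together with the observation that the $n_k$ can be taken strictly increasing so that $v_{n_k}(z_{n_k})$ is genuinely a subsequence of $v_n(z_n)$. (One could alternatively argue by contradiction: if $v$ were discontinuous at $x$, there would be $\delta>0$ and $x_k\to x$ with $|v(x_k)-v(x)|\ge\delta$, and the same interleaving would contradict $v_n(z_n)\to v(x)$; the direct version above is cleaner.)
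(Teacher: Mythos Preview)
Your proof is correct and follows essentially the same approach as the paper: both pick strictly increasing indices $n_k$ with $|v_{n_k}(x_k)-v(x_k)|<1/k$ via pointwise convergence, embed the $x_k$ into an auxiliary sequence converging to $x$, apply continuous convergence to that sequence, and conclude by the triangle inequality. The only cosmetic difference is in the interleaving (the paper sets the auxiliary sequence equal to $x$ at all indices $n\notin\{n_k\}$, whereas you fill whole blocks $[n_k,n_{k+1})$ with $x_k$), which is immaterial.
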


\noindent
This Lemma also explains why the limit $u$ in Theorem~\ref{t} is continuous, despite being the limit of discontinuous functions $u_n$: it stems from the fact that Theorem~\ref{t} proves continuous convergence of $u_n$, by establishing equality in the inequality~\eqref{pf:t}.

\begin{proof}[Proof of Lemma~\ref{l:ct-conv-ct}]
Fix $x_m, x\in X$ with $x_m\to x$. For each $m$, use the continuous convergence to choose $n_m > n_{m-1}$ such that $|v_{n_m}(x_m)-v(x_m)|<1/m$. Let us define the sequence $y_n = x_m$ if $n = n_m$ for some $m$ and $y_n = x$ otherwise. Since $y_n \to x$, we have $v_n(y_n) \to v(x)$ by the continuous convergence and hence $v_{n_m}(x_m) \to v(x)$ since $\{v_{n_m}(x_m)\}$ is a subsequence of $\{v_n(y_n)\}$.
Therefore
\begin{align*}
0 &\leq |v(x_m)-v(x)| \leq |v(x_m)-v_{n_m}(x_m)| + |v_{n_m}(x_m) - v(x)| 
\\&< 1/m + |v_{n_m}(x_m) - v(x)| \to 0 \quad \text{as $m \to \infty$.}
\end{align*}
\end{proof}

\begin{proof}[Proof of Lemma~\ref{l:conv-equiv-R}]	
We show $\ref{l:conv-equiv-R:ct} \Longleftrightarrow \ref{l:conv-equiv-R:locunif}$ and $\ref{l:conv-equiv-R:locunif} \Longrightarrow \ref{l:conv-equiv-R:unif}\Longrightarrow \ref{l:conv-equiv-R:kappa}\Longrightarrow \ref{l:conv-equiv-R:locunif}$. 
The final statement follows from the fact that any continuous function of finite variation is uniformly continuous.

The implication $\ref{l:conv-equiv-R:locunif}\Longrightarrow \ref{l:conv-equiv-R:ct}$ follows  directly from the triangle inequality. 
To show $\ref{l:conv-equiv-R:ct} \Longrightarrow \ref{l:conv-equiv-R:locunif}$, first note that Lemma~\ref{l:ct-conv-ct} implies that $u$ is continuous.  Let $K\subset \R$ be a compact set, let   $\{n_m\}$ be any subsequence, and  let  $x_m\in K$ be such that $|u_{n_m}(x_m) - u(x_m)|> \sup_K |u_{n_m} - u|-1/m$. By taking subsequences (not relabeled), we may assume that $x_m \to x$. But then 
\[
\sup_K |u_{n_m} - u| < \frac1m + |u_{n_m}(x_m) - u(x)| + |u(x) - u(x_m)|,
\]
which converges to 0 by the continuity of $u$ and the continuous convergence of $u_n$. We conclude that $\sup_K |u_n- u| \to 0$.

To show $\ref{l:conv-equiv-R:locunif} \Longrightarrow\ref{l:conv-equiv-R:unif}$, fix $\e>0$.  By the  tightness of $\kappa_n$ there exists $R>0$ such that 
\[
\sup_n \sup_{x > R} |u_n(x)-u_n(R)| < \e.
\]
Since $u$ has finite total variation, we can similarly assume that $R$ is such that 
\begin{equation}
\label{est:u-tot-var}	
\sup_{x > R} |u(x)-u(R)| < \e.
\end{equation}
Then, it follows directly from \ref{l:conv-equiv-R:locunif} that 
\[
  \sup_{[-R,R]} |u_n-u| < \e
\] 
for all $n$ large enough. By the triangle inequality
\[
\sup_{x > R} |u_n(x)-u(x)|
\leq \sup_{x > R} |u_n(x)-u_n(R)| + |u_n(R)-u(R)| + \sup_{x > R}|u(R)-u(x)|,
\]
which is smaller than $3 \e$ for all $n$ large enough by the three displays above. Analogously, one can derive that $\sup_{x < -R} |u_n(x)-u(x)| < 3 \e$ for all $n$ large enough.

To show $\ref{l:conv-equiv-R:unif}\Longrightarrow \ref{l:conv-equiv-R:kappa}$, note that the narrow convergence of $\kappa_n$ is a simple extension of e.g.~\cite[Prop.~1.4.9]{Bogachev18} to unbounded domains, and that we already remarked that the continuity of $u$ is equivalent to the absence of atoms in $\kappa$. To prove Property \eqref{lem:i:AEC-kappa}, let $\omega$ be a modulus of continuity of the limit $u$ on $\R$, and set $s_n := 2\sup_\R |u_n-u|$. For any $n$ and any $-\infty< x\leq y <\infty$ we then have
\begin{align*}
|\kappa_n((x,y])| &= |u_n(y)-u_n(x)| \leq |u_n(y)-u(y)| + |u(y)-u(x)| + |u(x)-u_n(x)|\\
&\leq s_n + \omega(|x-y|).
\end{align*}

Finally we prove $\ref{l:conv-equiv-R:kappa}\Longrightarrow \ref{l:conv-equiv-R:locunif}$. The condition~\eqref{cond:AEC-kappa} implies that $u_n$ satisfies the conditions of the generalization~\cite[Prop.~3.3.1]{AmbrosioGigliSavare08} of the classical Arzel\`a-Ascoli theorem. For each $R$, we therefore find a subsequence $u_{n_k}$ that converges uniformly on $[-R,R]$ to a continuous limit. Since the narrow convergence of $\kappa_n$ to $\kappa$ uniquely characterizes  this limit as $u$, we conclude that the whole sequence converges uniformly on $[-R,R]$ to $u$. 
\end{proof}

\bigskip
The lemmas above deal with convergence of functions and measures on $\R$. We now extend the statements to functions and measures on $[0,T]\times \R$.

\begin{lem}
\label{l:conv-equiv-TR}
Let $u_n,u:[0,T]\times 	\R\to \R$ be such that  $u_n(t,\cdot)$ and $u(t,\cdot)$ have finite variation on $\R$  for all $t\in [0,T]$, and let $\{\kappa_n(t)\}_{t\in [0,T]}$ and $\{\kappa(t)\}_{t\in [0,T]}$ be families of signed measures on $\R$. Assume that 
\begin{enumerate}[A.]
	\item the family $\{\kappa_n(t)\}_{n\in \N,t\in[0,T]}$ is bounded in total variation on $\R$ and tight;
	\item for all $t\in [0,T], \ x\in \R$,
\[
u_n(t,x) = \kappa_n(t,(-\infty,x])\qquad\text{and}\qquad
u(t,x) = \kappa(t,(-\infty,x]).
\]
\end{enumerate}


Then the following are equivalent:
\begin{enumerate}
\item \label{l:conv-equiv-TR:ct}
$u_n\to u$ continuously on $[0,T]\times \R$;
\item \label{l:conv-equiv-TR:locunif}
$u_n\to u$ locally uniformly on $[0,T]\times \R$, and $u$ is continuous on $[0,T]\times \R$;
\item \label{l:conv-equiv-TR:unif}
$u_n\to u$  uniformly on $[0,T]\times \R$, and $u$ is continuous on $[0,T]\times \R$;
\item \label{l:conv-equiv-TR:kappa}
For each sequence $t_n\to t$ in $[0,T]$, $\kappa_n(t_n)$ converges to $\kappa(t)$ in the sense of Lemma~\ref{l:conv-equiv-R}. 
The sequence $(s_n)_n$ and the modulus of continuity $\omega$ can be chosen to be uniform in the sequence $t_n\to t$.
\end{enumerate}
The limit function $u$ is uniformly continuous on $[0,T]\times \R$. 
\end{lem}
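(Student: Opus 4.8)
The plan is to run the same cycle of implications as in the one-dimensional Lemma~\ref{l:conv-equiv-R}, now over the metric space $X:=[0,T]\times\R$: I would prove $\ref{l:conv-equiv-TR:ct}\Leftrightarrow\ref{l:conv-equiv-TR:locunif}$, then $\ref{l:conv-equiv-TR:locunif}\Rightarrow\ref{l:conv-equiv-TR:unif}\Rightarrow\ref{l:conv-equiv-TR:kappa}\Rightarrow\ref{l:conv-equiv-TR:locunif}$, and finally settle the closing uniform-continuity claim separately. Hypotheses~A--B make every time-slice $u_n(t,\cdot)$ a c\`adl\`ag function of finite variation with distributional derivative $\kappa_n(t)$, all bounded in $\|\cdot\|_\infty$ by the total-variation bound, and $\{\kappa_n(t)\}_{n,t}$ a single bounded, tight family; hence Lemma~\ref{l:conv-equiv-R} may be invoked for \emph{any} sequence of slices $\kappa_n(t_n)$, and keeping track of that applicability is a recurring (if routine) bookkeeping task.

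The implication $\ref{l:conv-equiv-TR:locunif}\Rightarrow\ref{l:conv-equiv-TR:ct}$ is the triangle inequality. For $\ref{l:conv-equiv-TR:ct}\Rightarrow\ref{l:conv-equiv-TR:locunif}$ one gets continuity of $u$ from Lemma~\ref{l:ct-conv-ct} (as $X$ is metric), and then the usual subsequence argument on a compact $K\subset X$ — picking near-maximizers of $\sup_K|u_{n_m}-u|$ and passing to a convergent point — gives $\sup_K|u_n-u|\to0$. For $\ref{l:conv-equiv-TR:locunif}\Rightarrow\ref{l:conv-equiv-TR:unif}$ I would split $X$ into the compact slab $[0,T]\times[-R,R]$, where local uniform convergence is uniform, and the tails $|x|>R$, where $|u_n(t,x)-u_n(t,R)|=|\kappa_n(t,(R,x])|\le\sup_{n,t}|\kappa_n(t)|(\R\setminus[-R,R])$ is small for $R$ large uniformly in $n$ and $t$ (tightness), the same bound passing to the limit $u$, while $u_n(\cdot,R)\to u(\cdot,R)$ uniformly on the compact interval $[0,T]$; three triangle-inequality terms finish it, and symmetrically for $x<-R$. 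For $\ref{l:conv-equiv-TR:unif}\Rightarrow\ref{l:conv-equiv-TR:kappa}$, given $t_n\to t$ I would show $u_n(t_n,\cdot)\to u(t,\cdot)$ uniformly on $\R$ using $\|u_n(t_n,\cdot)-u(t_n,\cdot)\|_\infty\le\sup_X|u_n-u|$ and $\|u(t_n,\cdot)-u(t,\cdot)\|_\infty\to0$ (uniform continuity of $u$, proved below); Lemma~\ref{l:conv-equiv-R} then yields narrow convergence $\kappa_n(t_n)\rightharpoonup\kappa(t)$ and absence of atoms of $\kappa(t)$, while~\eqref{cond:AEC-kappa} follows directly with the \emph{sequence-independent} choices $s_n:=2\sup_X|u_n-u|$ and $\omega$ an $x$-modulus of continuity of $u$ uniform in $t$, via $|\kappa_n(t_n,(x,y])|=|u_n(t_n,y)-u_n(t_n,x)|\le s_n+\omega(|x-y|)$.

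The crux is $\ref{l:conv-equiv-TR:kappa}\Rightarrow\ref{l:conv-equiv-TR:locunif}$. Applying~\ref{l:conv-equiv-TR:kappa} to constant sequences gives, for each fixed $t$, $\kappa_n(t)\to\kappa(t)$ in the sense of Lemma~\ref{l:conv-equiv-R}, hence $u_n(t,\cdot)\to u(t,\cdot)$ uniformly on $\R$ — so $u_n\to u$ pointwise on $X$ and $u(t,\cdot)$ is continuous — and~\eqref{cond:AEC-kappa}, being uniform in $t$, gives asymptotic equicontinuity in $x$ uniform in $t$: $|u_n(t,x)-u_n(t,y)|\le s_n+\omega(|x-y|)$. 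The hard part is equicontinuity \emph{in time}, which I would obtain by contradiction: if it failed there would be $\e>0$, a subsequence $n_k$, times $t_k,s_k$ with $|t_k-s_k|\to0$, and points $x_k$ with $|u_{n_k}(t_k,x_k)-u_{n_k}(s_k,x_k)|\ge\e$; writing $t^\ast:=\lim t_k=\lim s_k$ and interleaving $t_k$ (resp.\ $s_k$) at the indices $n_k$ into the full sequence that equals $t^\ast$ elsewhere produces sequences $\tau_n,\sigma_n\to t^\ast$ to which~\ref{l:conv-equiv-TR:kappa} applies, forcing $u_n(\tau_n,\cdot),u_n(\sigma_n,\cdot)\to u(t^\ast,\cdot)$ uniformly on $\R$ and hence $|u_{n_k}(t_k,x_k)-u_{n_k}(s_k,x_k)|\to0$, a contradiction. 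Combining the two equicontinuities gives asymptotic equicontinuity of $\{u_n\}$ on all of $X$ with a location-independent modulus; together with the pointwise convergence, a standard $\e/3$ covering argument over compacts yields $u_n\to u$ locally uniformly, and passing that joint modulus to the pointwise limit shows $u$ is uniformly continuous on $X$.

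The closing assertion — $u$ uniformly continuous on $[0,T]\times\R$ — I would prove once, valid in every case: in each of~\ref{l:conv-equiv-TR:ct}--\ref{l:conv-equiv-TR:kappa} one has $u_n\to u$ at least locally uniformly with $u$ continuous (by hypothesis, by Lemma~\ref{l:ct-conv-ct}, or by the previous paragraph), so $u$ is a pointwise limit of $u_n$; then $|u(t,x)-u(t,R)|=\lim_n|\kappa_n(t,(R,x])|\le\sup_{n,t}|\kappa_n(t)|(\R\setminus[-R,R])\xrightarrow{R\to\infty}0$ uniformly in $t$ and in $x>R$ (symmetrically for $x<-R$), while $u$ is uniformly continuous on each compact slab $[0,T]\times[-R,R]$; these combine into a genuine joint modulus of continuity on $X$. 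The main obstacle I foresee is the sequence-interleaving argument establishing time-equicontinuity in $\ref{l:conv-equiv-TR:kappa}\Rightarrow\ref{l:conv-equiv-TR:locunif}$ — converting the per-sequence hypothesis~\ref{l:conv-equiv-TR:kappa} into a genuine equicontinuity statement — with the secondary nuisance of checking, at each use of Lemma~\ref{l:conv-equiv-R}, that its c\`adl\`ag/finite-variation/tightness hypotheses hold for the relevant time-slices.
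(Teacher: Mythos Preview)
Your proposal is correct, and the implications $\ref{l:conv-equiv-TR:ct}\Leftrightarrow\ref{l:conv-equiv-TR:locunif}$, $\ref{l:conv-equiv-TR:locunif}\Rightarrow\ref{l:conv-equiv-TR:unif}$, $\ref{l:conv-equiv-TR:unif}\Rightarrow\ref{l:conv-equiv-TR:kappa}$, and the closing uniform-continuity argument match the paper's essentially verbatim (including the specific choices $s_n=2\sup_X|u_n-u|$ and $\omega$ a spatial modulus of $u$ uniform in $t$).

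Where you diverge is $\ref{l:conv-equiv-TR:kappa}\Rightarrow\ref{l:conv-equiv-TR:locunif}$, and here the paper is considerably shorter. Rather than going through equicontinuity in space and time separately, the paper observes that continuous convergence of $u_n\to u$ on $[0,T]\times\R$ is \emph{tautologically} the same as: for every $t_n\to t$ in $[0,T]$, $u_n(t_n,\cdot)\to u(t,\cdot)$ continuously on $\R$ (just unpack the definition of continuous convergence on the product). This recasts item~\ref{l:conv-equiv-TR:ct} as ``item~\ref{l:conv-equiv-R:ct} of Lemma~\ref{l:conv-equiv-R} holds for every time-sequence $t_n\to t$'', and then Lemma~\ref{l:conv-equiv-R} applied per sequence gives $\ref{l:conv-equiv-TR:ct}\Leftrightarrow\ref{l:conv-equiv-TR:kappa}$ in one stroke (the tightness and finite-variation hypotheses carry over from assumption~A as you note). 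No time-equicontinuity, no interleaving, no $\e/3$ covering. Your interleaving device is in fact a disguised form of this same reduction --- padding a subsequence to a full sequence is exactly how one invokes the per-sequence hypothesis~\ref{l:conv-equiv-TR:kappa} --- but you then route the conclusion through an equicontinuity framework that does more work than needed. What your longer route does buy is an explicit joint asymptotic modulus of continuity for $u_n$ on the product, which is a marginally stronger intermediate statement.
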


\begin{proof}
The implication $\ref{l:conv-equiv-TR:unif} \Longrightarrow \ref{l:conv-equiv-TR:locunif}$ is immediate. The implications $\ref{l:conv-equiv-TR:ct}\Longleftrightarrow \ref{l:conv-equiv-TR:locunif}$ are proved in the same way as in Lemma~\ref{l:conv-equiv-R}, and we omit the details. 
The implication $\ref{l:conv-equiv-TR:locunif}\Longrightarrow\ref{l:conv-equiv-TR:unif}$ also follows the same argument, with an additional step to establish the corresponding generalization of~\eqref{est:u-tot-var}, which is the existence of $R>0$ such that 
\[
\sup_{t\in[0,T]} \sup_{x > R} |u(t,x)-u(t,R)| < \e.
\]
We prove the existence of such an $R$ by noting that  for $x > R$, 
\begin{align*}
|u(t,x)-u(t,R)| = \lim_{n\to\infty} |u_n(t,x)-u_n(t,R)|
 = \lim_{n\to\infty} \bigl|\kappa_n\bigl(t,	(R,x]\bigr)\bigr| 
 \leq \liminf_{n\to\infty} |\kappa_n(t)|\bigl([R,\infty)\bigr),
\end{align*}
and this final expression vanishes uniformly in $t$ as $R\to\infty$ by the assumed tightness of $\{\kappa_n(t)\}_{n,t}$. This observation also implies that the limit $u$ is uniformly continuous on $[0,T]\times\R$. 

Finally, we prove $\ref{l:conv-equiv-TR:ct},\ref{l:conv-equiv-TR:unif} \Longleftrightarrow \ref{l:conv-equiv-TR:kappa}$. We first note that the continuous convergence of $u_n$ to $u$ on $[0,T]\times \R$ is equivalent to the property 
\[
\text{for all }t_n\to t \text{ in }[0,T], \quad 
u_n(t_n,\cdot) \to u(t,\cdot) \text{ continuously on }\R.
\]
Then, Lemma \ref{l:conv-equiv-R} implies that $\ref{l:conv-equiv-TR:ct},\ref{l:conv-equiv-TR:unif}$ are equivalent to $\ref{l:conv-equiv-TR:kappa}$. The fact that the parameters $s_n$ and~$\omega$ can be chosen to be independent of the sequence $t_n$ follows from an inspection of the proof of Lemma~\ref{l:conv-equiv-R}: we can take $\omega$ to be a modulus of continuity of $u$ on $[0,T]\times \R$, and we can set $s_n := 2\sup\bigl\{|u_n(t,x)-u(t,x)|: (t,x) \in [0,T]\times \R\bigr\}$.
\end{proof}

\bigskip
Finally we use Lemma~\ref{l:conv-equiv-TR} to translate Theorem~\ref{t} into properties of $\kappa_n$ and $\kappa$:

\begin{cor}
\label{c:conv-kappa}
Let a sequence of initial data $(\bx_n^\circ, \bb_n^\circ) \in \cZ_n$ be such that 
the corresponding measures  $\kappa_n^\circ$ defined by~\eqref{def:kappa_n} converge to  $\kappa^\circ$ in the sense of Lemma~\ref{l:conv-equiv-R}.
Let $(\bx_n, \bb_n)$ be the solution of \eqref{Pn} (Definition \ref{defn:Pn}) with initial data $(\bx^\circ_n, \bb_n^\circ)$. 

Then the measures $\kappa_n$ generated by $(\bx_n, \bb_n)$  converge to $\kappa$ in the sense of Lemma~\ref{l:conv-equiv-TR}. 
%
%
The limit $\kappa$ is characterized by the property that its integral $u(t,x) = \kappa(t, (-\infty,x])$ is the unique viscosity solution of equation~\eqref{HJ:formal} in the sense of Definition \ref{def:HJ:VS} with initial datum $u^\circ(x) = \kappa^\circ((-\infty,x])$.
\end{cor}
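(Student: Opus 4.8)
\emph{Plan.} The strategy is to translate the hypothesis into the setting of Theorem~\ref{t}, apply that theorem, and translate the conclusion back through Lemmas~\ref{l:conv-equiv-R} and~\ref{l:conv-equiv-TR}. By Lemma~\ref{l:conv-equiv-R}, the convergence $\kappa_n^\circ\to\kappa^\circ$ in the stated sense is equivalent to: $u_n^\circ\to u^\circ$ uniformly on $\R$ with $u^\circ$ continuous, where $u_n^\circ(x)=\kappa_n^\circ((-\infty,x])$ as in~\eqref{un} and $u^\circ(x)=\kappa^\circ((-\infty,x])$, and moreover $\{\kappa_n^\circ\}_n$ is bounded in total variation and tight. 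Since $\|u^\circ\|_\infty\le|\kappa^\circ|(\R)<\infty$ and a continuous function of finite variation is uniformly continuous, $u^\circ\in BUC(\R)$, so Theorem~\ref{t} applies: the functions $u_n$ built from the solutions $(\bx_n,\bb_n)$ converge locally uniformly to the unique viscosity solution $u$ of~\eqref{HJ:formal} (Definition~\ref{def:HJ:VS}; uniqueness by Corollary~\ref{co:solutions-match-imr}) with initial datum $u^\circ$, and $u\in BUC([0,\infty)\times\R)$.

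Next I would set $\kappa_n(t):=\frac1n\sum_i b_i(t)\delta_{x_i(t)}$, so that $u_n(t,x)=\kappa_n(t,(-\infty,x])$ by~\eqref{un} ($u_n$ being right-continuous since $H(0)=1$), and $\kappa(t):=\partial_x u(t,\cdot)$, and verify the two hypotheses of Lemma~\ref{l:conv-equiv-TR}. Boundedness in total variation is immediate: the annihilation rule only turns charged particles neutral, so $|\kappa_n(t)|(\R)=\frac1n\#\{i:b_i(t)\neq0\}\le\frac1n\#\{i:b_i^\circ\neq0\}=|\kappa_n^\circ|(\R)$, uniformly bounded; lower semicontinuity of total variation under pointwise limits then also gives $TV(u(t,\cdot))\le\liminf_n TV(u_n(t,\cdot))<\infty$ for each $t$, so $u(t,\cdot)$ is c\`adl\`ag of finite variation. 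The remaining --- and main --- point is tightness of the family $\{\kappa_n(t)\}_{n\in\N,\,t\in[0,T]}$, uniformly in $t$.

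For tightness I would use a \emph{slowly rising soft count}. Fix a smooth non-decreasing $\chi_1$ with $\chi_1\equiv0$ on $(-\infty,-1]$ and $\chi_1\equiv1$ on $[0,\infty)$, set $\chi_L(s):=\chi_1(s/L)$ (so $\|\chi_L''\|_\infty=\|\chi_1''\|_\infty/L^2$), and consider $\Psi_n(t):=\frac1n\sum_i|b_i(t)|\,\chi_L(x_i(t)-R)$. Between collision times a computation analogous to~\eqref{pf:ddt:Mk1}--\eqref{pf:ddt:Mk2}, using $|b_i|b_i=b_i$ and symmetrization in $i\leftrightarrow j$, gives
\[
  \frac{d}{dt}\Psi_n(t)=\frac1{2n^2}\sum_{i\neq j}b_ib_j\,\frac{\chi_L'(x_i-R)-\chi_L'(x_j-R)}{x_i-x_j},
  \qquad\qquad
  \Big|\frac{d}{dt}\Psi_n(t)\Big|\le\frac{\|\chi_1''\|_\infty}{2L^2},
\]
while $\Psi_n$ can only jump downwards at collision times (a charged particle becoming neutral removes a non-negative term). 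Hence $\Psi_n(t)\le\Psi_n(0)+\|\chi_1''\|_\infty T/(2L^2)$. Since $\frac1n\#\{i:x_i(t)>R,\ b_i(t)\neq0\}\le\Psi_n(t)$ and $\Psi_n(0)\le|\kappa_n^\circ|((R-L,\infty))$, choosing first $L$ so large that $\|\chi_1''\|_\infty T/(2L^2)<\e/2$ and then $R$ so large that $\sup_n|\kappa_n^\circ|(\{|x|>R-L\})<\e/2$ (tightness of $\kappa_n^\circ$) yields $\sup_{n,\,t\in[0,T]}|\kappa_n(t)|((R,\infty))<\e$; the symmetric estimate controls the left tail. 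This gives the required uniform tightness, and in particular, letting $n\to\infty$, that $|u(t,x)|\le|\kappa(t)|((-\infty,x])$ is small for $x$ very negative uniformly in $t$, so $u(t,-\infty)=0$ and thus $u(t,\cdot)=\kappa(t,(-\infty,\cdot])$.

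With both hypotheses of Lemma~\ref{l:conv-equiv-TR} verified, I would invoke that lemma: Theorem~\ref{t} already furnishes condition~(\ref{l:conv-equiv-TR:locunif}) there (locally uniform convergence $u_n\to u$ to a continuous limit), so all equivalent conditions hold --- in particular condition~(\ref{l:conv-equiv-TR:kappa}), which is precisely the statement that $\kappa_n$ converges to $\kappa$ in the sense of Lemma~\ref{l:conv-equiv-TR}, with $s_n$ and modulus $\omega$ uniform in the time sequences. Finally, by Theorem~\ref{t} and Corollary~\ref{co:solutions-match-imr}, $u$ --- hence $\kappa$ via $u(t,x)=\kappa(t,(-\infty,x])$ --- is characterized as the unique viscosity solution of~\eqref{HJ:formal} with datum $u^\circ(x)=\kappa^\circ((-\infty,x])$. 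I expect the uniform-in-time tightness to be the one genuinely new ingredient over Theorem~\ref{t} and the soft-convergence lemmas, and the slowly-rising-soft-count estimate above is the crux of it; the subtlety it circumvents is that mass can in principle be carried towards infinity at speed that is not $O(1)$ in $n$, so a naive moment bound or a fixed-profile barrier (whose rate is not small) does not suffice, whereas the rate $\|\chi_1''\|_\infty/(2L^2)$ of the soft count can be made arbitrarily small.
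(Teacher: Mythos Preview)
Your proof is correct and follows essentially the same route as the paper: reduce to Theorem~\ref{t}, then verify the hypotheses of Lemma~\ref{l:conv-equiv-TR}, with uniform-in-time tightness of $\{\kappa_n(t)\}_{n,t}$ as the only new ingredient, established by propagating a test-function integral along the ODE via the symmetrization identity~\eqref{pf:ddt:Mk1}.

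The one stylistic difference is in how tightness is packaged. The paper invokes the de la Vall\'ee--Poussin characterization of tightness to produce a \emph{single} coercive function $\varphi$ with $\sup_n\int\varphi\,|\kappa_n^\circ|<\infty$, regularizes it so that $\varphi'$ is Lipschitz, and then the same computation you did gives $\frac{d}{dt}\int\varphi\,|\kappa_n(t)|\le\tfrac12\Lip(\varphi')$; tightness for all $t\in[0,T]$ follows in one stroke. Your two-parameter cutoff $\chi_L(\cdot-R)$ achieves the same thing more explicitly: choosing $L$ large makes the propagation rate $\|\chi_1''\|_\infty/(2L^2)$ small, and then $R$ large controls the initial tail. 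Your version is more elementary (it avoids citing the abstract tightness criterion and Moreau--Yosida regularization), while the paper's is a bit more streamlined. The underlying estimate is identical in both.
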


\begin{proof}
Theorem~\ref{t} provides continuous convergence of the corresponding functions $u_n$. 
In order to apply  Lemma~\ref{l:conv-equiv-TR}  we only need to establish tightness of the solutions $\{\kappa_n(t)\}_{n,t}$. 
Since by assumption the sequence $\kappa^\circ_n$ converges narrowly, by Prokhorov's theorem it is tight~\cite[Th.~8.6.2]{Bogachev07.II}, and by a well-known characterization of tightness~\cite[Rem.~5.1.5]{AmbrosioGigliSavare08} there exists a  function $\varphi:\R\to[0,\infty)$ with compact sublevel sets such that $\sup_n \int\varphi |\kappa^\circ_n| < \infty$. By passing to the Moreau-Yosida regularization $0\leq \varphi_\lambda\leq \varphi$ we can assume without loss of generality that $\varphi$ is differentiable with Lipschitz continuous derivative (see e.g.~\cite[Th.~XV.4.1.4]{Hiriart-UrrutyLemarechal96}). 
Similar to the calculation in~\eqref{pf:ddt:Mk1} we then calculate
\[
\frac{d}{dt} \int \varphi(x) |\kappa_n(t)|(dx) 
= \frac1{n^2} \sum_{i=1}^n \sum_{j=1}^{i-1} b_ib_j \frac{\varphi'(x_i)-\varphi'(x_j)}{x_i-x_j}
\leq \frac12 \Lip(\varphi'),
\]
implying that the set $\{\kappa_n(t)\}_{n,t}$ is tight. 
\end{proof}

The continuous convergence of $\kappa_n$ to $\kappa$ as measure-valued functions of time leads to continuity of the limit by Lemma~\ref{l:ct-conv-ct}:
\begin{cor}
\label{c:kappa-limit-ct}
The map $t\mapsto \kappa(t)$ from the interval $[0,T]$ into the space of finite signed measures is continuous in the sense of Lemma~\ref{l:conv-equiv-R}. 	
\end{cor}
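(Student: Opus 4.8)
The plan is to obtain the statement directly from Corollary~\ref{c:conv-kappa} together with the abstract continuity result Lemma~\ref{l:ct-conv-ct}. By definition, continuity of $t\mapsto\kappa(t)$ in the sense of Lemma~\ref{l:conv-equiv-R} means: for every $t_m\to t$ in $[0,T]$, the sequence $\kappa(t_m)$ converges to $\kappa(t)$ in the sense of Lemma~\ref{l:conv-equiv-R}; equivalently, via the identification~\eqref{eq:u-kappa-rel}, that $u(t_m,\cdot)\to u(t,\cdot)$ continuously on $\R$ with $u(t,\cdot)$ continuous.

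First I would record what Corollary~\ref{c:conv-kappa} already provides: $\kappa_n\to\kappa$ in the sense of Lemma~\ref{l:conv-equiv-TR}, i.e.\ (item~\ref{l:conv-equiv-TR:ct} there) $u_n\to u$ continuously on $[0,T]\times\R$, and the limit $u$ is uniformly continuous on $[0,T]\times\R$. Invoking Lemma~\ref{l:ct-conv-ct} with $X=[0,T]\times\R$ is precisely the mechanism by which this joint continuity of $u$ follows from the continuous convergence of the (discontinuous) $u_n$, which is the content of the remark preceding the corollary; in any case the only fact needed below is that $u$ is jointly continuous.

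Next I would fix $t_m\to t$ in $[0,T]$ together with an arbitrary sequence $x_m\to x$ in $\R$. Then $(t_m,x_m)\to(t,x)$ in $[0,T]\times\R$, so joint continuity of $u$ gives $u(t_m,x_m)\to u(t,x)$, which says exactly that $u(t_m,\cdot)\to u(t,\cdot)$ continuously on $\R$. It remains to transfer this back to the measures through Lemma~\ref{l:conv-equiv-R}, applied to the sequence $(\kappa(t_m))_m$ with limit $\kappa(t)$. Its hypotheses hold: the functions $u(t_m,\cdot)$ and $u(t,\cdot)$ are c\`adl\`ag (indeed continuous) of finite variation by the standing assumptions of Lemma~\ref{l:conv-equiv-TR}, and the family $\{\kappa(t_m)\}_m$ is bounded in total variation and tight. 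Both of these last properties I would derive, uniformly in $s\in[0,T]$, from lower semicontinuity of the total-variation norm under narrow convergence applied to $\kappa_n(s)\to\kappa(s)$: boundedness gives $\sup_s|\kappa(s)|(\R)\le\sup_{n,s}|\kappa_n(s)|(\R)<\infty$, while the estimate $|\kappa(s)|(U)\le\liminf_n|\kappa_n(s)|(U)$ for open $U$, combined with the uniform tightness of $\{\kappa_n(s)\}_{n,s}$ established in the proof of Corollary~\ref{c:conv-kappa}, yields $\sup_s|\kappa(s)|\bigl(\R\setminus[-R,R]\bigr)\to0$ as $R\to\infty$. With the hypotheses in place, the equivalence \ref{l:conv-equiv-R:ct}$\Leftrightarrow$\ref{l:conv-equiv-R:kappa} of Lemma~\ref{l:conv-equiv-R} upgrades the continuous convergence $u(t_m,\cdot)\to u(t,\cdot)$ to convergence of $\kappa(t_m)$ to $\kappa(t)$ in the sense of Lemma~\ref{l:conv-equiv-R}. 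Since $t_m\to t$ was arbitrary, this is the claimed continuity.

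The only step that is not pure bookkeeping is the verification that the limiting family $\{\kappa(t)\}_{t\in[0,T]}$ is again bounded in total variation and tight; as indicated this reduces to lower semicontinuity of $|\cdot|$ under narrow convergence, and I expect it to present no real obstacle. Everything else is a direct unwinding of the definitions in Lemmas~\ref{l:ct-conv-ct}, \ref{l:conv-equiv-R} and~\ref{l:conv-equiv-TR}.
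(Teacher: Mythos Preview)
Your argument is correct and rests on the same core idea as the paper's: continuous convergence of $\kappa_n$ to $\kappa$ forces continuity of the limit $t\mapsto\kappa(t)$. The paper packages this more abstractly: it observes via Remark~\ref{rem:kappa-conv-is-topological} that the convergence of Lemma~\ref{l:conv-equiv-R} is generated by a metric, and then invokes Lemma~\ref{l:ct-conv-ct} directly for the measure-valued maps $t\mapsto\kappa_n(t)$ (the proof of that lemma works verbatim for any metric target). Your route instead unpacks everything through the functions $u$: you use the joint continuity of $u$ on $[0,T]\times\R$ to get $u(t_m,\cdot)\to u(t,\cdot)$ continuously, and then translate back via Lemma~\ref{l:conv-equiv-R}. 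This buys you a self-contained argument that does not rely on the metric-topology remark, at the cost of having to check boundedness and tightness of $\{\kappa(t)\}_t$ explicitly; your lower-semicontinuity argument for this step is fine.
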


\begin{proof}
By Lemma~\ref{l:ct-conv-ct} the limit $t\mapsto \kappa(t)$ is sequentially continuous in the sense of part~\ref{l:conv-equiv-R:kappa} of Lemma~\ref{l:conv-equiv-R}; by  Remark~\ref{rem:kappa-conv-is-topological} this convergence  is generated by a metric topology, implying that sequential continuity and continuity are equivalent. 
\end{proof}

\paragraph{Acknowledgements}
PvM gratefully acknowledges support from JSPS KAKENHI Grant Number 20K14358. NP is supported by JSPS KAKENHI Grant Number 18K13440. We gratefully acknowledge support from Kanazawa University for hosting MAP.

\bibliographystyle{alphainitials}

\end{document}